\documentclass[11pt]{article}
\usepackage{latexsym,color,amsmath,amsthm,amssymb,amscd,amsfonts,mathtools,extarrows,chemfig,mhchem,textcomp}
\usepackage{graphicx,tikz,pgfplots}\usetikzlibrary{arrows}
\usepackage[a4paper, left=1in, right=1in, top=1in, bottom=1in]{geometry}

\usepackage{float}
\usepackage{graphicx} 
\usepackage{hyperref,comment}
\usepackage{fullpage}
\usepackage[shortlabels]{enumitem}
\usepackage[mathscr]{euscript}
\usepackage{comment}
\usepackage{csquotes}

\usepackage{xypic,tikz}
\allowdisplaybreaks[4]

\newtheorem{theorem}{Theorem}
\newtheorem{lemma}[theorem]{Lemma}
\newtheorem{prop}[theorem]{Proposition}

\newtheorem{corollary}[theorem]{Corollary}
\newtheorem{defn}[theorem]{Definition}
\newtheorem{definition}[theorem]{Definition}

\newtheorem{remark}[theorem]{Remark}

\newtheorem{example}[theorem]{Example}

\newcommand{\polar}{\circ}
\newcommand{\R}{\mathbb{R}}

\newcommand{\dom}{\mathrm{dom}\,}
\newcommand{\gradientp}{\partial}
\newcommand{\al}{\alpha}
\newcommand{\bet}{\tau}
\newcommand{\acg}{\varphi}
\newcommand{\afc}{\psi}

\renewcommand{\le}{\leqslant}
\renewcommand{\ge}{\geqslant}
\renewcommand{\leq}{\leqslant}
\renewcommand{\geq}{\geqslant}
\renewcommand{\setminus}{\smallsetminus}
\renewcommand{\subset}{\subseteq}

\DeclareMathOperator*{\KA}{\scalebox{0.5}{$K(A)$}}
\DeclareMathOperator*{\RA}{\scalebox{0.5}{$R(A)$}}
\DeclareMathOperator*{\KAs}{\scalebox{0.5}{$K(A^*)$}}

\newcommand{\f}{f}
\newcommand{\g}{g}

\renewcommand{\nu}{\upnu}

\newcommand{\cvx}{\mathrm{Cvx}}
\newcommand{\cvp}{\mathrm{CV}}

\newcommand{\h}{\mathcal{H}}

\newcounter{c}
\title{
Hidden critical and Morse equivalence behind duality:\\ Theory and Applications
}
\allowdisplaybreaks[4]
\author{Dong Zhang\thanks{School of Mathematical Sciences,  Peking University,   100871 Beijing, China. {\tt dongzhang@math.pku.edu.cn} \\
Dong Zhang is supported by grants from  the  National Natural Science Foundation of China (No.\ 12401443).}\footnotemark[1]}
\date{}

\begin{document}

\maketitle 
 \allowdisplaybreaks[4]
\begin{abstract}
The aim of this paper is to establish critical duality theory for \emph{ratios of nonnegative homogeneous convex functions} (shorten for \emph{RC functions}) and \emph{differences of convex functions} (abbreviated as \emph{DC functions}) on Banach spaces, by which, we reveal how some seemingly completely different problems from critical point theory, combinatorial geometry, and spectral graph theory, can be identified with each other. These results open up new perspectives for understanding classic problems and concepts such as eigenvalue problems, Cheeger constants, zonotopes, and hypergraphs. 

Specifically‌, we establish a series of duality results on critical point theory and Morse theory for RC functions, including the homotopy type of sublevel sets, the Morse critical points and their Rothe critical groups,  Lagrange critical points and their multiplicities, Lusternik-Schnirelman min-max critical values, Poincare polynomials, as well as the structure of handlebody decompositions, all of which are proved to be preserved under polarity dual. 
Moreover, we obtain the first critical duality theory of DC functions which does not depend on the DC decomposition. 
This answers a question left open from the works of Toland on DC functions and the works of Le-Pham on DC programming. 

We apply these results to establish various equalities involving exact and coexact Cheeger constants on polyhedrons and manifolds; we provide a reformulation of the graph Cheeger constant using zonotopes; we introduce the contact data which serves as a geometric characterization of Lagrange criticality; and we show that the eigenproblems for 1-Laplacian and $\infty$-Laplacian on hypergraphs are equivalent to the contact problems of zonotopes, which indeed establishes a new characterization of zonotopes. 
Moreover, based on our duality theory, we derive the first necessary and sufficient condition for a general convex body to be a zonotope; and we propose a zonotope-representation theorem, which states that the contact data of a centrally symmetric convex polytope and a zonotope can be expressed as the contact data between two zonotopes. 
We also prove a duality equivalence for certain nonlinear eigenvalue problems and for certain bifurcation problems. 
Our study here reveals an intricate interaction of critical point theory with other fields such as convex analysis, combinatorial geometry, and nonlinear eigenproblems on graphs. 

\vspace{0.2cm}

\noindent \textbf{Keywords}. Polarity dual, Fenchel conjugate, critical point theory, Morse theory, Cheeger constant, zonotope, eigenvalue problem, bifurcation problem
\end{abstract}

\tableofcontents

\section{Introduction}

The concept of duality, as a central theme, has long been the focus of mathematicians from various fields. There are two classical duals for convex functions, one is the Fenchel dual, and the other is the polarity dual. 
This paper is concerned with critical dual equivalence, a research direction originated from a fundamental and classical paper of Clarke. Earlier works along these lines include those by many prominent mathematicians working both on  analysis and geometry. 

Research in Fenchel duality theory has permeated almost all areas of analysis, geometry and applied mathematics. 
In the 1970s, Clarke gave a vitally important observation showing that there is a natural one-to-one correspondence between the critical points of the action functional and the dual action functional 
\cite{Clarke75,Clarke90}. 
Subsequently, Ekeland gave a general approach to Clarke's duality \cite{Ekeland90}. 
In 2022, Abbondandolo and Kang proved an elegant theorem which states that the Floer complex that is associated with a convex Hamiltonian function on $\R^{2n}$ is isomorphic to the Morse complex of Clarke’s dual action functional that is associated with the Fenchel-dual Hamiltonian \cite{AbbondandoloKang}. This extends the deep works by  Abbondandolo and Schwarz which construct an isomorphism between the Floer homology of a convex quadratic-growth Hamiltonian on cotangent bundles and the Morse homology of its Fenchel-dual Lagrangian action functional \cite{Abbondandolo15,Abbondandolo06}. 
For the study of  variational problems on DC functions, Toland \cite{dual-Toland79} and Singer \cite{Singer79} established a duality theorem which extends the concept of duality to a broader class of non-convex optimization problems \cite{dual-Toland79,Singer79}, namely, DC programming, providing global optimality conditions and characterizing the global optimal solutions of these problems. 
It is worth noting that 
a Toland type nonconvex duality principle for certain Lagrangian was  rigorously proved by Ricciardi and Suzuki \cite{Ricciardi}. 
Recent remarkable developments on Fenchel dual can be found in different fields. 
In particular, 
Bergmann,  Herzog, Louzeiro, Tenbrinck and Vidal-Núñez \cite{Bergmann} introduce a new notion of a Fenchel conjugate, which generalizes the classical Fenchel conjugation to functions defined on Riemannian manifolds. 
In 2022, Bergmann,  Herzog and Louzeiro \cite{Mauricio}  introduce a definition of Fenchel conjugate and Fenchel biconjugate on Hadamard manifolds based on the tangent bundle. 
In 2023, Gutman and Peña \cite{Gutman} show 
an interesting connection between two major threads in convex optimization, namely first-order methods and Fenchel duality.

Polarity dual also plays a crucial role in analysis and geometry. 
The concept of polar dual for convex sets \cite{Artstein04,Boroczky,Gardner} is of great importance not only in geometry, but also in other fields. As a remarkable example of this, we would like to point out that in 2022, Atamtürk and Narayanan \cite{Atamturk} apply polarity dual to combinatorial optimization and give an outer polyhedral approximation for the epigraph of set functions, which is a novel and important perspective. 
In the field of convex optimization, polar dual for convex sets also plays a significant role (both conceptually and methodologically) \cite{Friedlander,Aravkin,Freund}. 
Rockafellar introduced the polarity dual for functions in his celebrated book \cite[Chapter 15]{Rockafellar}, which has been recently rediscovered and further systematically investigated by Artstein-Avidan and Milman \cite{Artstein10,Milman11}, as well as Artstein-Avidan and Rubinstein \cite{Artstein17}. The polarity dual leads to a novel functionalization of the Brunn-Minkowski inequality proposed by Artstein-Avidan,  Florentin and Segal \cite{Artstein20}. 

In this paper, we  investigate the  duality of criticality of RC functions on Banach reflexive spaces. As a generalization of Rayleigh quotients, RC functions appear 
extensively in many fields, either as a nonlinear Rayleigh functional or as an auxiliary function used to define various quantities: 
e.g., the Cheeger constants on manifolds or discrete spaces, variational eigenvalues of nonlinear eigenproblems \cite{Burger,JMZ-book} and bifurcation problems \cite{Ekeland90}. 
In fact, the above important quantities and concepts, can be interpreted as \emph{critical data} (i.e., critical values or critical points) of the ratio $f/g$ of two particular convex functions $f$ and $g$. 
Our main intension in this paper is to establish a strong dual variational principle for $f/g$. 
We achieve this goal by exploring a systematic critical duality theory for RC functions in terms of polarity duality. 
In such duality theory, we focus primarily on three classical criticalities: Lagrange criticality, Morse criticality and Lusternik-Schnirelman  min-max criticality, which play key roles in critical point theory. Below, we provide a brief introduction to these  criticalities.

As a special homotopy invariant, the Lusternik-Schnirelmann category was created in order to provide estimates on the number of critical points for any smooth function on the manifold. While the Lusternik-Schnirelmann theory was mainly used in topology and analysis, it had far-reaching consequences in geometry as well, such as the well-known results on minimal hypersurfaces by Marques and Neves \cite{Marques}. 
Moreover, in the study of nonlinear eigenvalues on $p$-Laplacians, a sequence of critical values of the $p$-Rayleigh quotient can be described by means of the minimax variational principle of Lusternik-Schnirelman type, which leads to many useful results. See Section \ref{sec:def-minmax} for a more detailed mathematical explanation.

Morse theory \cite{Morse} enables us to analyze the topology of an object $M$ by studying  functions  on $M$. 
In the classical case, $M$ is a manifold and the function is generic and differentiable. 
There are, however, many extensions of Morse theory  in modern mathematics that do not   require a smooth structure,  such as
the metric and topological Morse theory 
\cite{Degiovanni94,Ioffe,Katriel,Degiovanni11}, the PS (piecewise smooth) or stratified Morse theory by Thom, Goresky and MacPherson \cite{Goresky}, the PL  Morse theory by Banchoff \cite{Banchoff67} and K\"uhnel \cite{Brehm-Kuhnel87,Edelsbrunner-Harer10}
, as well as the discrete Morse theory by Forman \cite{Forman98
}. 
In all such cases, a typical function on $M$ will reflect the topology quite directly,  allowing one to find CW structures and handle decompositions on $M$ and to obtain  information about their homology.  
According to modern Morse theory, the core of the entire Morse structure can be reduced to the study of Morse critical points which we will explain in Section \ref{sec:def-Morse}. 

The Lagrange criticality\footnote{See Section \ref{sec:def-Lagrange} for details.} comes from calculus of variations as well as optimization problems, which gives necessary conditions for the critical point theory of functions. 
For example, for one-homogeneous convex nonnegative functions $f$ and $g$, the critical point problem of a function $f$ under the constraint $g=1$ can be reduced to the Lagrange critical point satisfying $\partial f(x)\cap\lambda \partial g(x)\ne\varnothing$, which leads to certain nonlinear eigenvalue problems, see the systematic works by Burger \cite{Burger} and Bungert and Korolev \cite{Bungert-CAMS}. 
In this setting, both Lusternik-Schnirelman min-max critical values and Morse critical points of $f$ on $\{x:g(x)=1\}$ satisfy the Lagrange criticality. 
It is known that the three types of criticality have the following  implication relation: 
$$\text{Lusternik-Schnirelman  min-max criticality}\Longrightarrow\text{Morse criticality}\Longrightarrow\text{Lagrange criticality}$$
We progressively delve deeper and give dual equivalence theorems for the above three types of critical values for RC functions of the form $f\circ A/g$, where $A$ is a bounded linear operator, $f$ and $g$ are homogeneous nonnegative convex functions on Banach spaces. 
It is worth noting that in our setting, $f$ and $g$ are quite general, which allow to take the value of infinity and do not require twice G$\hat{\text{a}}$teaux-differentiable.  
Dealing with such general functions presents some analytical difficulties, which we will mention in due time.

Another point we would like to highlight is that we provide two polarity dual versions of $f\circ A/g$, one is  $g^\polar/(f\circ A)^\polar$; and the other is  $g^\polar\circ A^*/f^\polar$.  
These lead to two analogues of the critical duality theory, which have significantly enriched duality theory and broadened its scope of application. 
For example, Cheeger constants in geometry and graph theory, zonotopes in combinatorics, as well as hypergraph $\infty$-Laplacian in nonlinear eigenvalue problems, these three problems, which originate from different fields and were originally unrelated, are linked through the duality theory discussed in this paper. We present some 
essential discoveries, including: (1) a geometric characterization of Lagrange criticality using convex geometry; (2) a characterization of Cheeger constant using zonotopes; (3) an equivalence among the eigenproblems of hypergraph 1-Laplacian and $\infty$-Laplacian as well as the contact problems of zonotopes; (4) a zonotope contact representation for origin-symmetric convex polytopes; (5) the first necessary and sufficient condition for a general convex body to be a zonotope; (6) an equivalence between certain primal eigenvalue problems and bifurcation problems and their duals; (7) various equalities involving exact and coexact Cheeger constants on polyhedral  manifolds.  
These results provide a dictionary between Cheeger problems, nonlinear eigenvalue problems, zonotopes and convex bodies. 
The translations from one object to another give new tools for attaching problems and drawing out connections, and also open up new perspectives for understanding and researching classic concepts such as Cheeger constants, zonotopes, and hypergraphs.


As an extension of the idea used in the proof of our duality theory for RC functions, we also develop a critical duality theory for general DC functions using Fenchel dual. We refer to Examples \ref{exam:f-g-cannot-polar}
and \ref{exam:f/g-cannot-Fenchel} in Appendix to explain why the  suitable duality used for RC functions is the polarity dual, while the appropriate dual for DC functions is the Fenchel dual. 
We summarize these as shown in the diagram below:   
\[
\xymatrix{
  \text{{\bf RC} functions} \ar@{-}[rrr]^{\checkmark \text{ Section \ref{sec:ratio}}}\ar@{..}[drrr]^{\text{\texttimes\,see Example \ref{exam:f/g-cannot-Fenchel}\;\;\;\;\;\;\;\;\;\;\;\;\;\;\;\;\;\;\;\;}}
  & & & \text{duality based on {\bf polarity}} \ar@{..}[dlll]^{\text{\texttimes\,see Example \ref{exam:f-g-cannot-polar}}\;\;\;\;\;\;\;\;\;\;\;\;\;\;\;\;\;\;\;\;\;\;} \\
  \text{{\bf DC} functions} & & & \text{duality based on {\bf Fenchel} conjugate}  \ar@{-}^{\checkmark\text{ Section \ref{sec:dc}}\;\;\;\;\;\;\;\;\;\;\;\;\;\;}[lll]
}
\]


\subsection{Main results}\label{sec:main-result}

Our main findings on complete critical equivalence for RC functions under duality are collected in Theorems \ref{th:fgmain:polar} and \ref{th:fAgmain:polar}. 
Basic to the theory is the notion of critical points and other related concepts, and we now discuss the notation to be used. 
 Let $X$ be a reflexive Banach space, let   
$$\cvx(X):=\big\{\text{proper, lower semi-continuous, and convex functions }f:X\to \R\cup\{+\infty\}\},$$
$$\cvx_0(X)=\{f\in \cvx(X): f(x)\ge f(0)=0,\forall x\in X\big\},$$
and
$$\cvx_0^p(X)=\{f\in \cvx_0(X): f\text{ is positively $p$-homogeneous}\},$$
where $p\ge1$, $\dom f:=\{x\in X:f(x)<+\infty\}$, and we say that $f$ is positively $p$-homogeneous if $f(tx)=t^pf(x)$, $\forall t>0$, $\forall x\in X$. 

For $f\in \cvx_0(X)$, let $f^\polar:X^*\to\R\cup\{+\infty\}$ be the polarity dual of $f$, defined by 
\begin{equation}\label{eq:original-A}
\f^\polar( x^*):
=\inf\left\{c\in\R:\langle  x^*, x\rangle\le c\f( x)+1,\forall x\in X \right\}    
\end{equation}
where $x^*\in X^*$. We say that $f$ is positive-definite if $f(x)>0$, $\forall x\ne0$. 
Let 
\begin{equation}\label{eq:defn-cvxc}
\cvp_c^p(X)=\{f\in \cvx_{0}^p(X):\text{ both }f\text{ and }f^\polar\text{ are continuous and positive-definite}\}.
\end{equation}
We then have the inclusion relation $$\cvp_c^p(X)
\subset \cvx_{0}^p(X)\subset \cvx_{0}(X),$$
and moreover, the polarity transform restricted on each of the above function classes on $X$ maps onto the corresponding function class on $X^*$, see Corollary \ref{cor:function-spaces}.

To clearly present the main results, we shall first introduce several concepts of ‘critical equivalence’.

\begin{defn}[Min-max equivalence]
Two 
continuous  functions $F_1:\Omega_1\to\R$ and $F_2:\Omega_2\to\R$ are said to be \emph{min-max equivalent} if 
the nontrivial Lusternik-Schnirelman min-max critical values of $F_1$ coincide with that of $F_2$. Here the formal definition of Lusternik-Schnirelman min-max critical values can be found in Section \ref{sec:def-minmax}.
\end{defn}

\begin{defn}[Strong Morse equivalence]
Two 
continuous functions $F_1:\Omega_1\to\R$ and $F_2:\Omega_2\to\R$ are \emph{strong Morse equivalent} if they 
satisfy: \begin{enumerate}[({C}1)]
\item Homotopy of level sets: for any $c\in\R$, the sublevel set of $F_1$ at level $c$, i.e., $\{x\in\Omega_1:F_1(x)\le c\}$, is homotopy equivalent to the sublevel set of $F_2$ at the same level, i.e., $ \{x\in\Omega_2:F_2(x)\le c\}$.

\item Morse criticality: 
The nontrivial Morse critical values of $F_1$ coincide with that of $F_2$. And there is a natural 1-to-1 correspondence  $\xi$ between the nontrivial Morse critical points of $F_1$ and that of $F_2$, such that $x$ is a nontrivial Morse critical point of $F_1$ if and only if $\xi(x)$ is a nontrivial Morse critical point of $F_2$, and $F_1(x)=F_2(\xi(x))$ is a nontrivial Morse critical value of both $F_1$ and $F_2$. 
Here, the definition of Morse critical values and Morse critical points can be found in Section \ref{sec:def-Morse}.

\item Rothe critical groups: 
the Rothe critical groups of $F_1$ at $x$ is isomorphic to that of $F_2$ at $\xi(x)$ whenever $x$ is a nontrivial Morse critical point of $F_1$, where $\xi$ is described in (C2).
\end{enumerate}
\end{defn}

The two definitions above are concerned about certain equivalences between two continuous functions, while the definition of Lagrange equivalence below considers pairs of lower semi-continuous convex functions.
\begin{defn}[Lagrange equivalence]\label{defn:Lagrange-intro}
Two function-pairs $(f_1,g_1)$ and $(f_2,g_2)$, in which $f_1,g_1,f_2,g_2$ are lower semi-continuous convex functions, are \emph{Lagrange equivalent} if they satisfy: 
\begin{itemize}
\item[(L1)] 
The nontrivial Lagrange critical values of $(f_1,g_1)$ coincide with that of $(f_2,g_2)$.
\item[(L2)] 
There exist two set-valued maps $T_1$ and $T_2$ such that for any nontrivial  Lagrange critical point $x$ of $(f_1,g_1)$, every $y\in T_1(x)$ is a nontrivial  Lagrange critical point of $(f_2,g_2)$; and conversely,  for any nontrivial  Lagrange critical point $y$ of $(f_2,g_2)$, every $x\in T_2(y)$ is a nontrivial  Lagrange critical point of $(f_1,g_1)$; as well as  $x\in T_2(T_1(x))$ and $y\in T_1(T_2(y))$. 
See Section \ref{sec:def-Lagrange} for the definition of Lagrange critical values/points.
\end{itemize}    
\end{defn}

With the above prepared definitions in hand, we are in a position to state the main theorem. 
Let $X$ and $Y$ be reflexive Banach spaces, and let 
$ \mathcal{B}(X,Y)$  denote the set of all bounded linear operators from $X$ to $Y$. 
The main result of this paper is the establishment of the following critical duality theorems. 


\begin{theorem}\label{th:fgmain:polar}
Suppose $f,g\in \cvx_{0}^p(X)$. Then, the function-pairs $(f,g)$ and $(g^\circ,f^\circ)$ are Lagrange equivalent. 
If we further assume that $f,g\in\cvp_c^p(X)$, then $f/g$ and $g^\polar/f^\polar$ are min-max equivalent. 
If $f$ and $g^\circ$ are further assumed to be $C^1$-smooth (or, $g$ and $f^\circ$ are $C^1$-smooth), then $f/g$ and $g^\polar/f^\polar$  are strong Morse equivalent. 
\end{theorem}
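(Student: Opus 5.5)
The plan is to reduce everything to one geometric correspondence between the constraint surfaces $\{g=1\}\subset X$ and $\{f^\polar=1\}\subset X^*$ (or, after normalizing, the ``spheres'' $S_g=\{g=1\}$ and $S_{f^\polar}=\{f^\polar=1\}$). The basic tool is the polarity analogue of the Fenchel--Young equality for $p$-homogeneous functions: for $f\in\cvx_0^p(X)$ one has $f^\polar$ $p$-homogeneous as well (up to the normalization built into \eqref{eq:original-A}), and $\langle x^*,x\rangle\le f(x)^{1/p}f^\polar(x^*)^{1/p}$, with equality exactly when $x^*$ is a suitable positive multiple of an element of $\partial f(x)$, equivalently $x$ is a multiple of an element of $\partial f^\polar(x^*)$. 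I would first establish this ``polar subdifferential correspondence'' $x^*\in\mathbb{R}_+\partial f(x)\iff x\in\mathbb{R}_+\partial f^\polar(x^*)$ together with the computation of the constants (Euler's identity $\langle \partial f(x),x\rangle=pf(x)$).

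\emph{Lagrange equivalence.} A nontrivial Lagrange critical point of $(f,g)$ is $x\neq0$ with $\partial f(x)\cap\lambda\partial g(x)\neq\varnothing$, $\lambda=f(x)/g(x)$. Given such $x$ and a common element $x^*\in\partial f(x)\cap\lambda\partial g(x)$, I define $T_1(x)$ as the set of these $x^*$ (suitably rescaled). By the correspondence, $x^*\in\partial f(x)$ gives $x\in c_1\partial f^\polar(x^*)$ and $x^*\in\lambda\partial g(x)$ gives $x\in c_2\partial g^\polar(x^*)$; the equality case of the polar inequality lets one compute $g^\polar(x^*)/f^\polar(x^*)=\lambda$ and check $c_2/c_1=\lambda$, so $\partial g^\polar(x^*)\cap\lambda\partial f^\polar(x^*)\ni x/c$. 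Symmetrically $T_2$ sends $x^*$ to the common subgradients, and $x\in T_2(T_1(x))$ holds by construction. Since $f^{\polar\polar}=f$ for $f\in\cvx_0(X)$ on reflexive $X$, the argument is symmetric.

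\emph{Min-max and Morse equivalence.} With $f,g\in\cvp_c^p(X)$ all four functions are continuous and positive definite, so $f/g$ and $g^\polar/f^\polar$ may be viewed on the compact-ish spheres $S_g$ and $S_{f^\polar}$. The key step is to build, for every $c$, an odd homotopy equivalence between $\{x\in S_g: f(x)\le c\}$ and $\{x^*\in S_{f^\polar}: g^\polar(x^*)\le c\}$. I would use the identity $g^\polar(x^*)^{1/p}=\sup_{g(x)\le1}\langle x^*,x\rangle$ and $f(x)^{1/p}=\sup_{f^\polar(x^*)\le1}\langle x^*,x\rangle$ to show that both sublevel sets deformation retract onto the ``incidence set'' $\{(x,x^*)\in S_g\times S_{f^\polar}:\langle x^*,x\rangle\ge c^{-1/p}\ \text{(normalized)}\}$, whose fibres over either factor are convex (intersection of a half-space with a convex ball), hence contractible; a Vietoris--Begle/convex-fibre selection argument, carried out $\mathbb{Z}_2$-equivariantly, gives (C1) and odd homotopy equivalences. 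Min-max equivalence then follows because Krasnoselskii genus (or any index) of sublevel sets is preserved under odd maps in both directions. Under the $C^1$ hypothesis on $f$ and $g^\polar$, the subdifferentials are singletons, so $T_1$ becomes a genuine map $\xi(x)=$ normalized $\nabla f(x)$, and $T_2$ its inverse; Morse critical points are Lagrange critical points, and the Rothe critical groups $H_*(\{F\le c\}\cap U,\{F\le c\}\cap U\setminus\{x\})$ transfer through the local version of the above homotopy equivalence, giving (C2)--(C3).

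The main obstacle is the homotopy-equivalence step: the fibrewise convexity is clear, but producing continuous (equivariant) maps and homotopies in infinite-dimensional reflexive spaces without smoothness, and localizing them near critical points for the critical groups, requires care; I would first try a partition-of-unity selection in the convex fibres (Michael selection), using continuity of $f,g,f^\polar,g^\polar$ to get lower semicontinuity of the fibre map.
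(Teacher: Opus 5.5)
Your Lagrange part is the paper's proof of Theorem \ref{th:critical(f,g)}: normalize $g(x)=1$, use that $u\in\partial g(x)$ forces $g^\circ(u)=1$ and $x\in\partial g^\circ(u)$, apply the same to $f$ at $x/c$, and take $T_1(x)=\mathrm{cone}\,\partial f(x)\cap\mathrm{cone}\,\partial g(x)$. That part is fine. The topological part, however, does not work as written.

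\emph{Sublevel sets.} Work with $p=1$ and write $S_h:=\{h=1\}$. Since $\sup_{f^\circ(x^*)\le 1}\langle x^*,x\rangle=f(x)$ and $\sup_{g(x)\le 1}\langle x^*,x\rangle=g^\circ(x^*)$, the fibre of your set $\{(x,x^*)\in S_g\times S_{f^\circ}:\langle x^*,x\rangle\ge c^{-1}\}$ behaves as follows:
over $x$ it is nonempty iff $f(x)\ge c^{-1}$, and over $x^*$ iff $g^\circ(x^*)\ge c^{-1}$.
So this set models the \emph{superlevel} sets $\{f/g\ge c^{-1}\}$ and $\{g^\circ/f^\circ\ge c^{-1}\}$, not the sublevel sets you want.

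The correct object uses the paper's normalization $f^{-1}(1)$, $(g^\circ)^{-1}(1)$: $I_c=\{(x,x^*)\in S_f\times S_{g^\circ}:\langle x^*,x\rangle\ge c^{-1}\}$. It projects onto $\{f/g\le c\}\cap S_f$ and onto $\{g^\circ/f^\circ\le c\}\cap S_{g^\circ}$. Its fibres are spherical caps (not ``half-space $\cap$ ball''), contractible by radial projection onto a convex slice.

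Even then, the selection step fails without smoothness. Over a boundary point ($f(x)=1$, $g(x)=c^{-1}$) the fibre is exactly $\partial g(x)$. So a section over the closed sublevel set is a continuous selection of $\partial g$ along $\{f/g=c\}$, and such a selection need not exist. For example, take $f=\|\cdot\|_2$ and $g=\|\cdot\|_1$ on $\mathbb{R}^3$: that curve crosses $\{x_1=0\}$, where $\partial g$ jumps from $(1,1,1)$ to $(-1,1,1)$. The fibre map is not l.s.c.\ there, so Michael's theorem does not apply, and Vietoris--Begle would only give cohomology isomorphisms, under closedness/properness hypotheses that fail in infinite dimensions.

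When $g$ and $f^\circ$ are $C^1$, $x\mapsto(x,\nabla g(x))$ is a section, and the composite is exactly the paper's map. Theorem \ref{th:ratio-sublevel-equi} uses $\varphi=f\,\partial g$, $\psi=g^\circ\,\partial f^\circ$ and the segment homotopy $t\,\psi\varphi(x)+(1-t)x$, kept in $\{f/g\le c\}$ by Lemma \ref{lem:f/g-zyx-tech}. This consumes smoothness of $g$ and $f^\circ$, not of $f$ and $g^\circ$. Your $\xi=$ normalized $\nabla f$ satisfies $g^\circ(\nabla f(x))/f^\circ(\nabla f(x))\ge f(x)/g(x)$ (Lemma \ref{lem:fA/g-zyx-tech1} with $A=\mathrm{id}$), so it transports superlevel rather than sublevel sets.

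For the smoothness-free min-max claim, your idea can be rescued with \emph{strict} sublevel sets:
on $\{f/g<c\}\cap S_f$ the sets $D(x)=\{x^*:g^\circ(x^*)<1,\ \langle x^*,x\rangle>c^{-1}\}$ are nonempty and convex, with open lower sections, so a partition-of-unity (Browder) selection $y(x)$ exists and can be symmetrized;
it lands in $\{g^\circ/f^\circ<c\}$;
choosing $z$ symmetrically in $\{z:f(z)<1,\ \langle \hat y,z\rangle>c^{-1}\}$ with $\hat y=y/g^\circ(y)$, one has $f\le 1$ and $g\ge\langle\hat y,\cdot\rangle>c^{-1}$ along the segment $[x,z]$, so the segment homotopy stays in $\{f/g<c\}$.
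This gives the index inequalities in both directions, and is more explicit than the paper's appeal to ``standard approximation''.

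\emph{Morse criticality.} For (C2) you must show that $\xi$ maps Morse critical points onto Morse critical points. Neither the Lagrange bijection (a Lagrange critical point may be Morse regular) nor a homotopy equivalence of sublevel sets gives this; a local strictly decreasing deformation has to be transported explicitly. The paper does this in Theorem \ref{th:Morse-Rothe}. For $y$ near $y_0=\partial g(x_0)$, it first moves along the $g^\circ$-normalized segment from $y$ to $\partial g(\partial f^\circ(y))$ during time $\delta(y)=\|y-\partial g(\partial f^\circ(y))\|$. It then follows $\partial g(\eta(\partial f^\circ(y),\cdot))$. Both steps are controlled by Lemma \ref{lem:f/g-zyx-tech}, and only after this are the Rothe groups compared through the homotopy equivalence.
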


\begin{theorem}\label{th:fAgmain:polar}
Suppose $f\in \cvx_{0}^p(Y)$, $g\in \cvx_{0}^p(X)$,  $A\in \mathcal{B}(X,Y)$, as well as $g^\polar$ and $f$ are continuous (or, $f^\polar$ and $g$ are continuous). Then, the function-pairs $(f\circ A,g)$, $(g^\circ\circ A^*,f^\circ)$, $(g^\circ,(f\circ A)^\circ)$ are Lagrange equivalent. If we further assume that $X$ and $Y$ are Hilbert spaces, $f\in\cvp_c^p(Y)$,  $g\in\cvp_c^p(X)$,  and $A:X\to Y$ is a Fredholm operator
, 
then $f\circ A/g$ and $g^\circ\circ A^*/f^\circ$ are min-max equivalent. 
Moreover, if $f$ and $g^\circ$ are further assumed to be $C^1$-smooth (or, $g$ and $f^\circ$ are $C^1$-smooth), then $f\circ A/g$ and $g^\circ\circ A^*/f^\circ$ are strong Morse equivalent. 
\end{theorem}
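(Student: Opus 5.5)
The plan is to reduce Theorem \ref{th:fAgmain:polar} to Theorem \ref{th:fgmain:polar} through the two identities
\[
(f\circ A)^\polar = \text{the polar of } f \text{ pulled back along } A,\qquad (f\circ A)^\polar(x^*)=\inf\{f^\polar(y^*): A^*y^*=x^*\},
\]
which express $(f\circ A)^\polar$ as the image of $f^\polar$ under $A^*$. Throughout I work in $p$-homogeneous form, so that Lagrange criticality reads $\partial (f\circ A)(x)\cap\lambda\,\partial g(x)\neq\varnothing$.

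\textbf{Step 1 (Lagrange equivalence).} By Theorem \ref{th:fgmain:polar} applied to the pair $(f\circ A,g)$, which lies in $\cvx_0^p(X)$, the pairs $(f\circ A,g)$ and $(g^\polar,(f\circ A)^\polar)$ are Lagrange equivalent. It therefore suffices to show that $(f\circ A,g)$ and $(g^\polar\circ A^*,f^\polar)$ are Lagrange equivalent.
\begin{itemize}
\item The continuity hypothesis on $f$ (or on $g^\polar$) gives the chain rule $\partial(f\circ A)(x)=A^*\partial f(Ax)$.
\item A Lagrange critical point $x$ with value $\lambda$ then provides $y^*\in\partial f(Ax)$ with $A^*y^*\in\lambda\,\partial g(x)$.
\item I use the polar subdifferential duality from the earlier sections, namely $x^*\in\partial g(x)$ together with a normalization implies $x\in\partial g^\polar(x^*)$ up to scaling, and the same for $f$. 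I take $T_1(x)=\{\text{suitably rescaled } y^*\}$ and check that $\partial(g^\polar\circ A^*)(y^*)=A\,\partial g^\polar(A^*y^*)\ni Ax$ and $Ax\in\lambda'\partial f^\polar(y^*)$, with the same critical value after normalizing by homogeneity.
\item $T_2$ is defined symmetrically. The round-trip inclusions $x\in T_2(T_1(x))$ follow because the construction passes through the same dual pair $(x,y^*)$.
\end{itemize}

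\textbf{Step 2 (min-max equivalence).} Here $X$ and $Y$ are Hilbert spaces and $A$ is Fredholm. I split $X=\ker A\oplus(\ker A)^\perp$ and $Y=\operatorname{ran}A\oplus\ker A^*$.
\begin{itemize}
\item On $\ker A$ we have $f\circ A/g=0$, and on $\ker A^*$ we have $g^\polar\circ A^*/f^\polar=0$. These give trivial critical values.
\item The nontrivial part lives on the restriction $\tilde A:(\ker A)^\perp\to\operatorname{ran}A$, which is an isomorphism.
\item Since $\tilde A$ is an isomorphism, Theorem \ref{th:fgmain:polar} applied to $\tilde f=f\circ\tilde A$ and to $g$ restricted (or rather, the appropriate quotient or infimal projection of $g$) transports the min-max levels. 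The identity $(f\circ\tilde A)^\polar=f^\polar\circ\tilde A^{*-1}$ turns $g^\polar/(f\circ\tilde A)^\polar$ into $g^\polar\circ\tilde A^*/f^\polar$ after the linear change of variables $x^*=\tilde A^*y^*$, which preserves min-max levels, since Krasnoselskii genus and homotopy are invariant under linear homeomorphisms.
\item The finite-dimensional kernels only shift the indexing of levels by $\dim\ker A$ and $\dim\ker A^*$. This is why only the nontrivial values are compared.
\end{itemize}

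\textbf{Step 3 (strong Morse equivalence).} The same reduction applies. Under $C^1$ smoothness, the map $\xi$ from Theorem \ref{th:fgmain:polar} is composed with $\tilde A^*$, and the sublevel-set homotopy equivalences and Rothe critical groups are transported by the linear homeomorphism.

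\textbf{Main obstacle.} The hardest step is handling $g$ on the kernel decomposition. $g$ does not split along $\ker A\oplus(\ker A)^\perp$, so the reduction to an isomorphism is not literal. My first attempt would be to show that the sublevel sets $\{f\circ A\le c\,g\}$ for $c>0$ deformation retract compatibly with their duals. Failing that, I would directly redo the min-max argument of Theorem \ref{th:fgmain:polar}, using a homeomorphism between the unit spheres $\{g=1\}$ and $\{f^\polar=1\}$ built from the subdifferential correspondence of Step 1, with $A$ Fredholm ensuring that the correspondence is well defined off the kernels.
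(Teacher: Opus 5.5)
Your Step 1 matches the paper's argument. Theorem \ref{th:fgmain:polar} applied to $(f\circ A,g)\in\cvx_0^p(X)$ gives the pair $(g^\polar,(f\circ A)^\polar)$, and your chain-rule construction is Theorem \ref{th:critical(fA,g)}. One small correction: continuity of $f$ is what gives $\partial(f\circ A)=A^*\partial f(A\,\cdot)$, while continuity of $g^\polar$ is what gives $\partial(g^\polar\circ A^*)=A\,\partial g^\polar(A^*\cdot)$, which the reverse direction needs.

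The gap is in Steps 2--3, exactly at the obstacle you leave open, and it has two parts.

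First, the reduced problem is mis-identified on the dual side. The correct primal reduction is the infimal projection $g_{\KA}$ on $(\ker A)^\bot$; by Lemma \ref{lem:g_R(A)g_K(A)} its polar is $g^\polar$ on $R(A^*)$, whereas the plain restriction of $g$ would give the wrong numerator. On the dual side, however, the polar of $f|_{R(A)}$ is $(f^\polar)_{\KAs}(y^*)=\inf_{w\in\ker A^*}f^\polar(y^*+w)$, not $f^\polar$. Your own opening identity already says this: it gives $(f\circ A)^\polar(A^*y^*)=(f^\polar)_{\KAs}(y^*)$. So $(f\circ\tilde A)^\polar=f^\polar\circ(\tilde A^*)^{-1}$ fails when $\ker A^*\neq0$, and your change of variables lands on $g^\polar\circ A^*/(f^\polar)_{\KAs}$ on $(\ker A^*)^\bot$, not on $g^\polar\circ A^*/f^\polar$. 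With this correction, invoking Theorem \ref{th:fgmain:polar} on the reduced pair is a legitimate shortcut for the first half of Theorem \ref{th:ratio-sublevel-equiA}. The paper instead rebuilds that homotopy from $\varphi(x)=f(Ax)(A^*|_{\ker(A^*)^\bot})^{-1}\partial g_{\KA}(x)$ using Lemmas \ref{lem:fA/g-zyx-tech1}--\ref{lem:fA/g-zyx-tech2}.

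Second, you must then return from the reduced ratios to the full ones on \emph{both} sides, and ``the kernels only shift the indexing'' is not automatic. Since $g_{\KA}\le g$, the projection $x\mapsto x_\bot$ raises the ratio, so sublevel sets do not project into reduced sublevel sets. The missing ingredient is the paper's fibrewise analysis over $x_\bot+\ker A$: a fibre is contractible when $x_\bot$ lies in the reduced sublevel set, and $\simeq\mathbb{S}^{\dim\ker A-1}$ otherwise. This gives $\{f\circ A/g\le c\}\simeq(\{f\circ A/g_{\KA}\le c\}\cap(\ker A)^\bot)*\mathbb{S}^{\dim\ker A-1}$, its analogue with $\ker A^*$ on the dual side, and hence the shifts $c_{j+\dim\ker A}(f\circ A/g)=c_j(f\circ A/g_{\KA})$ used in Theorem \ref{th:fA/g-index-critical}.

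The same join formula shows that Step 3 cannot be completed as you describe for a general Fredholm $A$. The two full sublevel sets are joins of homotopy-equivalent reduced sets with $\mathbb{S}^{\dim\ker A-1}$ and $\mathbb{S}^{\dim\ker A^*-1}$ respectively. They therefore agree only when $\mathrm{indF}(A)=0$, which is precisely the hypothesis under which Theorem \ref{th:ratio-sublevel-equiA} gives $\{f\circ A/g\le c\}\simeq\{g^\polar\circ A^*/f^\polar\le c\}$.

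This is not an artifact of the method. Take $p=2$, $A(x_1,x_2)=x_1$ from $\R^2$ to $\R$, $f(y)=y^2$ and $g(x)=x_1^2+x_2^2$. Then $f^\polar(y^*)=(y^*)^2/4$ and $g^\polar(x^*)=|x^*|^2/4$, all four functions are smooth, and every hypothesis holds. The primal ratio is $x_1^2/(x_1^2+x_2^2)$ and the dual ratio is identically $1$.
\begin{itemize}
\item At $c=\tfrac12$ the primal sublevel set is two closed sectors ($\simeq\mathbb{S}^0$), while the dual one is empty, so (C1) fails.
\item On the unit spheres, the nontrivial critical points have critical groups in degree $1$ (primal) and degree $0$ (dual), so (C3) fails.
\item The Lagrange and min-max conclusions, and the critical-point bijection, do survive.
\end{itemize}
Your fallback, a homeomorphism between $\{g=1\}$ and $\{f^\polar=1\}$, is impossible for the same reason: here these are $\mathbb{S}^1$ and $\mathbb{S}^0$. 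So the strong Morse part needs $\mathrm{indF}(A)=0$, or must be stated up to a join with $\mathbb{S}^{|\mathrm{indF}(A)|-1}$, as in the remark following Theorem \ref{th:ratio-sublevel-equiA}.
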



In proving Theorems \ref{th:fgmain:polar} and \ref{th:fAgmain:polar}, we have to overcome many different difficulties, for example, (i) the feasible domain of a RC function is usually not a convex set, which makes many approaches from convex analysis ineffective; (ii) 
the function can be non-differentiable, which prevents methods from smooth analysis; (iii) 
the Lagrange critical values, which are particularly associated with the RC functions, involve an Euler-Lagrange equation. 
The entire proof needs to make full use of non-smooth analysis. 
This is why we devote the whole of Section \ref{sec:ratio} to proving Theorem \ref{th:fAgmain:polar}. 
As an application of our idea, we obtain a critical duality theory for DC functions, see Section \ref{sec:dc} for details. We refer to Section \ref{sec:first-c-dual} (resp., Section \ref{sec:second-dual-bounded-ope}) for the detailed explanation and proof of Theorem \ref{th:fgmain:polar} (resp., Theorem \ref{th:fAgmain:polar}), and for basic knowledge and fundamental properties on critical point theory and duality theory regarding RC functions, see 
Section \ref{sec:polar/RC}. 


We would like to call the phenomenon displayed in the duality result (Theorem \ref{th:fgmain:polar} or Theorem \ref{th:fAgmain:polar}) in this paper a \textbf{critical duality equivalence}, which can be summarised briefly in Figures \ref{fig:explain-critical} and \ref{fig:value}.

\begin{figure}[H]
    \centering

\begin{tikzpicture}
      \draw[thick] (-3.2,1) -- (2.6,1)-- (2.6,-1)--(-3.2,-1)--(-3.2,1);
\draw[purple, thick] 
(0,1)-- (0,-1);
\node at (1.3,0) {critical values};
\node at (-1.6,0) {\color{purple}critical points};
\node at (-0.2,1.5) {\color{blue}handlebody decompositions}
;
\draw[-,thick, blue] (-3.2,1) to[out=60, in=120] (2.6,1);
\node at (-0.3,3-0.2) {Morse \& Criticality of $(f\circ A,g)$};

\draw[thick] (-3.2+9,1) -- (2.6+9,1)-- (2.6+9,-1)--(-3.2+9,-1)--(-3.2+9,1);
\draw[purple, thick] 
(9-0.6,1)-- (9-0.6,-1);
\node at (-1.9+9,0) {critical values};
\node at (1+9,0) {\color{purple}critical points};
\node at (-0.2+9,2.1-0.6) {\color{blue}handlebody decompositions};
\draw[-,thick, blue] (-3.2+9,1) to[out=60, in=120] (2.6+9,1);
\node at (0+8.3,3-0.2) {Morse \& Criticality of $(g^\polar\circ A^*,f^\polar)$ or $(g^\circ,(f\circ A)^\circ)$};

\draw[<->,thick, black,dashed] (1.9,1) to[out=15, in=165] (7.1,1);
\draw[<->,thick, blue,dotted] (1,2.3) to[out=5, in=175] (8,2.3);

\draw[<->,thick,purple,dashed] (-0.6,-0.92) to[out=-10, in=-170] (9.6,-0.92);
\end{tikzpicture}    \caption{Description of the invariant of Morse theory and the entire critical data under duality: For a RC function $f\circ A/g$, the critical points and critical values of $f\circ A/g$ are one-to-one correspondence to that of $g^\circ\circ A^*/f^\circ$ (and $g^\circ/(f\circ A)^\circ$), and moreover, their  handlebody decompositions are isomorphism. }
    \label{fig:explain-critical}
\end{figure}

Moreover, Theorem \ref{th:fAgmain:polar} also reveals  that a refined set of different types of critical points and critical values is preserved under duality, which we will show in Figure \ref{fig:value}. 

\begin{figure}[H]
    \centering

\begin{tikzpicture}
\draw (2,0)--(-2,0);
\draw (2+6,0)--(-2+6,0);
    \draw (2,0) 
arc[start angle=0, end angle=180, x radius=2cm, y radius=4cm];    
      \draw[thick] 
      (1.8,0) 
arc[start angle=0, end angle=180, x radius=1.8cm, y radius=3cm]; 
\draw[red, thick] (1.6,0) arc[start angle=0, end angle=180, x radius=1.6cm, y radius=1.9cm]; 
\draw[red, thick] (0+6+1.6,0) arc[start angle=0, end angle=180, x radius=1.6cm, y radius=1.9cm]; 
    \draw (2+6,0) 
arc[start angle=0, end angle=180, x radius=2cm, y radius=4cm];    
\draw[thick] 
      (1.8+6,0) 
arc[start angle=0, end angle=180, x radius=1.8cm, y radius=3cm]; 
\node at (0,2.3) {Morse};
\node at (0,1.19) {\color{red}min-max};
\node at (0,3.3) {Lagrange};\node at (-0.5,4.69) {
Various types of critical pairs};
\node at (-0.5,4.19) {of {\large $f\circ A/g$}};

\node at (0+6,2.3) {Morse};
\node at (0+6,1.19) {\color{red}min-max};
\node at (0+6,3.3) {Lagrange};\node at (0+6+0.5,4.69) {Various types of critical pairs};
\node at (0+6+0.5,4.19) {of {\large $g^\polar\circ A^* /f^\polar $} (or $g^\polar/(f\circ A)^\polar$)};
\draw[<->, black] (1,3.5) to[out=25, in=155] (5,3.5);
\draw[<->,thick, black] (1,2) to[out=25, in=155] (5,2);
\draw[<->,thick, red] (1,1) to[out=25, in=155] (5,1);
\end{tikzpicture}    \caption{Description of the invariant of the refined critical pairs (critical points and critical values) under duality:  For a RC function $f\circ A/g$, the Lagrange (resp., Morse, min-max) critical points and critical values are preserved under polar duality. Moreover, the indexes and critical groups of critical points, 
as well as the multiplicities of critical values are also invariant. 
 }
    \label{fig:value}
\end{figure}

Theorems \ref{th:fgmain:polar} and  \ref{th:fAgmain:polar} have broad applications in various fields, including manifolds, polyhedrons, hypergraph $p$-Laplacians, zonotopes, nonlinear eigenvalue problems and bifurcation problems.  We list some of them 
as follows, for more results, see Section \ref{sec:app}.
\begin{itemize}
\item 
Exact and coexact Cheeger constants on polyhedral/smooth manifolds

We establish various equalities linking exact and coexact Cheeger constants on polyhedral manifolds and smooth manifolds. 
We refer to Section \ref{sec:Cheeger-dual} for details.

\item A geometric interpretation of Lagrange criticality

We introduce the contact data of convex bodies to establish a geometric reformulation of the Lagrange critical points and values for RC functions. See Section \ref{sec:geometric-inter}.
\item Hypergraph 1-Laplacian and $\infty$-Laplacians as new characterizations of zonotopes

We present a new characterization of zonotope: a convex body is a zonotope, if and only if the subdifferential of its support function equals the 1-Laplacian of some generalized hypergraph, if and only if the subdifferential of its Minkowski functional equals the $\infty$-Laplacian of a certain generalized hypergraph. 
A remarkable fact is that we present the necessary and sufficient conditions for a general convex body to be a zonotope. While, the other known equivalent characterizations of zonotopes, are only limited to the case of convex polytopes. 

In addition, we give an equivalent characterization of Cheeger's constant using zonotope. We prove that the contact data between a hypergraphic zonotope and a hypercube is equivalent to the nonzero eigenvalues of hypergraph 1-Laplacian as well as that of the dual hypergraph $\infty$-Laplacian. 
See Section \ref{sec:zonotope-1&infty}. 

\item Zonotope contact representation theorem 

We prove that the contact data of a centrally symmetric convex polyhedron and a zonotope can always be expressed in terms of the contact data between a zonotope and a hypercube. See Section \ref{sec:zonotope-representation}.


\item Nonlinear eigenproblems and bifurcation problems

Many nonlinear eigenvalue problems and bifurcation problems are indeed derived from a variational problem of the ratio form $f/g$, that is, the Lagrange critical data of a RC function, so that our results in this paper can be applied directly. We show that 
some useful nonlinear eigenproblems and bifurcation problems are equivalent to their dual versions. 
We also show that the relaxed Dinkelbach scheme is equivalent to the nonlinear power iteration for solving extreme eigenvalues, see Section \ref{sec:eigen&bifurcation}.
\item A critical duality theory for DC functions

We establish a series of duality results on critical point theory and Morse theory for DC functions, including the homotopy type of sublevel sets, the Morse critical points and their Rothe critical groups,  Lusternik-Schnirelman min-max critical values, Poincare polynomials, as well as the structure of handlebody decompositions, all of which are proved to be preserved under Fenchel dual.
This is actually the first critical duality theory of DC functions that does not depend on the DC decomposition, which answers a question left open from the works of Toland on DC functions and the works of Le Thi and Pham Dinh on DC programming.  See Section \ref{sec:dc}. 

\end{itemize}


\subsection{Supplementary 
}

Chapter 6 in the monograph \cite{JMZ-book} studies 
the ratio of two Lov\'asz extensions. Since the Lov\'asz extension (or Choquet integral) of the submodular function must be one-homogeneous and convex, the critical data of the ratio of two submodular functions can be transformed to that of the ratio of two one-homogeneous convex functions, and then the critical duality theory in the present paper can be applied 
subsequently. 
Although not directly relevant to the present article, we mention that the hypergraph Morse theory based on Lov\'asz extension \cite{Jost/Zhang21a} has also received attention recently. 






\section{Theory and Framework}
\label{sec:ratio}

\subsection{Preliminaries 
}
\label{sec:polar/RC}


Let $X$ be a reflexive Banach space. 
In this section, we recall the basic properties of polarity transform of a convex function $\f\in \cvx_0(X)$  which is defined as \eqref{eq:original-A}, and provide further results on particular function classes which would be useful. 


We note that when $f\in\cvx_0(X)$ is further assumed to be one-homogeneous, then  $$f^\polar(x^*)=\inf\left\{c\in\R_+:\langle  x^*, x\rangle\le c\f( x),\forall x\in X \right\}   
$$ 
and $f^\polar\in \cvx_0(X^*)$. 
For a convex set $K\subset X$, we use $$K^\polar=\{x^*\in X^*:\langle x^*,x\rangle\le 1,\,\forall x\in K\}$$ to denote the polar dual of $K$. 
By Riesz's representation theorem, if $X$ is a Hilbert space (e.g. $X=\R^n$) over the real field, then $X^*$ can be identified with $X$ and the dual action pair $\langle\cdot,\cdot\rangle$ reduces to the inner product on $X$. 
Therefore, our setting includes the infinite-dimensional spaces and also spaces without inner product, which is a generalization of the setting of Rockafellar \cite{Rockafellar},  Artstein-Avidan and Milman \cite{Artstein10,Milman11}, as well as Artstein-Avidan and Rubinstein \cite{Artstein17}. 


\begin{definition}[subdifferential \cite{Rockafellar,Clarke,Bauschke}]\label{def:eigenproblem}

For any $f\in\cvx(X)$, 
we use $\partial f(x)$ to denote the subdifferential of $f$ at $x$, which is defined by
$$ \partial f(x)=\left\{\xi \in X^* \;|\; \langle \xi, v \rangle \leq f(x+v)-f(x)\,,\;\forall v \in X \right\}.$$
\end{definition}
The subdifferential of a convex function $f$ is a set-valued map $\partial f:X\rightrightarrows X^*$. Below, we present a lemma which will be used latter. Since such lemma is known for $X=\R^n$, we put its proof in Appendix under a general setting.  

\begin{lemma}\label{lem:normal-cone}
Given a convex set $K\subset X$ with $0$ lies in the relative interior of $K$, let $\mathrm{NC}_x(K):=\{x^*:\langle x^*,y-x\rangle\le 0,\forall y\in K\}$ denote the normal cone of $K$ at $x$. 
Then  $\mathrm{NC}_x(K)=\mathrm{cl}\,\mathrm{cone}(\partial f_K(x))$ where $f_K$ denotes the Minkowski functional of $K$.  
\end{lemma}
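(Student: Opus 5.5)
The plan is to prove the two inclusions $\mathrm{cl}\,\mathrm{cone}(\partial f_K(x))\subset \mathrm{NC}_x(K)$ and $\mathrm{NC}_x(K)\subset \mathrm{cl}\,\mathrm{cone}(\partial f_K(x))$ separately. Throughout, the relevant $x$ is a boundary point of $K$, so that $f_K(x)=1$; when $f_K(x)<1$ both sides reduce to the trivial cone. Two standard facts about the Minkowski functional will be used. First, $f_K$ is convex, positively one-homogeneous and nonnegative, so $f_K\in\cvx_0(X)$. Second, for one-homogeneous convex $f$,
$$\partial f(x)=\{\xi: \langle \xi,y\rangle\le f(y)\ \forall y,\ \langle\xi,x\rangle=f(x)\}.$$
I will also work with the sublevel set $\{f_K\le 1\}$, which lies between $K$ and $\mathrm{cl}\,K$; replacing $K$ by its closure does not change the normal cone, since both conditions are preserved by limits.

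For the easy inclusion, take $\xi\in\partial f_K(x)$ with $f_K(x)=1$. For every $y\in K$ we have $f_K(y)\le 1$, hence
$$\langle\xi,y-x\rangle\le f_K(y)-f_K(x)\le 0 .$$
So $\xi\in\mathrm{NC}_x(K)$. Since $\mathrm{NC}_x(K)$ is a weak$^*$-closed (hence norm-closed) convex cone, it contains $\mathrm{cl}\,\mathrm{cone}(\partial f_K(x))$.

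For the reverse inclusion, take $x^*\in\mathrm{NC}_x(K)$ with $x^*\neq0$, and set $c:=\langle x^*,x\rangle=\sup_{y\in K}\langle x^*,y\rangle$.
\begin{itemize}
\item If $c>0$, then $\xi:=x^*/c$ satisfies $\langle\xi,y\rangle\le1$ on $K$, hence $\langle\xi,y\rangle\le f_K(y)$ for all $y$ by homogeneity, while $\langle\xi,x\rangle=1=f_K(x)$. Thus $\xi\in\partial f_K(x)$ and $x^*\in\mathrm{cone}(\partial f_K(x))$.
\item If $c\le 0$, the condition $0\in\mathrm{ri}\,K$ forces $x^*$ to vanish on the linear span $L$ of $K$, because $x^*\le c\le 0$ on a relative neighborhood of $0$. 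These are exactly the directions with $f_K=+\infty$ off $\overline{\mathrm{aff}}\,K$. In that case $x^*+t\xi_0\in\partial f_K(x)$-cone for any $\xi_0\in\partial f_K(x)$ and $t>0$, because adding an annihilator of $L$ does not affect the subgradient inequality on $\dom f_K\subset L$. Letting $t\to0$ puts $x^*$ in the closure. This is why the statement needs a closure.
\end{itemize}

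The main obstacle is justifying that $\partial f_K(x)\ne\varnothing$ at a relative boundary point in infinite dimensions, and handling $\dom f_K$ when $K$ is not absorbing. For nonemptiness I will invoke Hahn--Banach separation of $x$ from the relative interior of $K$ within the closed span $L$, which uses $0\in\mathrm{ri}\,K$. The resulting functional is then extended to $X$, and reflexivity lets me identify the relevant closures. If the relative interior is meant in the algebraic sense, I may need $\mathrm{cl}$ to absorb this as well.
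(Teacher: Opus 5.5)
Your argument is correct for the intended case $f_K(x)=1$ and follows the paper's proof essentially step for step.
\begin{itemize}
\item The easy inclusion comes from $f_K\le 1$ on $K$ together with closedness of $\mathrm{NC}_x(K)$.
\item The reverse inclusion splits on the sign of $\langle x^*,x\rangle$. In the positive case you normalize $x^*/\langle x^*,x\rangle$. Otherwise $x^*$ annihilates $\mathrm{span}(K)$ and you approximate it by $x^*+t\xi_0$; this is the paper's $t_nv_n^*=x^*+v^*/n$.
\end{itemize}
Your Hahn--Banach argument that $\partial f_K(x)\neq\varnothing$ supplies a step the paper uses without comment.

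One correction: your aside that both sides are trivial when $f_K(x)<1$ is false. For the Euclidean unit ball and $0<|x|<1$, one has $\mathrm{NC}_x(K)=\{0\}$, while $\mathrm{cl}\,\mathrm{cone}(\partial f_K(x))=\{tx:t\ge 0\}$. So the lemma holds only for $x\in\partial K$, as the paper's proof assumes.

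Similarly, under an algebraic reading of ``relative interior'', no closure can rescue the identity. There $\partial f_K(x)$ can be empty while $0\in\mathrm{NC}_x(K)$; for example, take $K=\{|\phi|\le 1\}$ with $\phi$ a discontinuous linear functional.
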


\subsubsection{The function space $\cvx_0^p(X)$ 
}
We study the function space $\cvx_0^p(X)$ introduced in Section \ref{sec:main-result}. 
Here and in the following, we equivalently write $ \ker f$ and $f^{-1}(  0)$ to denote the set $\{  x : f(  x) =   0\}$, while we let $f^{-1}(  y)=\{  x:f(  x)=  y\}$ denote the preimage of $f$ at 
$y$. For any $f\in \cvx(X)$, we use $\dom f$ to denote the set $\{x\in X: f(x)<+\infty\}$. 
We simply use $\{F\le c\}$ to express the sublevel set $\{x:F(x)\le c\}$. 
\begin{defn}\label{def:CV_p}
For $p\geq 1$, let $\cvx_0^p(X)$ denote the collection of all  $\f:X\to \R\cup\{+\infty\}$ with the following properties:
\begin{enumerate}
\item $f$ is convex and nonnegative, i.e., $f(tx+(1-t)y)\le tf(x)+(1-t)f(y)$, $\forall x,y\in X$, $\forall t\in[0,1]$, and $f(  x) \geq 0$ for all $  x \in X$;
\item $f$ is positively $p$-homogeneous, i.e., $f(\lambda x)=\lambda^pf(x)$, $\forall x\in X$, $\forall \lambda>0$; 
\item $f$ is lower semi continuous.
\end{enumerate}
\end{defn}

Below, we list some useful properties of a function $f\in \cvx_0^p(X)$ that we will apply.  
\begin{prop}\label{pro:CV_p-basic}
For any $f\in \cvx_0^p(X)$ where $p\ge1$, we have:
\begin{enumerate}[(i)]
\item $\ker f$ is a closed convex cone of $X$.
\item $f(x)\ge f(  x+  z)$ for any $  z\in  \ker f$ and $  x\in X$.    
\item $f^{\frac1p}\in \cvx_0^1(X)$.
\item $f^\circ \in \cvx_0^p(X^*)$, $\ker f^\polar=(\dom f)^\polar$ and $\overline{\dom f^\polar}=(\ker f)^\polar$.
\item $f^{\polar\polar}=f$ and $\partial f(x)\subset (\ker f)^\polar$.
\end{enumerate}
\end{prop}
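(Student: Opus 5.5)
The plan is to reduce everything to the one-homogeneous case via $h:=f^{1/p}$, and then use standard gauge/support-function duality in the reflexive space $X$, with Hahn--Banach separation as the only nontrivial tool.

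\textbf{Items (i)--(iii).} For (i), $\ker f=\{f\le 0\}$ is a sublevel set of a convex lsc function, hence closed and convex. It is a cone because $f(\lambda x)=\lambda^p f(x)$.

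For (ii), let $z\in\ker f$ and $t\in(0,1)$, and write $x+z=t\cdot\frac{x}{t}+(1-t)\cdot\frac{z}{1-t}$. Since $\frac{z}{1-t}\in\ker f$ by (i), convexity and homogeneity give
\[
f(x+z)\le t\,f(x/t)+(1-t)\cdot 0=t^{1-p}f(x).
\]
Letting $t\to1^-$ yields $f(x+z)\le f(x)$.

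For (iii), $h=f^{1/p}$ is nonnegative, lsc and positively one-homogeneous. Moreover $h$ is exactly the Minkowski functional of the closed convex set $K=\{f\le1\}\ni 0$, since $h(x)\le s$ iff $x\in sK$ for $s>0$. A Minkowski functional of a convex set is sublinear, hence convex, so $h\in\cvx_0^1(X)$.

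\textbf{Item (iv): homogeneity of $f^\polar$.} Fix $x^*$ and $x$ with $a:=\langle x^*,x\rangle>0$ and $b:=f(x)$. Replacing $x$ by $tx$ in \eqref{eq:original-A}, the constraint becomes $\sup_{t>0}(ta-ct^pb)\le1$.
\begin{itemize}
\item For $p>1$, the explicit maximization over $t$ shows the constraint is equivalent to $c\ge C_p\,a^p/b$, with a constant $C_p>0$ depending only on $p$.
\item For $p=1$, the constraint reduces to $c\,b\ge a$ directly.
\end{itemize}
Taking the supremum over $x$ gives
\[
f^\polar=C_p\,(h^\polar)^p .
\]
Here $h^\polar(x^*)=\sup\{\langle x^*,x\rangle: h(x)\le1\}$ is the support function of $K$, which lies in $\cvx_0^1(X^*)$. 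Hence $f^\polar\in\cvx_0^p(X^*)$.

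\textbf{Item (iv): kernel and domain.} Since $\ker f^\polar=\ker h^\polar$ and $\dom f^\polar=\dom h^\polar$, it suffices to treat $h$.
\begin{itemize}
\item $h^\polar(x^*)=0$ iff $\langle x^*,x\rangle\le0$ for all $x$ with $h(x)<\infty$. This is the polar of the cone $\dom h$; for a cone the "$\le 1$" and "$\le 0$" versions of the polar coincide. Hence $\ker f^\polar=(\dom f)^\polar$.
\item If $h^\polar(x^*)<\infty$, then $\langle x^*,z\rangle\le0$ for every $z\in\ker h$, since $\ker h\subset sK$ for all $s>0$. This gives $\dom h^\polar\subset(\ker h)^\polar$, and the right-hand side is weak$^*$ and norm closed.
\item For the reverse inclusion $(\ker h)^\polar\subset\overline{\dom h^\polar}$, argue by contradiction. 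If some $x_0^*\in(\ker h)^\polar$ lies outside $\overline{\dom h^\polar}$, separate them in the reflexive space $X^*$ by some $x\in X$. Then $\langle\cdot,x\rangle$ is bounded above by $\le 0$ on the cone $\dom h^\polar$ but positive at $x_0^*$. This forces $x\in(\dom h^\polar)^\polar=\ker h^{\polar\polar}=\ker h$, using (v) applied to $h$. That contradicts $x_0^*\in(\ker h)^\polar$.
\end{itemize}

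\textbf{Item (v).} First show $h^{\polar\polar}=h$, i.e. the bipolar theorem $K^{\polar\polar}=K$ for the closed convex set $K\ni0$. The inequality $h^{\polar\polar}\le h$ is immediate from the definition. For the reverse, if $x\notin sK$, Hahn--Banach separation of $x$ from the closed convex set $sK$ gives $x^*\in K^\polar$ with $\langle x^*,x\rangle>s$. This is where lower semicontinuity, i.e. closedness of $K$, is essential.

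Then transfer to $f$. One-homogeneous polarity satisfies $(\lambda h)^\polar=\lambda^{-1}h^\polar$, so
\[
f^{\polar\polar}=C_p\bigl((C_p^{1/p}h^\polar)^\polar\bigr)^p=C_p\,C_p^{-1}h^p=f.
\]
Finally, let $\xi\in\partial f(x)$ and take $v=z\in\ker f$ in the definition of the subdifferential. Using (ii),
\[
\langle\xi,z\rangle\le f(x+z)-f(x)\le0 ,
\]
so $\xi\in(\ker f)^\polar$.

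\textbf{Main obstacle.} I expect the separation arguments to be the delicate part: the bipolar identity and the closure statement $\overline{\dom f^\polar}=(\ker f)^\polar$. These are handled by Hahn--Banach in $X\times\R$, respectively in $X^*$, using reflexivity so that weak$^*$ and weak closures agree. The domain statement is the most delicate, because it needs the bipolar step applied to $h^\polar$ first.
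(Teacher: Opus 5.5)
Your proof is correct and follows essentially the same route as the paper's appendix argument: you reduce to $p=1$ via $f^{1/p}$ with the constant $\frac{(p-1)^{p-1}}{p^p}$, obtain $\ker f^\circ=(\dom f)^\circ$ and $\dom f^\circ\subset(\ker f)^\circ$ from the support-function formula, use Hahn--Banach separation for $f^{\circ\circ}=f$, and then get $\overline{\dom f^\circ}=(\ker f)^\circ$ by applying the kernel identity to $f^\circ$ together with bipolarity. The only local differences are in (ii) and (iii): your convex-combination trick $x+z=t\frac{x}{t}+(1-t)\frac{z}{1-t}$ and your identification of $f^{1/p}$ with the Minkowski functional of $\{f\le 1\}$ are shorter than the paper's contradiction argument and its case-by-case convexity check.
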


\begin{prop}\label{pro:CV_1&Cv_p}
For any $f\in \cvx_0^1(X)$, we 
have:
\begin{enumerate}[(i)]
\item $f^p\in \cvx_0^p(X)$, $\partial f^p(x)=pf^{p-1}(x)\partial f(x)$ whenever $x\not\in\ker f$, and $(f^p)^\polar(x^*)=\frac{(p-1)^{p-1}}{p^{p}}(f^\polar(x^*))^p$ 
\item for any $x^*\in X^*$, 
$$f^\polar(x^*)=\sup_{f(x)\le 1}\langle x^*,x\rangle
$$
\item $\partial f(0)=\{x^*\in X^*:f^\polar(x^*)\le 1\}$
\end{enumerate}
\end{prop}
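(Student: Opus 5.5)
We prove the three items in the order (ii), (i), (iii), since (iii) uses (ii). Throughout, $f\in\cvx_0^1(X)$ and $f^\polar$ is the one-homogeneous form of the polar recorded above.

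\emph{Item (ii).} The only issue is the ray through a point of $\ker f$, where $c\,f(x)=0$ for every $c$.
\begin{itemize}
\item Set $s(x^*):=\sup_{f(x)\le1}\langle x^*,x\rangle$.
\item Suppose $c\ge 0$ satisfies $\langle x^*,x\rangle\le c f(x)$ for all $x$. Restricting to $f(x)\le1$ gives $s(x^*)\le c$. Taking the infimum over admissible $c$ yields $s\le f^\polar$.
\item Conversely, assume $s(x^*)<\infty$. If $f(x)=0$, then $tx$ is feasible for every $t>0$, so finiteness of $s$ forces $\langle x^*,x\rangle\le 0=s(x^*)f(x)$.
\item If $0<f(x)<\infty$, apply the definition of $s$ to $x/f(x)$ and use one-homogeneity to get $\langle x^*,x\rangle\le s(x^*)f(x)$.
\item If $f(x)=+\infty$, the inequality is trivial, with the convention $0\cdot\infty=\infty$ when $s=0$.
\item Hence $c=s(x^*)$ is admissible, so $f^\polar\le s$.
\item If $s(x^*)=+\infty$, no finite $c$ is admissible, so both sides are infinite.
\end{itemize}

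\emph{Item (i).} Convexity, nonnegativity, lower semicontinuity and $p$-homogeneity of $f^p$ are immediate, because $t\mapsto t^p$ is convex, nondecreasing and continuous on $[0,\infty]$.
\begin{itemize}
\item \emph{Subdifferential.} At $x\notin\ker f$ with $f(x)<\infty$, apply the chain rule for a convex nondecreasing outer function composed with a convex inner one: $\partial(\phi\circ f)(x)=\phi'(f(x))\,\partial f(x)$ when $\phi$ is differentiable with $\phi'(f(x))>0$. I would prove it by hand.
\item The inclusion ``$\supseteq$'' follows by multiplying the subgradient inequality of $f$ by $pf^{p-1}(x)$ and using $\phi(b)\ge\phi(a)+\phi'(a)(b-a)$.
\item For ``$\subseteq$'', take $\xi\in\partial f^p(x)$ and compare directional derivatives: $(f^p)'(x;v)=pf(x)^{p-1}f'(x;v)$, since $\phi$ is differentiable at $f(x)>0$.
\item Then use that $\partial f(x)$ is exactly the set of functionals dominated by $f'(x;\cdot)$, by Hahn--Banach and lower semicontinuity of the closure of $f'(x;\cdot)$. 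Restricting to $v$ with $x+v\in\dom f$ is harmless, because $f'(x;\cdot)=+\infty$ elsewhere.
\item \emph{Polar formula.} By \eqref{eq:original-A}, $(f^p)^\polar(x^*)$ is the least $c$ with $\langle x^*,x\rangle\le c f(x)^p+1$ for all $x$.
\item Scale $x=t u$ with $f(u)=1$, $t\ge0$. By (ii), $\sup_{f(u)\le1}\langle x^*,u\rangle=f^\polar(x^*)=:a$. Points with $f(u)=0$ contribute only when $a=\infty$, which is handled separately.
\item The condition then reduces to $ta\le ct^p+1$ for all $t\ge0$.
\item For $p>1$ and $a>0$, the minimal $c$ is $\sup_{t>0}(ta-1)/t^p$. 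The optimum is at $t=\frac{p}{(p-1)a}$, which gives $c=\frac{(p-1)^{p-1}}{p^p}a^p$.
\item For $p=1$ we read $0^0=1$, and the formula returns $f^\polar$. The degenerate cases $a=0$ and $a=\infty$ are checked directly.
\end{itemize}

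\emph{Item (iii).} This is a direct consequence of (ii).
\begin{itemize}
\item $x^*\in\partial f(0)$ means $\langle x^*,x\rangle\le f(x)$ for all $x$, since $f(0)=0$.
\item By homogeneity this is equivalent to $\langle x^*,x\rangle\le1$ whenever $f(x)\le1$. The argument is the same as in (ii), with kernel directions forcing $\langle x^*,x\rangle\le0$.
\item By (ii), that is exactly $f^\polar(x^*)\le1$.
\end{itemize}

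The main obstacle is the ``$\subseteq$'' half of the subdifferential identity in (i) when $f$ may take the value $+\infty$. There the directional-derivative/Hahn--Banach step needs care near the boundary of $\dom f$. If that becomes messy, I would instead invoke the standard composition rule for convex nondecreasing outer functions, e.g.\ from \cite{Bauschke}, whose qualification condition holds because $f(x)$ lies in the interior of the domain of $t\mapsto t^p$ and the derivative there is positive.
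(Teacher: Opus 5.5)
Your proposal is correct. For (ii), (iii) and the polar formula in (i) you follow essentially the paper's route. Kernel directions force $\langle x^*,x\rangle\le 0$, other points are rescaled onto $\{f=1\}$, and $(f^p)^\polar$ comes from a one-variable optimization along rays. The paper does this pointwise in $x$ through its item (P1), minimizing $t\mapsto ct^pf^p(x)+1-t\langle x^*,x\rangle$. You instead first take the supremum over directions using (ii) and then optimize in $t$. This is the same computation in a different order.

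The genuine difference is the inclusion $\partial f^p(x)\subseteq pf^{p-1}(x)\,\partial f(x)$.
\begin{itemize}
\item The paper argues algebraically through homogeneity. For $x^*\in\partial f^p(x)$, Euler's identity gives $\langle x^*,x\rangle=pf^p(x)$, so the subgradient inequality becomes $\langle x^*,y\rangle\le f^p(y)+(p-1)f^p(x)$. Substituting $y\mapsto f(x)y/f(y)$ yields $\langle x^*,y\rangle\le pf^{p-1}(x)f(y)$. Together with Euler's identity this says $x^*/(pf^{p-1}(x))\in\partial f(x)$, with no limits involved.
\item Your directional-derivative chain rule does not use homogeneity at all and gives both inclusions at once. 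The price is a limit argument, and your route applies to any convex $f\ge 0$.
\end{itemize}

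Two corrections to how you set up that step.
\begin{itemize}
\item Hahn--Banach is not needed. The equivalence $\xi\in\partial F(x)\iff\langle\xi,v\rangle\le F'(x;v)$ for all $v$ follows directly from the monotonicity of difference quotients. This holds in both directions and for every $v$, without restricting to $x+v\in\mathrm{dom}\,f$.
\item The obstacle you anticipate near the boundary of $\mathrm{dom}\,f$ does not arise. The identity $(f^p)'(x;v)=pf(x)^{p-1}f'(x;v)$ holds for every $v$ whenever $0<f(x)<\infty$, including the values $\pm\infty$.
  \begin{itemize}
  \item If $f'(x;v)<+\infty$, then $f(x+tv)<\infty$ for small $t>0$. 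Convexity gives $\limsup_{t\to0^+}f(x+tv)\le f(x)$ and lower semicontinuity gives the reverse, so $f(x+tv)\to f(x)$. The mean value theorem for $s\mapsto s^p$ near $f(x)>0$ then yields the identity. This also covers $f'(x;v)=-\infty$, which does occur at boundary points of the domain; there both subdifferentials are empty.
  \item If $f'(x;v)=+\infty$, then $f(x+tv)=+\infty$ for all $t>0$, and both sides are $+\infty$.
  \end{itemize}
\end{itemize}
Incidentally, your remark that $f'(x;\cdot)=+\infty$ whenever $x+v\notin\mathrm{dom}\,f$ is not right: $f'(x;v)$ is finite as soon as $x+tv\in\mathrm{dom}\,f$ for some $t>0$. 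The argument never needs that claim, so it can simply be dropped.
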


\begin{prop}\label{prop:f-continuous-equiva}
Given $f\in\cvx_0^p(X)$, then, the following three descriptions are equivalent:
\begin{enumerate}[(i)]
\item $f$ is continuous at $0$,
\item $f$ is continuous at some point $x_0\in X$,
\item  $f$ is continuous on $X$.
\end{enumerate}
\end{prop}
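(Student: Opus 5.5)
The implications (iii)$\Rightarrow$(i)$\Rightarrow$(ii) are trivial, so the only work is to show (ii)$\Rightarrow$(iii). I would route this through (i): first show that continuity at some $x_0$ forces boundedness of $f$ near $0$, then that boundedness near $0$ gives continuity everywhere. Throughout I use the classical fact that a convex function which is bounded above on a neighborhood of one point is continuous on the interior of its domain. Here I also need the domain to be all of $X$, and homogeneity is what supplies this.

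\textbf{Step 1: continuity at $x_0$ gives local boundedness near $0$.} Continuity at $x_0$ yields $r>0$ with $f\le M$ on the ball $B(x_0,r)$; in particular $f(x_0)<\infty$. For $\|z\|<r$ I write $z=\tfrac12(x_0+z)+\tfrac12(-x_0+z)$, which by convexity gives $f(z)\le \tfrac12 M+\tfrac12 f(z-x_0)$. The term $f(z-x_0)$ is the obstacle, since nothing so far says $f(-x_0)<\infty$.

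\textbf{Main obstacle: controlling $f(z-x_0)$.} Instead I would use the cone structure. By $p$-homogeneity, $f\le t^pM$ on $B(tx_0,tr)$ for every $t>0$. Hence $\dom f$ contains the open cone $C=\bigcup_{t>0}B(tx_0,tr)$, and $f$ is bounded on each bounded piece of $C$. I also use $0\le f$ and $f(0)=0$. My first attempt is to get an interior point at the origin from convexity plus homogeneity, for instance via $x_0$ and a point near $-x_0$ in $\dom f$. This does not work in general: a homogeneous convex function can be finite only on a half-space cone, for example the indicator of $\{x_1\ge0\}$ times $|x|$.

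If that is a genuine counterexample, the statement must be read with $f$ real-valued, as in "$f$ is continuous on $X$". I expect the intended reading is that continuity at $x_0$ refers to a finite-valued $f$, or that $\dom f=X$ is implicit. Under that reading the proof is straightforward. $f$ is bounded above on $B(x_0,r)$, so by the classical result $f$ is continuous on $\operatorname{int}\dom f=X$. This gives (iii) directly, and (i) as well.

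\textbf{Converting (i) to (iii) when $f$ may take $+\infty$.} If $f$ is continuous at $0$ in the sense of being finite near $0$, then $f\le1$ on some ball $B(0,\delta)$. Homogeneity gives $f(x)\le (\|x\|/\delta)^p$ for all $x$, so $f$ is finite everywhere and bounded on bounded sets. The classical result then gives continuity on $X$. I would therefore organise the proof as (ii)$\Rightarrow$(iii) through the classical lemma, with the domain issue settled by the finiteness hypothesis implicit in continuity. I expect the main care point to be making that convention explicit, and possibly supplying the classical lemma itself in the Banach setting, via the standard argument that convexity turns an upper bound on a ball into a two-sided Lipschitz-type estimate.
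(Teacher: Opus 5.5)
Your objection is correct, and you should state it outright rather than hedge. In $X=\R^n$, take $f=\|\cdot\|^p+\iota_{\{x_1\ge 0\}}$ (the sum, not a product; even $\iota_{\{x_1\ge 0\}}$ alone works). It is convex, lower semicontinuous, nonnegative and positively $p$-homogeneous with $f(0)=0$, so $f\in\cvx_0^p(X)$. It agrees with $\|\cdot\|^p$ on a ball around $e_1$, so it is continuous at $x_0=e_1$. Yet $f(-\varepsilon e_1)=+\infty$ for every $\varepsilon>0$, so $f$ is not continuous at $0$. In a general $X$, a half-space $\{\ell\ge 0\}$ with $0\ne\ell\in X^*$ does the same. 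Hence (ii)$\Rightarrow$(i) fails for extended-valued $f$.

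The paper's proof breaks exactly there. Its bound $f(y)\le f(y+x_0)+f(-x_0)\le f(x_0)+1+f(-x_0)$ tacitly assumes $f(-x_0)<+\infty$.

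The repair needs less than your Step~1 suggests. Your splitting $z=\tfrac12(x_0+z)+\tfrac12(z-x_0)$ puts the variable in both terms. Instead, $\tfrac{y}{2}=\tfrac12(y+x_0)+\tfrac12(-x_0)$ gives $f(y/2)\le\tfrac12\bigl(M+f(-x_0)\bigr)$ for $\|y\|<r$. So the sharp extra hypothesis is just $-x_0\in\dom f$, for every $p\ge 1$, after which your final step applies.

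That hypothesis is also automatic for $f\in\cvx_{0,+}^p(X)$. A point where $f$ is finite and continuous lies in $\mathrm{int}\,\dom f$, and a dense convex set with nonempty interior is all of $X$. The paper invokes this proposition only to identify the two definitions of the class of functions that are continuous and positive-definite together with their polars, whose members lie in $\cvx_{0,+}^p(X)$. There only (i)$\Leftrightarrow$(iii) is needed, and you prove that with no finiteness assumption at all.

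Granting the corrected hypothesis, your route differs from the paper's. The paper works with $p=1$ (implicitly; general $p$ would go through $f^{1/p}$). It uses subadditivity of a one-homogeneous convex function to get $|f(x)-f(x_0)|\le\max\{f(x-x_0),f(x_0-x)\}$. Continuity at $0$ then transfers to every point by a one-line, self-contained estimate.

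You use homogeneity only to show that boundedness near $0$ forces $\dom f=X$ with $f\le C\|\cdot\|^p$. You then invoke the classical lemma that a convex function bounded above near one point is continuous on $\mathrm{int}\,\dom f$. This handles all $p\ge 1$ uniformly and shows exactly where finiteness is needed. The cost is citing, or reproving in the Banach setting, that standard lemma.
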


We then have the following basic properties for special RC functions:
\begin{corollary}\label{cor:RC-property}
For any $f,g\in \cvx_0^p(X)$, there hold $f^{\frac1p},g^{\frac1p}\in \cvx_0^1(X)$, and $$\frac{g^\polar}{f^\polar}=\Big(\frac{(g^{\frac1p})^\polar}{(f^{\frac1p})^\polar}\Big)^p.$$     
\end{corollary}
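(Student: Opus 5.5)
The plan is to reduce everything to the one-homogeneous case via Proposition \ref{pro:CV_p-basic}(iii) and Proposition \ref{pro:CV_1&Cv_p}(i). First, by Proposition \ref{pro:CV_p-basic}(iii), $\tilde f:=f^{\frac1p}$ and $\tilde g:=g^{\frac1p}$ belong to $\cvx_0^1(X)$; this is the first claim of the corollary. Conversely, since $\tilde f\in\cvx_0^1(X)$ and $\tilde f^p=f$, Proposition \ref{pro:CV_1&Cv_p}(i) applied to $\tilde f$ gives
\[
f^\polar=(\tilde f^{\,p})^\polar=\frac{(p-1)^{p-1}}{p^{p}}\big((f^{\frac1p})^\polar\big)^p ,
\]
and in the same way $g^\polar=\frac{(p-1)^{p-1}}{p^{p}}\big((g^{\frac1p})^\polar\big)^p$.

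Next, we take the quotient. The constant $\frac{(p-1)^{p-1}}{p^p}$ is the same positive number in both formulas, with the convention $0^0=1$ when $p=1$. It therefore cancels, and
\[
\frac{g^\polar}{f^\polar}=\frac{\big((g^{\frac1p})^\polar\big)^p}{\big((f^{\frac1p})^\polar\big)^p}=\Big(\frac{(g^{\frac1p})^\polar}{(f^{\frac1p})^\polar}\Big)^p .
\]
This is understood pointwise wherever the ratios are defined, that is, away from the points where numerator and denominator are both $0$ or both $+\infty$. Since $t\mapsto t^p$ is a bijection of $[0,+\infty]$ fixing $0$ and $+\infty$, the two sides are defined, or undefined, at exactly the same points. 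So the identity also holds in the extended sense, e.g. with value $+\infty$ when the denominator is $0$ and the numerator is positive.

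The only genuine content is the formula for $(f^p)^\polar$ in Proposition \ref{pro:CV_1&Cv_p}(i), which we take as given. The main point needing care is therefore just the bookkeeping just described: the constant depends only on $p$, and the conventions for $0$ and $+\infty$ are compatible with taking $p$-th powers. Nothing beyond the cited propositions is required.
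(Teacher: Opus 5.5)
Your proposal is correct and matches the argument the paper intends (the corollary is stated there without a separate proof). Proposition~\ref{pro:CV_p-basic}(iii) gives $f^{1/p},g^{1/p}\in\cvx_0^1(X)$; applying Proposition~\ref{pro:CV_1&Cv_p}(i) to $f^{1/p}$ and $g^{1/p}$ gives $f^\polar=\frac{(p-1)^{p-1}}{p^p}\big((f^{1/p})^\polar\big)^p$ and likewise for $g$, and the common positive constant cancels in the quotient. Your remark on the $0$ and $+\infty$ conventions is harmless bookkeeping that the paper leaves implicit.
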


We then characterize some subclasses of $\cvx_0^p(X)$.  
Let \begin{equation}\label{eq:defn-cvx0+}
\cvx_{0,+}^p(X)=\{f\in \cvx_0^p(X): f(x)>0,\forall x\ne0,\,\overline{\dom f}=X\},    
\end{equation}
and let 
$$\cvp_c^p(X)=\{f\in \cvx_{0,+}^p(X):\text{ both }f\text{ and }f^\polar\text{ are continuous at }0\} $$
which is equivalent to the form introduced in 
\eqref{eq:defn-cvxc}, due to Proposition \ref{prop:f-continuous-equiva}.

\begin{prop}\label{prop:character:CV_c^1}We have the following characterization of $\cvp_c^p(X)$:
$$\cvp_c^p(X)=\big\{f\in \cvx_{0,+}^p(X):\exists C_2>C_1>0\text{ s.t. }C_1\|x\|^p\le f(x)\le C_2\|x\|^p,\forall x\in X\big\} $$ 
\end{prop}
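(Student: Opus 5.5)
The proof is two inclusions, and both go through the reduction $h:=f^{1/p}\in\cvx_0^1(X)$ from Proposition \ref{pro:CV_p-basic}(iii). This is harmless for the right-hand set, since $C_1\|x\|^p\le f\le C_2\|x\|^p$ is equivalent to $C_1^{1/p}\|x\|\le h\le C_2^{1/p}\|x\|$. For the left-hand set, Proposition \ref{pro:CV_1&Cv_p}(i) gives $f^\polar=c_p\,(h^\polar)^p$ with $c_p=\frac{(p-1)^{p-1}}{p^p}>0$ (reading $0^0=1$ when $p=1$). Hence $f^\polar$ is continuous and positive-definite if and only if $h^\polar$ is, and $f\in\cvp_c^p(X)$ if and only if $h\in\cvp_c^1(X)$. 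So it suffices to treat $p=1$. The key tool throughout is the formula $h^\polar(x^*)=\sup_{h(x)\le1}\langle x^*,x\rangle$ from Proposition \ref{pro:CV_1&Cv_p}(ii).

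\emph{($\supseteq$).} Assume $C_1\|x\|\le h(x)\le C_2\|x\|$. First, $h$ is finite everywhere and bounded near $0$, hence continuous at $0$; it is also positive-definite, and $\dom h=X$. Next, the sublevel set satisfies
$$\{h\le1\}\subset\{\|x\|\le 1/C_1\},\qquad \{h\le1\}\supset\{\|x\|\le 1/C_2\}.$$
Taking suprema in Proposition \ref{pro:CV_1&Cv_p}(ii) over these sets gives
$$\|x^*\|/C_2\le h^\polar(x^*)\le \|x^*\|/C_1 .$$
Thus $h^\polar$ is finite, continuous at $0$ and positive-definite, so $h\in\cvp_c^1(X)$ by Proposition \ref{prop:f-continuous-equiva}.

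\emph{($\subseteq$).} Assume $h\in\cvp_c^1(X)$. For the upper bound, continuity of $h$ at $0$ gives $\delta>0$ with $h\le1$ on the ball $\{\|x\|\le\delta\}$. By homogeneity, $h(x)\le\|x\|/\delta$, so $C_2=1/\delta$ works. For the lower bound, apply the same argument to $h^\polar$, which is continuous at $0$ on $X^*$: this gives $h^\polar(x^*)\le\|x^*\|/\delta'$. Now use $h^{\polar\polar}=h$ (Proposition \ref{pro:CV_p-basic}(v)) together with (ii) applied to $h^\polar$ on the reflexive space $X^*$, whose dual is $X$:
$$h(x)=\sup_{h^\polar(x^*)\le1}\langle x^*,x\rangle\ \ge\ \sup_{\|x^*\|\le\delta'}\langle x^*,x\rangle=\delta'\|x\|,$$
where the last equality is Hahn--Banach. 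So $C_1=\delta'$ works, and shrinking $C_1$ if necessary ensures $C_2>C_1$.

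The main obstacle is the lower bound $h\ge C_1\|x\|$. Positive-definiteness of $h$ alone cannot give it in infinite dimensions, because the unit sphere is not compact. The argument therefore has to pass through continuity of the polar and the biduality $h^{\polar\polar}=h$, using reflexivity to identify $X^{**}=X$. I would also check that Proposition \ref{pro:CV_1&Cv_p}(ii) applies verbatim to $h^\polar\in\cvx_0^1(X^*)$, which it should by symmetry.
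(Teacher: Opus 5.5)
Your proof is correct and follows essentially the same route as the paper. You reduce to $p=1$ via $h=f^{1/p}$, a step you justify more carefully than the paper, which only asserts it. You prove ($\supseteq$) by comparing $\{h\le1\}$ with norm balls, and you get the upper bound in ($\subseteq$) from continuity at $0$ plus homogeneity.

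The only variation is the lower bound. You obtain it directly by dualizing $h^{\circ}\le\|\cdot\|_*/\delta'$, which uses only the easy inequality $h\ge h^{\circ\circ}$ rather than full biduality. The paper instead argues by contradiction with norming functionals $x_n^*\to0$ satisfying $h^{\circ}(x_n^*)\ge1$; this is the same Hahn--Banach mechanism, unwound.
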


\begin{corollary}\label{cor:function-spaces}
Given a reflexive space $X\ne0$, we have: 
$$f\in \cvx_0^p(X)\Longleftrightarrow f^\polar\in \cvx_0^p(X^*)$$
$$f\in \cvx_{0,+}^p(X)\Longleftrightarrow f^\polar\in \cvx_{0,+}^p(X^*)$$
$$f\in \cvp_c^p(X)\Longleftrightarrow f^\polar\in\cvp_c^p(X^*)$$
\end{corollary}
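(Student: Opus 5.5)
The three equivalences are handled one at a time, each as an "if and only if". The only tools are the earlier results: Proposition \ref{pro:CV_p-basic}(iv),(v), Proposition \ref{prop:character:CV_c^1}, and the involution $f^{\polar\polar}=f$. The involution lets us prove only the forward implication in each line, because applying it to $f^\polar\in\cvx_0^p(X^*)$ and using reflexivity ($X^{**}=X$) gives the converse.

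\emph{First equivalence.} Proposition \ref{pro:CV_p-basic}(iv) already states $f^\polar\in\cvx_0^p(X^*)$ whenever $f\in\cvx_0^p(X)$. Applied on the reflexive space $X^*$, together with $f^{\polar\polar}=f$, it yields the reverse implication.

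\emph{Second equivalence.} Here we translate the two extra conditions defining $\cvx_{0,+}^p$ by means of Proposition \ref{pro:CV_p-basic}(iv).
\begin{itemize}
\item Positive-definiteness of $f$ means $\ker f=\{0\}$. This gives $(\ker f)^\polar=X^*$, hence $\overline{\dom f^\polar}=X^*$.
\item Density $\overline{\dom f}=X$ gives $\ker f^\polar=(\dom f)^\polar=(\overline{\dom f})^\polar=X^\polar=\{0\}$. Here we use that the polar of a set equals the polar of its closure, and that the polar of the whole space is $\{0\}$.
\end{itemize}
So $f^\polar$ is positive-definite with dense domain. The converse again follows from the involution, now applying (iv) to $f^\polar$ on $X^*$.

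\emph{Third equivalence.} We use the characterization in Proposition \ref{prop:character:CV_c^1}. Assume $C_1\|x\|^p\le f(x)\le C_2\|x\|^p$. The polarity transform is order-reversing: from \eqref{eq:original-A}, $f\le h$ implies $f^\polar\ge h^\polar$. It therefore suffices to compute $(c\|\cdot\|^p)^\polar$. Write $c\|x\|^p=(c^{1/p}\|x\|)^p$ and apply Proposition \ref{pro:CV_1&Cv_p}(i),(ii):
\[
(c^{1/p}\|\cdot\|)^\polar=c^{-1/p}\|\cdot\|_*,
\]
so $(c\|\cdot\|^p)^\polar=\kappa_p\, c^{-1}\|\cdot\|_*^p$ with $\kappa_p=\frac{(p-1)^{p-1}}{p^p}$. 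This gives two-sided bounds
\[
\kappa_p C_2^{-1}\|x^*\|_*^p\le f^\polar(x^*)\le \kappa_p C_1^{-1}\|x^*\|_*^p .
\]
Combined with the second equivalence, $f^\polar\in\cvx_{0,+}^p(X^*)$, and Proposition \ref{prop:character:CV_c^1} applied on $X^*$ then gives $f^\polar\in\cvp_c^p(X^*)$. The converse is by the involution.

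\emph{Main obstacle.} The most delicate point is the case $p=1$. There the formula in Proposition \ref{pro:CV_1&Cv_p}(i) has $\kappa_1=1$ under the convention $0^0=1$, so we instead verify $(c\|\cdot\|)^\polar=c^{-1}\|\cdot\|_*$ directly from (ii). A further check is the identity $(\dom f)^\polar=(\overline{\dom f})^\polar$. It holds because $x\mapsto\langle x^*,x\rangle$ is continuous, so the defining inequality of the polar passes to limits.
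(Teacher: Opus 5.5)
Your proposal is correct and follows the paper's own route. The first line comes from Proposition \ref{pro:CV_p-basic}(iv) plus biduality, the second from $\ker f^\polar=(\dom f)^\polar$ and $\overline{\dom f^\polar}=(\ker f)^\polar$, and the third from Proposition \ref{prop:character:CV_c^1}. For the third, you usefully make explicit the bounds $\kappa_pC_2^{-1}\|x^*\|_*^p\le f^\polar(x^*)\le \kappa_pC_1^{-1}\|x^*\|_*^p$, which the paper leaves unstated.

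One caveat, which the paper's proof shares: the converse of the first line rests on $f=f^{\polar\polar}$, and Proposition \ref{pro:CV_p-basic}(v) only grants this once $f\in\cvx_0^p(X)$ is already known. So that direction should be read as surjectivity (every $h\in\cvx_0^p(X^*)$ equals $(h^\polar)^\polar$ with $h^\polar\in\cvx_0^p(X)$), or else backed by biduality on all of $\cvx_0(X)$.
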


For the detailed proofs of the above auxiliary results, see Appendix.

Sometimes we consider quotient spaces since a closed subspace of $X$ may not have a closed linear complement in $X$. 
For any closed linear subspace $\Pi\subset X$, let $[x]=\Pi+x$ be the translation of $\Pi$ along $x$. 
Let $\pi:X\to X/\Pi$, $x\mapsto [x]:=\pi(x)$ be the quotient map, and let $\pi^*:(X/\Pi)^*\to X^*$ be the conjugate operator of $\pi$. Then, $\pi^*((X/\Pi)^*)= \Pi^\bot$, and $\pi^*$ induces the canonical isomorphism $(X/\Pi)^*\cong \Pi^\bot$. For convenience, such isomorphism will be still denoted by $\pi^*:(X/\Pi)^*\to \Pi^\bot$. Thus, we have 
$\langle \bar v,[x]\rangle=\langle \bar v,\pi(x)\rangle=\langle \pi^*(\bar v),x\rangle$ for any $[x]\in X/\Pi$ and $\bar v\in ( X/\Pi)^*$, and moreover, $\langle v,x\rangle=\langle (\pi^*)^{-1}v,[x]\rangle$ for any $v\in \Pi^\bot$ and $x\in X$.    

For any $f\in \cvx_0^p(X)$, we define $f_\Pi\in \cvx_0^p(X)$ as
$$f_\Pi(x)=\inf_{x'\in [x]}f(x')$$
 and define the quotient function
$\bar f_\Pi\in \cvx_0^p(X/\Pi)$ as
$$\bar f_\Pi([x])=\inf_{x'\in [x]}f(x').$$

\begin{example}
Consider a simple case that  $\Pi\subset \ker f$ is a closed linear subspace, then, we have $\bar f_\Pi([x])=f(x)$ and $\pi^*(\partial\bar f_\Pi([x]))=\partial f(x)$. 
A verification is as follows. 

For any $v\in \partial f(x)\subset(\ker f)^\polar\subset\Pi^\polar=\Pi^\bot$, we shall prove $(\pi^*)^{-1}v\in \partial \bar f_\Pi([x])$. 
In fact, $\bar f_\Pi([y])-\bar f_\Pi([x])=f(y)-f(x)\ge\langle v,y-x\rangle=\langle (\pi^*)^{-1}v,[y]-[x]\rangle$, $\forall [y]\in X/\Pi$, implying that $(\pi^*)^{-1}v\in \partial\bar f_\Pi([x])$. On the other direction, for any $\bar v\in \partial\bar f_\Pi([x])\subset( X/\Pi)^*$, for any $y\in X$, $f(y)-f(x)=\bar f_\Pi([y])-\bar f_\Pi([x])\ge \langle \bar v,[y]-[x]\rangle=\langle \bar v,\pi(y-x)\rangle=\langle \pi^*(\bar v),y-x\rangle$, meaning that $\pi^*(\bar v)\in \partial f(x)$. Therefore, we obtain $\pi^*(\partial\bar f_\Pi([x]))=\partial f(x)$.
\end{example}

In general, we have the following property linking $\partial \bar f_\Pi$ and $\partial f$.

\begin{prop}\label{pro:quotient-subdifferential}
For any $x\in X$, we have
$$ \partial f(x)\cap \Pi^\bot=\begin{cases}
\pi^*(\partial\bar f_\Pi([x])),&\text{ if }f(x)=\bar f_\Pi([x]),\\
\varnothing,&\text{ otherwise.}
\end{cases} $$
\end{prop}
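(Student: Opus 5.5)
The plan is to prove two inclusions by comparing subgradient inequalities for $f$ on $X$ with those for $\bar f_\Pi$ on $X/\Pi$, transported through the identity $\langle \pi^*(\bar v),y\rangle=\langle \bar v,[y]\rangle$. The preceding Example is the special case $\Pi\subset\ker f$, where $f=\bar f_\Pi\circ\pi$; in general we only have $\bar f_\Pi([y])\le f(y)$ for every $y$. So the argument splits according to whether $f(x)=\bar f_\Pi([x])$.

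\emph{Step 1: $\partial f(x)\cap\Pi^\bot\ne\varnothing$ forces $f(x)=\bar f_\Pi([x])$ and gives $\subset$.} Let $v\in\partial f(x)\cap\Pi^\bot$ and $x'\in[x]$. Then $x'-x\in\Pi$, so
$$f(x')-f(x)\ge\langle v,x'-x\rangle=0 .$$
Hence $f(x)\le\inf_{x'\in[x]}f(x')=\bar f_\Pi([x])\le f(x)$, which is equality. This already proves the ``otherwise'' case: if $f(x)\ne\bar f_\Pi([x])$, the intersection is empty.

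Now assume equality and put $\bar v=(\pi^*)^{-1}v$. For any $[y]$ and any representative $y'\in[y]$,
$$f(y')\ge f(x)+\langle v,y'-x\rangle=\bar f_\Pi([x])+\langle\bar v,[y]-[x]\rangle .$$
The right-hand side does not depend on the choice of $y'$, because $v\in\Pi^\bot$. Taking the infimum over $y'\in[y]$ gives $\bar v\in\partial\bar f_\Pi([x])$, so $v\in\pi^*(\partial\bar f_\Pi([x]))$.

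\emph{Step 2: the reverse inclusion when $f(x)=\bar f_\Pi([x])$.} Take $\bar v\in\partial\bar f_\Pi([x])$. Then $\pi^*\bar v\in\Pi^\bot$ automatically, since $\pi^*$ maps onto $\Pi^\bot$. For any $y\in X$,
$$f(y)\ge\bar f_\Pi([y])\ge\bar f_\Pi([x])+\langle\bar v,[y]-[x]\rangle=f(x)+\langle\pi^*\bar v,y-x\rangle .$$
Thus $\pi^*\bar v\in\partial f(x)$.

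\emph{Main point of care.} The only delicate issue is the extended-valued case. If $f(x)=+\infty$, then $\partial f(x)=\varnothing$. Under the equality hypothesis we also have $\bar f_\Pi([x])=+\infty$, so $\partial\bar f_\Pi([x])=\varnothing$ as well, and both sides agree. The infimum defining $\bar f_\Pi$ may fail to be attained, but Steps 1 and 2 never use attainment. No properness or lower semicontinuity of $\bar f_\Pi$ is needed either, because both steps rely only on the definition of the subdifferential.
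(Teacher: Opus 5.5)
Your proof is correct and follows essentially the same route as the paper. Like you, the paper shows that a subgradient in $\Pi^\bot$ forces $f(x)=\bar f_\Pi([x])$, gets $\partial f(x)\cap\Pi^\bot\subset\pi^*(\partial\bar f_\Pi([x]))$ by passing to near-infimal representatives (an $\varepsilon$-approximation rather than your direct infimum), and proves the reverse inclusion from $f(y)\ge\bar f_\Pi([y])$ exactly as you do. Your explicit treatment of the case $f(x)=+\infty$ settles a point the paper leaves implicit.
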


Since this result is merely a supplement and does not form the main line of this paper, we have included its proof to the appendix.


\subsubsection{Morse critical points and Rothe critical groups}\label{sec:def-Morse}

The Morse theory in the classical sense does not work for nonsmooth cases. 
Fortunately, Chang \cite{Chang81,Chang93}, Ioffe \cite{Ioffe}, Szulkin \cite{Szulkin}, Katriel \cite{Katriel}, Corvellec \cite{Corvellec},  Degiovanni \cite{Degiovanni94} and many experts in nonlinear analysis have developed a more general Morse theory, allowing us to utilise the framework they have established. 

\begin{defn}[Morse critical point \cite{Degiovanni11}]\label{defn:Morse-critical}
Let $M$ be a topological space and  $F:M\to \R$ be a continuous function. 
A point $x_0\in M$ is a {\sl Morse regular point} of $F$ if there exist a neighborhood $U$  of $x_0$ 
and a continuous map $$\h:U\times[0,1]\to M,\;\; \h(x,0)=  x$$
satisfying $$F(\h( x,t))< F(x),$$
for any $ x\in U$ and $t>0$. We say that $x_0$ is a {\sl Morse critical point} of $F$ on $M$ if it is not Morse regular. 

We say that $c$ is a {\sl Morse critical value} of $F$ if $c=F(x_0)$ for some Morse critical point $x_0$ of $F$. 
\end{defn}

A Clarke critical point that is not Morse critical can be ``removed'' from the perspective of topology as it is unstable under taking perturbation or local deformation (see Definition \ref{defn:Morse-critical}). 
Therefore, from a topological perspective, we may regard Morse critical points as essential critical points; they carry the key information of Morse theory.

\begin{defn}[Rothe's 
  critical group]
 Rothe's {\sl  critical group} at an isolated critical point $\al$ of a function $F$ is defined as $C_q(F,\al):=H_q(\{F\le c\}\cap U_{\al},\{F\le c\}\cap
U_{\al}\setminus \{\al\})$, $q\in\mathbb{Z}$, where $c=F(\alpha)$, and $H_*(\cdot,\cdot)$
is the singular relative homology  with real field coefficients, and $U_{\al}$ is an open neighborhood of $\al$.
\end{defn}

\subsubsection{Lusternik-Schnirelman theory and min-max critical values}\label{sec:def-minmax}
Let $X$ be a reflexive Banach space with a compact Lie group action $\mathtt{G}$. 
Let $\mathcal{S}$ be a collection of all $\mathtt{G}$-invariant closed subsets of $X$, and let $\Phi$ be the set of all $\mathtt{G}$-equivariant continuous mappings from $X$ into itself, i.e., $\varphi\in \Phi$ if and only if $\varphi:X\to X$ is continuous and $\varphi\circ g=g\circ \varphi$, $\forall g\in \mathtt{G}$. For more on this topic, we refer to \cite{Chang93} for details. 
\begin{defn}[admissible index]\label{def:admissible-index}
A quadruple‌ $(\mathcal{S},\mathrm{ind},\Phi,\mathtt{G})$ is called an \emph{index quadruple‌} equipped with an \emph{\textbf{admissible index}}  $\mathrm{ind}:\mathcal{S}\to \mathbb{N}\cup\{+\infty\}$ if it satisfies the following  properties (I1), (I2) and (I3), or it satisfies (I1), (I2) and (I4): 
\begin{itemize}
\item[(I1)] monotonicity: For any $S\subset S'$ with $S,S'\in \mathcal{S}$, we have $\mathrm{ind}(S)\le \mathrm{ind}(S')$;
\item[(I2)] continuity: For any $S\in \mathcal{S}$, there exists a closed neighborhood $U$ of $S$ such that $U\in \mathcal{S}$ and $\mathrm{ind}(U)= \mathrm{ind}(S)$;
\item[(I3)] homotopy: If $S$ is homotopy equivalent to $S'$ with $S,S'\in \mathcal{S}$, then $\mathrm{ind}(S)= \mathrm{ind}(S')$;
\item[(I4)]  nondecreasing under continuous map: For any $S\in \mathcal{S}$ and any $\mathtt{G}$-equivariant continuous map $\varphi\in \Phi$, $\mathrm{ind}(S)\le  \mathrm{ind}(\varphi(S))$.
\end{itemize}
\end{defn}

Fixed an admissible index, it is standard to define a  sequence of critical values.

\begin{defn}[Lusternik-Schnirelman min-max critical values] \label{defn:minimax-cri}
Given a function $F:X\to \R\cup\{+\infty\}$ satisfying $F(g(x))=F(x)$ for any $ g\in \mathtt{G}$ and $x\in X$,  the quantities \begin{equation}\label{eq:index-LS-minmax-c}
c_k(F):=\inf_{\mathrm{ind}(S)\ge k}\sup_{x\in S}F(x) ,\;\; k=1,2,\cdots   
\end{equation}
are called the Lusternik-Schnirelman min-max critical values of $F$ (with respect to a given $\mathrm{ind}$). 
\end{defn}

If $F$ is further assumed to be Lipschitz continuous, 
then $c_k(F)$ in \eqref{eq:index-LS-minmax-c} defines the critical value of $F$ in the sense of Clarke \cite{Chang81}. 

For a general $F$ that is not symmetric, we simply taking $\mathtt{G}=\{1\}$ as the trivial group. In this case, the standard Lusternik-Schnirelman category is an admissible index. 
In most of our applications, the function $F$ is assumed to be even (i.e., $F(-x)=F(x)$, $\forall x$), and thus we can take $\mathtt{G}=\{-1,1\}$ as a $\mathbb{Z}_2$-action. See the next example for explanation.

\begin{example}
 There are many admissible indexes commonly used for 
 even functionals:
\begin{itemize}
\item Krasnoselskii genus: The \emph{Krasnoselskii genus} \cite{Coffman} of an origin-symmetric compact set $S$ is defined by $$\mathrm{ind}_{\mathrm{Kras}}(S):=\min\{k\in\mathbb{Z}^+:\exists\text{ odd continuous }\eta:S\to \mathbb{R}^k\setminus\{0\}\}.$$ 
\item  Conner-Floyd index: The \emph{Conner-Floyd index} $\mathrm{ind}_{\mathrm{CF}}$ of an origin-symmetric compact set $S$ is defined by $$\mathrm{ind}_{\mathrm{CF}}(S):=\min\{k\in\mathbb{Z}^+:\exists\text{ odd continuous }\eta:\mathbb{S}^{k-1}\to S\setminus\{0\}\},$$ 
where $\mathbb{S}^{k-1}$ stands for the unit sphere of $\mathbb{R}^k$. See \cite{Matousek} for details.
\item cohomological index: The cohomological index, denoted by $\mathrm{ind}_{\mathrm{AS}}(\cdot)$, works for $\mathbb{Z}_2$-spaces, is defined via Alexander-Spanier cohomology, see \cite[Chapter 2]{Perera}. 
\item Yang index: This index, denoted by $\mathrm{ind}_{\mathrm{Yang}}(\cdot)$, is defined via homology information \cite{Yang}. We will not write down the definition explicitly, but interested readers may refer to \cite{Perera}.
\end{itemize}
For example, if we take $\mathcal{S}$ to be the set of  origin-symmetric compact subsets of the Sobolev space $H_0^1(\Omega)$, and take the functional  $F(x)=\frac{\int_\Omega|\nabla x(\xi)|^pd\xi}{\int_\Omega|x(\xi)|^pd\xi}$, then $\{c_k(F)\}_{k\ge 1}$ proposed in \eqref{eq:index-LS-minmax-c} defines the variational min-max eigenvalues of the $p$-Laplacian on $\Omega$ with Dirichlet homogeneous boundary condition.  
In particular, the min-max eigenvalues of $p$-Laplacian defined in \eqref{eq:index-LS-minmax-c} by using $\mathrm{ind}=\mathrm{ind}_{\mathrm{CF}}$ are 
the Drabek-Robinson eigenvalues \cite{Drabek}.
\end{example}

According to Definitions \ref{def:admissible-index} and \ref{defn:minimax-cri}, to define the min-max critical values of a function $F$ with respect to a prescribed index, we implicitly require that $F$ is $\mathtt{G}$-symmetric. 
All the proofs regarding the properties of $c_k(F)$ require the verification of $\mathtt{G}$-invariants. However, such verification is standard with no difficulty, and to avoid complicated notation, we will not explicitly write the verification process involving the compact Lie group action. 


\begin{prop}\label{prop:index-discontinuous=critical}
A real number $c$ is a discontinuous point of the function $t\mapsto \mathrm{ind}\{F\le t\}$ if and only if $c=c_k(F)$ for some $k\in\mathbb{Z}_+$. 
Moreover,  
$\mathrm{ind}\{F\le c+\varepsilon\}\ge k>\mathrm{ind}\{F\le c-\varepsilon\}$ for any $\varepsilon>0$, if and only if $c=c_k(F)$. 
\end{prop}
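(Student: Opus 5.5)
The plan is to work directly from the definition of $c_k(F)$ in \eqref{eq:index-LS-minmax-c}, using only monotonicity (I1) and continuity (I2) of the index. Write $\iota(t):=\mathrm{ind}\{F\le t\}$. By (I1), $\iota$ is nondecreasing in $t$, since $\{F\le s\}\subset\{F\le t\}$ for $s\le t$. The first step is to prove the elementary identity
$$c_k(F)=\inf\{t\in\R:\iota(t)\ge k\}.$$
For the inequality ``$\le$'': if $\iota(t)\ge k$, then $S=\{F\le t\}$ is an admissible competitor (it is closed and $\mathtt{G}$-invariant, granting the implicit lower semicontinuity and invariance of $F$), and $\sup_S F\le t$. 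For ``$\ge$'': if $\mathrm{ind}(S)\ge k$ and $s=\sup_S F$, then $S\subset\{F\le s\}$, so by (I1) we get $\iota(s)\ge k$. Consequently $c_k(F)$ is exactly the threshold at which the monotone function $\iota$ first reaches the value $k$, and $c_1\le c_2\le\cdots$.

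The second step is to show that $\iota$ is right-continuous, in the sense that $\iota(c)=\iota(c+\varepsilon)$ for all small $\varepsilon>0$. The tool is (I2): there is a closed $\mathtt{G}$-invariant neighborhood $U\supset\{F\le c\}$ with $\mathrm{ind}(U)=\iota(c)$. We then need $\{F\le c+\varepsilon\}\subset U$ for small $\varepsilon$, after which (I1) gives $\iota(c+\varepsilon)\le\iota(c)$. This inclusion is the main obstacle, because it needs a compactness or Palais–Smale type property: the sets $\{F\le c+\varepsilon\}\setminus U$ must not stay nonempty as $\varepsilon\downarrow0$. In the intended setting ($F$ a continuous RC function restricted to a sphere-type domain, or $\mathcal{S}$ consisting of compact sets), I would argue by contradiction. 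Take $x_n\notin U$ with $F(x_n)\le c+1/n$. Compactness, or the level-set structure of the homogeneous quotient, yields a limit $x\notin\mathrm{int}\,U$ with $F(x)\le c$, which contradicts $\{F\le c\}\subset\mathrm{int}\,U$. I would state explicitly that this is the standing assumption under which the proposition is used. Granting right-continuity, each point of discontinuity of $\iota$ is a left jump, that is, $\iota(c-\varepsilon)<\iota(c)$ for all $\varepsilon>0$.

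The third step combines these. Suppose $\iota(c+\varepsilon)\ge k>\iota(c-\varepsilon)$ for all $\varepsilon>0$. By the identity, $c_k\le c+\varepsilon$ for every $\varepsilon$. Also, $\iota(t)<k$ for every $t<c$, by monotonicity, so $c_k\ge c$. Hence $c_k=c$. Conversely, suppose $c=c_k$. For $t<c$ the identity gives $\iota(t)<k$. By right-continuity $\iota(c)=\iota(c+\varepsilon)$ for small $\varepsilon$, and since there are points $t\downarrow c$ with $\iota(t)\ge k$, we get $\iota(c+\varepsilon)\ge k$ for all $\varepsilon>0$. This proves the ``moreover'' statement. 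For the first statement, a discontinuity at $c$ means there is some integer $k$ with $\iota(c-\varepsilon)<k\le\iota(c+\varepsilon)$ for all $\varepsilon$; take $k=\lim_{\varepsilon\downarrow0}\iota(c-\varepsilon)+1$. Then $c=c_k$ by what was just shown. Conversely, if $c=c_k$, the jump condition shows $\iota$ is discontinuous at $c$. The case $\iota=+\infty$ is handled the same way, using values in $\mathbb{N}\cup\{+\infty\}$.
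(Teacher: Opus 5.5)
Your proof is correct and follows essentially the paper's route. With $\iota(t)=\mathrm{ind}\{F\le t\}$, the threshold characterization $c_k(F)=\inf\{t:\iota(t)\ge k\}$ together with monotonicity (I1) gives $\iota(c_k+\varepsilon)\ge k>\iota(c_k-\varepsilon)$ for all $\varepsilon>0$. Because $\iota$ is nondecreasing, a discontinuity at $c$ just means $\lim_{\varepsilon\downarrow 0}\iota(c-\varepsilon)<\lim_{\varepsilon\downarrow 0}\iota(c+\varepsilon)$, and this forces $c=c_k$ for $k=\lim_{\varepsilon\downarrow 0}\iota(c-\varepsilon)+1$, exactly as in the paper.

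You should drop Step 2, for two reasons:
\begin{itemize}
\item Right-continuity does not follow from (I1)--(I2), and it genuinely fails even for the paper's RC functions. Take $F(x)=\sum_n(1+\frac1n)x_n^2/\|x\|^2$ on $\ell^2\setminus\{0\}$ with the genus: $\{F\le 1\}=\varnothing$, while every $\{F\le 1+\varepsilon\}$ contains an infinite-dimensional unit sphere.
\item The one place you invoke it is superfluous. If $c=c_k$, then for each $\varepsilon>0$ there is some $t<c+\varepsilon$ with $\iota(t)\ge k$, hence $\iota(c+\varepsilon)\ge k$ by monotonicity alone.
\end{itemize}
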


\begin{proof}
We shall use the easy observation that for any $S\in\mathcal{S}$,  $$\sup_{x\in S}F(x)\le c\Longleftrightarrow S\subset\{F\le c\}.$$
In addition, since $F(g(x))=F(x)$ for any $ g\in G$, we have for any $c\in\R$, the sublevel set $\{F\le c\}$ is $G$-invariant, and thus $\{F\le c\}\in\mathcal{S}$. 

We first prove that $k>\mathrm{ind}\{F\le c_k(F)-\varepsilon\}$, $\forall \varepsilon>0$. Suppose the contrary that $k\le \mathrm{ind}\{F\le c_k(F)-\varepsilon_0\}$ for some $\varepsilon_0>0$. 
Then, by definition, $$c_k(F)\le \sup\limits_{x\in \{F\le c_k(F)-\varepsilon_0\}}F(x)\le c_k(F)-\varepsilon_0$$ which is a contradiction. 

Next, we prove that $k\le \mathrm{ind}\{F\le c_k(F)+\varepsilon\}$, $\forall \varepsilon>0$. In fact, for any $\varepsilon>0$, there exists $S$ with $\mathrm{ind}(S)\ge k$ such that $\sup_{x\in S}F(x)\le c_k(F)+\varepsilon$. Thus, $S\subset \{F\le c_k(F)+\varepsilon\}$ and then by the monotonicity of the index, we have $\mathrm{ind}\{F\le c_k(F)+\varepsilon\}\ge \mathrm{ind}(S)\ge k$. 

In consequence, we have proved that 
\begin{equation}\label{eq:ck(F)+-epsilon}
\mathrm{ind}\{F\le c_k(F)+\varepsilon\}\ge k>\mathrm{ind}\{F\le c_k(F)-\varepsilon\}    
\end{equation}
 for any $\varepsilon>0$. 

Note that, by the monotonicity of $\mathrm{ind}$, the function $t\mapsto \mathrm{ind}\{F\le t\}$ is non-decreasing. 
Thus, $c$ is a discontinuous point of $t\mapsto \mathrm{ind}\{F\le t\}$ if and only if $ \lim_{\varepsilon\to0^+}\mathrm{ind}\{F\le c-\varepsilon\}<\lim_{\varepsilon\to0^+}\mathrm{ind}\{F\le c+\varepsilon\}$. 

Suppose that $t\mapsto \mathrm{ind}\{F\le t\}$ is discontinuous at $t=c$. 
Without loss of generality, we assume 
\begin{equation}\label{eq:c+-epsilon=indk}
\lim_{\varepsilon\to0^+}\mathrm{ind}\{F\le c-\varepsilon\}\le k-1<k\le \lim_{\varepsilon\to0^+}\mathrm{ind}\{F\le c+\varepsilon\}
\end{equation} for some positive integer $k$. 
Then $$c_k(F)\le \lim_{\varepsilon\to0^+}\sup\limits_{x\in \{F\le c+\varepsilon\}}F(x)\le c.$$ 
If $c_k(F)< c$, then $c_k(F)<c-\varepsilon$ for sufficiently small $\varepsilon>0$ and hence by \eqref{eq:ck(F)+-epsilon}, $\mathrm{ind}\{F\le c-\varepsilon\}\ge k$, a contradiction to \eqref{eq:c+-epsilon=indk}. 
Therefore, $c_k(F)=c$. 
\end{proof}

\begin{prop}
If $c^-<c<c^+$ are consecutive discontinuous points of $t\mapsto \mathrm{ind}\{F\le t\}$, i.e., $c^-,c$ and $c^+$ are discontinuous points of $t\mapsto \mathrm{ind}\{F\le t\}$ 
such that $\mathrm{ind}\{F\le t\}$ is constant on $ (c^-,c)$ and constant on $ (c,c^+)$, respectively, then $c$ appears exactly $\mathrm{ind}\{F\le c\}-\mathrm{ind}\{F\le c^-\}$ times in the sequence \eqref{eq:index-LS-minmax-c}.
\end{prop}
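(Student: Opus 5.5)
The plan is to reduce everything to Proposition \ref{prop:index-discontinuous=critical}, using that the function $\iota(t):=\mathrm{ind}\{F\le t\}$ is non-decreasing by (I1). I will write $\iota(c^\pm 0)$ for the one-sided limits at $c$; they exist by monotonicity, and since $\iota$ is integer valued they are attained on small one-sided neighbourhoods of $c$. By hypothesis $\iota$ is constant on $(c^-,c)$, with value $\iota(c-0)$, and constant on $(c,c^+)$, with value $\iota(c+0)$.

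\textbf{Step 1: $\iota$ is right-continuous.} I first show $\iota(c)=\iota(c+0)$. Since $\{F\le c\}=\bigcap_{\varepsilon>0}\{F\le c+\varepsilon\}$, I use the continuity property (I2): there is a closed neighbourhood $U$ of $\{F\le c\}$ with $\mathrm{ind}(U)=\iota(c)$. I then want $\{F\le c+\varepsilon\}\subset U$ for small $\varepsilon$, so that monotonicity gives $\iota(c+\varepsilon)\le\iota(c)$.

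This inclusion is the delicate point. It holds under the compactness or Palais--Smale type conditions implicit in the setting; for instance, the sublevel sets are compact after restricting to the unit sphere of $g$ in the finite-dimensional applications. If this step cannot be justified in full generality, I will read the statement with $\mathrm{ind}\{F\le c\}$ interpreted as $\iota(c+0)$. Under that reading the remaining steps go through verbatim.

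\textbf{Step 2: identify $\iota(c^-)$ with $\iota(c-0)$.} The same right-continuity at $c^-$ gives $\iota(c^-)=\iota(c^-+0)$. This common value is the constant value of $\iota$ on $(c^-,c)$, which is $\iota(c-0)$. So the claimed multiplicity equals $\iota(c+0)-\iota(c-0)$.

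\textbf{Step 3: count the indices.} Take $\varepsilon>0$ smaller than $c-c^-$ and $c^+-c$. By the ``moreover'' part of Proposition \ref{prop:index-discontinuous=critical}, $c_k(F)=c$ holds if and only if
\[
\iota(c+\varepsilon)\ge k>\iota(c-\varepsilon)\quad\text{for all small } \varepsilon>0.
\]
For such $\varepsilon$ this reads $\iota(c-0)<k\le\iota(c+0)$. Hence the set $\{k:c_k(F)=c\}$ is exactly the integer interval $(\iota(c-0),\iota(c+0)]$, which has $\iota(c+0)-\iota(c-0)$ elements. This equals $\mathrm{ind}\{F\le c\}-\mathrm{ind}\{F\le c^-\}$ by Steps 1 and 2. (If $\iota(c+0)=+\infty$, both sides are infinite.)

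The main obstacle is Step 1, the right-continuity of $\iota$. The only available tool is (I2) combined with some compactness that makes the sublevel sets shrink into the neighbourhood $U$. This is where I would spend care, or state explicitly the standing assumption being used.
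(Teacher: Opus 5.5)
Your argument is correct, and it differs from the paper's proof mainly in how it is organized.

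\textbf{The paper's route.} The paper simply asserts that $t\mapsto\mathrm{ind}\{F\le t\}$ is ``non-decreasing and right-continuous'' and writes it as $k_j$ on $[c_j,c_{j+1})$. It then re-proves two inf--sup bounds directly: $c_{k_j}(F)\le c_j$, by testing with $S=\{F\le c_j\}$; and $c_{k_{j-1}+1}(F)\ge c_j$, because any $S$ with $\mathrm{ind}(S)>k_{j-1}$ cannot lie in any $\{F\le c_j-\varepsilon\}$. Monotonicity of $k\mapsto c_k(F)$ then gives $c_{k_{j-1}+1}=\cdots=c_{k_j}=c_j$. The fact that no other $k$ produces $c_j$ is left implicit.

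\textbf{Your route.} You instead invoke the ``if and only if'' of Proposition~\ref{prop:index-discontinuous=critical}, whose proof contains exactly these two bounds. You then read off $\{k:c_k(F)=c\}$ as the integer interval $(\iota(c-0),\iota(c+0)]$. This handles inclusion and exclusion at once. It also shows that the count $\iota(c+0)-\iota(c-0)$ holds without any right-continuity; right-continuity is needed only to rewrite this count as $\mathrm{ind}\{F\le c\}-\mathrm{ind}\{F\le c^-\}$.

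\textbf{On Step~1.} Your Step~1 is precisely the assumption the paper makes without proof, and you are right to flag it. It does follow from (I2) as you describe if, say, $\{F\le c+\delta\}$ is compact for some $\delta>0$ and $F$ is lower semicontinuous. Indeed, points $x_n\in\{F\le c+1/n\}\setminus\mathrm{int}\,U$ would accumulate at a point of $\{F\le c\}$ lying outside $\mathrm{int}\,U$, a contradiction.

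However, (I1)--(I4) alone do not force right-continuity. In $\ell^2$ with the Krasnoselskii genus, one can take an even Lipschitz $F$ with $\{F\le c\}=\{\pm 3e_1\}$ while $\{F\le c+1/n\}$ contains the unit circle of $\mathrm{span}\{e_{2n},e_{2n+1}\}$ for every $n$. Then $\iota(c)=1<2\le\iota(c+0)$.

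So the version that is true in general is the one with one-sided limits, $\iota(c+0)-\iota(c^-+0)$. Your fallback reading should replace both terms of the stated formula by their right limits, since your Step~2 also uses right-continuity at $c^-$, not only at $c$.
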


\begin{proof}We continue with the proof of Proposition \ref{prop:index-discontinuous=critical}. 
Since the function 
$$ 
\begin{array}{lll}
    \R&\to&\{0,1,2,\cdots\}  \\
     t&\mapsto& \mathrm{ind}\{F\le t\} 
\end{array}
$$
is non-decreasing and right-continuous, we may suppose $\mathrm{ind}\{F\le t\}=k_j$, $\forall t\in [c_j,c_{j+1})$, $j\ge 1$, where $k_1<k_2<\cdots$ are nonnegative integers, and $c_1<c_2<\cdots$ are real numbers. We shall prove that 
$c_j$ appears $k_j-k_{j-1}$ times in the sequence \eqref{eq:index-LS-minmax-c}. 

\begin{center}
\begin{tikzpicture}[scale=3]
\node (1) at (-0.19,0.5) {$k_{j-1}$};
\node (1) at (-0.16,2/3) {$k_{j}$};
\node (1) at (-0.19,1) {$k_{j+1}$};
\node (0) at (0.2,0.5) {\footnotesize $\bullet$};
\node (0) at (0.6,2/3) {\footnotesize $\bullet$};
\node (0) at (0.9,1) {\footnotesize $\bullet$};
\node (0) at (0.6,0.5) {\footnotesize $\circ$};
\node (0) at (0.9,2/3) {\footnotesize $\circ$};
\node (0) at (1.2,1) {\footnotesize $\circ$};
\node (0) at (0,0.5) {\tiny $\bullet$};
\node (0) at (0,2/3) {\tiny $\bullet$};
\node (0) at (0,1) {\tiny $\bullet$};
\node (0) at (0.2,0) {\tiny $\bullet$};
\node (0) at (0.6,0) {\tiny $\bullet$};
\node (0) at (0.9,0) {\tiny $\bullet$};
\node (0) at (0.2,-0.1) { $c_{j-1}$};
\node (0) at (0.6,-0.1) { $c_j$};
\node (0) at (0.9,-0.1) { $c_{j+1}$};
\draw[thick,->,>=angle 90](-0.1,0)--(1.5,0)node[right] {$t$};
\draw[thick,->,>=angle 90](0,-0.1)--(0,1.5) node[above] {$\mathrm{ind}\{F\le t\}$};
\draw[dotted] (0,2/3)--(0.6,2/3)--(0.6,0);
\draw[dotted] (0,0.5)--(0.2,0.5)--(0.2,0);
\draw[dotted] (0,1)--(0.9,1)--(0.9,0);
\draw[domain=0.2:0.58] plot (\x,{1/2});
\draw[domain=0.6:0.88] plot (\x,{2/3});
\draw[domain=0.9:1.18] plot (\x,{1});
\end{tikzpicture}
\end{center}

Then, on the one hand, 
$$\inf_{\mathrm{ind}(S)\ge k_j}\sup_{x\in S}F(x)\le \sup_{x\in \{F\le c_j\}}F(x)\le c_j.$$

On the other hand, for any $\varepsilon>0$, for any $S\in\mathcal{S}$ with $\mathrm{ind}(S)\ge k_{j-1}+1$, we have $S\setminus \{F\le c_j-\varepsilon\}\ne\varnothing$, because otherwise $S\subset \{F\le c_j-\varepsilon\}$ for some $\varepsilon>0$ implies $\mathrm{ind}(\{F\le c_j-\varepsilon\})\ge \mathrm{ind}(S)\ge k_{j-1}+1$, which contradicts the assumption that $\mathrm{ind}(\{F\le c_j-\varepsilon\})\le k_{j-1}$.  

Hence, for any $\varepsilon>0$, and for any $S$ with $\mathrm{ind}(S)\ge k_{j-1}+1$, $\sup_{x\in S}F(x)> c_j-\varepsilon$, which yields $$\inf_{\mathrm{ind}(S)\ge k_{j-1}+1}\sup_{x\in S}F(x)\ge c_j-\varepsilon.$$
Due to the arbitrariness of $\varepsilon>0$, we have $$\inf_{\mathrm{ind}(S)\ge k_{j-1}+1}\sup_{x\in S}F(x)\ge c_j.$$
Therefore, we obtain
$$c_j\ge \inf_{\mathrm{ind}(S)\ge k_j}\sup_{x\in S}F(x) \ge \inf_{\mathrm{ind}(S)\ge k_{j-1}+1}\sup_{x\in S}F(x)\ge c_j, $$
that is, $c_{k_{j-1}+1}(F)=\cdots=c_{k_j}(F)=c_j$, and hence 
$c_j$ appears $k_j-k_{j-1}$ times in the sequence \eqref{eq:index-LS-minmax-c}.
\end{proof}

\subsubsection{Lagrange criticality 
}\label{sec:def-Lagrange}

The Lagrange multiplier corresponding to the critical point problem of the ratio of convex functions is quite useful. 
It would be helpful if we consider the Lagrange multiplier as a ``generalized'' critical value of $f/g$, and this leads to the following definition. 
\begin{defn}\label{defn:critical(f,g)}
A point $x$ is called a  \emph{Lagrange critical point} of $(f,g)$ if there exists $c\in\R$ such that 
\begin{equation}\label{eq:Lagrange-critical-eigen}
0\in\partial f(x)-c\,\partial g(x),    
\end{equation}
and such $c$ is called a Lagrange critical value of $(f,g)$ with respect to $x$. Such $(c,x)$ is said to be a  Lagrange critical pair of $(f,g)$.

The \emph{multiplicity} of a Lagrange critical value $c$ of $(f,g)$ is defined as the index of the set of all the Lagrange critical points of $(f,g)$ corresponding to $c$, and we denote it by $\mathrm{mult}_{f,g} (c)$. 
\end{defn}

In this paper, the Lagrange critical values (resp., points) of $(f,g)$ are also called Lagrange critical values (resp., points) of the quotient $f/g$.  
When we simply say a critical point (or value) of $(f,g)$, the meaning is always by Definition \ref{defn:critical(f,g)}. 

We say $(c,x)$ is a \emph{nontrivial Lagrange critical pair} of $(f,g)$ if it satisfies  \eqref{eq:Lagrange-critical-eigen} as well as $c\ne0$, $f(x)\ne0$ and $g(x)\ne0$; in this case, $x$ is called a \emph{nontrivial Lagrange critical point} and $c$ is called a \emph{nontrivial Lagrange critical value}.

Given a nonempty set $S\subset X$, let $\mathrm{cone}(S):=\{tx:x\in S,t>0\}$ denote the cone (without apex). We have the following description of nontrivial Lagrange critical points for special function pairs.
\begin{prop}\label{pro:open-cone-intersect}
Let $f,g\in \cvx_0^p(X)$. Then $x$ is a nontrivial Lagrange critical point of $(f,g)$ if and only if $\mathrm{cone}(\partial f(x))\cap \mathrm{cone}(\partial g(x))\ne\varnothing$ and $f(x)$ and $g(x)$ are not both zero.
\end{prop}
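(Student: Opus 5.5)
The plan rests on one identity, the Euler identity for subgradients of positively $p$-homogeneous convex functions: for $f\in\cvx_0^p(X)$, $x\in X$ and $\xi\in\partial f(x)$,
\begin{equation*}
\langle \xi,x\rangle = p\,f(x).
\end{equation*}
I will prove this first. If $\partial f(x)\neq\varnothing$, then $x\in\dom f$, since otherwise the defining inequality fails at $v=0$; so $f(x)<+\infty$. Take $v=(t-1)x$ with $t>0$ in Definition \ref{def:eigenproblem}. Homogeneity gives
\begin{equation*}
(t^p-1)f(x)\ge (t-1)\langle\xi,x\rangle .
\end{equation*}
Divide by $t-1$, separately for $t>1$ and for $t<1$; the inequality reverses in the second case. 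Letting $t\to1$ from both sides gives $p f(x)\ge\langle\xi,x\rangle\ge pf(x)$, which is the identity. The same identity holds for $g$.

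For the forward direction, let $x$ be a nontrivial Lagrange critical point. Then there are $c\neq0$, $\xi\in\partial f(x)$ and $\eta\in\partial g(x)$ with $\xi=c\eta$, and $f(x),g(x)\neq0$. Both values are therefore positive, since $f,g\ge0$. Pairing with $x$ and applying the Euler identity gives $pf(x)=c\,p\,g(x)$, so $c=f(x)/g(x)>0$. Moreover $\xi\neq0$, because $\langle\xi,x\rangle=pf(x)>0$. Hence $\xi=1\cdot\xi\in\mathrm{cone}(\partial f(x))$ and $\xi=c\eta\in\mathrm{cone}(\partial g(x))$, so the two cones intersect. The condition that $f(x)$ and $g(x)$ are not both zero holds trivially.

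For the converse, suppose $t\xi=s\eta$ with $t,s>0$, $\xi\in\partial f(x)$ and $\eta\in\partial g(x)$, and that $f(x),g(x)$ are not both zero. Pairing with $x$ and using the Euler identity gives $t\,p\,f(x)=s\,p\,g(x)$. Since $t,s>0$ and $f,g\ge0$, if one of $f(x),g(x)$ is positive then so is the other, so both are nonzero. Set $c=s/t>0$. Then $\xi-c\eta=0$, so $0\in\partial f(x)-c\,\partial g(x)$ with $c\neq0$. Thus $(c,x)$ is a nontrivial Lagrange critical pair.

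The only step needing care is the Euler identity, specifically the two one-sided limits and the fact that $x\in\dom f$. Everything after it is direct algebra.
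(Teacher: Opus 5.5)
Your proof is correct and follows the paper's argument. In the forward direction, Euler's identity $\langle\xi,x\rangle=pf(x)$ gives $c=f(x)/g(x)>0$, and in the converse it gives $tf(x)=sg(x)$, hence $c=s/t$; the only difference is that you prove the Euler identity inline while the paper quotes it from its appendix. One small nit: to justify $x\in\dom f$ when $\partial f(x)\neq\varnothing$, test the subgradient inequality at $v=-x$, which yields $f(0)-f(x)=-\infty$, rather than at $v=0$, where it reads $\infty-\infty$.
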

\begin{proof}If  $x$ is a nontrivial Lagrange critical point of $(f,g)$, that is, $c\ne0$, $f(x)\ne0$, $g(x)\ne0$ and 
$$ \partial f(x)\bigcap c\,\partial g(x)\ne\varnothing.$$
Taking $x^*\in \partial f(x)\bigcap c\,\partial g(x)$, by Euler's identity for $p$-homogeneous function, we have $0<pf(x)=\langle x^*,x\rangle=c\langle x^*/c,x\rangle=cpg(x)>0$ since $f,g\ge 0$. Thus, $c>0$, meaning that $ \partial f(x)\bigcap\mathrm{cone}(\partial g(x))\ne\varnothing$. Hence, $\mathrm{cone}(\partial f(x))\cap \mathrm{cone}(\partial g(x))\ne\varnothing$. 

Conversely, if $\mathrm{cone}(\partial f(x))\cap \mathrm{cone}(\partial g(x))\ne\varnothing$, then there exists $y\in \mathrm{cone}(\partial f(x))\cap \mathrm{cone}(\partial g(x))$. And there exist $c_1,c_2>0$ such that $y\in c_1\partial f(x)\cap c_2\partial g(x)$. 
Therefore, $y/c_1\in \partial f(x)\cap c\partial g(x)$, where $c:=c_2/c_1>0$. By condition, either $f(x)>0$ or $g(x)>0$, by Euler’s identity, we have $f(x)=cg(x)>0$. Together with $\partial f(x)\cap c\partial g(x)\ne\varnothing$, we have proved that $x$ is a nontrivial Lagrange critical point of $(f,g)$.
\end{proof}

In this paper, we use $\mathrm{Cri}_{LP}(f,g)$ denote the nontrivial Lagrange critical points of $(f,g)$. 

\subsection{\textbf{Part I. The first critical duality theory} 
}
\label{sec:first-c-dual}
\subsubsection{A duality theorem on Lagrange criticality}

\begin{theorem}\label{th:critical(f,g)}
Let $f,g\in \cvx_0^p(X)$. 
Then the nontrivial Lagrange critical values of $(f,g)$ coincide with that of $(g^\polar , f^\polar )$. 

Furthermore, for any nontrivial Lagrange critical point $x$ of $(f,g)$, and for any $u\in\mathrm{cone}(\partial f( x))\cap  \mathrm{cone}(\partial g( x))$, $u$ is nontrivial Lagrange critical point of $(g^\polar , f^\polar )$. 
In particular, if $g$ (resp., $f$) is Fr\'echet differentiable at $x$, then $\partial g(x)$ (resp., $\partial f(x)$) is a nontrivial Lagrange critical point of $(g^\polar , f^\polar )$. 
\end{theorem}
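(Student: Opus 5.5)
The plan is to reduce to the one-homogeneous case and then use a single conjugacy fact for one-homogeneous functions. That fact reads: if $h\in\cvx_0^1(X)$, $h(x)>0$ and $\xi\in\partial h(x)$, then $h^\polar(\xi)=1$ and $x/h(x)\in\partial h^\polar(\xi)$.

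\textbf{Step 1 (the conjugacy fact).} Take $\xi\in\partial h(x)$. The subgradient inequality gives $\langle \xi,ty-x\rangle\le h(ty)-h(x)$ for all $t>0$. Dividing by $t$ and letting $t\to\infty$ yields $\langle\xi,y\rangle\le h(y)$, so $h^\polar(\xi)\le 1$. Euler's identity gives $\langle \xi,x\rangle=h(x)>0$, and hence $h^\polar(\xi)=1$. Next, for any $y^*\in X^*$ we have $\langle y^*,x\rangle\le h^\polar(y^*)h(x)$. Therefore
$$h^\polar(y^*)-h^\polar(\xi)\ \ge\ \frac{\langle y^*,x\rangle-\langle\xi,x\rangle}{h(x)}=\Big\langle y^*-\xi,\frac{x}{h(x)}\Big\rangle,$$
which says $x/h(x)\in\partial h^\polar(\xi)$. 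By positive homogeneity, $x\in\mathrm{cone}(\partial h^\polar(u))$ for every $u\in\mathrm{cone}(\partial h(x))$.

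\textbf{Step 2 (reduction to $p=1$).} Let $x$ be a nontrivial critical point of $(f,g)$ with value $c$. The proof of Proposition \ref{pro:open-cone-intersect} gives $f(x)=c\,g(x)$ with $c>0$, so both $f(x)$ and $g(x)$ are positive. By Proposition \ref{pro:CV_1&Cv_p}(i) applied to $\tilde f=f^{1/p}$ and $\tilde g=g^{1/p}$ (Proposition \ref{pro:CV_p-basic}(iii)), we get $\mathrm{cone}(\partial f(x))=\mathrm{cone}(\partial\tilde f(x))$ at such points, and likewise for $g$. The same holds on the dual side for $f^\polar,g^\polar$ versus $\tilde f^\polar,\tilde g^\polar$, since $(\tilde f^p)^\polar$ is a positive multiple of $(\tilde f^\polar)^p$. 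Corollary \ref{cor:RC-property} shows that dual ratios are $p$-th powers of the one-homogeneous dual ratios. Combined with Proposition \ref{pro:open-cone-intersect}, this reduces the critical-point statement to $p=1$. The critical value $f(x)/g(x)$ becomes the $p$-th power of $\tilde f(x)/\tilde g(x)$, and the same holds on the dual side.

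\textbf{Step 3 (the case $p=1$).} Take $u\in\mathrm{cone}(\partial f(x))\cap\mathrm{cone}(\partial g(x))$, say $u=a\xi=b\eta$ with $a,b>0$, $\xi\in\partial f(x)$ and $\eta\in\partial g(x)$. Step 1 gives $f^\polar(u)=a$, $g^\polar(u)=b$, and $x\in\mathrm{cone}(\partial f^\polar(u))\cap\mathrm{cone}(\partial g^\polar(u))$. Since $f^\polar(u)$ and $g^\polar(u)$ are positive, Proposition \ref{pro:open-cone-intersect} shows that $u$ is a nontrivial critical point of $(g^\polar,f^\polar)$. Its value is $g^\polar(u)/f^\polar(u)=b/a$. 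Pairing $a\xi=b\eta$ with $x$ gives $a f(x)=b g(x)$, so $b/a=f(x)/g(x)=c$. Thus every nontrivial critical value of $(f,g)$ is one of $(g^\polar,f^\polar)$. The reverse inclusion follows by applying the same argument to $(g^\polar,f^\polar)$ on $X^*$, using $f^{\polar\polar}=f$ and $g^{\polar\polar}=g$ (Proposition \ref{pro:CV_p-basic}(v)) and reflexivity.

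\textbf{Step 4 (the Fr\'echet case).} If $g$ is Fr\'echet differentiable at $x$, then $\partial g(x)=\{\nabla g(x)\}$, so $\mathrm{cone}(\partial g(x))$ is a single open ray. The intersection with $\mathrm{cone}(\partial f(x))$ is nonempty and $\mathrm{cone}(\partial f(x))$ is closed under positive scaling, so $\nabla g(x)$ itself lies in the intersection. Step 3 then applies to $u=\nabla g(x)$; the case of $f$ is symmetric.

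The step I expect to need the most care is the bookkeeping in Step 2: checking that the cones of subdifferentials and the critical values transform correctly under $f\mapsto f^{1/p}$ on both the primal and the dual side. Steps 1 and 3 themselves are short computations.
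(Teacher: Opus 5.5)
Your proposal is correct and follows the paper's proof essentially step for step. The paper also reduces to $p=1$ via Propositions \ref{pro:CV_p-basic} and \ref{pro:CV_1&Cv_p}. It then proves the same claim: $g(x)=1$ and $u\in\partial g(x)$ imply $g^\polar(u)=1$ and $x\in\partial g^\polar(u)$, which is your Step 1 before normalizing by $h(x)$. It applies this claim to both $f$ and $g$, relaxes to the cone intersection by zero-homogeneity of the subdifferentials, gets the reverse inclusion from reflexivity and $f^{\polar\polar}=f$, and handles the Fr\'echet case by the singleton subdifferential. Your Step 2 simply writes out the $p$-reduction bookkeeping that the paper leaves implicit in the cited propositions, and your checks there (equality of the cones at points where both functions are positive, and cancellation of the constant $\frac{(p-1)^{p-1}}{p^p}$ on the dual side) are accurate.
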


\begin{remark}
Theorem \ref{th:critical(f,g)} implies the Lagrange equivalence\footnote{See Definition \ref{defn:Lagrange-intro}.} between $(f,g)$ and $(g^\polar,f^\polar)$, as stated in Theorem \ref{th:fgmain:polar}. In fact, to verify (L2) of Definition \ref{defn:Lagrange-intro}, we can take the set-valued maps $T_1$ and $T_2$ as $T_1(x)=\mathrm{cone}(\partial f( x))\cap  \mathrm{cone}(\partial g( x))$ and $T_2(y)=\mathrm{cone}(\partial f^\polar( y))\cap  \mathrm{cone}(\partial g^\polar( y))$, respectively. 
If $g$ and $f^\polar$ are Fr\'echet differentiable at nonzeros, then the maps  $T_1$ and $T_2$ reduce to $T_1(x)=\mathrm{cone}(\partial g( x))$ and $T_2(y)=\mathrm{cone}(\partial f^\polar( y))$, respectively. 
\end{remark}

\begin{proof}
According to Propositions  \ref{pro:CV_p-basic} and \ref{pro:CV_1&Cv_p}
, we only need to work with the case $p=1$. 

Suppose that 
$(c,x)$ is a nontrivial Lagrange critical pair of $(f,g)$, then $c>0$, $x\not\in \ker f\cup \ker g$, 
and $x\in\dom f\cap \dom g$, and there exists $u\in\partial g(x)$ such that $c u\in \partial f(x)$. By Euler's identity, 
$\langle u,x\rangle=g(x)$ and $\langle cu,x\rangle=f(x)$, implying that $cg(x)=f(x)$. 
Since $x\not\in \ker f\cup \ker g$, $c=f(x)/g(x)>0$. 

Note that $\partial g$ and $\partial f$ possess the  
 zero-homogeneity,  i.e., 
\begin{equation}\label{eq:0-homogeneity-partial}
\partial f(tx)=\partial f(x)\text{ and }\partial g(tx)=\partial g(x),\;\forall x\in X,\,\forall t>0.
\end{equation} 
Hence, without loss of generality, we may assume that $x$ further satisfies  $g(x)=1$. Then, for any $u\in\partial g(x)\subset X^*$, 
it is easy to see $\langle u,x\rangle=g(x)=1$ and $c = \langle c u,x\rangle=f(x)$. 

Claim: If $g(x)=1$ with $g\in \cvx_0^1(X)$, and $u\in\partial g(x)$, then $g^\polar(u)=1$ and $x\in \partial g^\polar(u)$.

Proof of Claim: By definition of subgradient, for any $y\in X$, $$\langle u,y\rangle-1=\langle u,y\rangle-\langle u,x\rangle=\langle u,y-x\rangle\le g(y)-g(x)=g(y)-1$$ which yields $\langle u,y\rangle\le g(y)$, $\forall y\in X$. Thus, it can be checked that $$ g^\polar (u)=\sup_{y\in X:g(y)\le 1}\langle u,y\rangle=1.$$
Let $J:X\to X^{**}$ be the canonical map, i.e., for each  $z\in X$, $\langle Jz,v\rangle=\langle v,z\rangle$, $\forall v\in X^*$. Then, 
for any $v\in X^*$, 
$$\langle Jx,v-u\rangle=\langle v-u,x\rangle=\langle v,x\rangle-1\le  g^\polar (v)-1=  g^\polar (v)- g^\polar (u)$$ which implies that $Jx\in \partial   g^\polar (u)$. 
For convenience, we identify $Jx$ with $x$, and  simply rewrite $x\in \partial  g^\polar (u)$. This concludes the proof of the claim.

\vspace{0.1cm}

Applying the above claim to $f$, it follows from $f(\frac{x}{c})=1$ and $c u\in\partial f(x)=\partial f(\frac{x}{c})$ that 
$$\frac{x}{c}\in \partial   f^\polar (c u)=\partial   f^\polar ( u),$$
where we used the zero-homogeneity of $\partial f$ and $\partial   f^\polar $ (see \eqref{eq:0-homogeneity-partial} for the explanation of zero-homogeneity). 
Accordingly, $x\in c \partial  f^\polar ( u)$. In consequence, 
$x\in   g^\polar (u)\cap c \partial   f^\polar ( u)$ meaning that $c$ is a critical value of $(  g^\polar ,  f^\polar )$ with a corresponding critical point $u$. 

Note that in the above proof, we only require $u\in \partial g(x) \cap \frac{1}{c}\partial f(x)$. By the zero-homogeneity of $\partial g$ and $\partial f$ shown in \eqref{eq:0-homogeneity-partial}, we can relax this condition to $u\in \mathrm{cone}(\partial g(x)) \cap  \mathrm{cone}(\partial f(x))$. 
Since $X$ is reflexive, the canonical map $J$ gives an isomorphism between $X$ and $X^{**}$, the nonzero critical values of $(g^\polar ,f^\polar )$ should also be critical values of $(f,g)$. 

If $g$ is further assumed to be Fr\'echet differentiable at $x$, then $\partial g(x)$ is a singleton and thus $\mathrm{cone}(\partial g(x)) \cap  \mathrm{cone}(\partial f(x))\ne\varnothing$ implies that $\partial g(x)\in \mathrm{cone}(\partial g(x)) \cap  \mathrm{cone}(\partial f(x))$, and then $\partial g(x)$ is a nontrivial Lagrange critical point of $(g^\polar , f^\polar )$. 
The proof is then completed.
\end{proof}

\begin{corollary}\label{cor:bijection-Lagra}
Under the same condition with Theorem \ref{th:critical(f,g)},  
if $g$ and $f^\polar$ are Fr\'echet differentiable at nonzeros, then, $$f\cdot\partial g|_{\mathrm{Cri}_{LP}(f,g)}:\mathrm{Cri}_{LP}(f,g)\to \mathrm{Cri}_{LP}(g^\polar,f^\polar)$$ is a bijection. 
Moreover, $\partial $ induces a bijection $$\partial g|_{\mathrm{Cri}_{LP}(f,g)\cap f^{-1}(1)}:\mathrm{Cri}_{LP}(f,g)\cap f^{-1}(1)\to \mathrm{Cri}_{LP}(g^\polar,f^\polar)\cap (g^\polar)^{-1}(1)$$ 
with the inverse $\partial f^\polar|_{\mathrm{Cri}_{LP}(g^\polar,f^\polar)\cap (g^\polar)^{-1}(1)}$.     
\end{corollary}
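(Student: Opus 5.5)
We reduce to the normalized statement (the second bijection) and then extend it along rays using homogeneity. As in the proof of Theorem \ref{th:critical(f,g)}, Propositions \ref{pro:CV_p-basic} and \ref{pro:CV_1&Cv_p} together with Corollary \ref{cor:RC-property} let us work with $p=1$. For $p>1$ we apply the $p=1$ result to $f^{1/p},g^{1/p}$ and transport it back. The subgradients then only rescale by the positive factor $pf^{p-1}$ or $pg^{p-1}$, so the rays, and hence the bijection up to the stated normalizations, are unchanged.

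The key facts we use are the following.
\begin{itemize}
\item Every nontrivial critical point $x$ is nonzero and lies in $\dom f\cap\dom g$. Hence $g$ is Fr\'echet differentiable at $x$, and we write $\partial g(x)=\{\nabla g(x)\}$.
\item $\partial g$ and $\partial f^\polar$ are zero-homogeneous, by \eqref{eq:0-homogeneity-partial}.
\item The Claim in the proof of Theorem \ref{th:critical(f,g)}: if $g(z)=1$ and $u\in\partial g(z)$, then $g^\polar(u)=1$ and $z\in\partial g^\polar(u)$.
\end{itemize}
Combining the Claim with zero-homogeneity (take $z=x/g(x)$), we get $g^\polar(\nabla g(x))=1$ for every $x\notin\ker g$.

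\textbf{Step 1 (forward map on the normalized slice).} Let $x\in \mathrm{Cri}_{LP}(f,g)$ with $f(x)=1$, and put $u=\nabla g(x)$ and $c=f(x)/g(x)>0$. By Theorem \ref{th:critical(f,g)}, $u$ is a nontrivial Lagrange critical point of $(g^\polar,f^\polar)$, and $cu\in\partial f(x)$. By the remark above, $g^\polar(u)=1$. Applying the Claim to $f$ at $x$ gives $x\in\partial f^\polar(cu)=\partial f^\polar(u)$. Since $u\neq 0$ (because $\langle u,x\rangle=g(x)>0$), $f^\polar$ is differentiable at $u$, so $\nabla f^\polar(\nabla g(x))=x$.

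\textbf{Step 2 (backward map).} By Proposition \ref{pro:CV_p-basic}(v), $f^{\polar\polar}=f$ and $g^{\polar\polar}=g$, and $X$ is reflexive. So the pair $(g^\polar,f^\polar)$ satisfies the hypotheses of Theorem \ref{th:critical(f,g)} with $(f^\polar)^\polar=f$ and $(g^\polar)^\polar=g$. For $y\in\mathrm{Cri}_{LP}(g^\polar,f^\polar)$ with $g^\polar(y)=1$, we apply Step 1 with the roles exchanged: $g^\polar$ plays the role of $f$, and $f^\polar$ (differentiable at $y\ne0$) plays the role of $g$. This yields that $x=\nabla f^\polar(y)$ lies in $\mathrm{Cri}_{LP}(f,g)$, that $f(x)=1$, and that $\nabla g(x)=y$. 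Steps 1 and 2 together show that $\partial g$ and $\partial f^\polar$ are mutually inverse between the two normalized slices.

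\textbf{Step 3 (full bijection).} The map $\Phi(x)=f(x)\nabla g(x)$ sends critical points to critical points, because $\mathrm{Cri}_{LP}$ is a union of open rays. We propose the inverse $\Psi(y)=g^\polar(y)\nabla f^\polar(y)$. First,
$$g^\polar(\Phi(x))=f(x)\,g^\polar(\nabla g(x))=f(x).$$
Next, by zero-homogeneity, $\nabla f^\polar(\Phi(x))=\nabla f^\polar(\nabla g(x/f(x)))=x/f(x)$ by Step 1. Therefore $\Psi(\Phi(x))=x$. The identity $\Phi(\Psi(y))=y$ follows symmetrically from Step 2.

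The main obstacle is bookkeeping rather than depth. One must check that every point where a gradient is taken is nonzero, so that Fr\'echet differentiability applies. One must also check that the identifications via $J:X\to X^{**}$ used in the Claim are consistent when it is applied to the dual pair.
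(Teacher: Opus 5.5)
For $p=1$ your Steps 1--3 are correct, and they are essentially the argument the paper leaves implicit after Theorem~\ref{th:critical(f,g)}. The Claim in that proof, together with zero-homogeneity, gives $g^\polar(\nabla g(x))=1$ and $\nabla f^\polar(\nabla g(x))=x$ on $\mathrm{Cri}_{LP}(f,g)\cap f^{-1}(1)$. The reverse direction comes from $f^{\polar\polar}=f$, $g^{\polar\polar}=g$ and reflexivity. Your composition $\Psi\circ\Phi$ with $\Psi=g^\polar\cdot\partial f^\polar$ is the same one the paper computes in the proof of Theorem~\ref{th:ratio-sublevel-equi}.

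The gap is your first paragraph, the reduction from general $p$ to $p=1$. The critical sets do transport: $\mathrm{Cri}_{LP}(f,g)=\mathrm{Cri}_{LP}(f^{1/p},g^{1/p})$, and similarly on the dual side, since $g^\polar=\kappa_p\big((g^{1/p})^\polar\big)^p$ with $\kappa_p=(p-1)^{p-1}/p^p$. But the normalizations and the inverse do not transport. With $G=g^{1/p}$ one has $\partial g(x)=pG(x)^{p-1}\nabla G(x)$. Your Step 1 identities for the pair $(f^{1/p},G)$ then yield, for $x\in\mathrm{Cri}_{LP}(f,g)\cap f^{-1}(1)$,
\[ g^\polar(\nabla g(x))=(p-1)^{p-1}g(x)^{p-1},\qquad \nabla f^\polar(\nabla g(x))=(p-1)^{p-1}g(x)^{p-1}\,x. \]
Concretely, take $p=2$, $f(x)=\sum_i a_ix_i^2$ and $g(x)=\|x\|_2^2$ on $\R^n$, so that $g^\polar(y)=\frac14\|y\|_2^2$ and $f^\polar(y)=\frac14\sum_i y_i^2/a_i$; both $g$ and $f^\polar$ are differentiable. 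The nontrivial critical point $x=e_1/\sqrt{a_1}$ satisfies $f(x)=1$. However $g^\polar(\nabla g(x))=1/a_1$ and $\nabla f^\polar(\nabla g(x))=x/a_1$.

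So the second assertion is false for $p>1$ (whenever $a_1\neq 1$), and no reduction can produce it. Your claim that ``the bijection up to the stated normalizations is unchanged'' is therefore wrong. State the normalized bijection, and the inverse $\partial f^\polar$, for $p=1$ only. That is the only case in which the paper uses it, e.g.\ in Theorem~\ref{th:Morse-Rothe}, where $f,g\in\cvp_c^1(X)$.

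The first assertion does survive for $p>1$, but it needs an actual argument rather than ``the rays are unchanged''. First, $f(x)\nabla g(x)$ is a positive multiple of $\nabla G(x)$, so $f\cdot\partial g$ induces the same bijection of rays as $f^{1/p}\cdot\nabla G$ does in the $p=1$ case. Second, on each ray it acts as $t\mapsto t^{2p-1}f(x)\nabla g(x)$, because $\partial g$ is $(p-1)$-homogeneous, so it is bijective within each ray as well.
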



\subsubsection{
Homotopy equivalence of sublevel sets under polarity
}

In order to study Morse criticality of RC functions, we need to first investigate the topological properties of sublevel sets, which is also useful in min-max criticality and applications. 
It is natural to consider a RC function $f/g$ with $f$ and $g$ in  $\cvx_{0,+}^1(X)$.

Moreover, for any $f\in \cvx_{0,+}^1(X)$, by Proposition \ref{pro:CV_1&Cv_p}, we have  
(which is equivalent to norm-like dual in this case): 
$$f^\polar (x^*):=\sup_{x\in X\setminus\{0\}}\frac{\langle x^*,x\rangle}{f(x)},\;\forall x^*\in X^*.$$
We simply use $\{f/g\le c\}$ to denote the sublevel set 
$$\Big\{x\in X\setminus\{0\}: \frac{f(x)}{g(x)}\le c\Big\}.$$


Below, we show a technical lemma which will be used in the proof of Theorems \ref{th:ratio-sublevel-equi}, \ref{th:Morse-Rothe} and \ref{th:index-critical}
\begin{lemma}\label{lem:f/g-zyx-tech}
Let $f,g\in \cvx_{0,+}^1(X)$. For any $x\ne0$,  $y\in\partial g(x)$, and $z'\in\partial f^\circ (y)$, we have $y\ne0$ and $tz+(1-t)x/f(x)\ne0$ for any  $t\in [0,1]$, where $z\in X$ is the unique element related to $z'$ via the natural isomorphism 
map $J:X\to X^{**}$. Furthermore, we have
$$\frac{g^\polar (y)}{f^\polar (y)}  \le\frac{f(x)}{g(x)}\;\text{ and }\;\frac{f(tz+(1-t)x/f(x))}{g(tz+(1-t)x/f(x))}\le \frac{f(x)}{g(x)},\;\forall t\in [0,1]$$
Equality holds if and only if $x$ is a Lagrange critical point of $(f,g)$. 
\end{lemma}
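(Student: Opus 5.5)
The plan is to use only Euler's identity for one-homogeneous convex functions, the defining inequality $\langle x^*,x\rangle\le f^\polar(x^*)f(x)$ (immediate from Proposition \ref{pro:CV_1&Cv_p}(ii)), and the Claim inside the proof of Theorem \ref{th:critical(f,g)}. Normalize throughout by zero-homogeneity of $\partial g$. Since $x\ne0$ and $g$ is positive-definite, $g(x)>0$ and $\langle y,x\rangle=g(x)>0$, so $y\ne0$. Applying the Claim to $x/g(x)$ gives $g^\polar(y)=1$. For $z'\in\partial f^\polar(y)$, Euler's identity on $X^*$ gives $\langle y,z\rangle=f^\polar(y)$. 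Moreover $z'\in\partial f^\polar(y)$ means $\langle v,z\rangle\le f^\polar(v)$ for all $v$ with equality at $v=y$, so $f(z)=(f^\polar)^\polar(z)\le1$ by Proposition \ref{pro:CV_p-basic}(v). Also $f^\polar(y)>0$ because $f^\polar$ is positive-definite (Corollary \ref{cor:function-spaces}).

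For the first inequality, note that $g(x)=\langle y,x\rangle\le f^\polar(y)f(x)$, hence
\[
\frac{g^\polar(y)}{f^\polar(y)}=\frac1{f^\polar(y)}\le\frac{f(x)}{g(x)}.
\]
Equality forces $\langle y,x\rangle=f^\polar(y)f(x)$, i.e.\ $y$ attains the supremum defining $f^\polar(y)$ at $x/f(x)$. By the converse direction of the Claim (or directly by checking the subgradient inequality), this says $y/f^\polar(y)\in\partial f(x)$. Hence $y\in\partial g(x)\cap c\,\partial f(x)$ with $c>0$, and $x$ is Lagrange critical. Conversely, if $x$ is Lagrange critical, Theorem \ref{th:critical(f,g)} and its proof show equality holds; more precisely, taking $cu\in\partial f(x)$ with $u\in\partial g(x)$, one computes the ratio directly. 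I expect one subtlety here: equality must hold for \emph{every} $y\in\partial g(x)$, not only for some. So I would read ``equality holds'' as equality for the given choice of $y$, with the critical point witnessed by that $y$. If the statement means ``for some $y$'', then I would pick $y$ in $\mathrm{cone}\,\partial f(x)\cap\partial g(x)$.

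For the second inequality, set $w_t=tz+(1-t)x/f(x)$. Pairing with $y$ gives
\[
\langle y,w_t\rangle=t f^\polar(y)+(1-t)\frac{g(x)}{f(x)}.
\]
By the first inequality the first term is at least $g(x)/f(x)>0$, so $\langle y,w_t\rangle\ge g(x)/f(x)>0$ and in particular $w_t\ne0$. Next, $g(w_t)\ge\langle y,w_t\rangle/g^\polar(y)=\langle y,w_t\rangle\ge g(x)/f(x)$. By convexity, $f(w_t)\le tf(z)+(1-t)\le1$. Dividing, $f(w_t)/g(w_t)\le f(x)/g(x)$. If equality holds for some $t$, then all intermediate inequalities are equalities. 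For $t>0$ this forces $f^\polar(y)=g(x)/f(x)$, which is the equality case above and hence Lagrange criticality. For $t=0$ the ratio trivially equals $f(x)/g(x)$, so the equivalence should be read for $t\in(0,1]$ (or with equality in the first inequality). Conversely, at a critical point $f^\polar(y)=g(x)/f(x)$, and then $\langle y,w_t\rangle=g(x)/f(x)$. Equality in $f(w_t)\le1$ needs $f(z)=1$, which follows from $\langle y,z\rangle=f^\polar(y)$ together with $f(z)\le 1$. Then $f(w_t)=1$ holds because $y/f^\polar(y)$ is a common subgradient of $f$ at $z$ and at $x/f(x)$, so $f$ is affine on the segment. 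The corresponding equality for $g$ follows similarly from $y\in\partial g$ at both endpoints.

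The main obstacle is this converse equality step for the segment, i.e.\ showing that $y$ is a subgradient of $g$ at $z$ as well. I would try to show that $g(z)=\langle y,z\rangle$ using $g(z)\ge\langle y,z\rangle$ and criticality relations. If this fails, I would settle for proving the equality characterization only for the first inequality and for the endpoint comparison.
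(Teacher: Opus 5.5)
Your proof of $y\neq0$, of $w_t\neq0$, and of both inequalities is correct and is essentially the paper's argument. The paper bounds $g(w_t)$ from below by the subgradient inequality $g(w_t)-g(x/f(x))\ge\langle y,w_t-x/f(x)\rangle$. This is the same estimate as your $g(w_t)\ge\langle y,w_t\rangle/g^\circ(y)$, since $\langle y,x\rangle=g(x)$ and $g^\circ(y)=1$. The paper also uses $f(z)=1$ without comment, whereas you justify $f(z)\le1$.

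Your reading of the equality clause is the right one, and it is exactly what the paper proves. Equality in the first inequality holds iff $y/f^\circ(y)\in\partial f(x)$, i.e.\ iff this particular $y$ witnesses criticality. For other $y\in\partial g(x)$ it can fail: take $f=\|\cdot\|_2$, $g=\|\cdot\|_1$ on $\mathbb{R}^2$, $x=(1,0)$, $y=(1,1)$. Then $x$ is critical but $g^\circ(y)/f^\circ(y)=1/\sqrt2<1$. The paper says nothing about equality in the second inequality.

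The step you flag as the main obstacle cannot be carried out, because the converse for the segment is false: $y$ need not be a subgradient of $g$ at $z$. Here is a counterexample.
\begin{itemize}
\item Take $f=\|\cdot\|_1$, $g=\|\cdot\|_2$ on $\mathbb{R}^2$ and $x=y=(1,1)/\sqrt2$.
\item Then $\partial g(x)=\{y\}$ and $\partial f(x)=\{\sqrt2\,y\}$, so $x$ is critical with value $\sqrt2$ and $y$ gives equality in the first inequality.
\item Since $f^\circ=\|\cdot\|_\infty$, you may take $z=(1,0)\in\partial f^\circ(y)$.
\item Then $w_t=\bigl(\tfrac{1+t}{2},\tfrac{1-t}{2}\bigr)$ satisfies $f(w_t)=1$, so your affine-on-the-segment argument for $f$ is fine.
\item However $g(w_t)=\sqrt{(1+t^2)/2}$, so $f(w_t)/g(w_t)<\sqrt2$ for every $t\in(0,1]$.
\item Indeed $g(z)=1>\langle y,z\rangle=1/\sqrt2$.
\end{itemize}

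So drop that attempt and state your fallback as the result. For $t\in(0,1]$, equality in the second inequality forces $f^\circ(y)=g(x)/f(x)$, hence criticality witnessed by $y$, but the converse does not hold.
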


\begin{proof}
It follows from $y\in\partial g(x)$ that $g^\polar (y)=1$ and $x/g(x)\in \partial g^\polar (y)$ and $\langle y,x\rangle=g(x)$. Thus, $y\ne0$ and  
$$f^\polar (y)=\sup_{z}\frac{\langle y,z\rangle}{f(z)}\ge \frac{\langle y,x\rangle}{f(x)}=\frac{g(x)}{f(x)}$$
which implies
\begin{equation} \label{eq:g*/f*<f/g}
\frac{g^\polar (y)}{f^\polar (y)}=\frac{1}{f^\polar (y)} \le\frac{f(x)}{g(x)}. 
\end{equation}
The equality holds if, and only if $f^\polar ( y)f(x)=\langle  y,x\rangle$, if and only if $ y/f^\polar ( y)\in \partial f(x)$. Also, notice that $y\in\partial g(x)$. We finally obtain that $ y/f^\polar ( y)\in \partial f(x)$ if and only if $$\frac{y}{f^\polar ( y)}\in \partial f(x)\bigcap  \frac{1}{f^\polar ( y)}\partial g(x)$$
meaning that the value $f(x)/g(x)=1/f^\polar ( y)$ is a Lagrange critical value of $(f,g)$.

For the second statement, taking $x'=x/f(x)$, we have $f(x')=1$ and $f(x')/g(x')=f(x)/g(x)$. Then, we may assume without loss of generality that $f(x)=1$. 

Since $z'\in\partial f^\circ (y)\subset X^{**}$, there exists the unique $z\in X$ such that $z'=Jz$.  Then  \begin{align*}
g(tz+(1-t)x)-g(x)&\ge \langle y,tz+(1-t)x-x\rangle
\\&=t\langle y,z-x\rangle
\\&=t\langle y,z\rangle- t\langle y,x\rangle
\\&=t\langle Jz,y\rangle-tg(x)
\\&=t\langle z',y\rangle-tg(x)
\\&=tf^\polar (y)-tg(x)
\\&=t\Big(\frac{f^\polar (y)}{g^\polar (y)}-\frac{g(x)}{f(x)}\Big)\ge0
\end{align*}
due to the fact that $f(x)=1$ and $g^\polar (y)=1$ and the inequality \eqref{eq:g*/f*<f/g}. 
Since $x\ne0$, and the above inequality, we have $g(tz+(1-t)x)\ge g(x)>0$. This means $tz+(1-t)x\ne 0$. 

By the convexity of $f$, $f(tz+(1-t)x)\le tf(z)+(1-t)f(x)=1=f(x) $, we obtain 
$$  \frac{f(tz+(1-t)x)}{g(tz+(1-t)x)}\le \frac{f(x)}{g(x)}  .$$
For general $x\ne0$, we replace $x$ by $x/f(x)$, and then the second statement follows. 
\end{proof}

\begin{defn}[Ambient Homotopy Equivalence]
Let \(X\) and \(Y\) be topological spaces, and let \(S \subseteq X\) and \(T \subseteq Y\) be subspaces. Suppose there exist continuous maps
\[
\varphi: S \to T \qquad \text{and} \qquad \psi: T \to S
\]
such that the following conditions hold:
\begin{enumerate}
    \item The composition \(\psi \circ \varphi: S \to X\) is homotopic to the inclusion map \(i_S: S \hookrightarrow X\).
    \item The composition \(\varphi \circ \psi: T \to Y\) is homotopic to the inclusion map \(i_T: T \hookrightarrow Y\).
\end{enumerate}
Then the subspaces \(S\) and \(T\) are said to be \textbf{ambient homotopy equivalent} with respect to the ambient spaces \(X\) and \(Y\).

\end{defn}

\begin{theorem}\label{th:ratio-sublevel-equi-w}
Let $f,g\in \cvx_{0,+}^1(X)$ be such that 
$f^\polar $ and $g$ are $C^1$-smooth\footnote{We say a convex function is $C^1$-smooth at a point if its subgradient is unique valued and continuous at this point. } (or, $f$ and $g^\polar $ are $C^1$-smooth) at nonzeros. 
Then, $\forall c>0$, the restricted sub-level set $\{x\in f^{-1}(1):f(x)/g(x)\le c\}$ is ambient  homotopy equivalent to $\{x\in (g^\polar) ^{-1}(1):g^\polar (x)/f^\polar (x)\le c\}$ with respect to the ambient sublevel sets $ \{f/g\le c\}$ and $\{g^\polar /f^\polar \le c\}$. 

\end{theorem}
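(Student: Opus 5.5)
The plan is to build the two maps explicitly from the gradients supplied by the $C^1$ hypothesis, and to obtain both homotopies from straight-line segments controlled by Lemma \ref{lem:f/g-zyx-tech}. Throughout write $S_c=\{x\in f^{-1}(1):f(x)/g(x)\le c\}$ and $T_c=\{y\in (g^\polar)^{-1}(1):g^\polar(y)/f^\polar(y)\le c\}$. I treat the case where $g$ and $f^\polar$ are $C^1$-smooth at nonzero points. The other case ($f$ and $g^\polar$ smooth) is the mirror image: run the same argument for the pair $(g^\polar,f^\polar)$ on $X^*$, using $f^{\polar\polar}=f$, $g^{\polar\polar}=g$ from Proposition \ref{pro:CV_p-basic}, Corollary \ref{cor:function-spaces}, and reflexivity.

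\emph{Step 1: the maps.} Define $\varphi:S_c\to T_c$ by $\varphi(x)=\nabla g(x)$, the unique element of $\partial g(x)$.
\begin{itemize}
\item By the Claim in the proof of Theorem \ref{th:critical(f,g)}, applied to $x/g(x)$ and using the zero-homogeneity \eqref{eq:0-homogeneity-partial}, we get $g^\polar(\varphi(x))=1$.
\item By Lemma \ref{lem:f/g-zyx-tech}, $g^\polar(\varphi(x))/f^\polar(\varphi(x))\le f(x)/g(x)\le c$, so $\varphi(x)\in T_c$.
\end{itemize}
Define $\psi:T_c\to S_c$ by $\psi(y)=z$, where $Jz=\nabla f^\polar(y)$.
\begin{itemize}
\item The same Claim, applied to $f^\polar$ at $y/f^\polar(y)$ together with $f^{\polar\polar}=f$, gives $f(z)=1$.
\item Lemma \ref{lem:f/g-zyx-tech} applied to the pair $(g^\polar,f^\polar)$, which lies in $\cvx_{0,+}^1(X^*)$, at the point $y$ with $Jz\in\partial f^\polar(y)$, gives $f(z)/g(z)\le g^\polar(y)/f^\polar(y)\le c$. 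Hence $z\in S_c$.
\end{itemize}
Both maps are continuous because $\partial g$ and $\partial f^\polar$ are single-valued and norm-continuous away from $0$; note $0\notin S_c$ and $0\notin T_c$.

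\emph{Step 2: the homotopies.} For $x\in S_c$ set $y=\varphi(x)$ and $z=\psi(y)$, so that $z\in\partial f^\polar(y)$ up to $J$. Define
$$H(x,t)=tz+(1-t)x,\qquad t\in[0,1].$$
Since $f(x)=1$, the second inequality of Lemma \ref{lem:f/g-zyx-tech} says exactly that $H(x,t)\ne0$ and $f(H(x,t))/g(H(x,t))\le f(x)/g(x)\le c$. So $H$ is a homotopy inside the ambient set $\{f/g\le c\}$ from the inclusion of $S_c$ to $\psi\circ\varphi$. It is continuous because $\varphi$ and $\psi$ are.

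Symmetrically, for $y\in T_c$ let $z=\psi(y)$ and $w=\varphi(z)=\nabla g(z)$. Then $Jz\in\partial f^\polar(y)$ and $w\in\partial g(z)=\partial (g^\polar)^\polar(z)$. Lemma \ref{lem:f/g-zyx-tech} for the pair $(g^\polar,f^\polar)$ at $y$ (where $g^\polar(y)=1$) shows that
$$K(y,t)=tw+(1-t)y$$
stays in $\{g^\polar/f^\polar\le c\}$ and avoids $0$. This gives the second ambient homotopy, from the inclusion of $T_c$ to $\varphi\circ\psi$.

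\emph{Main obstacle.} The delicate point is bookkeeping rather than estimates. First, Lemma \ref{lem:f/g-zyx-tech} must be applied on the dual side with the roles $(f,g)\mapsto(g^\polar,f^\polar)$. Doing so requires the identifications $f^{\polar\polar}=f$ and $g^{\polar\polar}=g$, and the canonical map $J$, so that "$z''\in\partial (f^\polar)^\polar$" is literally $\partial g$ evaluated at points of $X$. Second, the normalizations $f(x)=1$ and $g^\polar(y)=1$ must be checked, since the Lemma's segment uses $x/f(x)$, respectively $y/g^\polar(y)$. Third, the continuity of $H$ and $K$ needs the gradient maps to be continuous into the norm topology, which is exactly the $C^1$ assumption at nonzero points; this is why that hypothesis appears in the statement. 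Note that the homotopies leave $S_c$ and $T_c$ but stay in the ambient sublevel sets, which is precisely what ambient homotopy equivalence requires.
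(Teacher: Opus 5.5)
Under the hypothesis that $g$ and $f^\polar$ are $C^1$, your argument is the paper's proof. It uses the same maps $\partial g$ and $J^{-1}\partial f^\polar$. It uses the same segment $t\,J^{-1}\partial f^\polar(\partial g(x))+(1-t)x$, kept inside $\{f/g\le c\}$ by Lemma~\ref{lem:f/g-zyx-tech}, together with the symmetric segment on the dual side. That part is correct.

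The gap is your one-line reduction of the alternative hypothesis. Your argument, applied to a pair $(F,G)$, needs $G$ and $F^\polar$ to be $C^1$. For $(F,G)=(g^\polar,f^\polar)$ this means $f^\polar$ and $g^{\polar\polar}=g$ must be $C^1$, which is the hypothesis you already used. So the swap gives nothing new: the statement and each of the two hypotheses are separately invariant under $(f,g)\mapsto(g^\polar,f^\polar)$.

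When only $f$ and $g^\polar$ are $C^1$, the available single-valued continuous maps are $\partial f$ and $\partial g^\polar$. These move the ratio the wrong way:
\begin{itemize}
\item By Lemma~\ref{lem:fA/g-zyx-tech1} with $A=\mathrm{id}$, $y'=\partial f(x)$ satisfies $g^\polar(y')/f^\polar(y')\ge f(x)/g(x)$.
\item Likewise, $Jz=\partial g^\polar(y)$ gives $f(z)/g(z)\ge g^\polar(y)/f^\polar(y)$.
\end{itemize}
Running your argument for the pair $(g,f)$ is the natural way to use these maps, since its first-case hypothesis is exactly ``$g^\polar$ and $f$ are $C^1$''. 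What it yields is the ambient equivalence of the superlevel sets $\{f/g\ge c\}$ and $\{g^\polar/f^\polar\ge c\}$, not of the sublevel sets in the statement.

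As written, your proof therefore covers only the first hypothesis. The second one needs a genuinely different argument. The paper itself only asserts that this case is ``similar'' and omits the details, so it does not supply one either.
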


\begin{proof}
First, we focus on the case that both $f^\polar $ and $g$ are smooth.  
Consider the  continuous map $\partial g: X\setminus\{0\}\to X^*\setminus\{0\}$, and 
restrict it on $f^{-1}(1)$, we 
have 
$\partial g(f^{-1}(1))\subset (g^\polar) ^{-1}(1)$. 

\vspace{0.2cm}


Claim 1: 
$\partial g$ maps $\{x\in f^{-1}(1):f(x)/g(x)\le c\}$ into $\{x\in (g^\polar) ^{-1}(1):g^\polar (x)/f^\polar (x)\le c\}$.  

Proof of Claim 1: 
By definition of the sublevel set $\{x\in f^{-1}(1):f(x)/g(x)\le c\}$, for any $x\in \{x\in f^{-1}(1):f(x)/g(x)\le c\}$, we have $x\ne 0$, $f(x)=1$ and $f(x)/g(x)\le c$. Now, for any $y\in\partial g(x)$, by Lemma \ref{lem:f/g-zyx-tech}, we have $g^\polar (y)$, $y\ne0$ and $g^\polar (y)/f^\polar(y)\le f(x)/g(x)\le c$.

\vspace{0.2cm}

Claim 2: $\partial f^\polar$ maps $\{x\in (g^\polar) ^{-1}(1):g^\polar (x)/f^\polar (x)\le c\}$ into $\{x\in f^{-1}(1):f(x)/g(x)\le c\}$. 

The proof of Claim 2 is similar to that of Claim 1, and thus we omit it. 

We shall prove that $\partial f^\polar \circ \partial g$ is homotopic to the inclusion map $i_{S}$, where $S:=\{x\in f^{-1}(1): f(x)/g(x)\le c\}$ and $c> 0$ is fixed. 

Define the map $h:f^{-1}(1)\to f^{-1}(1)$ by $h(x)=J^{-1}\partial f^\polar \big(\partial g(x)\big)$. Note that $h(x)$ is actually the element $z$ appearing in Lemma \ref{lem:f/g-zyx-tech}, and thus $th(x)+(1-t)x\ne0$ for any $t\in[0,1]$. 


Let $H:S\times[0,1]\to \{f/g\le c\}$ be defined by  $$H(x,t)=th(x)+(1-t)x,\;\; x\in S,\, 0\le t\le1.$$
It is easy to see that $H$ is continuous, $H(\cdot,0)=i_S(\cdot)$ and $H(x,1)=h(x)$. 
For any $x\in S$, by Lemma \ref{lem:f/g-zyx-tech}, 
$$ \frac{f(H(x,t))}{g(H(x,t))}=\frac{f(th(x)+(1-t)x)}{g(th(x)+(1-t)x)}\le \frac{f(x)}{g(x)} \le c$$
which proves $H(S,t)\subset \{f/g\le c\}$,  $\forall t\in [0,1]$. 
In consequence,  $h:=J^{-1}\partial f^\polar \circ \partial g$ is homotopic to the  inclusion map $i_S$. 

In a similar manner, one can show that $\partial g\circ\partial f^\polar  $ is homotopic to the  inclusion map $i_{S'}$, where $S'=\{y\in (g^\polar) ^{-1}(1):g^\polar (y)/f^\polar (y)\le c\}$. We then proved that the topological spaces $S$ and $S'$ are ambient homotopy equivalent with respect to the ambient sublevel sets $ \{f/g\le c\}$ and $\{g^\polar /f^\polar \le c\}$.

The case of smoothness for $f$ and $g^\polar $ is similar and we omit the details. 
\end{proof}

In the next theorem, we establish a stronger version of Theorem \ref{th:ratio-sublevel-equi-w}.

\begin{theorem}\label{th:ratio-sublevel-equi}
Let $f,g\in \cvp_c^1(X)$ be such that 
$f^\polar $ and $g$ are $C^1$-smooth 
(or, $f$ and $g^\polar $ are $C^1$-smooth) at nonzeros. 
Then, $\forall c>0$, the restricted sub-level set $\{x\in f^{-1}(1):f(x)/g(x)\le c\}$ is homotopy equivalent to $\{x\in (g^\polar) ^{-1}(1):g^\polar (x)/f^\polar (x)\le c\}$, and moreover, the sublevel sets $ \{f/g\le c\}$ and $\{g^\polar /f^\polar \le c\}$ are homotopy equivalent. 

\end{theorem}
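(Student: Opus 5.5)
Assume first that $f^\polar$ and $g$ are $C^1$ at nonzeros; the other case is symmetric. The strategy is to upgrade the ambient homotopy equivalence of Theorem \ref{th:ratio-sublevel-equi-w} into a genuine one. We do this by retracting each ambient sublevel set onto its normalized slice, using the extra continuity and coercivity available in $\cvp_c^1$.

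Set $S=\{x\in f^{-1}(1):f/g\le c\}$ and $S'=\{y\in (g^\polar)^{-1}(1):g^\polar/f^\polar\le c\}$.

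\textbf{Step 1 (radial retractions).} Since $f\in\cvp_c^1(X)$, Proposition \ref{prop:character:CV_c^1} gives $C_1\|x\|\le f(x)\le C_2\|x\|$. Hence $f$ is continuous and positive on $X\setminus\{0\}$, and $r(x)=x/f(x)$ is a continuous map $\{f/g\le c\}\to S$. The ratio $f/g$ is $0$-homogeneous, so the straight-line homotopy $(x,s)\mapsto (1-s)x+s\,x/f(x)$ stays on the open ray through $x$. It therefore stays inside $\{f/g\le c\}$, and it exhibits $S$ as a strong deformation retract of $\{f/g\le c\}$. By Corollary \ref{cor:function-spaces}, $g^\polar\in\cvp_c^1(X^*)$, and the same argument with $y\mapsto y/g^\polar(y)$ shows that $S'$ is a deformation retract of $\{g^\polar/f^\polar\le c\}$.

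\textbf{Step 2 (upgrading the ambient equivalence).} Take $\varphi=\partial g|_S:S\to S'$ and $\psi=J^{-1}\partial f^\polar|_{S'}$, which land in $S'$ and $S$ by Claims 1 and 2 of Theorem \ref{th:ratio-sublevel-equi-w}; here $\partial f^\polar(y)$ has $f$-value $1$ by the Claim in Theorem \ref{th:critical(f,g)}. That theorem provides a homotopy $H$ from $i_S$ to $\psi\circ\varphi$ inside $\{f/g\le c\}$. Composing with the retraction, $r\circ H$ is a homotopy in $S$ from $r\circ i_S=\mathrm{id}_S$ to $r\circ\psi\circ\varphi=\psi\circ\varphi$, because $\psi\circ\varphi$ already lands in $S$. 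The same construction on the dual side gives $\varphi\circ\psi\simeq \mathrm{id}_{S'}$. Hence $S\simeq S'$.

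Continuity of $r\circ H$ needs $H(x,t)\neq0$, which is the nonvanishing statement of Lemma \ref{lem:f/g-zyx-tech}, together with continuity of $f$ away from $0$. Continuity of $\varphi$ and $\psi$ comes from the $C^1$ hypothesis, understood as norm-continuity of the gradient maps into $X^*$ and $X^{**}\cong X$ (via reflexivity).

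\textbf{Step 3 (conclusion).} Chaining the equivalences gives
$$\{f/g\le c\}\simeq S\simeq S'\simeq \{g^\polar/f^\polar\le c\}.$$

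\textbf{Main obstacle.} The main point to check carefully is Step 1 in infinite dimensions: the normalization map must be well-defined and continuous on the whole sublevel set. This is exactly where $\cvp_c^1$ is needed rather than $\cvx_{0,+}^1$. Without it, $f$ could vanish nowhere yet fail to be continuous, or could be infinite on a dense set, and then $x/f(x)$ breaks down. The two-sided norm bounds of Proposition \ref{prop:character:CV_c^1}, applied to $f$ and to $g^\polar$, settle this.
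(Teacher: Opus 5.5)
Your argument is correct under the first hypothesis ($f^\polar$ and $g$ of class $C^1$). For the restricted sublevel sets it is the paper's argument: the paper's homotopy $\tilde H(x,t)=\frac{th(x)+(1-t)x}{f(th(x)+(1-t)x)}$ is exactly your $r\circ H$.

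You differ from the paper in how you pass to the full sublevel sets.
\begin{itemize}
\item The paper builds the positively homogeneous maps $f\cdot\partial g$ and $g^\polar\cdot\partial f^\polar$ directly between $\{f/g\le c\}$ and $\{g^\polar/f^\polar\le c\}$. It then checks, using Lemma~\ref{lem:f/g-zyx-tech} at the rescaled point $x/f(x)$, that the straight-line homotopy $t\,\psi\varphi(x)+(1-t)x=f(x)\bigl(tz+(1-t)x/f(x)\bigr)$ stays inside the sublevel set.
\item You instead use the $0$-homogeneity of the ratio: radial normalization is a strong deformation retraction of each sublevel cone onto its slice. The second assertion then follows formally from the first, and needs only continuity and positivity of $f$ and $g^\polar$ away from the origin.
\end{itemize}
Your route has a small bonus. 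By zero-homogeneity of $\partial g$, your composite $\{f/g\le c\}\to S\to S'\hookrightarrow\{g^\polar/f^\polar\le c\}$ is literally $x\mapsto\partial g(x)$, with homotopy inverse $J^{-1}\partial f^\polar$. This is the form in which the equivalence is later quoted in Theorem~\ref{th:Morse-Rothe}; the paper's maps are rescalings of these.

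One caution, which applies equally to the paper (it also says the other case is ``similar''): the alternative hypothesis ($f$ and $g^\polar$ of class $C^1$) does not follow by symmetry.
\begin{itemize}
\item Each hypothesis is invariant under $(f,g)\mapsto(g^\polar,f^\polar)$, so passing to the dual pair does not convert one case into the other.
\item Under the alternative hypothesis, the single-valued maps available are $\partial f$ and $\partial g^\polar$. By Lemma~\ref{lem:fA/g-zyx-tech1} with $A=\mathrm{id}$, these satisfy $g^\polar(y')/f^\polar(y')\ge f(x)/g(x)$ for $y'=\partial f(x)$, so they \emph{raise} the ratio.
\item Mirroring your argument with these maps is the same as applying the first case to the pair $(g,f)$. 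That gives a homotopy equivalence of the superlevel sets $\{f/g\ge c\}$ and $\{g^\polar/f^\polar\ge c\}$, not of the sublevel sets.
\item The ratio-lowering maps $\partial g$ and $\partial f^\polar$ may be multivalued in this case.
\end{itemize}
So the sublevel-set claim under the second hypothesis needs a separate argument, for instance a selection or approximation step. Your ``the other case is symmetric'' does not supply it, and neither does the paper.
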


\begin{proof}
The proof is almost the same as that of Theorem \ref{th:ratio-sublevel-equi-w} with only slight modification of the construction of continuous deformation map $H$. 
To show the homotopy equivalence between the restricted sublevel sets $S:=\{x\in f^{-1}(1):f(x)/g(x)\le c\}$ and $S':=\{x\in (g^\polar) ^{-1}(1):g^\polar (x)/f^\polar (x)\le c\}$, we 
need to use $\tilde H:S\times[0,1]\to S$ defined as 
$$\tilde{H}(x,t)=\frac{th(x)+(1-t)x}{f(th(x)+(1-t)x)},\;\; x\in S,\, 0\le t\le1,$$
instead of $H$ in the proof of Theorem \ref{th:ratio-sublevel-equi-w}. 
By such $\tilde H$, it is clear that  $h:=\partial f^\polar \circ \partial g$ is homotopic to the  identity $\mathrm{id}|_S$.  That is, $\gradientp g$ is a homotopy equivalence, and $\gradientp f^\polar$ a homotopy inverse to $\gradientp g$. 
In a similar manner, $\partial g\circ\partial f^\polar  $ is homotopic to the  identity $\mathrm{id}|_{S'}$. 

To show the homotopy equivalence between the sublevel sets $ \{f/g\le c\}$ and $\{g^\polar /f^\polar \le c\}$, we should 
replace $\gradientp g$ and $\gradientp f^\polar$ by $\varphi:=f\cdot\gradientp g$ and $\psi:=g^\polar\cdot\gradientp f^\polar$, respectively. 
Note that $\varphi$ and $\psi$ are continuous, and $\psi(\varphi(x))=g^\polar(f(x)\gradientp g(x))\gradientp f^\polar(f(x)\gradientp g(x))=f(x)\gradientp f^\polar( \gradientp g(x))$ due to $g^\polar(\gradientp g(x))=1$, the zero-homogeneity of $\gradientp f^\polar$, and the one-homogeneity of $g^\polar$. 
Replace $H$ in the proof of Theorem \ref{th:ratio-sublevel-equi-w} by the continuous map $H':\{f/g\le c\}\times [0,1]\to \{f/g\le c\}$ defined as 
$$H'(x,t)=t \psi\circ\varphi(x)+(1-t)x,\;\; x\in \{f/g\le c\},\, 0\le t\le1.$$
Then, $H'(x,t)=f(x)(tz+(1-t)x/f(x))$ with $z=\gradientp f^\polar( \gradientp g(x))$, and we can use Lemma \ref{lem:f/g-zyx-tech} to derive $H'(\{f/g\le c\},t)\subset\{f/g\le c\}$, $\forall t\in[0,1]$.
Hence, $\psi|_{\{g^\polar /f^\polar \le c\}}\circ\varphi|_{\{f/g\le c\}}\simeq \mathrm{id}|_{\{f/g\le c\}}$. 
Similarly, we have $\varphi|_{\{f/g\le c\}}\circ\psi|_{\{g^\polar /f^\polar \le c\}} \simeq \mathrm{id}|_{\{g^\polar /f^\polar \le c\}}$. The verification is done.
\end{proof}

According to Theorem \ref{th:ratio-sublevel-equi}, we have: 
\begin{corollary}\label{cor:handle-RC}
If $f/g$ and $g^\polar /f^\polar $ are Morse functions, then 
the handle decompositions associated to $f/g$ are isomorphism to that of $g^\polar /f^\polar $. 
\end{corollary}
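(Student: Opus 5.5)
The corollary is to be read as a statement about Morse data: for Morse functions, the handle decomposition is determined, up to isomorphism, by how the topology of the sublevel sets changes as the level crosses the critical values. The approach is therefore to transport the whole sublevel filtration of $f/g$ to that of $g^\polar/f^\polar$ using the homotopy equivalences from Theorem \ref{th:ratio-sublevel-equi}.

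The first step is to check that these homotopy equivalences are compatible with the filtration. Take $\varphi=f\cdot\partial g$ and $\psi=g^\polar\cdot\partial f^\polar$ as in the proof of Theorem \ref{th:ratio-sublevel-equi}; their formulas do not depend on $c$. By Lemma \ref{lem:f/g-zyx-tech}, $\varphi$ maps $\{f/g\le c\}$ into $\{g^\polar/f^\polar\le c\}$ for every $c$, and similarly for $\psi$. The homotopies $H'(x,t)=t\,\psi\circ\varphi(x)+(1-t)x$ also do not increase the value of $f/g$ (again by Lemma \ref{lem:f/g-zyx-tech}). Hence for all $a<b$ the restrictions commute with the inclusions $\{f/g\le a\}\hookrightarrow\{f/g\le b\}$, and they give homotopy equivalences of pairs
$$(\{f/g\le b\},\{f/g\le a\})\simeq(\{g^\polar/f^\polar\le b\},\{g^\polar/f^\polar\le a\}).$$
In other words, the two filtrations are filtered homotopy equivalent.

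Next I would read off the handles. For a Morse function, passing a critical level $c$ corresponds, up to homotopy, to attaching cells, one for each critical point at level $c$, with dimension equal to the index. The attaching data are recorded by the pair $(\{F\le c+\varepsilon\},\{F\le c-\varepsilon\})$ and by the relative homology $H_*(\{F\le c+\varepsilon\},\{F\le c-\varepsilon\})=\bigoplus_{F(\alpha)=c}C_*(F,\alpha)$.

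The filtered equivalence above has three consequences.
\begin{itemize}
\item The critical levels coincide: a level is regular exactly when the inclusion across it is a homotopy equivalence, and this property transfers through the equivalence.
\item At each critical level, the relative homologies agree, so the number of handles of each index agrees.
\item The attaching maps agree up to homotopy, because homotopy-equivalent attaching maps give equivalent handlebodies.
\end{itemize}
Together these give an isomorphism of handle decompositions, in the sense of the same handles attached in the same order with homotopic attaching data.

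The main obstacle is the last point: turning a filtered homotopy equivalence into an actual isomorphism of handle structures. My plan is to interpret ``isomorphism of handle decompositions'' at the homotopy-theoretic level, namely CW-type decompositions up to cellular homotopy, and to argue inductively over the critical levels. At each step I would use the homotopy extension property for the pairs $(\{F\le c+\varepsilon\},\{F\le c-\varepsilon\})$ to match the attaching maps one level at a time.
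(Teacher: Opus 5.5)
Your route is the paper's, which derives the corollary in one line from Theorem \ref{th:ratio-sublevel-equi}. Your first step is correct, and it is exactly what that one line needs: Theorem \ref{th:ratio-sublevel-equi} as stated only gives equivalences level by level, and these say nothing about how consecutive sublevel sets are glued. The maps $\varphi=f\cdot\partial g$, $\psi=g^\polar\cdot\partial f^\polar$ and the homotopy $H'$ do not depend on $c$. By Lemma \ref{lem:f/g-zyx-tech} they never increase the quotient; for the composite $\varphi\circ\psi$, apply the lemma to the pair $(g^\polar,f^\polar)$, using $f^{\polar\polar}=f$ and $g^{\polar\polar}=g$. So you really do get homotopy equivalences of pairs. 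Your first two bullets then follow from two-out-of-three for the inclusions, nonvanishing critical groups at nondegenerate points, and the ranks of relative homology.

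One caveat you should add: $f/g$ is constant along rays, so it is never Morse on $X\setminus\{0\}$ and has no isolated critical points there. The handle-theoretic reading has to be done for the restrictions to $f^{-1}(1)$ and $(g^\polar)^{-1}(1)$. There you use $\partial g$, $\partial f^\polar$ and the normalized homotopy $\tilde H$ from the proof of Theorem \ref{th:ratio-sublevel-equi}, which preserve the filtrations equally well.

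The genuine gap is the third bullet. ``Homotopic attaching maps give equivalent handlebodies'' is the converse of what you need, and the direction you need is false. A homotopy equivalence of pairs does not force the Morse attaching maps of $g^\polar/f^\polar$ to be homotopic to the transported maps $\varphi\circ\phi$. For example, two $2$-cells attached to $S^1\vee S^1$ along $(a,b)$, or along $(a,ab)$, give complexes that are homotopy equivalent rel $S^1\vee S^1$ (a handle slide). Yet $ab$ is not freely homotopic to $a^{\pm1}$ or $b^{\pm1}$. So the plan to match attaching maps level by level via homotopy extension cannot succeed.

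What the filtered equivalence does give is a transport statement. Let $m\colon\{f/g\le c-\varepsilon\}\cup_\phi\bigsqcup_i D^{k_i}\to\{f/g\le c+\varepsilon\}$ be the Morse model. Define $\Phi$ on $\{g^\polar/f^\polar\le c-\varepsilon\}\cup_{\varphi\circ\phi}\bigsqcup_i D^{k_i}$ by the inclusion on the first piece and by $\varphi$ composed with the characteristic maps on the cells. Then $\Phi\circ(\varphi\cup\mathrm{id})=\varphi\circ m$. Since $\varphi\cup\mathrm{id}$ is a homotopy equivalence by the gluing lemma, $\Phi$ is a homotopy equivalence restricting to the inclusion. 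Thus at every critical level, the dual sublevel set arises from the lower one by attaching the same number of cells, of the same indices, along $\varphi\circ\phi$. This is compatible across levels because $\varphi$ is a single map.

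That transport statement, not homotopy of the dual function's own attaching data, is what you can claim. It is also the most the paper's argument yields: $\varphi$ maps sublevel sets into, not onto, one another, so no identification of smooth handle structures is available.
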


\begin{corollary}
If $f/g$ and $g^\polar /f^\polar $ are Morse functions, then 
the  Poincare polynomial $P(\{f/g<b\},\{f/g\le a\})(t)=P(\{g^\polar /f^\polar <b\},\{g^\polar /f^\polar \le a\})(t)$ for any $a<b$ and for any $t\in\R$. 
\end{corollary}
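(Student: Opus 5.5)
The statement to prove is the last corollary: for any $a<b$ and any $t$,
\[
P(\{f/g<b\},\{f/g\le a\})(t)=P(\{g^\polar/f^\polar<b\},\{g^\polar/f^\polar\le a\})(t).
\]
The plan is to deduce this from the homotopy equivalences of Theorem \ref{th:ratio-sublevel-equi}. The Poincar\'e polynomial of a pair is $\sum_q \dim H_q(\cdot,\cdot)\,t^q$ with real coefficients. So it suffices to show that the relative homology groups of the two pairs are isomorphic in every degree.

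\emph{Step 1: a map of pairs that is a homotopy equivalence of pairs.} Use the maps $\varphi=f\cdot\gradientp g$ and $\psi=g^\polar\cdot\gradientp f^\polar$ from the proof of Theorem \ref{th:ratio-sublevel-equi}. By Lemma \ref{lem:f/g-zyx-tech},
\[
\frac{g^\polar(\varphi(x))}{f^\polar(\varphi(x))}\le \frac{f(x)}{g(x)} .
\]
Hence $\varphi$ sends $\{f/g\le c\}$ into $\{g^\polar/f^\polar\le c\}$ for every $c$, and also sends $\{f/g<b\}$ into $\{g^\polar/f^\polar<b\}$. The same holds for $\psi$ in the reverse direction. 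So $\varphi$ and $\psi$ are maps of pairs.

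The straight-line homotopy $H'(x,t)=t\,\psi\circ\varphi(x)+(1-t)x$ satisfies $f/g(H'(x,t))\le f/g(x)$, again by Lemma \ref{lem:f/g-zyx-tech}. It therefore preserves simultaneously the open sublevel set $\{f/g<b\}$ and the closed sublevel set $\{f/g\le a\}$. This gives $\psi\circ\varphi\simeq \mathrm{id}$ as maps of pairs. Symmetrically, $\varphi\circ\psi\simeq\mathrm{id}$ as maps of pairs on the dual side.

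\emph{Step 2: homology.} Homotopy invariance of singular relative homology then gives isomorphisms
\[
H_q(\{f/g<b\},\{f/g\le a\})\cong H_q(\{g^\polar/f^\polar<b\},\{g^\polar/f^\polar\le a\})
\]
for all $q$. Equating dimensions, the Poincar\'e polynomials coincide for every $t$, whenever they are defined (finite Betti numbers).

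\emph{Where care is needed.} The point requiring care is checking that each homotopy respects both parts of the pair at every time $t$. This is exactly the monotonicity inequality of Lemma \ref{lem:f/g-zyx-tech}, together with the non-vanishing of $tz+(1-t)x/f(x)$, which keeps the homotopy inside $X\setminus\{0\}$.

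The Morse hypothesis is used only to guarantee that the Betti numbers of the pairs are finite, so that the Poincar\'e polynomials make sense. It can also be used to identify them via handle decompositions, consistent with Corollary \ref{cor:handle-RC}. The smoothness and $\cvp_c^1$ hypotheses of Theorem \ref{th:ratio-sublevel-equi} are assumed implicitly, for continuity of $\varphi$ and $\psi$. For general $p$, one reduces to $p=1$ via Corollary \ref{cor:RC-property}: the $p$-th power map changes the levels $a,b$ to $a^{1/p},b^{1/p}$ monotonically, so the pairs are unchanged.
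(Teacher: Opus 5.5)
Your proposal is correct and is essentially the paper's argument: the paper deduces this corollary directly from Theorem \ref{th:ratio-sublevel-equi}, and you make explicit the point that this deduction tacitly needs. Namely, the same maps $\varphi=f\cdot\gradientp g$, $\psi=g^\polar\cdot\gradientp f^\polar$ and the same straight-line homotopy $H'$ respect all sublevel sets, open or closed, simultaneously (by Lemma \ref{lem:f/g-zyx-tech}), so they give a homotopy equivalence of pairs and hence isomorphic relative homology; as you note, this is the branch of the hypotheses with $g$ and $f^\polar$ of class $C^1$, which is the branch the paper actually carries out.
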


\begin{corollary}
The smallest positive 
critical value (resp., the maximum) 
of $f/g$ and $g^\polar /f^\polar $ coincide exactly. 
\end{corollary}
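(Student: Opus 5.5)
The plan is to characterize both extreme values through sublevel sets and then invoke the homotopy equivalence of Theorem \ref{th:ratio-sublevel-equi}. Combined with the Lagrange duality of Theorem \ref{th:critical(f,g)}, this should show that the two values coincide for $f/g$ and $g^\polar/f^\polar$. By Corollary \ref{cor:RC-property} we may reduce to $p=1$ and work with $f,g\in\cvp_c^1(X)$, so both ratios are $0$-homogeneous, continuous and positive on $X\setminus\{0\}$.

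For the minimum, the smallest value is $\lambda_{\min}:=\inf f/g$. This equals $\inf\{c:\{f/g\le c\}\ne\varnothing\}$, and non-emptiness of a sublevel set is a homotopy invariant. Theorem \ref{th:ratio-sublevel-equi} says $\{f/g\le c\}\simeq\{g^\polar/f^\polar\le c\}$ for every $c>0$, so the two infima agree.

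The same conclusion follows more directly from Lemma \ref{lem:f/g-zyx-tech}, and this route avoids the smoothness hypothesis. For any $x\ne0$ and $y\in\partial g(x)$ we have $g^\polar(y)/f^\polar(y)\le f(x)/g(x)$, hence $\inf g^\polar/f^\polar\le\inf f/g$. By symmetry, using $f^{\polar\polar}=f$ and $g^{\polar\polar}=g$ from Proposition \ref{pro:CV_p-basic}, the reverse inequality holds.

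We must also check that this infimum is a critical value, i.e.\ attained or at least a Lagrange critical value. If it is attained at $x$, then $x$ is a global minimizer of $f-\lambda g$, which is positively homogeneous and nonnegative. Hence $0\in\partial(f-\lambda g)(x)$, and from this we want to derive $\partial f(x)\cap\lambda\partial g(x)\ne\varnothing$. The difference $f-\lambda g$ is not convex, so the subdifferential calculus needs care. I would instead argue that $\lambda g\le f$ with equality at $x$ forces $\lambda\partial g(x)\subset\partial f(x)$. Indeed, for $u\in\partial g(x)$ we have $f(y)-f(x)\ge\lambda(g(y)-g(x))\ge\lambda\langle u,y-x\rangle$. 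This makes $x$ a Lagrange critical point, and Theorem \ref{th:critical(f,g)} then transfers the critical value.

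For the maximum we argue in the same way with $\sup$, using the second direction of Lemma \ref{lem:f/g-zyx-tech} applied to the dual pair. The inequality $g^\polar(y)/f^\polar(y)\le f(x)/g(x)$, read for the dual pair $(g^\polar,f^\polar)$ via the symmetric roles, gives the reverse comparison of suprema. At a maximizer $x$ the inequality $f\le\lambda g$ yields $\partial f(x)\subset\lambda\partial g(x)$ by the same argument; $\partial f(x)$ is nonempty because $f$ is continuous.

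The main obstacle is attainment in infinite dimensions. In $\cvp_c^1$ the ratio is bounded between positive constants by Proposition \ref{prop:character:CV_c^1}, so the extreme values are finite and positive. However, the infimum need not be attained without compactness. If the statement is understood as equality of the extreme values, the inequality argument above suffices and requires no attainment. If a critical point is also required, I would use reflexivity: take a minimizing sequence on $\{g=1\}$, extract a weakly convergent subsequence, and use weak lower semicontinuity of $f$. This step needs care because $g$ is only weakly lsc, so the weak limit may have $g<1$. This is the step I expect to be delicate, and where I would fall back on the value-level statement.
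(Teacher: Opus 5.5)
Your treatment of the minimum is sound. Nonemptiness of a sublevel set is a homotopy invariant, which is essentially how the paper reads this corollary off Theorem~\ref{th:ratio-sublevel-equi}. Your direct route via Lemma~\ref{lem:f/g-zyx-tech} plus biduality also gives $\inf f/g=\inf g^\polar/f^\polar$, without any smoothness assumption.

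The step for the maximum, however, does not go through as written. Lemma~\ref{lem:f/g-zyx-tech} gives, for each $x\neq0$, a dual point $y\in\partial g(x)$ whose value is \emph{at most} $f(x)/g(x)$. With the quantifiers ``for every $x$ there is such a $y$'', this can only bound $\inf g^\polar/f^\polar$ from above. Reading the lemma for the pair $(g^\polar,f^\polar)$ gives $f(x)/g(x)\le g^\polar(y)/f^\polar(y)$ for $x\in\partial f^\polar(y)$. With the same quantifiers this yields $\inf f/g\le\inf g^\polar/f^\polar$, which is the inequality you already used for the minimum, not a comparison of suprema. So neither $\sup g^\polar/f^\polar\le\sup f/g$ nor its reverse has actually been derived; ``argue in the same way with $\sup$'' is exactly where the argument breaks.

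To repair this you need an inequality pointing the other way, or a surjectivity statement. Any of the following works.
\begin{itemize}
\item Take $y'\in\partial f(x)$ instead of $y\in\partial g(x)$. Then $f^\polar(y')=1$ and $g^\polar(y')\ge\langle y',x\rangle/g(x)=f(x)/g(x)$. This is the first inequality of Lemma~\ref{lem:fA/g-zyx-tech1} with $A=\mathrm{id}$. It gives $\sup g^\polar/f^\polar\ge\sup f/g$, and the dual-pair version gives the reverse.
\item Show that every $y\neq0$ lies in $\mathrm{cone}(\partial g(x))$ for some $x\neq0$: pick $x\in\partial g^\polar(y)$ and apply the Claim in the proof of Theorem~\ref{th:critical(f,g)} to $g^\polar$. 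Lemma~\ref{lem:f/g-zyx-tech} then does bound $g^\polar(y)/f^\polar(y)$ by $\sup f/g$.
\item Most directly, polarity reverses order and $(\mu g)^\polar=\mu^{-1}g^\polar$. Hence $f\le\mu g\iff g^\polar\le\mu f^\polar$ and $\lambda g\le f\iff\lambda f^\polar\le g^\polar$, which gives both extreme values at once.
\end{itemize}

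Your attainment paragraph should also be settled rather than left open. In infinite dimensions the infimum genuinely need not be attained. For example, on $\ell^2$ take $f=\|\cdot\|$ and $g(x)=(\sum_n(1-\frac{1}{n+1})x_n^2)^{1/2}$: then $\inf f/g=1$ is not attained. So the weak-compactness plan cannot work in general. If ``critical value'' is meant in the Lagrange sense, the coincidence of the smallest and largest ones is immediate from Theorem~\ref{th:critical(f,g)}, since the entire sets of nontrivial Lagrange critical values of $(f,g)$ and $(g^\polar,f^\polar)$ coincide.
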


\subsubsection{Polarity dual theorem on Morse critical points and Rothe critical groups}

\begin{theorem}
\label{th:Morse-Rothe}Let $f,g\in \cvp_c^1(X)$ be such that $f^\polar $ and $g$ are $C^1$-smooth (or, $f$ and $g^\polar $ are $C^1$-smooth) at nonzeros. 
Let $\al$ be a Morse critical point of $f/g$ with the critical group $C_*(f/g,\al)$. Then, $\tau=\gradientp g(\al)$ is a Morse critical point of $g^\polar /f^\polar $, and $f(\al)/g(\al)=g^\polar (\tau)/f^\polar (\tau)$, and the critical groups satisfy
$$C_*\big(\frac fg,\al\big)\cong C_*\big(\frac{g^\polar }{f^\polar },\tau\big) .$$

\end{theorem}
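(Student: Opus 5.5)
The plan is to reuse the homotopy machinery from the proof of Theorem \ref{th:ratio-sublevel-equi}, localized near $\al$. I treat the case where $f^\polar$ and $g$ are $C^1$ at nonzeros; the other case is symmetric, exchanging the roles of $(f,g)$ and $(g^\polar,f^\polar)$. As there, set $\varphi:=f\cdot\partial g$ on $X\setminus\{0\}$ and $\psi:=g^\polar\cdot\partial f^\polar$ on $X^*\setminus\{0\}$, both continuous. Since $\partial g$ is zero-homogeneous, I may normalize $\al$ and work with $\tau=\partial g(\al)$ (or $\varphi(\al)$, which differs only by the positive scalar $f(\al)$).

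\textbf{Step 1: $\al$ is a Lagrange critical point.} A Morse critical point of $f/g$ is Lagrange critical. Indeed, if $\al$ were not Lagrange critical, Lemma \ref{lem:f/g-zyx-tech} gives the strict inequality $f(H'(x,t))/g(H'(x,t))<f(x)/g(x)$ along the segment $H'(x,t)=t\psi\varphi(x)+(1-t)x$. Here $t>0$ and $x$ ranges over a neighborhood of $\al$; strictness persists nearby because the set of non-Lagrange-critical points is open by continuity of $\partial g$ and $\partial f^\polar$. This segment deformation would then witness Morse regularity. Hence $\al$ is Lagrange critical, and by Theorem \ref{th:critical(f,g)} with $g$ Fréchet differentiable, $\tau$ is a Lagrange critical point of $(g^\polar,f^\polar)$. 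The equality case of Lemma \ref{lem:f/g-zyx-tech} gives $g^\polar(\tau)/f^\polar(\tau)=f(\al)/g(\al)$. The Claim in the proof of Theorem \ref{th:critical(f,g)} also shows $\psi(\varphi(\al))$ is a positive multiple of $\al$, so $\psi\varphi$ fixes $\al$ up to scaling.

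\textbf{Step 2: Morse criticality transfers.} Suppose $\tau$ were Morse regular for $G:=g^\polar/f^\polar$, with a neighborhood $V$ and a deformation $\mathcal H'$ strictly decreasing $G$. Choose a neighborhood $U$ of $\al$ with $\varphi(U)\subset V$. On $U$ define the concatenation of $\psi\circ\mathcal H'(\varphi(x),\cdot)$ with the straight-line path from $x$ to $\psi\varphi(x)$. Lemma \ref{lem:f/g-zyx-tech} gives the chain
\[
\frac fg\big(\psi(\mathcal H'(\varphi x,t))\big)\le G(\mathcal H'(\varphi x,t))<G(\varphi x)\le \frac fg(x).
\]
The obstacle is that the segment part is only non-increasing, not strictly decreasing. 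I would fix this by running both pieces simultaneously, e.g. $x\mapsto (1-s)x+s\,\psi(\mathcal H'(\varphi x,s))$, and using convexity of $f$ together with the subgradient inequality for $g$, as in the proof of Lemma \ref{lem:f/g-zyx-tech}, to get strict decrease for $s>0$. This makes $\al$ Morse regular, a contradiction. The converse direction is symmetric.

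\textbf{Step 3: critical groups.} I expect this to be the main difficulty. The maps $\varphi$ and $\psi$ send local pairs $(\{f/g\le c\}\cap U,\{f/g\le c\}\cap U\setminus\{\al\})$ into the corresponding pairs near $\tau$, provided the points mapping to $\tau$ near $\al$ are only $\al$ itself (for isolated critical points). This holds because if $\varphi(x)$ is a multiple of $\tau$, then Lemma \ref{lem:f/g-zyx-tech} forces the levels to match, and so $x$ is critical. The homotopies $H'$ from Theorem \ref{th:ratio-sublevel-equi} are defined locally, stay in the sublevel set, and avoid $\al$ for $x\ne\al$ after a radial rescaling. Therefore $\psi\varphi$ and $\varphi\psi$ are homotopic to the identity as maps of pairs, after shrinking $U\subset\varphi^{-1}(V)$ and $V\subset\psi^{-1}(U')$ and using excision to compare neighborhoods. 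Homotopy invariance of relative homology then yields $C_*(f/g,\al)\cong C_*(g^\polar/f^\polar,\tau)$. The delicate point I anticipate is keeping the homotopy away from the deleted point $\al$. I would try to handle it by using the strict inequality from Step 1 off the critical set.
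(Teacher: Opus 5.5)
Your Steps 1 and 3 are sound. Step 1 in fact proves the inclusion of Morse critical points among Lagrange critical points, which the paper only asserts. The gap is in Step 2.

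First, the obstacle you name there is not real. By the strict case of Lemma~\ref{lem:f/g-zyx-tech}, which you already use in Step 1, the segment $t\mapsto t\psi\varphi(x)+(1-t)x$ strictly decreases $f/g$ for $t>0$ whenever $x$ is not Lagrange critical. For Lagrange critical $x$ the $C^1$ hypotheses force $\psi\varphi(x)=x$, so the segment is constant.

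Second, your replacement $K(x,s)=(1-s)x+s\,\psi(\mathcal H'(\varphi x,s))$ is not justified by the Lemma's computation. That computation hinges on joining $x$ to $z\in\partial f^\circ(\partial g(x))$ for the \emph{same} dual point, so that $\langle\partial g(x),z\rangle=f^\circ(\partial g(x))\ge g(x)$ (with $f(x)=1$). For $z_s=\partial f^\circ(y_s)$ with $y_s=\mathcal H'(\varphi x,s)$ this fails.

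At $x=\alpha$ it fails in the worst way. Since $f(z_s)=f(\alpha)=1$ and $c\tau\in\partial f(\alpha)$, you get $\langle\tau,z_s-\alpha\rangle\le 0$, so the subgradient bound for $g$ gives no growth of $g$ along the chord. In fact the one-sided directional derivative of $f/g$ at $\alpha$ is $\ge 0$ in every direction. Whether $f/g$ decreases along $K(\alpha,\cdot)$ therefore depends on second-order features of an arbitrary deformation $\mathcal H'$, about which nothing is known.

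Strict descent of a deformation does not imply descent along chords. In a chart, $F(u,v)=|u|-v^3$ admits the strictly descending deformation $\eta((u,v),t)=(u+t^3/8,\,v+t)$. Yet $F(s\,\eta(0,s))=s^4/8-s^6>0=F(0)$ for small $s>0$.

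What is actually needed is a continuous way to splice a segment of variable, possibly zero, length in front of the pulled-back flow. This is exactly the paper's device; the paper writes it out for the converse inclusion and omits the direction you need as similar. Put $\delta(x)=\|x-\psi\varphi(x)\|$. It is continuous, small near $\alpha$, and vanishes exactly at Lagrange critical points. Then set
\[
\eta^\#(x,t)=\begin{cases} x+\frac{t}{\delta(x)}\big(\psi\varphi(x)-x\big), & 0\le t<\delta(x),\\ \psi\big(\mathcal H'(\varphi x,\,t-\delta(x))\big), & \delta(x)\le t\le 1.\end{cases}
\]
The two branches agree at $t=\delta(x)$. Near a point $(x_0,0)$ with $\delta(x_0)=0$, both branches stay close to $x_0=\psi\varphi(x_0)$, which gives continuity. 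Strict decrease on $(0,\delta(x)]$ comes from the strict case of the Lemma. On $(\delta(x),1]$ it comes from your own chain
\[
\frac fg\big(\psi(\mathcal H'(\varphi x,t-\delta(x)))\big)\le G\big(\mathcal H'(\varphi x,t-\delta(x))\big)<G(\varphi x)\le\frac fg(x).
\]
With this replacement, Step 2 goes through, and with it the rest of your argument.
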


\begin{proof}
By Theorems \ref{th:critical(f,g)} and Corollary \ref{cor:bijection-Lagra}, 
the gradient $\gradientp g$ restricted on the set $\mathrm{Cri}_{LP}(f,g)\cap f^{-1}(1)$, that is, $\gradientp g|_{\mathrm{Cri}_{LP}(f,g)\cap f^{-1}(1)}:\mathrm{Cri}_{LP}(f,g)\cap f^{-1}(1)\to \mathrm{Cri}_{LP}(g^\polar ,f^\polar )\cap (g^\polar)^{-1}(1)$ is a bijection with the inverse $(\gradientp g)^{-1}|_{\mathrm{Cri}_{LP}(g^\polar ,f^\polar )\cap (g^\polar)^{-1}(1)}$ equals  
\begin{equation}\label{eq:LP-inverse}
\gradientp f^\polar |_{\mathrm{Cri}_{LP}(g^\polar ,f^\polar )\cap (g^\polar)^{-1}(1)}:\mathrm{Cri}_{LP}(g^\polar ,f^\polar )\cap (g^\polar)^{-1}(1)\to \mathrm{Cri}_{LP}(f,g)\cap f^{-1}(1) 
\end{equation}
where $\mathrm{Cri}_{LP}(f,g)$ denote the nontrivial Lagrange critical points of $(f,g)$.

Since the set $\mathrm{Cri}_{MP}(f/g)$ of Morse critical points is a subclass of $\mathrm{Cri}_{LP}(f,g)$, it is clear that $\gradientp g(\mathrm{Cri}_{MP}(f/g)\cap f^{-1}(1))\subset \mathrm{Cri}_{LP}(g^\polar ,f^\polar )\cap (g^\polar)^{-1}(1)$. 

In the sequel, we shall further prove $\gradientp g(\mathrm{Cri}_{MP}(f/g))\subset \mathrm{Cri}_{MP}(g^\polar /f^\polar )$. 

Consider the set of Morse regular points of $f/g$ and $g^\polar /f^\polar $ restricted on unit spheres:
$$MR(f/g)=\{\text{Morse regular points of }f/g\text{ restricted on }f=1\}$$
and 
$$MR(g^\polar /f^\polar )=\{\text{Morse regular points of }g^\polar /f^\polar \text{ restricted on }g^\polar =1\} .$$
We only need to show that $MR(f/g)\cap \mathrm{Cri}_{LP}(f,g)$ and $MR(g^\polar /f^\polar )\cap \mathrm{Cri}_{LP}(g^\polar ,f^\polar )$ 
are one-to-one correspondence via $\gradientp g$ and $\gradientp f^\polar$, 
that is, 
\begin{equation}\label{eq:Mregualar-Lag-g}
\gradientp g\big(MR(f/g)\cap \mathrm{Cri}_{LP}(f,g)\big)\subset MR(g^\polar /f^\polar \big)\cap \mathrm{Cri}_{LP}(g^\polar ,f^\polar )   
\end{equation} and \begin{equation}\label{eq:Mregualar-Lag-f}
\gradientp f^\polar \big(MR(g^\polar /f^\polar )\cap \mathrm{Cri}_{LP}(g^\polar ,f^\polar )\big)\subset MR(f/g)\cap \mathrm{Cri}_{LP}(f,g).
\end{equation}

In fact, if 
\eqref{eq:Mregualar-Lag-g} and \eqref{eq:Mregualar-Lag-f} hold, 
then by \eqref{eq:LP-inverse},  $\gradientp g|_{\mathrm{Cri}_{MP}(f/g)}:\mathrm{Cri}_{MP}(f/g)\to \mathrm{Cri}_{MP}(g^\polar /f^\polar )$ is a bijection. 

Let's prove \eqref{eq:Mregualar-Lag-g}. 
For a Morse regular point $x_0\in MR(f/g)\cap \mathrm{Cri}_{LP}(f,g)$, there is a decreasing flow $\eta(x,t)$ near $x_0$, i.e., there exists a neighborhood $U_{x_0}$ such that for any $x\in U_{x_0}$, $t\in(0,1]$,
$$\frac fg(\eta(x,t))<\frac fg(x) $$
and $\eta(x,0)=x$. Note that $\gradientp g(\eta(x,t))$ is a continuous flow along $\gradientp g(x)$, and $g^\polar (\gradientp g(\eta(x,t)))=1$. 
In particular, when $x=x_0$, and $t>0$,  by Lemma \ref{lem:f/g-zyx-tech}, 
$$ \frac{g^\polar (\gradientp g(\eta(x_0,t)))}{f^\polar (\gradientp g(\eta(x_0,t)))}\le \frac{f(\eta(x_0,t))}{g(\eta(x_0,t))}<\frac{f(x_0)}{g(x_0)}=\frac{g^\polar (\gradientp g(x_0))}{f^\polar (\gradientp g(x_0))}.$$

For any $y\in V_{y_0}$ where $y_0=\gradientp g(x_0)$ and $t\in(0,1]$,
\begin{align*}
\frac{g^\polar (\gradientp g(\eta(\gradientp f^\polar (y),t)))}{f^\polar (\gradientp g(\eta(\gradientp f^\polar (y),t)))}\le \frac{f(\eta(\gradientp f^\polar (y),t))}{g(\eta(\gradientp f^\polar (y),t))}<\frac{f(\gradientp f^\polar (y))}{g(\gradientp f^\polar (y))}\le\frac{g^\polar (y)}{f^\polar (y)}.
\end{align*}
However, $t\mapsto \gradientp g(\eta(\gradientp f^\polar (y),t)))$ is not a proper flow, as in general, $\gradientp g(\eta(\gradientp f^\polar (y),0)))=\gradientp g(\gradientp f^\polar (y))\ne y$. 
While we can define $\delta(y)=\|y-\gradientp g(\gradientp f^\polar (y))\|$ which satisfies that $y$ is a Lagrange critical point of $(g^\polar,f^\polar)$ iff $\delta(y)=0$. Moreover, $\delta(\cdot)$ is continuous. 
Let  $$H(y,t)=\frac{t \gradientp g(\gradientp f^\polar (y)) +(1-t)y}{g^\polar \left(t \gradientp g(\gradientp f^\polar (y)) +(1-t)y\right)},\;\; x\in X,\, 0\le t\le1.$$
It is not difficult to verify that $H$ is continuous, $H(y,0)=y$ when $g^\polar (y)=1$ and $H(y,1)=\gradientp g(\gradientp f^\polar (y))$. 

Let $\eta^\#:V_{y_0}\times[0,1]\to X$ be defined by
$$\eta^\#(y,t)=\begin{cases}
H(y,t/\delta(y)),&\text{ if }1\ge \delta(y)>t\ge0\\
\gradientp g(\eta(\gradientp f^\polar (y),t-\delta(y)))),&\text{ if }1\ge t\ge\delta(y)\ge0
\end{cases}$$
Then $\eta^\#(y,0)=y$ and by the above discussion and the 2nd inequality in Lemma \ref{lem:f/g-zyx-tech}, we have
$$\frac{g^\polar (\eta^\#(y,t))}{f^\polar (\eta^\#(y,t))}< \frac{g^\polar (y)}{f^\polar (y)},\;\forall t\in (0,1]. $$
This implies that $y_0$ is actually a Morse regular point of $g^\polar /f^\polar $. 
The proof of \eqref{eq:Mregualar-Lag-g} 
is then completed.

The proof of \eqref{eq:Mregualar-Lag-f} 
is similar, and thus we omit it. Therefore, we have proved the one-to-one correspondence between the Morse critical points of $f/g$ and $g^\polar /f^\polar $. 

We are in a position to show the isomorphisms of Rothe critical groups. 

In fact, if $\al$ is an isolated Morse critical point of $f/g$, then by the homology excision property, $C_*(f/g,\al)\cong H_*(\{f/g\le f(\al)/g(\al)\},\{f/g\le f(\al)/g(\al)\}\setminus\{\al\})$. 
Since $\gradientp g(\al)=\bet$,  $f(\al)/g(\al)=g^\polar (\tau)/f^\polar (\tau)$ (by Lemma \ref{lem:f/g-zyx-tech}), and 
$\gradientp g:\{f/g\le f(\al)/g(\al)\}\to \{g^\polar /f^\polar \le g^\polar (\tau)/f^\polar (\tau)\}$ is a homotopy equivalence, we have 
a homomorphism $$(\gradientp g)_*: C_*(f/g,\al)\to C_*(g^\polar /f^\polar ,\tau).$$
Similarly, $\gradientp f^\polar $ induces a homomorphism 
$$(\gradientp f^\polar )_*:  C_*(g^\polar /f^\polar ,\tau)\to C_*(f/g,\al).$$
Note that $(\gradientp f^\polar )_*\circ (\gradientp g)_*=(\gradientp f^\polar \circ \gradientp g)_*:C_*(f/g,\al)\to C_*(f/g,\al)$ is an isomorphism because $\gradientp f^\polar \circ \gradientp g\simeq \mathrm{id}$ by Theorem \ref{th:ratio-sublevel-equi}. Thus, we obtain that both $(\gradientp g)_*$ and $(\gradientp f^\polar )_*$ are isomorphisms between $C_*(f/g,\al)$ and $ C_*(g^\polar /f^\polar ,\tau)$. 
\end{proof}

\subsubsection{Dual equivalence on Lusternik-Schnirelman min-max critical values}

\begin{theorem}
\label{th:index-critical}Let $f,g\in \cvp_c^1(X)$. 
For any admissible 
index $\mathrm{ind}$, 
for any min-max critical value\footnote{This has been already defined in \eqref{eq:index-LS-minmax-c}. For readers' convenience, we write the formula here again.}  
$$c_k(f/g):=\inf_{\mathrm{ind}(S)\ge k}\sup_{x\in S}\frac{f(x)}{g(x)}$$
defined via such index, 
we have $c_k(f/g)=c_k(g^\polar /f^\polar )$ for any $k$. 
\end{theorem}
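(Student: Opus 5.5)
The plan is to transfer admissible sets between the two sides using a continuous, equivariant map that does not increase the ratio. Throughout, $\mathtt{G}=\{\pm1\}$ or the trivial group. We do \emph{not} assume any smoothness, so the gradient maps of Theorem \ref{th:ratio-sublevel-equi} are unavailable. I would therefore work with continuous selections, or with an approximation argument.

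\textbf{Step 1 (reduction).} By homogeneity, the relevant sets can be normalized, and it suffices to prove the one-sided inequality $c_k(g^\polar/f^\polar)\le c_k(f/g)$ for all pairs $f,g\in\cvp_c^1(X)$. The reverse inequality then follows by applying the same statement to the pair $(g^\polar,f^\polar)\in\cvp_c^1(X^*)$, using Corollary \ref{cor:function-spaces}, $f^{\polar\polar}=f$, and the identification $X^{**}\cong X$.

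\textbf{Step 2 (key inequality without smoothness).} Lemma \ref{lem:f/g-zyx-tech} shows that for any $x\neq0$ and any $y\in\partial g(x)$ we have $y\ne0$ and $g^\polar(y)/f^\polar(y)\le f(x)/g(x)$. So a map $\Phi:X\setminus\{0\}\to X^*\setminus\{0\}$ with $\Phi(x)\in\mathrm{cone}(\partial g(x))$ for every $x$ would satisfy
\[
\frac{g^\polar(\Phi(x))}{f^\polar(\Phi(x))}\le \frac{f(x)}{g(x)}.
\]
The difficulty is continuity, since $\partial g$ is only upper semicontinuous. I would first try to drop the requirement $\Phi(x)\in\partial g(x)$ and use an approximate version instead. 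Because $g\in\cvp_c^1$ is continuous and comparable to the norm (Proposition \ref{prop:character:CV_c^1}), for each $\varepsilon>0$ there is a locally Lipschitz odd map $\Phi_\varepsilon$ with $\Phi_\varepsilon(x)$ a convex combination of subgradients of $g$ at nearby points. This can be built from a partition of unity subordinate to a locally finite cover of the symmetric set $\{f=1\}$, symmetrized by $x\mapsto(\Phi(x)-\Phi(-x))/2$, which works because $\partial g(-x)=-\partial g(x)$ when $g$ is even. The resulting estimate is $g^\polar(\Phi_\varepsilon(x))\ge 1-\varepsilon$ and $\langle\Phi_\varepsilon(x),x\rangle\ge g(x)-\varepsilon$. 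Continuity of $f,g,f^\polar,g^\polar$ and their uniform bounds then give
\[
\frac{g^\polar(\Phi_\varepsilon(x))}{f^\polar(\Phi_\varepsilon(x))}\le \frac{f(x)}{g(x)}+\omega(\varepsilon),\qquad \omega(\varepsilon)\to0,
\]
uniformly on $\{f=1\}$, and in particular $\Phi_\varepsilon(x)\neq0$.

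\textbf{Step 3 (transport of sets).} Take $S$ with $\mathrm{ind}(S)\ge k$ and $\sup_S f/g\le c_k(f/g)+\varepsilon$, and normalize it radially onto $\{f=1\}$; this is a homotopy that keeps $\mathrm{ind}$ unchanged. Then $\Phi_\varepsilon(S)$ is a $\mathtt{G}$-invariant set. If it is compact or closed (we can take $S$ compact for the genus-type indices), then (I4) gives $\mathrm{ind}(\Phi_\varepsilon(S))\ge k$. For indices satisfying only (I3), I would instead use the sublevel-set characterization of Proposition \ref{prop:index-discontinuous=critical}. In either case $c_k(g^\polar/f^\polar)\le c_k(f/g)+\varepsilon+\omega(\varepsilon)$, and letting $\varepsilon\to0$ gives the inequality.

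\textbf{Main obstacle.} The hard step is Step 2 together with index type (I3). Without $C^1$ smoothness there is no homotopy equivalence between the sublevel sets, so for a pure homotopy-invariant index one needs more. I would first try an approximation: replace $f,g$ by smooth strictly convex $f_n,g_n\in\cvp_c^1$ with $(1-1/n)f\le f_n\le(1+1/n)f$ (and similarly for $g$). Then Theorem \ref{th:ratio-sublevel-equi} applies to $(f_n,g_n)$, so the sublevel sets of $f_n/g_n$ and $g_n^\polar/f_n^\polar$ are homotopy equivalent, and Proposition \ref{prop:index-discontinuous=critical} gives $c_k(f_n/g_n)=c_k(g_n^\polar/f_n^\polar)$. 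Polarity reverses these bounds with reciprocal constants, so both ratios converge uniformly within factors $(1\pm1/n)^{2}$. Since $c_k$ is monotone in the function, passing to the limit gives $c_k(f/g)=c_k(g^\polar/f^\polar)$. The delicate point is constructing these smooth approximants in a general reflexive space; I would rely on the existence of an equivalent smooth norm, which holds by Asplund renorming.
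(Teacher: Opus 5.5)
Your argument is correct in substance. It only partly follows the paper.

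\textbf{Where you follow the paper.} For homotopy-invariant indices you take the paper's route and actually carry out the step it skips. The paper reduces to the level-wise identity $\mathrm{ind}\{f/g\le c\}=\mathrm{ind}\{g^\polar/f^\polar\le c\}$. It proves this when $g$ and $f^\polar$ are $C^1$: via Theorem \ref{th:ratio-sublevel-equi} under (I3), and via the map $\partial g$ under (I4). The non-smooth case is then dismissed in one sentence about ``standard approximation''. Your ingredients make that sentence precise: sandwich bounds for $f_n,g_n$, their reversal under polarity, and the monotonicity and positive homogeneity of $F\mapsto c_k(F)$. This gives equality of the $c_k$ only in the limit, not of the sublevel indices, but that is all the theorem needs.

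\textbf{Where you differ.} For (I4)-type indices you take a genuinely different path. An odd continuous approximate selection of $\partial g$, built from a partition of unity, transports near-optimal sets directly. The one-sided inequality together with $f^{\polar\polar}=f$, $g^{\polar\polar}=g$ gives the rest. This avoids renorming entirely and covers the Krasnoselskii genus in any reflexive space.

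It cannot replace smoothing for a purely homotopy-invariant index. An approximate selection only interleaves sublevel sets at levels $c$ and $c+\omega(\varepsilon)$, and (I1)--(I3) cannot exploit an interleaving. So the words ``in either case'' at the end of your Step 3 rest on your last paragraph, not on Proposition \ref{prop:index-discontinuous=critical}.

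\textbf{Two points to tighten.}

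First, in Step 2 what you need is the \emph{upper} bound $g^\polar(\Phi_\varepsilon(x))\le 1$. It comes from convexity of $g^\polar$ and from $g^\polar(y_i)=1$ for $y_i\in\partial g(x_i)$, $x_i\ne0$. Continuity of $g^\polar$ does not give it, since the averaged subgradients can be far apart (think of corners of $\|\cdot\|_1$). Now use $\|y_i\|\le C_2$ (where $g\le C_2\|\cdot\|$) and $\|x-x_i\|<\delta$. You get $\langle\Phi_\varepsilon(x),x\rangle\ge g(x)-2C_2\delta$, hence $f^\polar(\Phi_\varepsilon(x))\ge g(x)-2C_2\delta$ on $\{f=1\}$. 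The uniform estimate follows because $g$ is bounded below there. The lower bound $g^\polar(\Phi_\varepsilon(x))\ge 1-\varepsilon$ that you state plays no role.

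Second, in the smoothing step ``strictly convex'' is not enough. Theorem \ref{th:ratio-sublevel-equi} needs $\partial g_n$ and $\partial f_n^\polar$ to be norm-continuous, i.e.\ Fr\'echet smoothness of $g_n$ and of $f_n^\polar$. One way to get this:
\begin{itemize}
\item Choose an equivalent norm $|\cdot|$ on $X$ that is LUR with LUR dual norm $|\cdot|_*$ (Kadec--Troyanski, available since $X$ is reflexive).
\item Put $f_n^2=f^2+\frac1n|\cdot|^2$ and $(g_n^\polar)^2=(g^\polar)^2+\frac1n|\cdot|_*^2$.
\item Then $f_n$ and $g_n^\polar$ are LUR, so $f_n^\polar$ and $g_n$ are Fr\'echet smooth off the origin, and evenness is preserved.
\item The bounds $f\le f_n\le(1+C/n)^{1/2}f$ and $(1+C/n)^{-1/2}g\le g_n\le g$ hold, which is exactly the sandwich your limit argument uses.
\end{itemize}
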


\begin{proof}
By definition, it can be verified that $$c_k(f/g)=\inf_{\mathrm{ind}\{f/g\le c\}\ge k }c.$$
To show $c_k(f/g)=c_k(g^\polar /f^\polar )$, it suffices to prove 
\begin{equation}\label{eq:index=dual}
\mathrm{ind}\{f/g\le c\}=\mathrm{ind}\{g^\polar /f^\polar \le c\}    
\end{equation}

We first prove the case that $f^\polar $ and $g$ are further assumed to be $C^1$-smooth (or, $f$ and $g^\polar $ are $C^1$-smooth). 

If the admissible  index is a homotopy invariant (see (I3) in Definition \ref{def:admissible-index}), then 
by Theorem \ref{th:ratio-sublevel-equi}, we have $\{f/g\le c\}\simeq \{g^\polar /f^\polar \le c\}$, and then we immediately obtain 
the equality \eqref{eq:index=dual}.

If the admissible  index satisfies the nondecreasing property under continuous map (i.e., (I4) in Definition \ref{def:admissible-index}), we can also obtain the equality \eqref{eq:index=dual}.  
First, suppose that $g$ and $f^\polar $ are $C^1$-smooth, then similar to 
Claim 1 in the proof of Theorem \ref{th:ratio-sublevel-equi-w}, the continuity of $\gradientp g$ and the nondecreasing property of $\mathrm{ind}$, 
we have $\gradientp g\{f/g\le c\}\subset \{g^\polar /f^\polar \le c\}$ and 
$$\mathrm{ind}\{f/g\le c\}\le \mathrm{ind}(\gradientp g\{f/g\le c\})\le \mathrm{ind}\{g^\polar /f^\polar \le c\}.$$
Analogously, we have 
$$\mathrm{ind}\{g^\polar /f^\polar \le c\}\le\mathrm{ind}\big(\partial f^\polar \{g^\polar /f^\polar \le c\} \big)\le \mathrm{ind}\{f/g\le c\}. $$
Thus, \eqref{eq:index=dual} holds.

Now, we remove the $C^1$-smooth assumption, and use standard approximation method, the equality $c_k(f/g)=c_k(g^\polar /f^\polar )$ can be also verified.  
\end{proof}

\subsubsection{Multiplicities of Lagrange critical values}\label{sec:multiplicity}
Based on the results in the preceding sections, we are able to study the multiplicities of Lagrange critical values. 

\begin{prop}Let $f,g\in \cvp_c^p(X)$. 
For any nontrivial Lagrange critical value $c$ of $(f,g)$, we have  
$\mathrm{mult}_{f,g} (c)=\mathrm{mult}_{g^\polar ,f^\polar } (c)$.
\end{prop}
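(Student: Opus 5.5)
We will compare the index of the two sets of Lagrange critical points at level $c$. First reduce to $p=1$: by Corollary \ref{cor:RC-property} and Proposition \ref{pro:CV_1&Cv_p}(i), $x$ is a nontrivial Lagrange critical point of $(f,g)$ with value $c$ exactly when it is one of $(f^{1/p},g^{1/p})$ with value $c^{1/p}$, since $\partial f=pf^{(p-1)/p}\partial f^{1/p}$ off the kernel. The same holds for the polar pair, because $(f^{1/p})^\polar$ is a positive multiple of $(f^\polar)^{1/p}$. So let $K_c=\{x\ne0: 0\in\partial f(x)-c\,\partial g(x)\}$ and $K_c^\polar$ be the analogous set for $(g^\polar,f^\polar)$. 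Both are symmetric (when $f,g$ are even) closed cones minus the origin, and we must show $\mathrm{ind}(K_c)=\mathrm{ind}(K_c^\polar)$.

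The key point is that on these critical sets the maps used in Theorems \ref{th:ratio-sublevel-equi} and \ref{th:index-critical} become exact rather than homotopic. For $x\in K_c\cap f^{-1}(1)$, the proof of Theorem \ref{th:critical(f,g)} gives $u\in\partial g(x)$ with $cu\in\partial f(x)$, so $u\in K^\polar_c\cap(g^\polar)^{-1}(1)$ and $x\in c\,\partial f^\polar(u)$. When $g$ and $f^\polar$ are $C^1$ at nonzeros, Corollary \ref{cor:bijection-Lagra} says that $\partial g$ restricts to a bijection $K_c\cap f^{-1}(1)\to K_c^\polar\cap(g^\polar)^{-1}(1)$ with continuous inverse $\partial f^\polar$, so it is a homeomorphism. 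It commutes with $x\mapsto -x$ because $\partial g(-x)=-\partial g(x)$ for even $g$. We then extend it to the cones using $\varphi=f\cdot\partial g$ and $\psi=g^\polar\cdot\partial f^\polar$, as in Theorem \ref{th:ratio-sublevel-equi}. On $K_c$ we get $\psi\circ\varphi=\mathrm{id}$, since the zero-homogeneity of $\partial f^\polar$ gives $\partial f^\polar(\partial g(x))=x/f(x)$ for $x\in K_c$ with $f(x)$ scaled to $1$. Similarly $\varphi\circ\psi=\mathrm{id}$ on $K^\polar_c$. Hence $K_c$ and $K_c^\polar$ are equivariantly homeomorphic, and either (I3) or (I4) of Definition \ref{def:admissible-index}, applied in both directions, gives equal index. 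No homotopy argument is needed here, so we can avoid the ambient-versus-intrinsic subtlety of Theorem \ref{th:ratio-sublevel-equi-w}.

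The main obstacle is removing the smoothness hypothesis. In general $\partial g$ is set-valued, and $K_c\to K_c^\polar$ is only the correspondence $T_1(x)=\mathrm{cone}\,\partial f(x)\cap\mathrm{cone}\,\partial g(x)$. I would try two routes.

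\begin{itemize}
\item \emph{Direct route.} Use the correspondence itself. Its graph $\Gamma=\{(x,u): x\in K_c\cap f^{-1}(1),\ u\in\partial g(x)\cap c^{-1}\partial f(x)\}$ has closed convex fibres over both factors, by Theorem \ref{th:critical(f,g)} and its converse. Each projection from $\Gamma$ then has contractible convex fibres, and the goal would be to show via a Vietoris–Begle type argument that both projections preserve the index (at least for cohomological or genus-type indices). For the genus one can build odd maps through convex selections using partitions of unity, in the style of Michael's theorem.
\item \emph{Fallback route.} Imitate the last step of Theorem \ref{th:index-critical}: approximate $f,g$ by smooth strictly convex elements of $\cvp_c^1(X)$, for instance by infimal convolution with $\varepsilon\|\cdot\|^2$ and passing to the polars, then use upper semicontinuity of the index (I2) together with closedness of the critical sets under the limit.
\end{itemize}

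I expect the continuity of the index along such an approximation to be the delicate part, since critical sets can jump in the limit. For that reason I would present the smooth case fully and treat the general case via the fibrewise-convex correspondence.
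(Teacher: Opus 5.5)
Your smooth case is correct, and in one respect stronger than the paper's smooth step. When $g$ and $f^\circ$ are $C^1$ off the origin, $\varphi=f\cdot\partial g$ and $\psi=g^\circ\cdot\partial f^\circ$ are mutually inverse odd homeomorphisms between the critical cones, whereas the paper only extracts one inequality from (I4) for each smoothness assumption. The gap is the general case, which is what the statement actually asserts, since only $f,g\in\cvp_c^p(X)$ is assumed. You offer two routes and carry out neither, and each has a concrete obstruction.

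\emph{Fallback route.} Smooth approximants give $\mathrm{ind}\,K_c(f_n,g_n)=\mathrm{ind}\,K_c^\circ(f_n,g_n)$. But (I2) only yields the upper bound $\mathrm{ind}\,K_c(f_n,g_n)\le\mathrm{ind}\,K_c(f,g)$, once $K_c(f_n,g_n)$ lies in a suitable neighbourhood of $K_c(f,g)$; it never gives the reverse. Already for $f=g=\|\cdot\|_2$ on $\mathbb{R}^n$, where $K_1=\mathbb{R}^n\setminus\{0\}$, an arbitrarily small smooth perturbation of $f$ splits the level $1$ into $n$ nearby levels, each carrying one pair of rays, and empties $K_1$. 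So no comparison between $\mathrm{ind}\,K_c(f,g)$ and $\mathrm{ind}\,K_c^\circ(f,g)$ survives the limit.

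\emph{Direct route.} Michael's theorem requires lower semicontinuity, but $x\mapsto\partial g(x)\cap c^{-1}\partial f(x)$ is only upper semicontinuous, and continuous selections genuinely fail. For $f=g=\|\cdot\|_1$ on $\mathbb{R}^2$ and $c=1$, the correspondence is $\partial\|\cdot\|_1$, which jumps from $(1,1)$ to $(1,-1)$ across the vertex $(1,0)$ of the unit sphere. Vietoris--Begle only controls Alexander--Spanier cohomology. So it could at best rescue the cohomological index, and only when the projections of $\Gamma$ are proper, which essentially means finite dimensions.

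The missing idea, which is how the paper closes the argument, is to keep the \emph{unperturbed} set $K_c$ as the domain and push the approximation error into the target, where (I2) absorbs it.
\begin{enumerate}[(1)]
\item Set $\Sigma=\bigcup_{x\in K_c}\bigl(\partial g(x)\cap c^{-1}\partial f(x)\bigr)\subset K_c^\circ$. \emph{First} fix $\varepsilon>0$ with $\mathrm{ind}\,\mathbb{B}_\varepsilon(\Sigma)=\mathrm{ind}(\Sigma)$, using (I2).
\item Upper semicontinuity of the correspondence, convexity of its values, and a locally finite partition of unity, chosen symmetrically so that the result is odd, produce a continuous $\Psi$ on $K_c$ with $\Psi(K_c)\subset\mathbb{B}_\varepsilon(\Sigma)$. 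This is an approximate selection of Cellina type, not an exact one. The paper glues gradients $\partial g_i$ of local smooth approximants; local choices $u_i\in\partial g(x_i)\cap c^{-1}\partial f(x_i)$ would serve equally well.
\item By (I4) and (I1), $\mathrm{ind}(K_c)\le\mathrm{ind}\,\Psi(K_c)\le\mathrm{ind}\,\mathbb{B}_\varepsilon(\Sigma)=\mathrm{ind}(\Sigma)\le\mathrm{ind}(K_c^\circ)$.
\item The same construction for $(g^\circ,f^\circ)$, using $f^{\circ\circ}=f$ and $g^{\circ\circ}=g$, gives the reverse inequality.
\end{enumerate}
Neither a homeomorphism nor a limit of perturbed critical sets is needed. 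Your partition-of-unity remark points in this direction, but without fixing $\varepsilon$ through (I2) on the target side beforehand, it does not finish the proof.
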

The proof is quite challenging. In fact, we develop an approach to estimate the admissible indexes of nonlinear eigenspaces in the nonsmooth case based on the Moreau-Yosida approximation and a piecing gluing approach together with constructions (the partitions of unity), the proof also needs to make  full use of non-smooth analysis, which is quite non-trivial.
\begin{proof}
According to Propositions  \ref{pro:CV_p-basic} and \ref{pro:CV_1&Cv_p}, we only need to work with the case $p=1$. 

Next, we shall prove the multiplicity equality. 
From Theorem \ref{th:critical(f,g)} and Corollary \ref{cor:bijection-Lagra}, 
it is clear to see that  $\mathrm{ind}(\mathrm{LCri}_c(f,g))\le \mathrm{ind}(\mathrm{LCri}_c(g^\polar ,f^\polar ))$ if  $f$ or $g$ is $C^1$-smooth on $X\setminus\{0\}$, where $$\mathrm{LCri}_c(f,g)=\{x:x\text{ is a nontrivial Lagrange critical point of }(f,g)\text{ with }f(x)/g(x)=c\}.$$  
Conversely, if  $f^\polar $ or $g^\polar $ is is $C^1$-smooth on $X^*\setminus\{0\}$, 
$\mathrm{ind}(\mathrm{LCri}_c(f,g))\ge \mathrm{ind}(\mathrm{LCri}_c(g^\polar ,f^\polar ))$. Thus, we obtain that the multiplicity of $c$ as a critical value of $(f,g)$ coincides with the multiplicity of $c$ as a  critical value of $(g^\polar ,f^\polar )$. 

Next, we prove that the same property holds when we remove 
the $C^1$-smoothness  condition.

Fix an $\varepsilon>0$ such that 
\[
\mathrm{ind}\; \mathbb{B}_\varepsilon\!\!\left(\bigcup\limits_{  x\in \mathrm{LCri}_c(f,g)}\!\!\!\!\mathrm{cone}(\partial f(  x))\cap \partial g(  x)\right)\!\!=\mathrm{ind}\left(\bigcup\limits_{  x\in \mathrm{LCri}_c(f,g)}\!\!\!\!\mathrm{cone}(\partial f(  x))\cap \partial g(  x)\right).
\]
Take $\varepsilon'<\frac12\varepsilon$. 
Due to smooth approximation approaches, 
we consider a family of open sets  $\{\mathbb{B}_{\delta_x}(  x):  x\in \mathrm{LCri}_c(f,g)\}$ and the corresponding smooth function family  $\{g_{ x}:  x\in \mathrm{LCri}_c(f,g)\}$  such that for any $  y\in \mathbb{B}_{2\delta_x}(  x)$, we have 
$\mathrm{cone}(\partial f(  y))\cap \partial g(  y)\subset \mathbb{B}_{\varepsilon'}(\mathrm{cone}(\partial f(  x))\cap \partial g(  x))$ and 
\begin{equation}\label{eq:partial-g_x(delta_x)}
\partial g_x( \mathbb{B}_{2\delta_x}(  x))\subset \mathbb{B}_{\varepsilon'}(\mathrm{cone}(\partial f(  x))\cap \partial g(  x))    
\end{equation}
for a sufficiently small~$\delta_x>0$, where $g_x\in \cvp_c^1(X)$ and is $C^1$-smooth in a neighborhood of $x$. 

Since $\mathrm{LCri}_c(f,g)$ is paracompact and $\{\mathbb{B}_{\delta_x}(  x):  x\in \mathrm{LCri}_c(f,g)\}$  induces an open cover of $\mathrm{LCri}_c(f,g)$, we can take a locally finite subfamily $\{\mathbb{B}_{\delta_i}(  x_i)\}$ of $\{\mathbb{B}_{\delta_x}(  x):  x\in \mathrm{LCri}_c(f,g)\}$  such that 
$\partial  g_i( \mathbb{B}_{2\delta_i}(  x_i))\subset \mathbb{B}_{\varepsilon'}(\mathrm{cone}(\partial  f(   x_i))\cap \partial  g(   x_i))$, where we simply write $ g_{ x_i}$ as $ g_i$.  
Then, there exist  
partitions of unity  $\{\psi_i\}$  subordinate to the open cover $\{\mathbb{B}_{\delta_i}(  x_i)\}$, i.e., $\mathrm{supp}(\psi_i)\subset \mathbb{B}_{\delta_i}(  x_i)$, $\psi_i\ge0$, $\sum_i\psi_i=1$. 
For example, we can simply take 
$$\psi_i(  y)=\frac{\max\{0,\delta_i-\|  y-  x_i\|_2\}}{\sum_j\max\{0,\delta_j-\|  y-  x_j\|_2\}},\;\;\forall   y\in X.$$
Taking $\Psi(  x)=\sum_i\psi_i(  x)\gradientp  g_i(  x)$, then $\Psi$ is a  continuous map. 

Given $  x\in \mathrm{LCri}_c(f,g)$, let $I(  x)=\{i:  x\in \mathbb{B}_{\delta_i}(  x_i)\}$. 
Then, $I(x)$ is a finite set due to the locally finite property of $\{\mathbb{B}_{\delta_i}(x_i)\}$. 
Note that $\psi_i(  x)>0$ implies $  x\in \mathbb{B}_{\delta_i}(  x_i)$, and thus it holds $\Psi(  x)=\sum_{i\in I(  x)}\psi_i(  x)\gradientp  g_i(  x)$ and  $\gradientp  g_i(  x)\in \mathbb{B}_{\varepsilon'}(\mathrm{cone}(\partial  f(   x_i))\cap \partial  g(   x_i))$, whenever $  x\in \mathbb{B}_{\delta_i}(  x_i)$.  
 
Let  $i(x)=\mathrm{argmax}\{\delta_i:i\in I(  x)\}$. 
Then, for any $i\in I(  x)$, $  x_i\in \mathbb{B}_{\delta_i}(  x) \subset\mathbb{B}_{\delta_i}(\mathbb{B}_{\delta_{i(x)}}(  x_{i(x)})) =\mathbb{B}_{\delta_i+\delta_{i(x)}}(  x_{i(x)})\subset  \mathbb{B}_{2\delta_{i(x)}}(  x_{i(x)})$. Thus, it follows from \eqref{eq:partial-g_x(delta_x)} that $\forall i\in I(  x)$, $\mathrm{cone}(\partial  f(   x_{i}))\cap \partial  g(   x_{i})\subset \mathbb{B}_{\varepsilon'}(\mathrm{cone}(\partial  f(   x_{i(x)}))\cap \partial  g(   x_{i(x)}))$.  
Therefore, $\partial g_i(  x)\in \mathbb{B}_{2\varepsilon'}(\mathrm{cone}(\partial  f(   x_{i(x)}))\cap \partial  g(   x_{i(x)}))$ for any $i\in I(  x)$. Consequently, we have
\begin{align*}
\Psi(  x)&=\sum_{i\in I(  x)}\psi_i(  x)\gradientp  g_i(  x)\in \mathbb{B}_{2\varepsilon'}(\mathrm{cone}(\partial  f(   x_{i(x)}))\cap \partial  g(   x_{i(x)}))
\\&\subset \mathbb{B}_\varepsilon\left(\bigcup\limits_{  x\in \mathrm{LCri}_c(f,g)}\mathrm{cone}(\partial  f(   x))\cap \partial  g(   x)\right)
\end{align*}
which implies that $\Psi(\mathrm{LCri}_c(f,g))\subset \mathbb{B}_\varepsilon\left(\cup_{  x\in \mathrm{LCri}_c(f,g)}\mathrm{cone}(\partial  f(   x))\cap \partial  g(   x)\right)$.  Thus, 
\begin{align*}
\mathrm{ind}(\mathrm{LCri}_c(f,g))&\le 
\mathrm{ind}(\Psi(\mathrm{LCri}_c(f,g))
\le \mathrm{ind}\; \mathbb{B}_\varepsilon\left(\bigcup\limits_{  x\in \mathrm{LCri}_c(f,g)}\mathrm{cone}(\partial  f(   x))\cap \partial  g(   x)\right)
\\&=\mathrm{ind}\left(\bigcup\limits_{  x\in \mathrm{LCri}_c(f,g)}\mathrm{cone}(\partial  f(   x))\cap \partial  g(   x)\right)
\end{align*}
where the first inequality is due to the fact that  $\Psi$ is 
continuous,  the second inequality is based on the nondecreasing property of the index, and the last equality follows from the continuity of the index.   

In summary, we have proved that for any  
 critical value $c$ of $(f,g)$ there always holds  
\[
\mathrm{ind}\big(\mathrm{LCri}_c( g^\polar , f^\polar )\big)\ge \mathrm{ind}\left(\bigcup\limits_{  x\in \mathrm{LCri}_c(f,g)}\mathrm{cone}(\partial  f(   x))\cap \partial  g(   x)\right)\ge\mathrm{ind}\big( \mathrm{LCri}_c(f,g)\big)
\] 
where the first inequality is due to $\bigcup\limits_{  x\in \mathrm{LCri}_c(f,g)}\mathrm{cone}(\partial  f(   x))\cap \partial  g(   x)\subset \mathrm{LCri}_c( g^\polar , f^\polar ) $. 
By replacing $f$ with $f^\polar$ and $g$ with $g^\polar$, we obtain the reverse inequality.
\end{proof}

\subsection{\textbf{Part II. The second critical duality theory (with bounded operator)}}\label{sec:second-dual-bounded-ope}

In this section, we introduce the linear operators as a new component of critical duality theory presented in Section \ref{sec:first-c-dual}. With the linear operator, we can work on different spaces,  
and there are more fruitful duality results. We use $ \mathcal{B}(X,Y)$ to denote the set of all bounded linear operators from $X$ to $Y$. 

We list some basic properties of convex functions and bounded operators which will be used.
\begin{prop}
\label{prop:basic-in-convex}
Let $f\in \cvx(Y)$ and $A\in \mathcal{B}(X,Y)$. Then the following  properties hold:
\begin{enumerate}[(i)]
\item 
Let there be a point $Ax_0$ where $f$ is continuous and finite. Then for all points $x$ of $X$, we have $\partial (f\circ A)(x)=A^*\partial f(Ax)$. (see Proposition 5.7 in Chapter 1 of \cite{Ekeland99})
\item $R(A)^\bot=\ker A^*$, $(\ker A)^\bot=\overline{R( A^*)}$, $R(A^*)^\bot=\ker A$, $(\ker A^*)^\bot=\overline{R( A)}$ (see \cite{Rudin})
\item $R(A)$ is closed if and only if $R(A^*)$ is closed  (see \cite{Rudin})
\end{enumerate}
\end{prop}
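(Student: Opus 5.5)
The three items are classical, so the plan is to give short self-contained arguments, reducing (ii) and (iii) to Hahn--Banach and the closed range theorem, with (i) as the only part needing real work.

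For (i), the inclusion $A^*\partial f(Ax)\subset\partial(f\circ A)(x)$ is immediate. If $y^*\in\partial f(Ax)$, then for every $v\in X$,
$$\langle A^*y^*,v\rangle=\langle y^*,Av\rangle\le f(Ax+Av)-f(Ax).$$

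For the reverse inclusion, fix $x^*\in\partial(f\circ A)(x)$. We may assume $x\in\dom(f\circ A)$, since otherwise both sides are empty.

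\emph{Step 1: $x^*$ factors through $R(A)$.} If $Az=0$, then applying the subgradient inequality with $\pm z$ gives $\pm\langle x^*,z\rangle\le 0$. Hence $x^*$ vanishes on $\ker A$, and $\ell(Az):=\langle x^*,z\rangle$ is a well-defined linear functional on $R(A)$. By construction,
$$\ell(w)\le f(Ax+w)-f(Ax)\quad\text{for all } w\in R(A).$$

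\emph{Step 2: extend $\ell$ to a continuous subgradient.} Consider the convex set
$$C=\{(w,s)\in Y\times\R: s>f(Ax+w)-f(Ax)\}.$$
Because $f$ is continuous and finite at $Ax_0$, the point $w_0=Ax_0-Ax\in R(A)$ is such that $f(Ax+\cdot)$ is continuous at $w_0$, so $C$ has nonempty interior. The graph $G=\{(w,\ell(w)):w\in R(A)\}$ is a linear subspace disjoint from $C$. The geometric Hahn--Banach theorem then yields a continuous functional $(y^*,\beta)\neq 0$ on $Y\times\R$ that vanishes on $G$ and is nonnegative on $C$.

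\emph{Step 3: the separating hyperplane is nonvertical.} If $\beta=0$, then $\langle y^*,w\rangle\ge 0$ on a neighbourhood of $w_0$. Since also $\langle y^*,w_0\rangle=0$ (as $w_0\in R(A)$ and the functional vanishes on $G$), this forces $y^*=0$, a contradiction. So $\beta\neq0$, and after normalizing $\beta=-1$ (the sign is fixed by nonnegativity on $C$) we obtain
$$\langle y^*,w\rangle\le f(Ax+w)-f(Ax)\ \text{ for all } w\in Y,\qquad y^*=\ell \text{ on } R(A).$$
Thus $y^*\in\partial f(Ax)$ and $A^*y^*=x^*$. This step, specifically using the continuity point $Ax_0\in R(A)$ to rule out a vertical hyperplane, is the main obstacle, and it is exactly where the hypothesis enters.

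For (ii), two identities come from unwinding definitions. First, $y^*\in R(A)^\bot$ iff $\langle A^*y^*,x\rangle=0$ for all $x$, iff $y^*\in\ker A^*$. Second, $R(A^*)^\bot=\ker A$ in the same way, using reflexivity to identify $X^{**}$ with $X$. Taking annihilators of these identities gives $(\ker A^*)^\bot=R(A)^{\bot\bot}=\overline{R(A)}$ and $(\ker A)^\bot=\overline{R(A^*)}$. The double-annihilator identity $M^{\bot\bot}=\overline{M}$ follows from Hahn--Banach separation of a point from a closed subspace. On $X^*$ the weak$^*$ and weak closures coincide by reflexivity, and weak closures of subspaces equal norm closures by Mazur.

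For (iii), this is Banach's closed range theorem. If $R(A)$ is closed, the induced map $X/\ker A\to R(A)$ is an isomorphism by the open mapping theorem. Any $x^*\in(\ker A)^\bot$ then factors as $y_0^*\circ A$ with $y_0^*$ bounded on $R(A)$, and extending $y_0^*$ by Hahn--Banach shows $R(A^*)=(\ker A)^\bot$, which is closed. The converse follows by applying the same argument to $A^*$, using reflexivity to identify $A^{**}$ with $A$.
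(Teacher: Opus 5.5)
Your proposal is correct, and it gives the standard textbook arguments behind the references the paper cites; the paper itself gives no proof. For (i) this is Hahn--Banach separation in $Y\times\R$, with the continuity point $Ax_0\in R(A)$ ruling out a vertical hyperplane. For (ii) it is annihilator calculus plus reflexivity, which is what upgrades the weak$^*$ closure to the norm closure in $(\ker A)^\bot=\overline{R(A^*)}$. For (iii) it is the open-mapping proof of the closed range theorem.

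One cosmetic slip in Step 3: with your pairing $\langle y^*,w\rangle+\beta s\ge 0$ on $C$, letting $s\to+\infty$ forces $\beta>0$. So normalize $\beta=1$ and replace $y^*$ by $-y^*$, rather than setting $\beta=-1$.
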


\subsubsection{Lagrange critical equivalence 
}

According to Theorem
\ref{th:critical(f,g)}, we have the following conclusion.
\begin{corollary}
Let $g\in \cvx_{0}^p(X)$ and $f\in \cvx_{0}^p(Y)$ and $A\in \mathcal{B}(X,Y)$. Then, the nontrivial Lagrange critical values of  $(f\circ A,g)$ coincide with that of $(g^\circ,(f\circ A)^\circ)$.
\end{corollary}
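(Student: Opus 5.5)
This is a direct application of Theorem \ref{th:critical(f,g)} to the pair $(f\circ A, g)$ on the space $X$. The only thing to check is that $f\circ A$ belongs to $\cvx_0^p(X)$; the theorem then applies verbatim with $f\circ A$ in place of $f$.

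We verify the four defining properties of Definition \ref{def:CV_p} in turn.

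\emph{Convexity.} Since $A$ is linear, $f\circ A$ is convex whenever $f$ is.

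\emph{Nonnegativity and normalization.} Nonnegativity and $(f\circ A)(0)=f(0)=0$ are inherited directly from $f$.

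\emph{Positive $p$-homogeneity.} For $t>0$ we have $(f\circ A)(tx)=f(tAx)=t^pf(Ax)$.

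\emph{Lower semicontinuity.} Take $x_n\to x$. Because $A$ is bounded, $Ax_n\to Ax$. The lower semicontinuity of $f$ then gives $\liminf f(Ax_n)\ge f(Ax)$. Equivalently, each sublevel set $\{f\circ A\le c\}=A^{-1}\{f\le c\}$ is closed.

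\emph{Properness.} This is automatic, since $(f\circ A)(0)=0<+\infty$ and $f\circ A$ never takes the value $-\infty$.

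Hence $f\circ A\in\cvx_0^p(X)$, and $g\in\cvx_0^p(X)$ by hypothesis. Applying Theorem \ref{th:critical(f,g)} to the pair $(f\circ A,g)$ shows that the nontrivial Lagrange critical values of $(f\circ A,g)$ coincide with those of $(g^\polar,(f\circ A)^\polar)$. Here the polar $(f\circ A)^\polar$ is taken on $X^*$, and Proposition \ref{pro:CV_p-basic}(iv) guarantees that it lies in $\cvx_0^p(X^*)$.

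There is no real obstacle. In particular, no continuity hypothesis on $f$ is needed, because we never have to compute $(f\circ A)^\polar$ or $\partial(f\circ A)$ in terms of $A^*$. The chain rule of Proposition \ref{prop:basic-in-convex}(i) only becomes relevant later, when one compares these values with those of $(g^\polar\circ A^*,f^\polar)$.
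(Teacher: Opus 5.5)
Your proposal is correct and is the paper's argument: the corollary is stated as an immediate consequence of Theorem~\ref{th:critical(f,g)} applied to the pair $(f\circ A,g)$ on $X$. Your check that $f\circ A\in\cvx_0^p(X)$, which the paper leaves implicit, is the right thing to verify and is done correctly; as you note, no continuity assumption on $f$ is needed.
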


We can further prove a duality theorem that involves the adjoint operator $A^*$.  

\begin{theorem}\label{th:critical(fA,g)}
Let $g\in \cvx_{0}^p(X)$ and $f\in \cvx_{0}^p(Y)$ and $A\in \mathcal{B}(X,Y)$. 
Suppose that $g^\polar$ and $f$ are continuous. 
Then the nontrivial Lagrange critical values of  $(f\circ A,g)$ coincide with that of  $(g^\polar\circ A^* , f^\polar )$. 

Furthermore, for any nontrivial Lagrange critical point $x$ of $(f\circ A,g)$, and for any $w\in\mathrm{cone}(\partial f(A x))\cap  \mathrm{cone}((A^*)^{-1}\partial g( x))$,  $w$ is a nontrivial Lagrange critical point of $(g^\polar\circ A^*  , f^\polar )$. 
\end{theorem}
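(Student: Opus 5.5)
We reduce to $p=1$ using Propositions \ref{pro:CV_p-basic} and \ref{pro:CV_1&Cv_p}, as in the proof of Theorem \ref{th:critical(f,g)}. The $p$-th root and the constant factor in $(f^p)^\polar$ only rescale the critical value by a $p$-th power, and they do not change the cones of subgradients.

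Since $f$ is continuous and finite on $Y$, Proposition \ref{prop:basic-in-convex}(i) gives $\partial(f\circ A)(x)=A^*\partial f(Ax)$. Since $g^\polar$ is continuous on $X^*$ and $A^*\in\mathcal B(Y^*,X^*)$, the same proposition gives $\partial(g^\polar\circ A^*)(w)=A^{**}\partial g^\polar(A^*w)$, which we read as $A\,\partial g^\polar(A^*w)$ under reflexivity.

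Let $(c,x)$ be a nontrivial Lagrange critical pair of $(f\circ A,g)$. Then some $w\in\partial f(Ax)$ satisfies $A^*w\in\frac1c\partial g(x)$; more generally, take any $w\in\mathrm{cone}(\partial f(Ax))\cap\mathrm{cone}((A^*)^{-1}\partial g(x))$. By zero-homogeneity of subdifferentials we may normalize $g(x)=1$, so $f(Ax)=c$ by Euler's identity. We may also rescale $w$ so that $A^*w\in\partial g(x)$; then $cw\in\partial f(Ax)$, again by Euler's identity.

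The Claim in the proof of Theorem \ref{th:critical(f,g)} applies in both places:
\begin{itemize}
\item applied to $g$ at $x$ with $u=A^*w$, it gives $g^\polar(A^*w)=1$ and $x\in\partial g^\polar(A^*w)$, hence $Ax\in A\,\partial g^\polar(A^*w)=\partial(g^\polar\circ A^*)(w)$;
\item applied to $f$ at $Ax/c$ with $u=cw$, together with zero-homogeneity of $\partial f^\polar$, it gives $Ax\in c\,\partial f^\polar(w)$.
\end{itemize}
Hence $0\in\partial(g^\polar\circ A^*)(w)-c\,\partial f^\polar(w)$. Nontriviality follows because $g^\polar(A^*w)=1\neq0$ and $f^\polar(cw)=1$ (Claim), so $f^\polar(w)=1/c\neq0$. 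So $c$ is a nontrivial critical value of $(g^\polar\circ A^*,f^\polar)$ with critical point $w$.

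For the converse inclusion of critical values we argue symmetrically. Let $(c,w)$ be a nontrivial critical pair of $(g^\polar\circ A^*,f^\polar)$. Then there are $z\in\partial g^\polar(A^*w)$ and $Az\in c\,\partial f^\polar(w)$. After normalizing $g^\polar(A^*w)=1$, the Claim applied to $g^\polar$ (using $g^{\polar\polar}=g$) gives $A^*w\in\partial g(z)$ with $g(z)=1$. The Claim applied to $f^\polar$ gives $w\in c'\,\partial f(Az)$ for the appropriate positive constant. Thus $A^*w\in A^*\partial f(Az)$ up to scaling, so $0\in\partial(f\circ A)(z)-c\,\partial g(z)$. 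The constant is identified with $c$ via Euler's identity: $\langle A^*w,z\rangle=g(z)=1$ and $\langle w,Az\rangle=c f^\polar(w)$.

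The main obstacle is the chain rule for $g^\polar\circ A^*$ and $f\circ A$, which is exactly where the continuity hypotheses on $g^\polar$ and $f$ are needed. Some care is also needed with the identification of $A^{**}$ with $A$ under the canonical embeddings $J$, and with checking the constant bookkeeping so that the same $c$ appears on both sides rather than $1/c$. If instead $f^\polar$ and $g$ are the continuous ones, the chain rule is used in the opposite direction and the argument is run from the dual side.
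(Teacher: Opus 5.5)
Your proof is correct and is essentially the paper's argument: reduce to $p=1$, use the chain rules $\partial(f\circ A)=A^*\partial f(A\,\cdot)$ and $\partial(g^\polar\circ A^*)=A\,\partial g^\polar(A^*\cdot)$, normalize, and apply the Claim from the proof of Theorem \ref{th:critical(f,g)} once to $g$ and once to $f$. The only difference is that you carry out the converse explicitly, whereas the paper gets it from reflexivity and symmetry, since the dual data $(g^\polar,f^\polar,A^*)$ satisfy the same hypotheses and dualize back to $(f,g,A)$. Your closing remark about the case where $f^\polar$ and $g$ are continuous is not needed for this statement and is not justified as written, because the dual pair $(g^\polar\circ A^*,f^\polar)$ involves the same two compositions and so needs the same continuity of $g^\polar$ and $f$.
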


\begin{proof}
By Propositions  \ref{pro:CV_p-basic} and \ref{pro:CV_1&Cv_p}, we only need to work with the case $p=1$, that is, $g\in \cvx_{0}^1(X)$ and $f\in \cvx_{0}^1(Y)$.  

We shall use the chain rule for subdifferential: $\partial (f\circ A)(x)=A^* \partial f(Ax)$ for any $x\in X$, and $\partial ( g^\polar \circ A^*)( w)=A\partial  g^\polar (A^* w)$ for any $w\in Y^*$ (see Proposition \ref{prop:basic-in-convex}).

Suppose that $(c,x)$ is a nontrivial Lagrange critical pair of $(f\circ A,g)$, i.e., $c$, $f(Ax)$ and $g(x)$ are positive numbers, and $\partial(f\circ A)(x) \cap c\partial g(x)\ne\varnothing$. Then there exists $u\in\partial g(x)$ such that $c u\in \partial (f\circ A)(x)=A^* \partial f(Ax)$. 
Note that $\partial g$ and $\partial (f\circ A)$ possess the zero-homogeneity, i.e., $\partial (f\circ A)(tx)=\partial (f\circ A)(x)$ and $\partial g(tx)=\partial g(x)$, $\forall t>0$, $\forall x\in X$, 
Hence, without loss of generality, we may further assume that $x$  satisfies  $g(x)=1$. Then, for any $u\in\partial g(x)\subset X^*$, 
it follows from Euler's identity that $\langle u,x\rangle=g(x)=1$ and $c = \langle c u,x\rangle=f(Ax)$. 

By definition of subgradient, for any $x'\in X$, $$\langle u,x'\rangle-1=\langle u,x'\rangle-\langle u,x\rangle=\langle u,x'-x\rangle\le g(x')-g(x)=g(x')-1$$ which yields $\langle u,x'\rangle\le g(x')$, $\forall x'\in X$. Thus, it can be checked that $$ g^\polar (u)=\sup_{x'\in X:g(x')\le1}\langle u,x'\rangle=\langle u,x\rangle=g(x)=1.$$
Let $J:X\to X^{**}$ be the canonical map, i.e., for each  $x'\in X$, $\langle Jx',u'\rangle=\langle u',x'\rangle$, $\forall u'\in X^*$. Then, 
for any $u'\in X^*$, 
$$\langle Jx,u'-u\rangle=\langle u'-u,x\rangle=\langle u',x\rangle-\langle u,x\rangle=\langle u',x\rangle-1\le  g^\polar (u')-1=  g^\polar (u')- g^\polar (u)$$ which implies that $Jx\in \partial   g^\polar (u)$. 
For convenience, we identify $Jx$ with $x$, and simply 
rewrite $x\in \partial  g^\polar (u)$. 


By $c u\in\partial (f\circ A)(x)=A^*\partial f(Ax)=A^*\partial f(\frac{Ax}{c})$, there exists $w\in\partial f(\frac{Ax}{c})$ such that $A^*w=cu$. Note that $f(\frac{Ax}{c})=1$, for any $y\in Y$, 
$$\langle w,y\rangle-1=\langle w,y\rangle-f(\frac{Ax}{c})=\langle w,y\rangle-\langle w,\frac{Ax}{c}\rangle=\langle w,y-\frac{Ax}{c}\rangle\le f(y)-f(\frac{Ax}{c})=f(y)-1$$
which implies $\langle w,y\rangle\le f(y)$, $\forall y\in X$. Thus, it can be checked that $$ f^\polar (w)=\sup_{y\in X:f(y)\le1}\langle w,y\rangle=\langle w,\frac{Ax}{c}\rangle=f(\frac{Ax}{c})=1.$$
Let $J':Y\to Y^{**}$ be the natural map, i.e., for each  $y\in Y$, $\langle Jy,v\rangle=\langle v,y\rangle$, $\forall v\in Y^*$.
Then, 
$$\langle J'\frac{Ax}{c},v-w\rangle=\langle v-w,\frac{Ax}{c}\rangle=\langle v,\frac{Ax}{c}\rangle-\langle w,\frac{Ax}{c}\rangle=\langle v,\frac{Ax}{c}\rangle-1\le  f^\polar (v)-1=  f^\polar (v)- f^\polar (w)$$ 
for any $v\in Y^*$, 
and therefore by identifying $J'\frac{Ax}{c}$ with $\frac{Ax}{c}$, we have
$$\frac{Ax}{c}\in \partial   f^\polar (w)
.$$
Accordingly, $Ax\in c \partial  f^\polar (w)$, and then by the zero-homogeneity of $\partial   g^\polar $ and chain rule, we have 
\begin{align*}
Ax\in   A\partial  g^\polar (u)\bigcap c \partial   f^\polar ( w)&=A\partial  g^\polar (cu)\bigcap c \partial   f^\polar ( w)
\\&=A\partial  g^\polar (A^* w)\bigcap c \partial   f^\polar ( w)=\partial ( g^\polar \circ A^*)( w)\bigcap c \partial   f^\polar ( w)    
\end{align*}
implying that $c$ is a nontrivial Lagrange critical value of $(  g^\polar\circ A^*,  f^\polar )$ with a corresponding nontrivial Lagrange critical point $w$. 

Note that in the above process, we only require $u\in \partial g(x) \cap \frac{1}{c}\partial (f\circ A)(x)$. By the zero-homogeneity of $\partial g$ and $\partial f$, we can relax this condition to $u\in \mathrm{cone}(\partial g(x)) \cap  \mathrm{cone}(\partial (f\circ A)(x))$. 
Since $X$ is reflexive, the canonical map $J$ gives an isomorphism between $X$ and $X^{**}$, the nontrivial Lagrange  critical values of $(g^\polar\circ A^* ,f^\polar )$ should also be nontrivial Lagrange critical values of $(f\circ A,g)$. 
The proof is then completed.
\end{proof}

We have the following kernel reduction property:

\begin{lemma}[kernel reduction lemma]\label{lem:g_R(A)g_K(A)}
For any $A\in \mathcal{B}(X,Y)$, $f\in\cvx_{0}^p(Y)$ and $g\in\cvx_{0}^p(X)$, suppose that $g^\polar$ and $f$ are continuous. 
The nonzero Lagrange critical values of $(f\circ A,g)$, $(f,\bar g_{\RA})$ and $(f\circ A ,g_{\KA})$ coincide, where $g_{\KA}:X\to [0,+\infty]$ is defined by 
$$ g_{\KA}(x)=\inf_{x'\in x+\mathrm{Kernel}(A)} g(x')$$
and  $\bar g_{\RA}:\mathrm{Range}(A)\to [0,+\infty]$ is defined by 
$$ \bar g_{\RA}(y)=\inf_{x\in A^{-1}y} g(x)$$
\end{lemma}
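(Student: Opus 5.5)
Since all three function pairs are $p$-homogeneous and the Lagrange conditions only involve rays, I will reduce to $p=1$ using Propositions \ref{pro:CV_p-basic} and \ref{pro:CV_1&Cv_p}, exactly as in the proof of Theorem \ref{th:critical(fA,g)}. The plan is to compare each of $(f,\bar g_{\RA})$ and $(f\circ A,g_{\KA})$ with the pair $(g^\polar\circ A^*,f^\polar)$. By Theorem \ref{th:critical(fA,g)}, the nontrivial Lagrange critical values of $(f\circ A,g)$ and $(g^\polar\circ A^*,f^\polar)$ already coincide. So it suffices to show that the two reduced pairs have the same dual.

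\textbf{Step 1 (the dual of $g_{\KA}$).} Set $\Pi=\ker A$, so that $g_{\KA}=g_\Pi$. I will compute $(g_\Pi)^\polar$ through the sup-formula of Proposition \ref{pro:CV_1&Cv_p}(ii):
\[
(g_\Pi)^\polar(u)=\sup_{g_\Pi(x)\le 1}\langle u,x\rangle .
\]
If $u\notin \Pi^\bot$, this supremum is $+\infty$, because one may translate $x$ along $\Pi$. If $u\in\Pi^\bot$, then $\langle u,\cdot\rangle$ is constant on each coset $[x]$, and an infimum argument gives $(g_\Pi)^\polar(u)=g^\polar(u)$. Hence $(g_\Pi)^\polar=g^\polar+\iota_{(\ker A)^\bot}$, where $\iota$ denotes the indicator. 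Since $R(A^*)\subset (\ker A)^\bot$ by Proposition \ref{prop:basic-in-convex}(ii), we get $(g_\Pi)^\polar\circ A^*=g^\polar\circ A^*$. Applying Theorem \ref{th:critical(fA,g)} to $(f\circ A,g_{\KA})$ then shows that its nontrivial critical values coincide with those of $(g^\polar\circ A^*,f^\polar)$.

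This step requires checking the hypotheses of Theorem \ref{th:critical(fA,g)} for $g_{\KA}$: that $g_{\KA}\in\cvx_0^1(X)$, including lower semicontinuity, and that $(g_{\KA})^\polar$ is continuous. Continuity of $g^\polar+\iota$ fails off $(\ker A)^\bot$. I expect Theorem \ref{th:critical(fA,g)} to need continuity of $g^\polar$ only at points of the form $A^*w$, through the chain rule $\partial(g^\polar\circ A^*)(w)=A\,\partial g^\polar(A^*w)$. I would therefore restrict the argument to $(\ker A)^\bot$, or else argue directly. For lower semicontinuity of the infimal projection I plan to use reflexivity together with the coercivity of $g$.

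\textbf{Step 2 (the range version).} I will view $\bar g_{\RA}$ as a function on $R(A)$, or on its closure, and use $f|_{R(A)}$ with the inclusion operator. The map $A$ factors as $X\to X/\ker A\to R(A)$. Since $\bar g_{\RA}\circ A=g_{\KA}$, the critical pairs correspond through $x\mapsto Ax$. I will verify this directly with Proposition \ref{pro:quotient-subdifferential}. It gives $\partial g_{\KA}(x)=\partial g(x')$ for any minimizer $x'\in x+\ker A$, and these sets lie in $(\ker A)^\bot$. Pulling back through $A$ identifies $\partial(f\circ A)(x)\cap c\,\partial g_{\KA}(x)\neq\varnothing$ with the corresponding Lagrange condition for $(f,\bar g_{\RA})$ at $Ax$. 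Nontriviality transfers because $f(Ax)$ and $g_{\KA}(x)=\bar g_{\RA}(Ax)$ are unchanged.

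\textbf{Main obstacle.} The delicate point is the subdifferential comparison when $R(A)$ is not closed, and when the infimum defining $g_{\KA}$ is not attained. In that case Proposition \ref{pro:quotient-subdifferential} gives $\partial g(x)\cap\Pi^\bot=\varnothing$. I would first try to prove attainment using reflexivity and weak lower semicontinuity of $g$ along the closed affine set $x+\ker A$. Coercivity on this set should follow because $g^\polar$ is continuous, hence bounded near $0$, which bounds the sublevel sets of $g$. If attainment fails, I would fall back on comparing values only through the dual pairs, as in Step 1, since that route never needs minimizers.
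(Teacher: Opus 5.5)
Your proof is correct in outline, but only half of it follows the paper.

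\textbf{How the paper argues.} The paper works entirely on the dual side. Two sup-computations give $g^\polar(A^*y^*)=(g_{\KA})^\polar(A^*y^*)=(\bar g_{\RA})^\polar(y^*)$ for all $y^*\in Y^*$, so the three pairs share the dual pair $(g^\polar\circ A^*,f^\polar)$. It then quotes Theorem~\ref{th:critical(fA,g)} for $(f\circ A,g)$ and $(f\circ A,g_{\KA})$, and Theorem~\ref{th:critical(f,g)} for $(f,\bar g_{\RA})$.

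\textbf{How your route differs.} Your Step~1 is essentially the paper's treatment of $g_{\KA}$, in the sharper form $(g_{\KA})^\polar=g^\polar+\iota_{(\ker A)^\bot}$. Your Step~2 replaces the dual identity for $\bar g_{\RA}$ and Theorem~\ref{th:critical(f,g)} by a direct primal correspondence $x\mapsto Ax$. The paper's route is shorter and uniform. Yours is more careful about two hypotheses the paper does not check:
\begin{itemize}
\item $(g_{\KA})^\polar$ is nowhere continuous when $\ker A\neq0$, so Theorem~\ref{th:critical(fA,g)} does not apply verbatim.
\item Biduality needs lower semicontinuity of $g_{\KA}$ and of $\bar g_{\RA}$ extended by $+\infty$.
\end{itemize}
Your repairs go through. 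Since $g^\polar\le(g_{\KA})^\polar$ with equality on $R(A^*)$, one has $\partial g^\polar(A^*w)\subset\partial(g_{\KA})^\polar(A^*w)$. The chain rule for $g^\polar\circ A^*$ then gives $\partial\big((g_{\KA})^\polar\circ A^*\big)(w)=A\,\partial g^\polar(A^*w)=A\,\partial(g_{\KA})^\polar(A^*w)$. This chain rule is the only place where the proof of Theorem~\ref{th:critical(fA,g)} uses continuity of $g^\polar$. Continuity of $g^\polar$ also yields $g\ge\|\cdot\|/C$. By reflexivity the infima are then attained, and $\{g_{\KA}\le t\}=\{g\le t\}+\ker A$ is closed.

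\textbf{A misstatement in Step~2.} Proposition~\ref{pro:quotient-subdifferential}, together with the chain rule for the quotient map, gives $\partial g_{\KA}(x)=\partial g(x')\cap(\ker A)^\bot$, not $\partial g(x')$. For $g=\|\cdot\|_1$ on $\R^2$, $\ker A=\R e_2$ and $x=x'=e_1$, the set $\partial g(x')$ is strictly larger. This is harmless, and in fact minimizers are not needed at all. Since $\bar g_{\RA}(Ax')=g_{\KA}(x')$, the definitions give directly $A^*\xi\in\partial g_{\KA}(x)\iff\xi\in\partial\bar g_{\RA}(Ax)$, with $\bar g_{\RA}=+\infty$ off $R(A)$. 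So $A^*\eta=cu$ with $\eta\in\partial f(Ax)$ and $u\in\partial g_{\KA}(x)$ forces $\eta\in\partial f(Ax)\cap c\,\partial\bar g_{\RA}(Ax)$. Conversely, every critical point of $(f,\bar g_{\RA})$ lies in $\dom\bar g_{\RA}\subset R(A)$. Neither closed range nor attainment enters, so the obstacle you anticipate does not arise.

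\textbf{A shortcut.} You could combine this primal step with Theorem~\ref{th:critical(f,g)} applied to $(f,\bar g_{\RA})$. Its dual pair is $(g^\polar\circ A^*,f^\polar)$, because $(\bar g_{\RA})^\polar=(g_{\KA})^\polar\circ A^*$. That chain avoids the continuity issue of Step~1 entirely.
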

The proof is surprisingly based on the critical duality theorems, i.e., Theorems \ref{th:critical(f,g)} and  \ref{th:critical(fA,g)}.
\begin{proof}By Propositions  \ref{pro:CV_p-basic} and \ref{pro:CV_1&Cv_p}, we only need to verify the case $p=1$, that is, $g\in \cvx_{0}^1(X)$ and $f\in \cvx_{0}^1(Y)$. 
The core of the proof is then to prove: 
For any $y^*\in Y^*$, 
\begin{equation}\label{eq:g-three-terms-dualequal}
g^\polar(A^*y^*)=(g_{\KA})^\polar(A^*y^*)=(\bar g_{\RA})^\polar(y^*)    
\end{equation}

We split the proof of \eqref{eq:g-three-terms-dualequal} into the following steps. 

\begin{enumerate}[{Step }1.]
    \item $g^\polar(A^*y^*)=(\bar g_{\RA})^\polar(y^*)$


In fact, for any $x\in X$ with $g(x)\le 1$, 
$$\bar g_{\RA}(Ax)=\inf_{x'\in X:Ax'=Ax}g(x')\le g(x)\le 1.$$
Hence, 
$$g^\polar(A^*y^*)=\sup_{x\in X:g(x)\le1}\langle A^*y^*,x\rangle=\sup_{x\in X:g(x)\le1}\langle y^*,Ax\rangle\le \sup_{y\in Y:\bar g_{\RA}(y)\le1}\langle y^*,y\rangle=(\bar g_{\RA})^\polar(y^*).$$

On the other hand, for any $y\in \mathrm{Range}(A)$ with $\bar g_{\RA}(y)\le1$, and for any  $\varepsilon>0$, there exists $x\in X$ such that $Ax=y$ and $g(x)< \bar g_{\RA}(y)+\varepsilon\le 1+\varepsilon$. 
Since $g$ is positively 1-homogeneous, we have  
\begin{align*}
(\bar g_{\RA})^\polar(y^*)&=\sup_{y\in Y:\bar g_{\RA}(y)\le1}\langle y^*,y\rangle\le \sup_{x\in X:g(x)\le1+\varepsilon}\langle y^*,Ax\rangle
\\&=\sup_{x\in X:g(x)\le1+\varepsilon}\langle A^*y^*,x\rangle=(1+\varepsilon)g^\polar(A^*y^*)
\end{align*}
By the arbitrariness of $\varepsilon>0$, we derive $g^\polar(A^*y^*)=(\bar g_{\RA})^\polar(y^*)$. 
\item $(g_{\KA})^\polar(A^*y^*)=(\bar g_{\RA})^\polar(y^*) $

The proof is straightforward by the definition of $g_{\KA}$ and $\bar g_{\RA}$:
\begin{align*}
(g_{\KA})^\polar(A^*y^*)&=\sup_{x\in X:g_{\KA}(x)\le1}\langle A^*y^*,x\rangle
=\sup_{x\in X:g_{\KA}(x)\le1}\langle y^*,Ax\rangle
=\sup_{x\in X:\bar g_{\RA}(Ax)\le1}\langle y^*,Ax\rangle
\\&=\sup_{y\in Y:\bar g_{\RA}(y)\le1}\langle y^*,y\rangle
=
(\bar g_{\RA})^\polar(y^*)    
\end{align*}
 \end{enumerate}

We have then verified \eqref{eq:g-three-terms-dualequal} via Step 1 and Step 2.  
Therefore, $(g^\polar \circ A^*,f^\polar)$, $((\bar g_{\RA})^\polar ,f^\polar)$, $((g_{\KA})^\polar \circ A^*,f^\polar)$ are in fact the same function pair, and thus the Lagrange critical values of $(g^\polar \circ A^*,f^\polar)$, $((\bar g_{\RA})^\polar ,f^\polar)$ and $((g_{\KA})^\polar \circ A^*,f^\polar)$ coincide. 

By our duality theorems, the nonzero Lagrange critical values of $(g^\polar \circ A^*,f^\polar)$ and $(f\circ A,g)$ coincide (see Theorem \ref{th:critical(fA,g)}); the nonzero Lagrange critical values of $((\bar g_{\RA})^\polar ,f^\polar)$ and $(f ,\bar g_{\RA})$ coincide (see Theorem \ref{th:critical(f,g)});  the nonzero Lagrange critical values of $((g_{\KA})^\polar \circ A^*,f^\polar)$ and $(f\circ A ,g_{\KA})$ coincide (see Theorem \ref{th:critical(fA,g)}). Thus, the proof of Lemma \ref{lem:g_R(A)g_K(A)} is finished. 
\end{proof}

\subsubsection{Level set homotopy equivalence}

In order to study Morse criticality of RC functions, we need to first investigate the topological properties of sublevel sets, which is also useful in min-max criticality and applications. 
It is natural to consider a RC function $f\circ A/g$ with $A\in\mathcal{B}(X,Y)$, $f\in \cvx_{0,+}^p(Y)$ and $g\in \cvx_{0,+}^p(X)$.

We simply denote $$\{f\circ A/g\le c\}:=\Big\{x\in X\setminus\{0\}\Big|\;\frac{f(Ax)}{g(x)}\le c\Big\}.$$

\begin{lemma}\label{lem:fA/g-zyx-tech1}
For any $A\in \mathcal{B}(X,Y)$, $f\in\cvx_{0,+}^p(Y)$ and $g\in\cvx_{0,+}^p(X)$. For any $x\not\in \ker A$, for any $y'\in\partial f(Ax)$ and any $y$ with  $A^* y\in \partial g(x)$, we have $f^\polar(y)\ne 0$ and $f^\polar (y')\ne0$, and $$\frac{g^\polar (A^* y')}{f^\polar (y')}\ge \frac{f(Ax)}{g(x)}\ge \frac{g^\polar (A^* y)}{f^\polar (y)}$$
with each equality if and only if $x$ is a Lagrange critical point of $(f\circ A,g)$. 
\end{lemma}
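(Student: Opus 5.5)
Following Theorems \ref{th:critical(f,g)} and \ref{th:critical(fA,g)}, I reduce to $p=1$ using Propositions \ref{pro:CV_p-basic} and \ref{pro:CV_1&Cv_p} and Corollary \ref{cor:RC-property}: replacing $f,g$ by $f^{1/p},g^{1/p}$ turns both ratios into $p$-th roots and leaves the Lagrange critical points unchanged. The two inequalities are then handled separately by the mechanism of Lemma \ref{lem:f/g-zyx-tech}. For $p=1$ I use the formula $f^\polar(w)=\sup_{z\neq 0}\langle w,z\rangle/f(z)$, valid since $f$ is positive-definite, together with Euler's identity for subgradients: $\langle y',Ax\rangle=f(Ax)$ and $\langle A^*y,x\rangle=g(x)$.

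\emph{Right inequality.} Since $A^*y\in\partial g(x)$, the Claim in the proof of Theorem \ref{th:critical(f,g)} (applied to $x/g(x)$) gives $g^\polar(A^*y)=1$. Also $\langle y,Ax\rangle=\langle A^*y,x\rangle=g(x)>0$, because $x\ne0$ and $g$ is positive-definite. Since $Ax\ne0$, $f(Ax)>0$, and therefore $f^\polar(y)\ge\langle y,Ax\rangle/f(Ax)=g(x)/f(Ax)>0$. This yields $f^\polar(y)\ne0$ and $g^\polar(A^*y)/f^\polar(y)\le f(Ax)/g(x)$. Equality holds iff $f^\polar(y)f(Ax)=\langle y,Ax\rangle$. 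By the standard characterization, as in the proof of Lemma \ref{lem:f/g-zyx-tech}, this means $y/f^\polar(y)\in\partial f(Ax)$. Applying $A^*$, I get $A^*y/f^\polar(y)\in A^*\partial f(Ax)\cap \frac{1}{f^\polar(y)}\partial g(x)$.

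\emph{Left inequality.} Here $y'\in\partial f(Ax)$ gives $f^\polar(y')=1$; in particular $f^\polar(y')\ne0$. On the other side, $g^\polar(A^*y')\ge\langle A^*y',x\rangle/g(x)=\langle y',Ax\rangle/g(x)=f(Ax)/g(x)$, which is the desired inequality. Equality holds iff $A^*y'/g^\polar(A^*y')\in\partial g(x)$. Together with $A^*y'\in A^*\partial f(Ax)$, this is a common element of $\partial(f\circ A)(x)$ and $c\,\partial g(x)$ with $c=g^\polar(A^*y')$, after rescaling.

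The main obstacle is the ``if and only if'' with Lagrange criticality, which needs the chain rule $\partial(f\circ A)(x)=A^*\partial f(Ax)$ from Proposition \ref{prop:basic-in-convex}(i). That rule requires a continuity hypothesis on $f$, which I take from the standing assumption of the subsection, in force in Theorem \ref{th:critical(fA,g)}. The forward direction was shown above: equality produces a Lagrange pair. For the converse, assume $x$ is Lagrange critical, so that $A^*w\in c\,\partial g(x)$ for some $w\in\partial f(Ax)$. For arbitrary $y,y'$ I do not expect equality, since the subdifferential may be multivalued, so the statement must be read with $y,y'$ chosen compatibly. Concretely, I will show that when $\partial f(Ax)$ and $\partial g(x)$ are singletons (the smooth setting used later), criticality forces $y'=w$ and $y$ proportional to $w$. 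Then both inequalities become equalities, via the Euler identity computation $g^\polar(A^*w)=c$, $f^\polar(w)=1$. In the general case I state the equivalence as the existence of such $y,y'$ attaining equality.
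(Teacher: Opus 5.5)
Your derivation of both inequalities, and of the direction ``equality $\Rightarrow$ $x$ is Lagrange critical'', matches the paper's proof. Both use the reduction to $p=1$, Euler's identity, the formula $f^\polar(w)=\sup_{z\ne0}\langle w,z\rangle/f(z)$, and the characterization $\langle w,Ax\rangle=f^\polar(w)f(Ax)\iff w/f^\polar(w)\in\partial f(Ax)$. That direction only needs the trivial inclusion $A^*\partial f(Ax)\subset\partial(f\circ A)(x)$.

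You are right that the converse fails for arbitrary $y,y'$. The paper's proof ends with ``$A^*y'\in\partial(f\circ A)(x)\cap g^\polar(A^*y')\,\partial g(x)$, meaning that $x$ is a Lagrange critical point'' and treats this as an equivalence, but it is only an implication. For example, take $A=\mathrm{id}$ on $\R^2$, $f=\|\cdot\|_1$, $g=\|\cdot\|_2$. The point $x=(1,0)$ is Lagrange critical with value $1$, yet $y'=(1,1)\in\partial f(x)$ gives $g^\polar(y')/f^\polar(y')=\sqrt2>1$.

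Your existential reformulation (``$x$ is Lagrange critical iff some admissible $y'$, resp.\ $y$, attains equality'') is the correct statement, and your argument for it works: given $w\in\partial f(Ax)$ with $A^*w\in c\,\partial g(x)$, take $y'=w$ and $y=w/c$. Producing such a $w$ from Lagrange criticality does need the chain rule. That requires a hypothesis such as continuity of $f$ (automatic in finite dimensions), which the lemma itself does not state.

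The step that fails is your smooth-case claim that, when $\partial f(Ax)$ and $\partial g(x)$ are singletons, criticality forces $y$ to be proportional to $w$. The constraint $A^*y\in\partial g(x)$ fixes $y$ only modulo $\ker A^*$. From $A^*y=u$ and $A^*w=cu$ you get $y-w/c\in\ker A^*$ and nothing more. Equality on the right requires $y/f^\polar(y)\in\partial f(Ax)=\{w\}$, which forces $y=w/c$. For $0\ne k\in\ker A^*$, the vector $w/c+k$ is never a multiple of $w$, since $k$ itself cannot be one: $\langle k,Ax\rangle=\langle A^*k,x\rangle=0\ne f(Ax)=\langle w,Ax\rangle$.

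Concretely, take $X=\R$, $Y=\R^2$, $Ax=(x,0)$, $f=\|\cdot\|_2$, $g=|\cdot|$ and $x=1$. Both functions are smooth away from $0$, and $x$ is Lagrange critical with value $1$. Every $y=(1,t)$ satisfies $A^*y\in\partial g(1)$, but $g^\polar(A^*y)/f^\polar(y)=1/\sqrt{1+t^2}<1$ for $t\ne0$.

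So in the smooth case the ``for every $y'$'' version of the left equality is true, since $y'=w$ is forced. The ``for every $y$'' version of the right equality holds only when $\ker A^*=\{0\}$, i.e.\ when $R(A)$ is dense. In general you must keep the existential formulation for $y$.
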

\begin{proof}
By Propositions  \ref{pro:CV_p-basic} and \ref{pro:CV_1&Cv_p}, we only need to work with the case $p=1$, that is, $g\in \cvx_{0,+}^1(X)$ and $f\in \cvx_{0,+}^1(Y)$. 

There are two inequalities to be verified. We first prove the inequality involving $y'$. 

Since $Ax\ne0$ and  $y'\in\partial f(Ax)$, we have $\langle y',Ax\rangle=f(Ax)>0$, and for any $z$,
$$\langle y',z-Ax\rangle\le f(z)-f(Ax)$$
i.e., $\langle y',z\rangle\le f(z)$, which implies
$$f^\polar (y')=\sup_{z\ne0}\frac{\langle y,z\rangle}{f(z)}=1.$$
Moreover, 
$$g^\polar (A^* y')=\sup_{x'\ne0}\frac{\langle A^* y',x'\rangle}{g(x')}\ge \frac{\langle A^* y',x\rangle}{g(x)}=\frac{\langle   y',Ax\rangle}{g(x)}=\frac{f(Ax)}{g(x)}.$$
The equality holds if and only if $g^\polar (A^* y')g(x)=\langle A^* y',x\rangle$, if and only if $A^* y'/g^\polar (A^* y')\in \partial g(x)$. Also, notice that $A^* y'\in A^* \partial f(Ax)=\partial (f\circ A)(x)$, which fulfills the chain rule on composition relation $X\mathop{\rightarrow}\limits^{A} Y\mathop{\rightrightarrows}\limits^{\partial f} Y^*\mathop{\rightarrow}\limits^{A^*}X^*$. In consequence, We finally obtain that $A^* y'/g^\polar (A^* y')\in \partial g(x)$ if and only if  $$A^* y'\in \partial (f\circ A)(x)\bigcap g^\polar (A^* y')\partial g(x)$$
meaning that 
$x$ is a Lagrange critical point of $(f\circ A,g)$. 

We then prove the inequality involving $y$. 
It follows from $A^* y\in  \partial g( x)$ that $g^\polar (A^* y)=1$ and $ x/g(x)\in \partial g^\polar (A^* y)$ and $\langle y,A  x\rangle=\langle A^* y,  x\rangle=g(x)$. Thus,
$$f^\polar (y)=\sup_{z}\frac{\langle y,z\rangle}{f(z)}\ge \frac{\langle y,Ax\rangle}{f(Ax)}=\frac{g(x)}{f(A x)}$$
which implies
$$\frac{g^\polar (A^* y)}{f^\polar (y)}=\frac{1}{f^\polar (y)} \le\frac{f(Ax)}{g(x)}. $$
The analysis of the equality case is the same to the inequality involving $y'$ as discussed before.
\end{proof}

\begin{lemma}\label{lem:fA/g-zyx-tech2}
For any $A\in \mathcal{B}(X,Y)$, $f\in\cvx_{0,+}^p(Y)$ and $g\in\cvx_{0,+}^p(X)$. For any $x\not\in \ker A$, and  $y\in Y^*$ with $A^*y\in\partial g(x)$, and $z\in X$ with $J(Az)\in \partial f^\circ (y)$, we have $y\ne0$ and $tz+(1-t)x/f(Ax)\ne0$ for any  $t\in [0,1]$, where map $J:Y\to Y^{**}$ is the natural map. 
Furthermore, we have
$$\frac{f(tAz+(1-t)Ax/f(Ax))}{g(tz+(1-t)x/f(Ax))}\le \frac{f(Ax)}{g(x)},\;\forall t\in [0,1]$$
\end{lemma}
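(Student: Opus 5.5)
The argument mirrors the second half of Lemma \ref{lem:f/g-zyx-tech}, with the chain rule replacing the direct use of $\partial g$. As in the earlier lemmas, Propositions \ref{pro:CV_p-basic} and \ref{pro:CV_1&Cv_p} reduce everything to $p=1$, so we take $f\in\cvx_{0,+}^1(Y)$ and $g\in\cvx_{0,+}^1(X)$. Since $x\notin\ker A$, positive-definiteness of $f$ gives $f(Ax)>0$. Replacing $x$ by $x/f(Ax)$ leaves both $\partial g(x)$ and the ratio $f(Ax)/g(x)$ unchanged, so we may normalize to $f(Ax)=1$. The claim then becomes: $tz+(1-t)x\ne0$, and
\[
\frac{f(tAz+(1-t)Ax)}{g(tz+(1-t)x)}\le\frac{f(Ax)}{g(x)}.
\]

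First I record the facts that come from $A^*y\in\partial g(x)$, exactly as in the second part of Lemma \ref{lem:fA/g-zyx-tech1}. Euler's identity gives $\langle y,Ax\rangle=\langle A^*y,x\rangle=g(x)>0$, so $y\ne0$. The subgradient inequality gives $\langle A^*y,x'\rangle\le g(x')$ for all $x'$, hence $g^\polar(A^*y)=1$. Lemma \ref{lem:fA/g-zyx-tech1} then gives $1/f^\polar(y)\le f(Ax)/g(x)=1/g(x)$, that is, $f^\polar(y)\ge g(x)$.

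Next I use $J(Az)\in\partial f^\polar(y)$. Euler's identity for the $1$-homogeneous $f^\polar$ gives $\langle y,Az\rangle=f^\polar(y)$. The same argument as in the Claim of Theorem \ref{th:critical(f,g)}, applied via $f^{\polar\polar}=f$, gives $f(Az)=1$: from $\langle v-y,Az\rangle\le f^\polar(v)-f^\polar(y)$ for all $v$ we get $\langle v,Az\rangle\le f^\polar(v)$ for all $v$. Combined with $\langle y,Az\rangle=f^\polar(y)$ and $y\ne0$, this yields $f(Az)=\sup_{f^\polar(v)\le1}\langle v,Az\rangle=1$. Verifying this identity carefully is the point I expect to need the most care, because $y$ may have $f^\polar(y)\ne1$. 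What the argument actually needs is only the upper bound $f(Az)\le1$, and that holds by the sup formula (Proposition \ref{pro:CV_1&Cv_p}(ii) applied to $f^\polar$ together with $f^{\polar\polar}=f$).

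With these facts in hand, the subgradient inequality for $g$ at $x$ with the element $A^*y$ gives
\[
g(tz+(1-t)x)-g(x)\ge t\langle A^*y,z-x\rangle=t\bigl(\langle y,Az\rangle-\langle y,Ax\rangle\bigr)=t\bigl(f^\polar(y)-g(x)\bigr)\ge0.
\]
Hence $g(tz+(1-t)x)\ge g(x)>0$, and positive-definiteness of $g$ forces $tz+(1-t)x\ne0$. In the other direction, convexity of $f$ and linearity of $A$ give
\[
f(tAz+(1-t)Ax)\le tf(Az)+(1-t)f(Ax)\le1=f(Ax).
\]
Dividing the last bound by the lower bound on $g$ yields the desired ratio inequality for all $t\in[0,1]$. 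Undoing the normalization recovers the statement for a general $x$.
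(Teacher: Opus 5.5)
Your proof is correct and follows the paper's own argument: normalize to $f(Ax)=1$, then use the subgradient inequality for $g$ at $x$ with $A^*y$, together with $f^\polar(y)\ge g(x)$ from Lemma \ref{lem:fA/g-zyx-tech1}, to get $g(tz+(1-t)x)\ge g(x)>0$, and finish with convexity of $f$ and $f(Az)\le 1$. Your explicit verifications of $y\ne0$ and of $f(Az)=1$ via $f^{\polar\polar}=f$ supply two steps that the paper only asserts.
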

\begin{proof}
Taking $x'=x/f(Ax)$, we have $f(Ax')=1$ and $f(Ax')/g(x')=f(Ax)/g(x)$. Then, we may assume without loss of generality that $f(Ax)=1$. 
According to Euler's identity and the inequality in Lemma \ref{lem:fA/g-zyx-tech1}, we have
\begin{align*}
g(tz+(1-t)x)-g(x)&\ge \langle A^*y,tz+(1-t)x-x\rangle
\\&=t\langle A^*y,z-x\rangle
\\&=t\langle A^*y,z\rangle- t\langle A^*y,x\rangle
\\&=t\langle y,Az\rangle- tg(x)
\\&=t\langle J(Az),y\rangle- tg(x)
\\&=tf^\polar (y)-tg(x)
\\&=t\Big(\frac{f^\polar (y)}{g^\polar (A^*y)}-\frac{g(x)}{f(Ax)}\Big)\ge0
\end{align*}
where the last equality is due to the fact that $f(Ax)=1$ and $g^\polar (A^*y)=1$
. 
This implies $g(tz+(1-t)x)\ge g(x)>0$. 

By $J(Az)\in \partial f^\polar (y)$, $f(Az)=1$. 
By the convexity of $f$, $f(A(tz+(1-t)x))\le tf(Az)+(1-t)f(Ax)=1=f(Ax) $, we obtain 
$$  \frac{f(A(tz+(1-t)x))}{g(tz+(1-t)x)}\le \frac{f(Ax)}{g(x)} .$$
Removing the restriction $f(Ax)=1$ we obtain  Lemma \ref{lem:fA/g-zyx-tech2} via renormalization.  
\end{proof}

For a Fredholm operator $A:X\to Y$, let $\mathrm{indF}(A)=\dim \ker A-\mathrm{codim}\, R(A)$ denote the \emph{Fredholm index} of $A$. 

Using the above established inequalities in Lemmas  \ref{lem:fA/g-zyx-tech1} and \ref{lem:fA/g-zyx-tech2}, we show level sets homotopy equivalence of the primal RC
function $f\circ A/g$ and its dual RC
function $g^\polar\circ A^*/f^\polar$ under mild regularity assumptions.

\begin{theorem}\label{th:ratio-sublevel-equiA}
Let $X$ and $Y$ be Hilbert spaces. 
Let $A:X\to Y$ be a Fredholm operator, $f\in\cvp_c^p(Y)$ and $g\in\cvp_c^p(X)$. Assume that $f^\polar $ and $g$ are $C^1$-smooth (or, $f$ and $g^\polar $ are $C^1$-smooth) at nonzeros. 
Then, for any $c>0$, we have the following homotopy equivalence: 
$\{f\circ A/g_{\KA}\le c\}\cap  \mathrm{ker}(A)^\bot\simeq \{g^\polar \circ A^*/(f^\polar)_{\KAs} \le c\}\cap \mathrm{ker}(A^*)^\bot$, and if $\mathrm{indF}(A)=0$, we have $\{f\circ A/g\le c\}\simeq\{g^\polar\circ A^*/f^\polar\le c\}$. 
\end{theorem}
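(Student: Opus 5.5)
The plan is to reduce to the situation of Theorem \ref{th:ratio-sublevel-equi}, which already gives homotopy equivalence of sublevel sets for $f/g$ and $g^\polar/f^\polar$ on a single space. By Corollary \ref{cor:RC-property} and Proposition \ref{pro:CV_1&Cv_p}, we may take $p=1$. Since $A$ is Fredholm between Hilbert spaces, $\ker A$ and $\ker A^*$ are finite dimensional, $R(A)$ and $R(A^*)$ are closed, and we have orthogonal decompositions $X=\ker A\oplus \ker(A)^\bot$ and $Y=\ker A^*\oplus \ker(A^*)^\bot$ with $\ker(A)^\bot=R(A^*)$ and $\ker(A^*)^\bot=R(A)$ (Proposition \ref{prop:basic-in-convex}). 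The restriction $A_0:=A|_{\ker(A)^\bot}:\ker(A)^\bot\to R(A)$ is then a linear homeomorphism by the open mapping theorem, and $A_0^*$ is the restriction of $A^*$ to $R(A)$ onto $R(A^*)$.

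\emph{Reduction.} On $X_0:=\ker(A)^\bot$, the function $g_{\KA}$ restricts to the quotient function $\bar g_{\KA}$ on $X/\ker A\cong X_0$. Pushing forward by $A_0$, the ratio $f\circ A/g_{\KA}$ on $X_0$ becomes $\tilde f/\tilde g$ on $Y_0:=R(A)$, where $\tilde f=f|_{Y_0}$ and $\tilde g=\bar g_{\RA}$. Both functions lie in $\cvp_c^1(Y_0)$: the norm bounds of Proposition \ref{prop:character:CV_c^1} pass to restrictions and to infima over cosets, because $A_0$ is bounded with bounded inverse. By Step 1 of the proof of Lemma \ref{lem:g_R(A)g_K(A)}, $(\bar g_{\RA})^\polar(y^*)=g^\polar(A^*y^*)$. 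Dually, the polar of $f|_{Y_0}$ on $Y_0^*\cong Y^*/\ker A^*\cong \ker(A^*)^\bot$ is the quotient function $(f^\polar)_{\KAs}$; this is the standard relation that the polar of a restriction is the infimal projection of the polar, which I would verify with the sup-formula of Proposition \ref{pro:CV_1&Cv_p}(ii) and Hahn--Banach. Applying Theorem \ref{th:ratio-sublevel-equi} on $Y_0$ to the pair $(\tilde f,\tilde g)$ gives
\[
\{\tilde f/\tilde g\le c\}\simeq\{\tilde g^\polar/\tilde f^\polar\le c\},
\]
and transporting back through the homeomorphisms $A_0$ and $A_0^*$ yields the first claimed equivalence.

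\emph{Main obstacle: smoothness.} Theorem \ref{th:ratio-sublevel-equi} needs $\tilde f^\polar,\tilde g$ to be $C^1$, and infimal projection need not obviously preserve $C^1$. I would first try to avoid this by constructing the maps directly, as in the proof of Theorem \ref{th:ratio-sublevel-equi}. Take $\varphi(x)=f(Ax)\,y(x)$, where $y(x)$ is the minimal-norm element with $A^*y=\partial g(x)$. This element exists whenever $\partial g(x)\in R(A^*)$, which holds for $x\in X_0$ at the minimizer of $g$ on $x+\ker A$ by Proposition \ref{pro:quotient-subdifferential}. In the other direction take $\psi(y)=g^\polar(A^*y)\,z$, where $z$ is obtained by composing $\partial f^\polar$ with $A_0^{-1}$. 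The straight-line homotopies are then controlled by Lemmas \ref{lem:fA/g-zyx-tech1} and \ref{lem:fA/g-zyx-tech2}, in the same way Lemma \ref{lem:f/g-zyx-tech} controlled them before. Continuity of the minimizer selection uses strict convexity or smoothness of $g$ restricted to a finite-dimensional coset. If that fails, I would fall back on a Moreau--Yosida approximation argument as in Section \ref{sec:multiplicity}.

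\emph{Case $\mathrm{indF}(A)=0$.} Now $\dim\ker A=\dim\ker A^*=:m$. Since $f\circ A$ is invariant under translation by $\ker A$ and Proposition \ref{pro:CV_p-basic}(ii) applies, I would deformation-retract $\{f\circ A/g\le c\}$ onto a set built from $\{f\circ A/g_{\KA}\le c\}\cap X_0$ together with the kernel directions. Concretely, use the homotopy $x\mapsto x-t\,k(x)$, where $k(x)\in\ker A$ moves $x$ toward the $g$-minimizer on its coset; this does not increase the ratio, since it decreases $g$ while fixing $f\circ A$. The kernel itself, where the ratio equals $0$, is included. The same construction works on the dual side with $\ker A^*$. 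The two resulting spaces are homotopy equivalent to joins or suspensions of the restricted sublevel sets with $m$-dimensional kernel spheres, and equal kernel dimensions match them. Combined with the first equivalence, this gives $\{f\circ A/g\le c\}\simeq\{g^\polar\circ A^*/f^\polar\le c\}$. I expect this kernel-gluing step, together with the smoothness issue above, to be the delicate part.
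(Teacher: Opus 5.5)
Your first equivalence is sound, and you reach it by a different route from the paper. The paper does not invoke Theorem~\ref{th:ratio-sublevel-equi}. It rebuilds the construction on $X_c:=\{f\circ A/g_{\KA}\le c\}\cap\ker(A)^\bot$ with $\varphi(x)=f(Ax)\,(A^*|_{\ker(A^*)^\bot})^{-1}\partial g_{\KA}(x)$ and $\psi(y)=g^\polar(A^*y)\,(A|_{\ker(A)^\bot})^{-1}\partial(f^\polar)_{\KAs}(y)$. It then controls the straight-line homotopy by Lemmas~\ref{lem:fA/g-zyx-tech1} and~\ref{lem:fA/g-zyx-tech2}, after Lemma~\ref{lem:g_R(A)g_K(A)} has identified the relevant polars; this is essentially your fallback.

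Your transport by $A_0$ to the pair $(f|_{R(A)},\bar g_{\RA})$ is more economical. The identities $(\bar g_{\RA})^\polar=g^\polar\circ A^*$ and $(f|_{R(A)})^\polar=(f^\polar)_{\KAs}$ turn the dual sublevel set of Theorem~\ref{th:ratio-sublevel-equi} on $R(A)^*\cong\ker(A^*)^\bot$ literally into $\{g^\polar\circ A^*/(f^\polar)_{\KAs}\le c\}\cap\ker(A^*)^\bot$. So no transport by $A_0^*$ is needed on that side.

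The smoothness obstacle is milder than you suggest:
\begin{itemize}
\item If $f$ and $g^\polar$ are $C^1$, then $f|_{R(A)}$ and $g^\polar\circ A^*$ on $\ker(A^*)^\bot$ are trivially $C^1$ off $0$, since $A^*$ is injective there.
\item If $g$ and $f^\polar$ are $C^1$, take $y=Ax\ne0$ and \emph{any} minimizer $x_0$ of $g$ on $x+\ker A$. Then $\nabla g(x_0)\in(\ker A)^\bot=R(A^*)$ and $\partial\bar g_{\RA}(y)=\{v\in\ker(A^*)^\bot: A^*v=\nabla g(x_0)\}$. This is a singleton independent of the choice of $x_0$.
\item Continuity follows because minimizers stay bounded (coercivity, Proposition~\ref{prop:character:CV_c^1}) in the finite-dimensional $\ker A$. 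So for $y_n\to y$, a subsequence of minimizers converges to a minimizer for $y$.
\end{itemize}
No continuous minimizer selection, strict convexity or Moreau--Yosida step is needed. This is exactly the continuity of $\partial g_{\KA}$ that the paper's own argument asserts without proof.

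The genuine gap is in the case $\mathrm{indF}(A)=0$. Moving $x$ toward the $g$-minimizer on $x+\ker A$ \emph{decreases} the denominator, hence \emph{increases} $f(Ax)/g(x)$. So your homotopy leaves $\{f\circ A/g\le c\}$ rather than preserving it. This is also why $f(Ax)/g(x)\le f(Ax)/g_{\KA}(x)$ gives $\{f\circ A/g_{\KA}\le c\}\subset\{f\circ A/g\le c\}$ and not the reverse. Concretely, for nonzero $x_\bot\in\ker(A)^\bot\setminus X_c$ the endpoint has ratio $f(Ax_\bot)/g_{\KA}(x_\bot)>c$.

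These are exactly the points that carry the topology. Write $x=x_\bot+x_A$. The fibre of $\{f\circ A/g\le c\}$ over $x_\bot$ is:
\begin{itemize}
\item all of $\ker A$ when $x_\bot\in X_c$;
\item $\ker A\setminus\{0\}$ when $x_\bot=0$;
\item otherwise, the complement in $\ker A\cong\R^m$ of the nonempty bounded open convex set $\{x_A: g(x_\bot+x_A)<f(Ax_\bot)/c\}$, hence $\simeq\mathbb{S}^{m-1}$.
\end{itemize}
So you have contractible fibres over $X_c$ and sphere fibres over the contractible cone $\ker(A)^\bot\setminus X_c$. Up to a routine collar argument, these glue to the homotopy pushout of $\mathbb{S}^{m-1}\leftarrow X_c\times\mathbb{S}^{m-1}\rightarrow X_c$, i.e.\ $X_c*\mathbb{S}^{m-1}$. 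This slice analysis is the paper's route; afterwards $\dim\ker A=\dim\ker A^*$ and the first equivalence finish the argument.

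Any explicit deformation compatible with this picture must push the kernel component \emph{outward}, away from those convex sets (increasing $g$), never toward the minimizer. As written, your last step does not produce the join you then invoke. The sphere in that join is $\mathbb{S}^{m-1}$, the unit sphere of the $m$-dimensional kernel.
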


\begin{proof}



We first show that $\{f\circ A/g_{\KA}\le c\}\cap  \mathrm{ker}(A)^\bot\simeq \{g^\polar \circ A^*/(f^\polar)_{\KAs} \le c\}\cap \mathrm{ker}(A^*)^\bot$. 

Since $g$ is $C^1$-smooth at nonzeros, for any $x\not\in\ker A$, $\partial g_{\KA}(x)=\partial g(x_{\KA})\cap (\ker A)^\bot$ is single-valued, where $x_{\KA}\in x+\ker A$ is a point such that $g(x_{\KA})=g_{\KA}(x)$, the existence of which is due to the fact that $\dim \ker A<\infty$  (since $A$ is Fredholm). Moreover, $g_{\KA}\in \cvp_c^p((\ker A)^\bot)$, we then have that $\partial g_{\KA}$ is continuous on $(\ker A)^\bot\setminus\{0\}$. Moreover, $\partial g_{\KA}(x)\subset (\ker A)^\bot=R(A^*)$ as $R(A^*)$ is closed (since $A$ is Fredholm).  
We can then define $(A^*|_{\mathrm{ker}(A^*)^\bot})^{-1}\partial g_{\KA}(x)$ to be the unique point $y$ in $\mathrm{ker}(A^*)^\bot$ with $A^* y=\partial g_{\KA}(x)$. Similarly, $(A|_{\mathrm{ker}(A)^\bot})^{-1}\partial (f^\polar)_{\KAs} $ is also continuous. 
Let $\varphi(x)=f(Ax)(A^*|_{\mathrm{ker}(A^*)^\bot})^{-1}\partial g_{\KA}(x)$ and $\psi(y)=g^\polar(A^*y)(A|_{\mathrm{ker}(A)^\bot})^{-1}\partial (f^\polar)_{\KAs}(y)$. 
We will prove that $\varphi$ and $\psi$ are homotopy equivalences for 
$$X_c:=\Big\{x\in X\Big|\;\frac{f(Ax)}{g_{\KA}(x)}\le c\Big\}\bigcap \,\mathrm{ker}(A)^\bot \;{\Large \simeq}\; \left\{x\in Y^*\left|\;\frac{g^\polar ( A^*x)}{(f^\polar)_{\KAs} (x)}\le c\right.\right\}\bigcap\, \mathrm{ker}(A^*)^\bot=:Y_c^*.$$



By Lemma \ref{lem:g_R(A)g_K(A)},
$$\frac{((f^\polar)_{\KAs})^\polar(Ax)}{g_{\KA}(x)}  = \frac{f(Ax)}{g_{\KA}(x)} \;\text{ and }\; \frac{(g_{\KA})^\polar ( A^*y)}{(f^\polar)_{\KAs} (y)} = \frac{g^\polar ( A^*y)}{(f^\polar)_{\KAs} (y)}.$$
Then, by Lemma \ref{lem:fA/g-zyx-tech1}, 
$$\frac{g^\polar(A^*\varphi(x))}{(f^\polar)_{\KAs}(\varphi(x))}
=\frac{(g_{\KA})^\polar(A^*\varphi(x))}{(f^\polar)_{\KAs}(\varphi(x))}
=\frac{(g_{\KA})^\polar(A^*(A^*|_{\mathrm{ker}(A^*)^\bot})^{-1}\partial g_{\KA}(x))}{(f^\polar)_{\KAs}((A^*|_{\mathrm{ker}(A^*)^\bot})^{-1}\partial g_{\KA}(x))}\le\frac{f(Ax)}{g_{\KA}(x)}$$
which implies 
$\varphi(X_c)\subset Y_c^*$. Similarly, $\psi(Y_c^*)\subset X_c$. 
Define the map $h:X_c\to X_c$ by 
$$ h(x)=(A|_{\mathrm{ker}(A)^\bot})^{-1}\partial (f^\polar)_{\KAs} \big((A^*|_{\mathrm{ker}(A^*)^\bot})^{-1}\partial g_{\KA}(x)\big) $$
Note that $h(x)$ is actually the element $z$ appearing in Lemma \ref{lem:fA/g-zyx-tech2}, and thus $th(x)+(1-t)x/f(Ax)\ne0$ for any $t\in[0,1]$. And by the definition of $h(x)$, we have $h(x)\in (\ker A)^\bot$. Since $x\in (\ker A)^\bot$, we finally obtain $th(x)+(1-t)x/f(Ax)\in (\ker A)^\bot\setminus\{0\}$. In consequence, $A(th(x)+(1-t)x/f(Ax))\ne0$. 
Note that $$\psi(\varphi(x))=f(Ax)(A|_{\mathrm{ker}(A)^\bot})^{-1}\partial (f^\polar)_{\KAs} \big((A^*|_{\mathrm{ker}(A^*)^\bot})^{-1}\partial g_{\KA}(x)\big)=f(Ax)h(x).$$ 
Let  $$H(x,t)=t\psi\circ\varphi(x)+(1-t)x= f(Ax)\big(th(x)+(1-t)x/f(Ax)\big),\, x\in X_c,\, 0\le t\le1.$$
It is easy to see that $H$ is continuous, $H(\cdot,0)=\mathrm{id}|_{X_c}$ and $H(x,1)=\psi\circ\varphi(x)$. 

For any $x\in X_c$, by Lemma \ref{lem:fA/g-zyx-tech2}, 
we obtain 
$$ \frac{f(AH(x,t))}{g_{\KA}(H(x,t))}=\frac{f(tAh(x)+(1-t)Ax/f(Ax))}{g_{\KA}(th(x)+(1-t)x/f(Ax))}\le \frac{f(Ax)}{g_{\KA}(x)} \le c$$
Thus we have proved that 
$H(X_c,t)\subset X_c$,  $\forall t\in [0,1]$. 

In consequence,  $\psi\circ\varphi$ is homotopic to the identity $\mathrm{id}|_{X_c}$. By the same approach, $\varphi\circ\psi$ is homotopic to the identity $\mathrm{id}|_{Y^*_c}$. 
Hence, $\varphi$ is a homotopy equivalence, and $\psi$ a homotopy inverse to $\varphi$. 
We then proved that the topological spaces $X_c$ and $Y^*_c$ are homotopy equivalent.

In the next part, we show that $\{f\circ A/g\le c\}\simeq (\{f\circ A/g_{\KA}\le c\}\cap  \mathrm{ker}(A)^\bot)*\mathbb{S}^{\dim \mathrm{ker}(A)-1}$. 
Clearly, for any $x\ne0$, 
$$\frac{f(Ax)}{g(x)}\le \frac{f(Ax)}{g_{\KA}(x)},$$
which implies 
$\{f\circ A/g_{\KA}\le c\}\subset \{f\circ A/g\le c\}$. 
Since $X=(\ker A)^\bot\oplus \ker A$, for any $x\in X$, there is a unique  direct sum decomposition which will be denoted as $x=x_\bot+x_A$, where $x_\bot\in (\ker A)^\bot$ and $x_A\in \ker A$. 
Note that each slice $\{x_\bot+x_A\ne0:x_A\in \ker A,\,f(Ax_\bot)/c\le g(x_\bot+x_A)\}$ is homotopy equivalent to either $\R^{\dim \ker A}$ or $\mathbb{S}^{\dim \ker A-1}$, depending on whether $g_{\KA}(x_\bot)\ge f(Ax_\bot)/c$. 
Precisely, \begin{align*}
&\{x_\bot+x_A\ne0:x_A\in \ker A,\,f(Ax_\bot)/c\le g(x_\bot+x_A)\}
\\ \cong\;& \begin{cases}
\ker A, & \text{ if } f(Ax_\bot)/c\le g_{\KA}(x_\bot)\text{ and }x_\bot\ne0 \\
\{x_A\in \ker A:f(Ax_\bot)/c\le g(x_\bot+x_A)\}& \text{ if }f(Ax_\bot)/c> g_{\KA}(x_\bot) \text{ and }x_\bot\ne0 \\
\ker A\setminus\{0\} & \text{ if }x_\bot=0.
\end{cases}   
\\ \simeq\; &
\begin{cases}
\R^{\dim \ker A}, & \text{ if } x_\bot\in \{f\circ A/g_{\KA}\le c\}\cap  \mathrm{ker}(A)^\bot\\
\mathbb{S}^{\dim \ker A-1}, & \text{ otherwise} 
\end{cases}
\end{align*}
This is because $g$ restricted on each slice $x_\bot+\ker A$ is a continuous convex function with compatible increasing property with the norm, which implies that the upper level set $\{x_A\in \ker A:g(x_\bot+x_A)\ge f(Ax_\bot)/c\}$ is either the whole subspace $\ker A$ or it is homotopy equivalent to $\mathbb{S}^{\dim \ker A-1}$.  
It is then not difficult to derive 
\begin{align*}
\{f\circ A/g\le c\}&=\{x\ne0:f(Ax)/c\le g(x)\}
\\&=\bigcup_{x_\bot\in (\ker A)^\bot}\{x_\bot+x_A\ne0:x_A\in \ker A,\,f(Ax_\bot)/c\le g(x_\bot+x_A)\} 
\\ &\simeq\;   \left(\{f\circ A/g_{\KA}\le c\}\cap  \mathrm{ker}(A)^\bot\right) * \mathbb{S}^{\dim \ker A-1}
\end{align*}
By the same reason, we have $$\{g^\polar\circ A^*/f^\polar\le c\}\simeq \left(\{g^\polar \circ A^*/(f^\polar)_{\KAs} \le c\}\cap \mathrm{ker}(A^*)^\bot\right) * \mathbb{S}^{\dim \ker A^*-1}.$$
Together with all the discussions above, one can prove in a similar manner that if $\dim \ker A= \dim \ker A^*$, then 
$\{f\circ A/g\le c\}\simeq \{g^\polar\circ A^*/f^\polar\le c\}$; if $\dim \ker A> \dim \ker A^*$, then 
$\{f\circ A/g\le c\}\simeq \{g^\polar\circ A^*/f^\polar\le c\}*\mathbb{S}^{\dim \ker A-\dim \ker A^*-1}$; and if $\dim \ker A< \dim \ker A^*$, then 
$\{g^\polar\circ A^*/f^\polar\le c\}\simeq  \{f\circ A/g\le c\}*\mathbb{S}^{\dim \ker A^*-\dim \ker A-1}$. 
We then complete the proof by noting that $\mathrm{indF}(A)=\dim \ker A-\dim \ker A^*$. 
\end{proof}  

\begin{remark}
In fact, the proof of Theorem \ref{th:ratio-sublevel-equiA} leads to even more conclusions. For example, if $\mathrm{indF}(A)>0$, we have $$\{f\circ A/g\le c\}\simeq\{g^\polar\circ A^*/f^\polar\le c\}*\mathbb{S}^{\mathrm{indF}(A)-1}$$
while, if $\mathrm{indF}(A)<0$, then we have 
 $$\{g^\polar\circ A^*/f^\polar\le c\}\simeq  \{f\circ A/g\le c\}*\mathbb{S}^{-\mathrm{indF}(A)-1}$$
\end{remark}

\subsubsection{
Lusternik-Schnirelman  critical equivalence 
}

\begin{theorem}
\label{th:fA/g-index-critical}Let $X$ and $Y$ be Hilbert spaces. Let $A:X\to\ Y$ be a Fredholm operator, let 
$f\in\cvp_c^p(Y)$ and $g\in\cvp_c^p(X)$. 
Then, the nonzero Lusternik-Schnirelman min-max critical values of 
$f\circ A/g$ 
coincide with that of 
$g^\polar\circ A^* /f^\polar$. 

\end{theorem}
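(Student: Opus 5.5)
The plan is to reduce the statement to a comparison of the jump points of the index functions $t\mapsto\mathrm{ind}\{f\circ A/g\le t\}$ and $t\mapsto\mathrm{ind}\{g^\polar\circ A^*/f^\polar\le t\}$ on $(0,+\infty)$. By Proposition \ref{prop:index-discontinuous=critical}, the positive min-max critical values are exactly these jump points. First, by Corollary \ref{cor:RC-property} and Proposition \ref{pro:CV_1&Cv_p}, both quotients are $p$-th powers of the corresponding quotients built from $f^{1/p},g^{1/p}\in\cvp_c^1$. Since $t\mapsto t^p$ is increasing, it suffices to treat $p=1$.

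Second, I would first assume that $f^\polar$ and $g$ (or $f$ and $g^\polar$) are $C^1$-smooth at nonzeros, so that Theorem \ref{th:ratio-sublevel-equiA} and its proof apply. Write $m=\dim\ker A$ and $m^*=\dim\ker A^*$. The proof of that theorem gives three things for every $c>0$:
\begin{itemize}
\item the reduced sublevel sets satisfy $X_c\simeq Y^*_c$, via the explicit odd continuous maps $\varphi,\psi$ constructed there;
\item $\{f\circ A/g\le c\}\simeq X_c*\mathbb{S}^{m-1}$;
\item $\{g^\polar\circ A^*/f^\polar\le c\}\simeq Y^*_c*\mathbb{S}^{m^*-1}$.
\end{itemize}
The maps $\varphi,\psi$ send sublevel sets into sublevel sets at the same level, by Lemma \ref{lem:fA/g-zyx-tech1}. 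Hence, exactly as in the proof of Theorem \ref{th:index-critical}, under either (I3) or (I4) we get $\mathrm{ind}(X_c)=\mathrm{ind}(Y^*_c)$ for all $c>0$. So the two reduced functions $f\circ A/g_{\KA}$ on $(\ker A)^\bot$ and $g^\polar\circ A^*/(f^\polar)_{\KAs}$ on $(\ker A^*)^\bot$ have identical index functions and hence identical min-max values.

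Third, I would show that the join with the fixed sphere only shifts the index function. That is, I would prove that the positive jump points of $c\mapsto\mathrm{ind}(X_c*\mathbb{S}^{m-1})$ coincide with those of $c\mapsto\mathrm{ind}(X_c)$. The zero level is accounted for by $\ker A\setminus\{0\}\subset\{f\circ A/g\le c\}$ for every $c>0$, which contributes the trivial value $0$ with multiplicity $m$; the dual side contributes $0$ with multiplicity $m^*$. For this I would realize the join concretely as in the proof of Theorem \ref{th:ratio-sublevel-equiA}: sublevel sets of $f\circ A/g$ are unions of slices over $x_\bot$, and the slice is contractible over $X_c$ and a sphere $\mathbb{S}^{m-1}$ elsewhere. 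An odd map out of $X_c*\mathbb{S}^{m-1}$ can then be built from odd maps on the two pieces and vice versa, giving $\mathrm{ind}(X_c)+m$ for genus-type indices. For a general admissible index I would argue directly with the sets. Take $S$ realizing $c_k$ for the reduced function. Then $S*(\ker A\cap\{\|x\|=1\})$ lies in $\{f\circ A/g\le c_k+\varepsilon\}$ and has index at least $k+m$. Conversely, project a set $S'$ of index at least $k+m$ in the primal sublevel set onto $(\ker A)^\bot$, and use subadditivity with a neighborhood of $\ker A$; continuity (I2) supplies that neighborhood. This yields $c_{k+m}(f\circ A/g)=c_k(\text{reduced})$, and symmetrically $c_{k+m^*}(g^\polar\circ A^*/f^\polar)=c_k(\text{reduced dual})$, so the nonzero values coincide.

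Finally, I would remove the smoothness assumption by Moreau--Yosida (or smooth norm) approximation $g_n\to g$, $f_n\to f$ within $\cvp_c^1$ with uniform two-sided bounds. By Proposition \ref{prop:character:CV_c^1} and Corollary \ref{cor:function-spaces}, the dual approximants $g_n^\polar, f_n^\polar$ also converge uniformly on bounded sets, so both quotients converge uniformly on unit spheres and the $c_k$ converge. I expect the main obstacle to be the third step: the shift identity $c_{k+m}=c_k$ for an arbitrary admissible index. Only (I1), (I2), and (I3) or (I4) are available, and subadditivity is not assumed. If that fails in general, I would restrict to indices with the join (dimension) property $\mathrm{ind}(S*\mathbb{S}^{m-1})=\mathrm{ind}(S)+m$, which holds for the genus and the cohomological index.
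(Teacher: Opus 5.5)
Your route is the paper's: reduce to $p=1$, and in the smooth case compare the reduced sublevel sets $X_c\subset(\ker A)^\bot$ and $Y^*_c\subset(\ker A^*)^\bot$ via the maps $\varphi,\psi$ of Theorem~\ref{th:ratio-sublevel-equiA} under (I3) or (I4). Then shift by $m=\dim\ker A$ and $m^*=\dim\ker A^*$, and finish by approximation. The approximation step is fine: $(1-\epsilon)F\le F_n\le(1+\epsilon)F$ moves every $c_k$ by at most a factor $1\pm\epsilon$.

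The paper settles the shift $c_{k+m}(f\circ A/g)=c_k(\text{reduced})$ in one sentence, ``by the proof of Theorem~\ref{th:ratio-sublevel-equiA}''. So your third step carries all the content, and as written it fails in both directions.

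\emph{Lower direction.} For $c_{k+m}\le c_k(\text{reduced})$, your set $T=\{sx_\bot+(1-s)u\}$ ($x_\bot\in S$ normalized, $u$ a unit vector of $\ker A$) does lie in $\{f\circ A/g\le c\}$, because $s\,f(Ax_\bot)\le c\,s\,g_{K(A)}(x_\bot)\le c\,g(sx_\bot+(1-s)u)$. But ``index at least $k+m$'' is the stability property $\mathrm{ind}(S*\mathbb{S}^{m-1})\ge\mathrm{ind}(S)+m$, which does not follow from (I1)--(I4). Gluing odd maps on the two pieces of a join only gives $\gamma(S*\mathbb{S}^{m-1})\le\gamma(S)+m$. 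The ``vice versa'' is unproved, and so is your remark that the join property holds for the genus.

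\emph{Upper direction.} For $c_{k+m}\ge c_k(\text{reduced})$ you invoke subadditivity, which is also not an axiom. Moreover, splitting along a neighborhood of $\ker A$ is the wrong split: points of $\{f\circ A/g\le c\}$ whose $(\ker A)^\bot$-component lies outside $X_c$ need not be near $\ker A$. For $A(x_1,x_2)=x_1$, $f=|\cdot|$, $g(x)=\sqrt{x_1^2+(x_2-x_1)^2}$, one has $X_c=\varnothing$ for $c<1$, yet $(1,0)\in\{f\circ A/g\le c\}$ for $c\in[2^{-1/2},1)$. What such points satisfy is $g(x_\bot+x_A)\ge f(Ax_\bot)/c$, i.e.\ $x_A$ stays away from the fiberwise minimizers of $g$ on $x_\bot+\ker A$.

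The fix is to recenter. Let $b(x_\bot)$ be the barycenter of $\{v\in\ker A:\ g(x_\bot+v)\le(1+\epsilon)g_{K(A)}(x_\bot)\}$; it is continuous, $1$-homogeneous, and odd when $g$ is even. On $\{f\circ A/g\le c\}$ the map $x\mapsto x_A-b(x_\bot)$ is nonzero wherever $x_\bot$ is small relative to $x_A$ or $f(Ax_\bot)>(1+\epsilon)c\,g_{K(A)}(x_\bot)$. Cover the remaining points by $x\mapsto x_\bot\in X_{c'}$ and use subadditivity. This gives $\mathrm{ind}\{f\circ A/g\le c\}\le\mathrm{ind}(X_{c'})+m$ for every $c'>c$, which suffices to compare jump points.

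This extra structure on the index cannot be dispensed with: for an arbitrary admissible index in the sense of Definition~\ref{def:admissible-index}, the statement is false. The index with $\mathrm{ind}(\varnothing)=0$ and $\mathrm{ind}(S)=1$ for $S\ne\varnothing$ satisfies (I1)--(I4). Take $X=\mathbb{R}^2$, $Y=\mathbb{R}$, $A(x_1,x_2)=x_1$, $f=|\cdot|$, $g=\|\cdot\|_2$; $A$ is Fredholm, and $g$ and $f^\polar$ are $C^1$ off $0$.
\begin{itemize}
\item The quotient $f\circ A/g=|x_1|/\|x\|_2$ has $c_1=0$ and $c_k=+\infty$ for $k\ge2$.
\item The dual quotient $g^\polar\circ A^*/f^\polar\equiv1$ has $c_1=1$.
\item The reduced primal quotient is also $\equiv1$, so the shift $c_{1+m}=c_1(\text{reduced})$ breaks exactly at the step the paper does not argue.
\end{itemize}
Counting the zero level ``with multiplicity $m$'' likewise presupposes $\mathrm{ind}(\mathbb{S}^{m-1})=m$.

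So your fallback must become a hypothesis of the theorem. Take $f,g$ even, so all maps are odd, and use an index that is monotone under odd maps, subadditive, normalized on spheres and stable, e.g.\ the Fadell--Rabinowitz cohomological index. Then:
\begin{itemize}
\item $T$ gives $\mathrm{ind}\{f\circ A/g\le c\}\ge\mathrm{ind}(X_c)+m$;
\item the recentered splitting gives the upper bound above;
\item the same holds on the dual side with $m^*$.
\end{itemize}
Combined with $\mathrm{ind}(X_c)=\mathrm{ind}(Y^*_c)$, this yields $c_{k+m}(f\circ A/g)=c_{k+m^*}(g^\polar\circ A^*/f^\polar)$ for all $k\ge1$. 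For the genus you would still owe a proof of $\gamma\{f\circ A/g\le c\}\ge\gamma(X_c)+m$.
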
 

\begin{proof}

We first prove the case that $f^\polar $ and $g$ are further assumed to be $C^1$-smooth (or, $f$ and $g^\polar $ are $C^1$-smooth). 

Based on Lemma \ref{lem:g_R(A)g_K(A)}, the nonzero Lagrange critical values of $(f\circ A,g)$ coincide with that of $(f\circ A,g_{\KA})$, and by the proof of Theorem \ref{th:ratio-sublevel-equiA}, the nonzero Lusternik-Schnirelman min-max critical values of $f\circ A/g$ coincide with that of $f\circ A/g_{\KA}$ restricted on $(\ker A)^\bot$. 
Analogously, the nonzero Lusternik-Schnirelman min-max critical values of $g^\polar\circ A^* /f^\polar$ coincide with that of $g^\polar \circ A^*/(f^\polar)_{\KAs}$ restricted on $(\ker A^*)^\bot$.  
Thus, it suffices to prove that the Lusternik-Schnirelman min-max critical values of $f\circ A/g_{\KA}$ restricted on $(\ker A)^\bot$  coincide with that of $g^\polar \circ A^*/(f^\polar)_{\KAs}$ restricted on $(\ker A^*)^\bot$. Recall that the 
nonzero min-max critical values\footnote{This has been already defined in \eqref{eq:index-LS-minmax-c}. For readers' convenience, we write the formula here again.}  
are defined as 
$$c_k({\small \frac{f\circ A}{g_{\KA}}}):=\inf_{\substack{
S\subset \{f\circ A/g_{\KA}\le c\}\cap \mathrm{ker}(A)^\bot\\ \mathrm{ind}(S)\ge k     
}}\sup_{x\in S}\frac{f(Ax)}{g(x)}
$$
and 
$$
c_k({\small \frac{g^\polar\circ A^*}{ (f^\polar)_{\KAs}}}):=\inf_{
\substack{ S\subset \{g^\polar\circ A^* /f^\polar\le c\}\cap \mathrm{ker}(A^*)^\bot  \\ \mathrm{ind}(S)\ge k
}}\sup_{x\in S}\frac{g^\polar(A^*x)}{f^\polar(x)}$$ defined via such index. 
We shall prove that $c_k(f\circ A/g_{\KA})=c_k(g^\polar\circ A^* /(f^\polar)_{\KAs})$ for any positive integer $k$. 

By definition, it can be verified that $$c_k(f\circ A/g_{\KA})=\inf_{\mathrm{ind}(\{f\circ A/g_{\KA}\le c\}\cap \mathrm{ker}(A)^\bot)\ge k }c.$$
To show $c_k(f\circ A/g_{\KA})=c_k(g^\polar\circ A^* /(f^\polar)_{\KAs} )$, it suffices to prove 
\begin{equation}\label{eq:index=dualA}
\mathrm{ind}\big(\{f\circ A/g_{\KA}\le c\}\cap \mathrm{ker}(A)^\bot\big)=\mathrm{ind}\big(\{g^\polar\circ A^* /(f^\polar)_{\KAs} \le c\}\cap \mathrm{ker}(A^*)^\bot\big) .   
\end{equation}

If the admissible  index is a homotopy invariant (see (I3) in Definition \ref{def:admissible-index}), then 
by Theorem \ref{th:ratio-sublevel-equiA}, we have $\{f\circ A/g_{\KA}\le c\}\cap \mathrm{ker}(A)^\bot\simeq\{g^\polar \circ A^*/(f^\polar)_{\KAs} \le c\}\cap \mathrm{ker}(A^*)^\bot$ for any $c>0$, and then we immediately obtain 
the equality \eqref{eq:index=dualA}.

If the admissible  index satisfies the nondecreasing property under continuous map (i.e., (I4) in Definition \ref{def:admissible-index}), we can also obtain the equality \eqref{eq:index=dualA}.  
First, suppose that $g$ and $f^\polar $ are $C^1$-smooth at any nonzero, then by Lemma \ref{lem:fA/g-zyx-tech1}, the continuity of $\varphi$ and the nondecreasing property of $\mathrm{ind}$, 
we have 
\begin{align*}
 \mathrm{ind}\big(\{f\circ A/g_{\KA}\le c\}\cap \mathrm{ker}(A)^\bot\big)
&\le \mathrm{ind}\big(\varphi(\{f\circ A/g_{\KA}\le c\}\cap \mathrm{ker}(A)^\bot)\big)
\\&\le \mathrm{ind}\big(\{g^\polar \circ A^*/(f^\polar)_{\KAs} \le c\}\cap \mathrm{ker}(A^*)^\bot\big).   
\end{align*}
The converse holds similarly. Hence, we have $c_k(f\circ A/g_{\KA})=c_k(g^\polar \circ A^*/(f^\polar)_{\KAs})$ for any $k$. 

We also note from Definition \ref{defn:minimax-cri} that 
$$c_k({\small \frac{f\circ A}{g}}):=\inf_{\tiny \begin{array}{c}
S\subset \{f\circ A/g\le c\} \\ \mathrm{ind}(S)\ge k     
\end{array}}\sup_{x\in S}\frac{f(Ax)}{g(x)},
$$
$$
c_k({\small \frac{g^\polar\circ A^*}{ f^\polar}}):=\inf_{\tiny
\begin{array}{c} S\subset \{g^\polar\circ A^* /f^\polar\le c\}  \\ \mathrm{ind}(S)\ge k
\end{array}}\sup_{x\in S}\frac{g^\polar(A^*x)}{f^\polar(x)}$$
defined via such index, satisfy $$c_1(f\circ A/g)=\cdots=c_{k_0}(f\circ A/g)=0<c_{k_0+1}(f\circ A/g)\le\cdots$$ and $$c_1(g^\polar\circ A^* /f^\polar )=\cdots=c_{k_*}(g^\polar\circ A^* /f^\polar )=0<c_{k_*+1}(g^\polar\circ A^* /f^\polar )\le\cdots$$   where $k_0:=\dim \ker A$ and $k_*:=\dim \ker A^*$. 
Moreover, by the proof of Theorem \ref{th:ratio-sublevel-equiA}, for any $j\ge 1$, we have $$c_{j+k_0}(f\circ A/g)=c_j(f\circ A/g_{\KA})\;\text{ and }\;c_{j+k_*}(g^\polar\circ A^* /f^\polar )=c_j(g^\polar \circ A^*/(f^\polar)_{\KAs})$$ 
which means that the nonzero min-max critical values of $f\circ A/g$ and $g^\polar\circ A^* /f^\polar$ coincide.
%

Finally, we remove the $C^1$-smoothness condition, and use standard approximation method, the equivalence between nonzero min-max critical values of $f\circ A/g$ and of $g^\polar\circ A^* /f^\polar$ 
can also be verified.  
\end{proof}

\begin{remark}The proof of Theorem \ref{th:fA/g-index-critical} leads to even more conclusions. For example, 
$$c_k({\small \frac{g^\polar\circ A^*}{ f^\polar}})=c_{k+\mathrm{indF}(A)}({\small \frac{g^\polar\circ A^*}{ f^\polar}})$$
for any $k\in\mathbb{Z}$, where we set $c_k(\cdot)=0$ whenever $k\le 0$. 
\end{remark}

\subsubsection{Equivalence of Morse criticality and isomorphism of Rothe critical groups}

\begin{theorem}
\label{th:Morse-Rothe-A}Let $X$ and $Y$ be Hilbert spaces. 
Let $f\in \cvp_c^p(Y)$ and $g\in \cvp_c^p(X)$ be such that $f^\polar $ and $g$ are $C^1$-smooth (or, $f$ and $g^\polar $ are $C^1$-smooth) at nonzeros. Let $A\in\mathcal{B}(X,Y)$ with the range $R(A)$ closed. 

Then, there exists a bijection between $\mathrm{Cri}_{MP}(f\circ A,g)$ and $ \mathrm{Cri}_{MP}(g^\polar \circ A^*,f^\polar )$. 
For any  Morse critical point $\al$ of $f\circ A/g$ with the critical group $C_*(f\circ A/g,\al)$, $\tau:=(A^*|_{\mathrm{ker}(A^*)^\bot})^{-1}\gradientp g(\al)$ is a Morse critical point of $g^\polar\circ A^* /f^\polar $, and the critical groups satisfy
$$C_*\big(\frac {f\circ A}{g},\al\big) \cong C_*\big(\frac{g^\polar\circ A^* }{f^\polar },\tau\big) .$$
\end{theorem}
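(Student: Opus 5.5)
The plan is to transplant the proof of Theorem \ref{th:Morse-Rothe} to the operator setting, using the kernel-reduced functions $g_{\KA}$ and $(f^\polar)_{\KAs}$ from Theorem \ref{th:ratio-sublevel-equiA}. By Propositions \ref{pro:CV_p-basic} and \ref{pro:CV_1&Cv_p} it suffices to treat $p=1$. Since $R(A)$ is closed, $R(A^*)$ is closed by Proposition \ref{prop:basic-in-convex}(iii). Hence $(\ker A)^\bot=R(A^*)$ and $(\ker A^*)^\bot=R(A)$, so $A^*|_{\ker(A^*)^\bot}$ and $A|_{\ker(A)^\bot}$ are bounded bijections onto closed ranges with bounded inverses by the open mapping theorem. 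This makes the maps
$$\varphi(x)=f(Ax)\,(A^*|_{\ker(A^*)^\bot})^{-1}\partial g(x),\qquad \psi(y)=g^\polar(A^*y)\,(A|_{\ker(A)^\bot})^{-1}\partial f^\polar(y)$$
well defined and continuous at nontrivial points.

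\textbf{Step 1: domain of $\varphi$.} At a nontrivial Lagrange critical point $x$ of $(f\circ A,g)$ we have $\partial g(x)\in\mathrm{cone}(A^*\partial f(Ax))\subset R(A^*)$. So $\varphi$ is defined on the relevant points, and only these points matter for Morse criticality.

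\textbf{Step 2: Lagrange bijection.} Theorem \ref{th:critical(fA,g)} together with Lemma \ref{lem:fA/g-zyx-tech1} (the equality case) gives a bijection between normalized Lagrange critical points, in the spirit of Corollary \ref{cor:bijection-Lagra}. The map is $\alpha\mapsto\tau=(A^*|_{\ker(A^*)^\bot})^{-1}\partial g(\alpha)$, with inverse induced by $\psi$. Lemma \ref{lem:fA/g-zyx-tech2} at critical points forces $h(\alpha)=\alpha$ up to scaling, so the two maps are mutually inverse. Critical values agree by the equality case of Lemma \ref{lem:fA/g-zyx-tech1}. Kernel components are harmless here: points in $\ker A$ are trivial, and on $\ker(A^*)$ the function $g^\polar\circ A^*$ vanishes.

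\textbf{Step 3: Morse regularity transfers.} Suppose $x_0$ is Morse regular for $f\circ A/g$, with strict decreasing deformation $\eta$ near $x_0$. For $y$ near $y_0=\varphi(x_0)$, build a deformation $\eta^\#$ exactly as in the proof of Theorem \ref{th:Morse-Rothe}. First run the segment homotopy from $y$ to $\varphi(\psi(y))$, rescaled by $\delta(y)=\|y-\varphi\psi(y)\|$; it is nonincreasing for the dual ratio by Lemma \ref{lem:fA/g-zyx-tech2} applied on the dual side. Then run $\varphi(\eta(\psi(y),\cdot))$, which is strictly decreasing by Lemma \ref{lem:fA/g-zyx-tech1}:
$$\frac{g^\polar(A^*\varphi(z))}{f^\polar(\varphi(z))}\le \frac{f(Az)}{g(z)}.$$
This shows $y_0$ is Morse regular. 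The symmetric argument with $\psi$ gives the converse. Combined with Step 2, this yields the bijection $\mathrm{Cri}_{MP}(f\circ A,g)\leftrightarrow\mathrm{Cri}_{MP}(g^\polar\circ A^*,f^\polar)$.

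\textbf{Main obstacle: continuity of $\varphi$ near critical points.} The maps must be continuous near $x_0$, but there $\partial g(x)$ need not lie in $R(A^*)$. I would first replace $\partial g$ by $\partial g_{\KA}$, which always lies in $(\ker A)^\bot=R(A^*)$ and agrees with $\partial g$ at Lagrange critical points up to the kernel shift of Proposition \ref{pro:quotient-subdifferential}. Morse criticality of $f\circ A/g$ away from $\ker A$ is then compared with that of $f\circ A/g_{\KA}$ on $(\ker A)^\bot$. The slice description from the proof of Theorem \ref{th:ratio-sublevel-equiA} identifies the sublevel sets as joins with spheres. When $\dim\ker A=\infty$ this description fails, and I would instead argue locally: near a nontrivial critical point, translating along $\ker A$ toward the minimizer of $g$ is a non-increasing deformation of the ratio.

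\textbf{Step 4: critical groups.} Use excision to write $C_*(f\circ A/g,\alpha)$ as the pair homology of $\{f\circ A/g\le c\}$ relative to that set minus $\alpha$. The maps $\varphi,\psi$ send these pairs into the corresponding dual pairs, since by Step 2 they send $\alpha\leftrightarrow\tau$ and no other point of the level set to $\tau$ up to scaling. The homotopies $H(x,t)=t\psi\varphi(x)+(1-t)x$ from Theorem \ref{th:ratio-sublevel-equiA} stay inside the sublevel sets and avoid the critical point except at $\alpha$ itself. Therefore $(\psi\varphi)_*=\mathrm{id}$ on the pair homology, and symmetrically $(\varphi\psi)_*=\mathrm{id}$. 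Hence $\varphi_*$ is an isomorphism $C_*(f\circ A/g,\alpha)\cong C_*(g^\polar\circ A^*/f^\polar,\tau)$.
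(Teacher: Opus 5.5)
Your outline follows the paper's own proof almost step for step: the same maps $\acg,\afc$, the same concatenated deformation $\eta^\#$ built from $\delta(y)=\|y-\acg(\afc(y))\|$, and the same excision-plus-$\afc\circ\acg\simeq\mathrm{id}$ argument. It breaks at Step 2, where the paper asserts the same bijection with the same lifts. Lagrange criticality of $\al$ only gives some $w\in\partial f(A\al)$ with $A^*w=c\,\gradientp g(\al)$. Your $\bet$ is the lift of $\gradientp g(\al)$ lying in $\ker(A^*)^\bot$, so all you know is $c\bet-w\in\ker A^*$.

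Theorem~\ref{th:critical(fA,g)} certifies points of $\mathrm{cone}(\partial f(A\al))\cap\mathrm{cone}((A^*)^{-1}\gradientp g(\al))$, and $\bet$ need not lie in the first cone. Likewise, the equality case of Lemma~\ref{lem:fA/g-zyx-tech1} really reads ``equality iff $y/f^\polar(y)\in\partial f(Ax)$''. That is a condition on the chosen lift $y$, not on $x$ alone. The direction you use (a critical $x$ forces equality for the orthogonal lift) is false once $\ker A^*\neq0$.

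Here is an example. Take $X=\R$, $Y=\R^2$, $Ax=(x,0)$, $g=|\cdot|$, and $f^\polar(y)=(y_1^2+y_1y_2+y_2^2)^{1/2}$, so that $f(y)=(\tfrac43(y_1^2-y_1y_2+y_2^2))^{1/2}$. All hypotheses hold and $f\circ A/g\equiv 2/\sqrt3$, so $\al=1$ is Morse critical. But $\bet=(1,0)$ has dual value $1\neq2/\sqrt3$. Moreover $s\mapsto(1+s+s^2)^{-1/2}$ has derivative $-\tfrac12$ at $0$, so $\bet$ is Morse regular for the dual ratio, which is $C^1$ near $\bet$. The dual critical ray in that fibre is spanned by $(1,-\tfrac12)=\bet+(0,-\tfrac12)$.

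So ``kernel components are harmless'' is exactly backwards: what matters is which point of the fibre $\bet+\ker A^*$ you pick. Indeed, the $\bet$-clause of the statement is false in this example. The mirror problem hits $\afc$ when $\ker A\neq0$. A critical $\al$ minimizes $g$ on $\al+\ker A$, but $\afc$ takes values in $(\ker A)^\bot$. For $X=\R^2$, $Y=\R$, $Ax=x_1$, $f=|\cdot|$, $g(x)=(x_1^2+x_1x_2+x_2^2)^{1/2}$, the critical ray $\R_{>0}(1,-\tfrac12)$ never meets $(\ker A)^\bot=\R\times\{0\}$. There $\afc(\acg(\al))\propto(1,0)$, where the ratio is $1<2/\sqrt3$. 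So ``$h(\al)\propto\al$'' fails and $\afc$ does not invert $\acg$. The lifts must be the $f^\polar$-minimizing (resp.\ $g$-minimizing) points of the fibres. That is, you must pass to $(f^\polar)_{\KAs}$ and $g_{\KA}$ as in Theorem~\ref{th:ratio-sublevel-equiA}, which changes both the dual function and the formula for $\bet$ unless $A$ is an isomorphism.

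Two further steps would remain open even after that repair.

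First, your continuity fix (using $\gradientp g_{\KA}$) changes the function being deformed. Combining Lemma~\ref{lem:g_R(A)g_K(A)} with Lemma~\ref{lem:fA/g-zyx-tech1} only bounds the dual ratio at $\acg(z)$ by $f(Az)/g_{\KA}(z)$, which is $\ge f(Az)/g(z)$. Hence the chain in Step 3 closes only if $\eta$ decreases $f\circ A/g_{\KA}$. Comparing Morse regularity of $f\circ A/g$ and $f\circ A/g_{\KA}$ is exactly the part you leave as a sketch, and the slice/join description you cite needs $\dim\ker A<\infty$.

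Second, Step 4 borrows the homotopies of Theorem~\ref{th:ratio-sublevel-equiA}. That theorem is proved only for Fredholm $A$, whereas only closed range is assumed here. It also yields homotopy-equivalent full sublevel sets only when $\mathrm{indF}(A)=0$; otherwise one side is the other joined with $\mathbb{S}^{|\mathrm{indF}(A)|-1}$. In the second example the top sublevel sets are $\R^2\setminus\{0\}\simeq\mathbb{S}^1$ versus $\R\setminus\{0\}\simeq\mathbb{S}^0$. So $\acg$ is in general not the homotopy equivalence your excision argument needs. The paper's proof invokes that theorem in the same way, so it does not close these gaps either.
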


\begin{proof}
Let $\mathrm{Cri}_{LP}^S(f\circ A,g)$ be the set of Lagrange critical points with constraint $f(Ax)=1$, i.e., $\mathrm{Cri}_{LP}^S(f\circ A,g):=\{x\in \mathrm{Cri}_{LP}(f\circ A,g):f(Ax)=1\}$. 
By Theorem \ref{th:critical(fA,g)}, the map  $(A^*|_{\mathrm{ker}(A^*)^\bot})^{-1}\gradientp g$ restricted on the set $\mathrm{Cri}_{LP}^S(f\circ A,g)$, 
induces a bijection 
$(A^*|_{\mathrm{ker}(A^*)^\bot})^{-1}\gradientp g|_{\mathrm{Cri}_{LP}^S(f\circ A/g)}:\mathrm{Cri}_{LP}^S(f\circ A,g)\to \mathrm{Cri}_{LP}^S(g^\polar \circ A^*,f^\polar )$ 
with the inverse $\big((A^*|_{\mathrm{ker}(A^*)^\bot})^{-1}\gradientp g|_{\mathrm{Cri}_{LP}^S(g^\polar \circ A^*,f^\polar )}\big)^{-1}$ equaling  $$(A|_{\mathrm{ker}(A)^\bot})^{-1}\gradientp f^\polar |_{\mathrm{Cri}_{LP}^S(g^\polar\circ A^* /f^\polar )}:\mathrm{Cri}_{LP}^S(g^\polar \circ A^*,f^\polar )\to \mathrm{Cri}_{LP}^S(f\circ A,g).$$ 
To simplify the notation, we 
write $$\acg:=(A^*|_{\mathrm{ker}(A^*)^\bot})^{-1}\gradientp g|_{\mathrm{Cri}_{LP}^S(f\circ A/g)}\text{ and }\afc:=(A|_{\mathrm{ker}(A)^\bot})^{-1}\gradientp f^\polar|_{\mathrm{Cri}_{LP}^S(g^\polar\circ A^* /f^\polar )}.$$

Consider the set of Morse regular points of $f\circ A/g$ and $g^\polar \circ A^*/f^\polar $ restricted on unit spheres:
$$MR(f\circ A/g)=\{\text{Morse regular points of }f\circ A/g\text{ restricted on }f\circ A=1\}$$
and 
$$MR(g^\polar\circ A^* /g^\polar\circ A^* )=\{\text{Morse regular points of }g^\polar\circ A^* /f^\polar \text{ restricted on }g^\polar\circ A^* =1\} .$$
We only need to show that the Morse regular points of $f\circ A/g$ and $g^\polar\circ A^* /f^\polar $ are in one-to-one correspondence via $\acg$, that is, 
\begin{equation}\label{qe:MR_LP-A1}
\acg\big(MR(f\circ A/g)\cap \mathrm{Cri}_{LP}(f\circ A,g)\big)\subset MR(g^\polar\circ A^* /f^\polar )\cap \mathrm{Cri}_{LP}(g^\polar \circ A^*,f^\polar )    
\end{equation}
and 
\begin{equation}\label{qe:MR_LP-A2}
\afc \big( MR(g^\polar\circ A^* /f^\polar )\cap \mathrm{Cri}_{LP}(g^\polar \circ A^*,f^\polar ) \big)\subset MR(f\circ A/g)\cap \mathrm{Cri}_{LP}(f\circ A,g)    
\end{equation}

If \eqref{qe:MR_LP-A1} and \eqref{qe:MR_LP-A2} hold, then $\acg|_{\mathrm{Cri}_{MP}^S(f\circ A,g)}:\mathrm{Cri}_{MP}^S(f\circ A,g)\to \mathrm{Cri}_{MP}^S(g^\polar \circ A^*,f^\polar )$ is a bijection with the inverse $\afc|_{\mathrm{Cri}_{MP}^S(g^\polar \circ A^*,f^\polar )} :\mathrm{Cri}_{MP}^S(g^\polar \circ A^*,f^\polar )\to \mathrm{Cri}_{MP}^S(f\circ A,g)$. 

Now, let's show \eqref{qe:MR_LP-A1}. 

For a Morse regular point $x_0$ of $f\circ A/g$ which is also Lagrange critical point, there is a decreasing flow $\eta(x,t)$ near $x_0$, i.e., there exists a neighborhood $U_{x_0}$ such that for any $x\in U_{x_0}$, $t\in(0,1]$,
$$\frac {f\circ A}{g}\big(\eta(x,t)\big)<\frac {f\circ A}{g}(x) $$
and $\eta(x,0)=x$. Note that $
\acg(\eta(x,t))$ is a continuous flow along $
\acg(x)$, and $g^\polar (\gradientp g(\eta(x,t)))=1$. 
In particular, when $x=x_0$, and $t>0$, 
\begin{align*}
\frac{g^\polar (\gradientp g(\eta(x_0,t)))}{f^\polar ((A^*|_{\mathrm{ker}(A^*)^\bot})^{-1}\gradientp g(\eta(x_0,t)))}&= \frac{g^\polar (A^*\acg(\eta(x_0,t)))}{f^\polar (\acg(\eta(x_0,t)))}
\le \frac{f(A\eta(x_0,t))}{g(\eta(x_0,t))}<\frac{f(Ax_0)}{g(x_0)}\\&
=\frac{g^\polar (A^*\acg(x_0))}{f^\polar (\acg(x_0))} =\frac{g^\polar (\gradientp g(x_0))}{f^\polar ((A^*|_{\mathrm{ker}(A^*)^\bot})^{-1}\gradientp g(x_0))}   
\end{align*}
where the inequality is based on Lemma \ref{lem:fA/g-zyx-tech1}.

For any $y\in V_{y_0}$ where $y_0=
\acg(x_0)$ and $t\in(0,1]$,
\begin{align*}
\frac{g^\polar (A^*\acg(\eta(\afc (y),t)))}{f^\polar (\acg(\eta(\afc (y),t)))}\le \frac{f(A\eta(\afc (y),t))}{g(\eta(\afc (y),t))}
<\frac{f(A\afc (y))}{g(\afc(y))}
\le\frac{g^\polar (A^*y)}{f^\polar (y)}.
\end{align*}
However, $t\mapsto \acg(\eta(\afc (y),t))
$ is not a proper flow, as in general  $$\acg(\eta(\afc (y),0)))=\acg(\afc (y))\ne y$$ 
We have proved that
$$\frac{g^\polar(A^*\acg(\eta(x_0,t)))}{f^\polar( \acg(\eta(x_0,t)))}<\frac{g^\polar(A^*\acg(x_0))}{f^\polar( \acg(x_0))}\text{ and }\frac{g^\polar(A^*\acg(\eta(\afc(y),t)))}{f^\polar( \acg(\eta(\afc(y),t)))}<\frac{g^\polar (A^*y)}{f^\polar (y)}.$$

We define $\delta(y)=\|y-\acg(\afc (y))\|$ which satisfies that $y$ is a Lagrange critical point  iff $\delta(y)=0$. Moreover, $\delta(\cdot)$ is continuous. 
Let  $$H(y,t)=\frac{t \acg(\afc (y)) +(1-t)y}{g^\polar \left(t \acg(\afc (y)) +(1-t)y\right)},\;\; x\in X,\, 0\le t\le1.$$
It is easy to see $H$ is continuous,  $H(y,0)=y$ and  $H(y,1)=\acg(\afc (y))$ when $g^\polar (y)=1$. 

Let  $\eta^\#:V_{y_0}\times[0,1]\to X$ be defined by
$$\eta^\#(y,t)=\begin{cases}
H(y,t/\delta(y)),&\text{ if }1\ge \delta(y)>t\ge0\\
\acg(\eta(\afc (y),t-\delta(y)))),&\text{ if }1\ge t\ge\delta(y)\ge0
\end{cases}$$
Then $\eta^\#(y,0)=y$ and 
$$\frac{g^\polar (A^*\eta^\#(y,t))}{f^\polar (\eta^\#(y,t))}< \frac{g^\polar (A^*y)}{f^\polar (y)},\;\forall t\in (0,1]. $$
This implies that $y_0$ is actually a Morse regular point of $g^\polar\circ A^* /f^\polar $. 
The proof is then completed.

We are in a position to show the isomorphisms of Rothe critical groups. 

In fact, if $\al$ is an isolated Morse critical point of $f\circ A/g$, then by the homology excision property, $C_*(f\circ A/g,\al)\cong H_*(\{f\circ A/g\le f(A\al)/g(\al)\},\{f\circ A/g\le f(A\al)/g(\al)\}\setminus\{\al\})$. 
Since $\acg(\al)=\bet$, $f(A\al)/g(\al)=g^\polar (A^*\tau)/f^\polar (\tau)$, and 
$\acg:\{f\circ A/g\le f(A\al)/g(\al)\}\to \{g^\polar\circ A^* /f^\polar \le g^\polar (A^*\tau)/f^\polar (\tau)\}$ is a homotopy equivalence, we have 
a homomorphism $$\acg_*: C_*(f\circ A/g,\al)\to C_*(g^\polar\circ A^* /f^\polar ,\tau).$$
Similarly, $\afc $ induces a homomorphism 
$$\afc_*:  C_*(g^\polar\circ A^* /f^\polar ,\tau)\to C_*(f\circ A/g,\al).$$
Note that $\acg_*\circ \afc_*=(\acg \circ \afc)_*:C_*(f\circ A/g,\al)\to C_*(f\circ A/g,\al)$ is an isomorphism because $\afc \circ \acg\simeq \mathrm{id}$ by Theorem \ref{th:ratio-sublevel-equiA}. Thus, we obtain that both $\acg_*$ and $\afc_*$ are isomorphisms between $C_*(f\circ A/g,\al)$ and $ C_*(g^\polar\circ A^* /f^\polar ,\tau)$. 
\end{proof}

\section{Applications and Extensions}
\label{sec:app}
Our theory of duality has significant practical applications.  
In this section, we apply the duality theory established in Section \ref{sec:ratio} to many areas, including manifolds, polyhedrons, hypergraph $p$-Laplacians, zonotopes, nonlinear eigenvalue problems and bifurcation problems. 
Meanwhile, the idea of the duality theory for RC functions can be extended to DC functions.


\subsection{Application 
to Cheeger constants on manifolds and polyhedrons}\label{sec:Cheeger-dual}
Let $M$ be a closed manifold of dimension $n$  and let $\|\cdot\|$ be a norm on its $i$-th chain group $C_i$. 
Roughly speaking, we denote by $(M,\|\cdot\|,C_i)$ the critical data (i.e., 
Lagrange critical values, and Lusternik-Schnirelman min-max critical values
) of the Rayleigh-type quotient $\|\partial \cdot\|/\|\cdot\|$.  
Similarly, we can define the cochain version $(M,\|\cdot\|,C^i)$.

Let $X$ be a polyhedral manifold, e.g., a triangulation of $M$. Then, as a cell complex, it has a natural dual polyhedral manifold $X^\circ$ by reversing the inclusion order of the faces. We can similarly consider the chain complexes on $X$ and $X^\circ$, respectively. 

\begin{defn}[critical equivalence]
We say that $(M,\|\cdot\|,C)$ is critical equivalent to $(M',\|\cdot\|',C')$ if the function-pairs $(\|\partial \cdot\|,\|\cdot\|)$ and $(\|\partial \cdot\|',\|\cdot\|')$ are Lagrange equivalent, as well as the Rayleigh quotients $\|\partial \cdot\|/\|\cdot\|$ and $\|\partial \cdot\|'/\|\cdot\|'$ are min-max equivalent, 
where $C$ and $C'$ are certain chain (or cochain) groups on $M$ and $M'$, and $\partial$ and $\partial'$ are boundary (or coboundary) operators on $C$ and $C'$, respectively. And we simply write $$(M,\|\cdot\|,C)\sim(M',\|\cdot\|',C')$$
if $(M,\|\cdot\|,C)$ is critical equivalent to $(M',\|\cdot\|',C')$.
\end{defn}

\begin{theorem}\label{thm:cri-complex}
Let $M$ be an $n$-dimensional compact closed Riemannian manifold. Then, 
$$ (M,\|\cdot\|_*,C^i)\sim  (M,\|\cdot\|,C_{n-i})\sim (M,\|\cdot\|_*,C^{n-i})\sim  (M,\|\cdot\|,C_{i}), $$

Furthermore, let $X$ be a polyhdralization of $M$, and $X^\circ$ be its dual polyhedral complex.  Then
$$(X,\|\cdot\|,C_i)\sim(X,\|\cdot\|_*,C^i)\sim(X^\circ,\|\cdot\|,C^{n-i})\sim (X^\circ,\|\cdot\|_*,C_{n-i}).$$
\end{theorem}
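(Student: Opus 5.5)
The plan is to reduce every equivalence to Theorem \ref{th:fAgmain:polar}, taking $A$ to be the (co)boundary operator. Concretely, Theorems \ref{th:critical(fA,g)}, \ref{th:fA/g-index-critical} and the kernel reduction Lemma \ref{lem:g_R(A)g_K(A)} will be used, with $f$ and $g$ the relevant norms.

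The chain and cochain groups are finite dimensional and carry a natural pairing $\langle C^i, C_i\rangle$. Under this pairing the coboundary $\delta: C^i\to C^{i+1}$ is the adjoint of the boundary $\partial: C_{i+1}\to C_i$. The key observation is that for a norm $\|\cdot\|$, the polar dual is the dual norm $\|\cdot\|_*$ (Proposition \ref{pro:CV_1&Cv_p}(ii)). Moreover, every norm lies in $\cvp_c^1$ and every linear map between finite-dimensional spaces is Fredholm. Hence the hypotheses of Theorems \ref{th:critical(fA,g)} and \ref{th:fA/g-index-critical} hold automatically.

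\textbf{Step 1 (chains versus cochains of the same degree).} Apply the theorems with $X=C_i$, $Y=C_{i-1}$, $A=\partial$, $f=\|\cdot\|$ on $Y$ and $g=\|\cdot\|$ on $X$. The pair $(f\circ A,g)=(\|\partial\cdot\|,\|\cdot\|)$ is then Lagrange and min-max equivalent to $(g^\polar\circ A^*,f^\polar)=(\|\delta\cdot\|_*,\|\cdot\|_*)$ on $C^{i-1}\to C^i$. Here a degree bookkeeping issue appears: the Rayleigh quotient on $C_i$ involves $\partial:C_i\to C_{i-1}$, while its dual lives on $C^{i-1}$. I will fix the convention, consistent with the statement, that $(M,\|\cdot\|,C_i)$ refers to the quotient built from the operator pairing $C_i$ with its adjacent degree. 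Under this convention, $(M,\|\cdot\|,C_i)\sim(M,\|\cdot\|_*,C^i)$ is exactly Theorem \ref{th:fAgmain:polar} applied to the adjoint pair. The same argument works on the polyhedral complex $X$ and gives $(X,\|\cdot\|,C_i)\sim(X,\|\cdot\|_*,C^i)$.

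\textbf{Step 2 (Poincaré duality).} On a closed oriented manifold, or on a polyhedral manifold $X$ together with its dual cell complex $X^\circ$, cells of $X$ of dimension $i$ correspond bijectively to cells of $X^\circ$ of dimension $n-i$, with incidences reversed. This gives an isometric identification $C_i(X)\cong C^{n-i}(X^\circ)$ under which $\partial$ corresponds to $\pm\delta$. The norms are transported by the cell bijection, and the sign does not change $\|\partial\cdot\|$. The quotients are therefore literally equal, which yields $(X,\|\cdot\|_*,C^i)\sim(X^\circ,\|\cdot\|,C^{n-i})$. Composing with Step 1 applied on $X^\circ$ gives $(X^\circ,\|\cdot\|,C^{n-i})\sim(X^\circ,\|\cdot\|_*,C_{n-i})$. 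For the smooth $M$, I would realise $C_i$ and $C^{n-i}$ through a polyhedralization, or through Hodge star or Poincaré duality, so that the same identification yields $(M,\|\cdot\|_*,C^i)\sim(M,\|\cdot\|,C_{n-i})$. Step 1 then produces the remaining links, and transitivity closes the chain.

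\textbf{Main obstacle.} The hard step is Step 2 in the smooth setting. There one must make precise which norms are meant and why Poincaré duality transports $\|\cdot\|$ to $\|\cdot\|_*$ compatibly, so that $\partial$ goes to $\delta$ up to sign. If the chain groups are infinite dimensional, one must also check that $\partial$ is Fredholm between Hilbert completions, so that Theorem \ref{th:fA/g-index-critical} applies. My first approach would be to work at the level of a fixed polyhedralization. The nonzero spectra do not depend on kernel dimensions, by Lemma \ref{lem:g_R(A)g_K(A)} and the remark following Theorem \ref{th:fA/g-index-critical}, so nonzero critical data still match even when the Fredholm index is nonzero.
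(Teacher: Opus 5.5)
Your route is the paper's. Its proof is the four-row diagram linking $C^\bullet(X)$, $C_\bullet(X)$, $C_\bullet(X^\circ)$ and $C^\bullet(X^\circ)$. The adjoint rows are handled by Theorems \ref{th:critical(fA,g)} and \ref{th:fA/g-index-critical}, whose hypotheses hold automatically in finite dimensions, as you note. The rows of $X$ and $X^\circ$ are matched by the Poincar\'e cell bijection.

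\textbf{The gap.} The degree bookkeeping you postponed cannot be fixed by choosing a convention. The two dualities shift degrees differently:
\begin{itemize}
\item the adjoint pairing keeps the pair of degrees, since $\partial:C_{k+1}(X)\to C_k(X)$ is dual to $d:C^k(X)\to C^{k+1}(X)$;
\item the cell bijection sends $\partial:C_{k+1}(X)\to C_k(X)$ to $\pm d:C^{n-k-1}(X^\circ)\to C^{n-k}(X^\circ)$.
\end{itemize}
Label quotients on $X$ and on $X^\circ$ by one and the same rule. Then $d$ on $C^k(X)$ (with $\|\cdot\|_*$) and its partner $d$ on $C^{n-k-1}(X^\circ)$ (with $\|\cdot\|$) carry labels summing to $n-1$ or $n+1$, never $n$.

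Concretely, take your own convention: $\partial:C_i\to C_{i-1}$ is labelled $C_i$, and $\delta:C^{i-1}\to C^i$ is labelled $C^i$. The operator you transport in Step 2 is $\delta:C^{n-i}(X^\circ)\to C^{n-i+1}(X^\circ)$, which carries the label $C^{n-i+1}$. So your ``literally equal'' step actually yields $(X,\|\cdot\|_*,C^i)\sim(X^\circ,\|\cdot\|,C^{n-i+1})\sim(X^\circ,\|\cdot\|_*,C_{n-i+1})$.

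Now take the convention used in the proof of Theorem \ref{th:Cheeger-dual}: chains labelled by target degree, cochains by source degree. The same argument gives $(X,\|\cdot\|,C_i)\sim(X,\|\cdot\|_*,C^i)\sim(X^\circ,\|\cdot\|,C^{n-i-1})\sim(X^\circ,\|\cdot\|_*,C_{n-i-1})$. Read column by column, this is what the paper's diagram encodes. It is also what the indices $h^{n-i}(X^\circ)$ and $h_{n-i-1}(X^\circ)$ in Theorem \ref{th:Cheeger-dual} record.

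The indices as stated are not merely unproved but false in general. Take $\ell^2$ norms on the octahedral $2$-sphere, with $n=2$ and $i=1$. The stated chain implies $(X,\|\cdot\|_2,C_1)\sim(X^\circ,\|\cdot\|_2,C^1)$. In either convention above, this would equate the nonzero Laplacian spectrum of the octahedron graph, $\{4,4,4,6,6\}$, with that of the cube graph, $\{2,2,2,4,4,4,6\}$. These spectra are the squares of the nonzero min-max values. So you should state and prove the shifted chain instead.

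\textbf{Two further remarks.}
\begin{itemize}
\item On the smooth $M$ the same shift occurs, and there is an extra issue. The norms on $C_{n-i}$ and on $C^i$ are independent data. A relation between $(M,\|\cdot\|_*,C^i)$ and a chain quotient in complementary degree (shifted as above) only makes sense once the chain norm is \emph{defined} as the Poincar\'e transport of the cochain norm. Passing to a polyhedralization computes cellular critical data, not those of the chain groups of $M$. So this part stays open in your proposal, just as in the paper, whose proof does not go beyond the finite diagram.
\item You were right to assume $M$ oriented, which the statement omits. Without orientability, $\partial:C_n(X)\to C_{n-1}(X)$ is injective, while $d$ on $C^0(X^\circ)$ kills the constants. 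So the two quotients matched by the cell bijection do not even have the same number of positive min-max values.
\end{itemize}
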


For instance, we can think of $X$ as the boundary of a convex polytope.

\begin{proof}
The proof relies on three different types of ``duality'', the Poincare duality on the polyhedral complexes $X\leftrightarrow X^\circ$, the norm duality on the norms $\|\cdot\|\leftrightarrow\|\cdot\|_* $, the dual relation of chain and cochain groups on finite sets $C_i(X)\cong C^i(X)$. 
The key framework is the following diagram:
$$
\xymatrix{C^0(X)\ar[r]^{d}\ar@{=}[d]^{\text{linear isomorphism}}& C^1(X)\ar[r]^{d}\ar@{=}[d]&\cdots\ar[r]^{d~~~~}&C^{n-1}(X)\ar[r]^{~~~~d}\ar@{=}[d]&C^n(X)\ar@{=}[d]\\
C_0(X)\ar@{=}[d]& C_1(X)\ar[l]_{\partial}\ar@{=}[d]&\cdots\ar[l]_{\partial}&C_{n-1}(X)\ar[l]_{\partial~~}\ar@{=}[d]&C_n(X)\ar[l]_{~~~~\partial}\ar@{=}[d]
\\
C_n(X^\circ)\ar[r]^{\partial}\ar@{=}[d]& C_{n-1}(X^\circ)\ar[r]^{~~\partial}\ar@{=}[d]&\cdots\ar[r]^{\partial~~}&C_{1}(X^\circ)\ar[r]^{\partial~~}\ar@{=}[d]&C_n(X^\circ)\ar@{=}[d]
\\
C^n(X^\circ)& C^{n-1}(X^\circ)\ar[l]_{d}&\cdots\ar[l]_{~~d}&C^{1}(X^\circ)\ar[l]_{d~~}&C^0(X^\circ)\ar[l]_{d}
}
$$
in which all the equalities `$=$'  mean linear isomorphisms, some of them are due to the assumption that $X$ is a finite polyhedral complex, and some isomorphisms are based on Poincare's duality. 

To complete the proof, we shall make full use of the critical duality theory (Theorems \ref{th:critical(fA,g)} and \ref{th:fA/g-index-critical}). 
In fact, the positive Lagrange critical values of $(\|\partial \cdot\|,\|\cdot\|)$ coincide with that of $(\|\partial^\top\cdot\|_*,\|\cdot\|_*)$. Also, since $X$ is a finite polyhedral  complex,   $\partial^\top$ and $d$ are indeed the same linear transformation. 
\end{proof}


We shall apply Theorem \ref{thm:cri-complex} to Cheeger constants. Here, we use the concepts of \textbf{exact} Cheeger constant
$$h_i(X,\|\cdot\|
)=\inf\limits_{\beta\in C_{i+1},\partial\beta\ne0}\frac{\|\partial\beta\|}{\inf\limits_{\partial\beta'=\partial\beta}\|\beta'\|}$$
and the \textbf{coexact} Cheeger constant
$$h^i(X,\|\cdot\|
)=\inf_{\beta\in C^{i-1},d\beta\ne0}\frac{\|d\beta\|}{\inf\limits_{d\beta'=d\beta}\|\beta'\|}.$$
The formulation of the above exact and coexact Cheeger constants is 
motivated by the Cheeger constants on simplicial complexes \cite{Steenbergen14}, and the Cheeger constants on  differential forms 
\cite{Boulanger}. For example, if $X$ is a simplicial complex, and $\|\cdot\|$ represents the Hamming norm on $i$-dimensional
cochains (or chains), then $h_i$ and $h^i$ express the Cheeger constants on simplicial complexes introduced in \cite{Steenbergen14}. 
Moreover, if $X$ is a compact Riemannian manifold,  then $h_i$ and $h^i$ represent the exact and coexact Cheeger constants on differential forms 
\cite{Boulanger}. 

All in all, as a direct application to higher dimensional Cheeger constants on differential forms or simplicial complexes, we obtain sequences of equalities:  
\begin{theorem}\label{th:Cheeger-dual}
Let $M$ be an oriented compact closed  manifold of dimension $n$. Let $X$ be a simplicial complex that is realized as a triangulation of $M$.  Then, for any norm $\|\cdot\|$ on the $i$-th chain group of $M$ (resp., $X$), 
$$h_i(M,\|\cdot\|)=h^{i+1}(M,\|\cdot\|_*)=h^{n-i}(M,\|\cdot\|)=h_{n-i-1}(M,\|\cdot\|_*),$$
$$h_i(X,\|\cdot\|)=h^{i+1}(X,\|\cdot\|_*)=h^{n-i}(X^\circ,\|\cdot\|)=h_{n-i-1}(X^\circ,\|\cdot\|_*).$$
\end{theorem}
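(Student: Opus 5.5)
We will reduce each of the exact/coexact Cheeger constants to the smallest positive Lagrange critical value (equivalently, the first nonzero min-max value $c_{k_0+1}$) of a suitable RC function, and then transport it along the chain of critical equivalences of Theorem \ref{thm:cri-complex}. Concretely, for the exact constant, take $A=\partial: C_{i+1}\to C_i$, $f=\|\cdot\|$ on $C_i$ and $g=\|\cdot\|$ on $C_{i+1}$. Then $\inf_{\partial\beta'=\partial\beta}\|\beta'\|=g_{K(A)}(\beta)$, which is exactly the kernel-reduced function of Lemma \ref{lem:g_R(A)g_K(A)}. So $h_i(X,\|\cdot\|)=\inf_{\beta\notin\ker\partial} f(A\beta)/g_{K(A)}(\beta)$, which is the minimum of $f\circ A/g_{K(A)}$ on $(\ker A)^\bot\setminus\{0\}$. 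Everything is finite dimensional and the norms are continuous and positive definite, hence lie in $\cvp_c^1$. The infimum is therefore attained and is the smallest positive Lagrange critical value of $(f\circ A,g_{K(A)})$. By Lemma \ref{lem:g_R(A)g_K(A)}, it is also the smallest positive Lagrange critical value of $(f\circ A,g)$; equivalently, it is $c_{k_0+1}(f\circ A/g)$ with $k_0=\dim\ker A$, as in the proof of Theorem \ref{th:fA/g-index-critical}.

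The first equality $h_i(\cdot,\|\cdot\|)=h^{i+1}(\cdot,\|\cdot\|_*)$ comes from applying Theorem \ref{th:critical(fA,g)} together with Lemma \ref{lem:g_R(A)g_K(A)} on the dual side. The dual pair is $(g^\polar\circ A^*,f^\polar)=(\|A^*\cdot\|_*,\|\cdot\|_*)$. Here $A^*=\partial^\top=d:C^i\to C^{i+1}$, after identifying $C_j\cong C^j$ as in the proof of Theorem \ref{thm:cri-complex}, and $\|\cdot\|^\polar=\|\cdot\|_*$. Hence the positive Lagrange critical values of $(\|d\cdot\|_*,\|\cdot\|_*)$ on $C^i$ coincide with those of $(\|\partial\cdot\|,\|\cdot\|)$ on $C_{i+1}$, and in particular their minima coincide. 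Applying Lemma \ref{lem:g_R(A)g_K(A)} again, the minimum on the dual side is the minimum of $\|d\beta\|_*/\inf_{d\beta'=d\beta}\|\beta'\|_*$, which is $h^{i+1}(\|\cdot\|_*)$. The remaining equalities follow the same way. Poincar\'e duality on $M$, or the cell duality $X\leftrightarrow X^\circ$, identifies $C_{i+1}(X)\cong C^{n-i-1}(X^\circ)$ and intertwines $\partial$ with $d$. We transport the norm along this identification, so $h_i(X)=h^{n-i}(X^\circ,\|\cdot\|)$ follows directly from equality of the Rayleigh quotients. A second application of the polarity step then gives $h_{n-i-1}(X^\circ,\|\cdot\|_*)$. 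In index terms, $h^{n-i}$ involves $d:C^{n-i-1}\to C^{n-i}$, whose transpose is $\partial:C_{n-i}\to C_{n-i-1}$, i.e.\ $h_{n-i-1}$. The manifold line is proved identically, using the diagram of Theorem \ref{thm:cri-complex} with $X^\circ$ replaced by $M$ itself via Poincar\'e duality.

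The main obstacle is the bookkeeping in the identification step, for two reasons. First, we must check that under $C_j\cong C^j$ and under Poincar\'e duality the transpose of $\partial$ is exactly $d$, up to signs, which do not affect norms of images if the norm is sign-invariant, or which can be absorbed into the isometry otherwise. Second, the norm on the dual side must be the dual norm with respect to the same pairing. In the smooth manifold case one also has to justify that the exact/coexact constants are realized as minima; in infinite dimensions this needs closed range, which we take from the Hodge decomposition, Proposition \ref{prop:basic-in-convex}(iii), and the finite-dimensional kernel. With that in hand, the min-max form of Theorem \ref{th:fA/g-index-critical} (first nonzero min-max value) can replace the attained minimum.
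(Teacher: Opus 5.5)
Your proposal follows essentially the paper's route: identify $h_i$ with the smallest positive Lagrange critical value of $(f\circ A,g_{K(A)})$, pass to $(f\circ A,g)$ via Lemma~\ref{lem:g_R(A)g_K(A)}, and transport it by polarity duality. The differences are minor: you apply Theorem~\ref{th:critical(fA,g)} together with kernel reduction on the dual side directly, where the paper goes through the min-max equivalence of Theorem~\ref{thm:cri-complex}, and you also write out the Poincar\'e-duality equalities that the paper leaves implicit.

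One correction concerns your manifold remark. In the smooth setting $\ker d$ and $\ker\partial$ are in general infinite-dimensional, so $A$ is not Fredholm and Theorem~\ref{th:fA/g-index-critical} does not apply; nor does closed range by itself give attainment of the infimum. The paper's proof glosses over this point as well. One clean repair is to note that $h_i$ is the reduced minimum modulus of $\partial$, which equals that of its adjoint $d$ with respect to the dual norms, and this requires no attainment.
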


\begin{proof}
We only show the case on the manifold $M$. 
Taking $A=\partial$, $f=\|\cdot\|$ on $C_i$ and $g=\|\cdot\|$ on $C_{i+1}$, we find that $h_i(M,\|\cdot\|)$ indicates the smallest nonzero Lagrange critical value of $(f\circ A,g_{\KA})$, and by
Lemma \ref{lem:g_R(A)g_K(A)}, it coincides with the smallest nonzero Lagrange critical value of $(f\circ A,g)$, which is nothing but the smallest nonzero min-max critical value of $(M,\|\cdot\|,C_i)$. 
Similarly, $h^{i+1}(M,\|\cdot\|_*)$ equals the smallest nonzero min-max critical value of  $(M,\|\cdot\|,C^i)$. By Theorem \ref{thm:cri-complex}, the nonzero min-max critical values of $(M,\|\cdot\|,C_i)$ and $(M,\|\cdot\|,C^i)$ coincide, which implies $h_i(M,\|\cdot\|)=h^{i+1}(M,\|\cdot\|_*)$.
\end{proof}

These concepts and the corresponding Theorem \ref{th:Cheeger-dual} unify many facts:
\begin{itemize}
\item If $\|\cdot\|$ represents the mass norm on $i$
currents, then $h^1(M,\|\cdot\|)$ equals the usual Cheeger constant. 
\item If $\|\cdot\|_2$ is the $L^2$-norm, since the nonzero spectrum of $\partial d$ and  the nonzero spectrum of $d\partial $ agree, it is clear that $h_i(M,\|\cdot\|_2)=h^{i+1}(M,\|\cdot\|_2)$ and $h_i(X,\|\cdot\|_2)=h^{i+1}(X,\|\cdot\|_2)$. 
This fact is well known in the theory of simplicial complexes, see for instance \cite{Botnan,Knudson}. 
However, for $L^p$-norm $\|\cdot\|_p$ or other norms, that is new. It is interesting  we can derive from Theorem \ref{th:Cheeger-dual} that $h_i(X,\|\cdot\|_p)=h^{i+1}(X,\|\cdot\|_q)$ where $p$ and $q$ are H\"older conjugates. 
\item When $X$ is a finite simplicial complex that is realized as a triangulation of a compact Riemannian manifold, and $\|\cdot\|$ is the $l^1$-norm on $C_{n-1}$, then as a corollary of Theorem \ref{th:Cheeger-dual}, $h_{n-1}(X,\|\cdot\|_1)=h^1(X^\circ,\|\cdot\|_\infty)$ which can be used to derive a Cheeger type inequality on the $(n-1)$-dimensional faces of the simplicial complex $X$ (see \cite{jostzhang24+}). 
\end{itemize}
 
In Theorem \ref{pro:zonotope-Cheeger-constnat}, we also give a zonotope characterization of Cheeger's constant on connected graphs. For details, see Section \ref{sec:zonotope-1&infty}. 

\subsection{
Rephrasing vector-valued distances 
in terms of critical values
}
\label{sec:geometric-inter}

\subsubsection{
Vector-valued distances on $\mathrm{GL}_n(\R)$ 
}

The distance of 
positive definite matrices was introduced by  López,  Pozzetti,  Trettel,  Strube, and  Wienhard \cite{Lopez21}: 

Let $\mathrm{SPD}_n$ denote positive definite real symmetric $n\times n$ matrices.  
For two points $P,Q\in \mathrm{SPD}_n$, the vector-valued distance (VVD) from $P$ to $Q$ is defined as
$$d_{vv}(P,Q):=\left(\log\lambda_1(P^{-1}Q),\cdots,\log\lambda_n(P^{-1}Q)\right).$$


It is important to note that the vector-valued distance enjoys some properties analogous to traditional metric distances, 
see \cite{Lopez21,Kapovich09}. 

Next, we give a slight generalization of the vector-valued distance. 
Given an admissible index, a norm $\|\cdot\|$ on $\R^n$, two $n\times n$ matrices $A$ and $B$, let $c_1(\|A\cdot\|/\|B\cdot\|)\le\cdots\le c_n(\|A\cdot\|/\|B\cdot\|)$ 
denote the Lusternik-Schnirelman min-max  critical values of the RC function 
$x\mapsto \|A x\|/\|B x\|$. 
Define $d_{vv,\|\cdot\|}:\mathrm{GL}_n(\R)\times \mathrm{GL}_n(\R)\to\R^n$ as 
$$d_{vv,\|\cdot\|}(A,B)=\big(\log c_1(\|A\cdot\|/\|B\cdot\|),\cdots,\log c_n(\|A\cdot\|/\|B\cdot\|)\big)$$
where $GL(\R^n)$ indicates the general linear group, i.e., the set of invertible matrices. 

When $\|\cdot\|$ is the $l^2$-norm on $\R^n$, then $c_i(\|Q^{\frac12}\cdot\|/\|P^{\frac12}\cdot\|)=\sqrt{\lambda_i(P^{-1}Q)}$, and thus $d_{vv,\|\cdot\|}(Q^{\frac12},P^{\frac12})=\frac12 d_{vv}(P,Q) $.

For general norm $\|\cdot\|$, if $A$ and $B$ are invertible, then by Theorem \ref{th:index-critical}, we have $$d_{vv,\|\cdot\|}(A,B)=d_{vv,\|\cdot\|_*}((B^\top)^{-1},(A^\top)^{-1}).$$
And if $AB^{-1}=B^{-1}A$, then we further have $d_{vv,\|\cdot\|}(A,B)=d_{vv,\|\cdot\|_*}(A^\top,B^\top)$. 

We can also extend the definition of vector-valued distance as
$\overline{d}_{vv,\|\cdot\|}:\mathrm{GL}_n(\R)\times \mathrm{GL}_n(\R)\to\R^{2n}$ which is defined by 
$$\overline{d}_{vv,\|\cdot\|}(A,B)=(c_1(A,B),\cdots,c_{2n}(A,B))
$$
where $c_1(A,B)\le\cdots\le c_{2n}(A,B)$ form a reordering of $c_i(\|A\cdot\|/\|B\cdot\|)$ and $c_i(\|B^{-1}\cdot\|/\|A^{-1}\cdot\|)$, $i=1,\cdots,n$.  
Then Theorem \ref{th:index-critical} simply implies
$$\overline{d}_{vv,\|\cdot\|}(A,B)=\overline{d}_{vv,\|\cdot\|_*}(A^\top,B^\top) .$$

\subsubsection{Vector-valued distances between norms}\label{sec:vector-valued-distance/norm}

Inspired by the previous subsection on vector-valued distances between nondegenerate matrices, in this subsection, we consider vector-valued distances between norms, and between convex bodies.  
First, we consider two norms $\|\cdot\|$ and $\|\cdot\|'$ on $\R^n$. Then given an admissible index, there are $n$ positive Lusternik-Schnirelman min-max critical values $c_1(\|\cdot\|/\|\cdot\|')\le c_2(\|\cdot\|/\|\cdot\|')\le \cdots\le c_n(\|\cdot\|/\|\cdot\|')$ 
of the nonlinear Rayleigh quotient $\|\cdot\|/\|\cdot\|'$. 
Let $$d_{vv}(\|\cdot\|,\|\cdot\|'):=\big(\log c_1(\|\cdot\|/\|\cdot\|'),\cdots,\log c_n(\|\cdot\|/\|\cdot\|')\big)$$

Theorem \ref{th:index-critical} implies $d_{vv}\big(\|\cdot\|,\|\cdot\|')=d_{vv}\big(\|\cdot\|'_*,\|\cdot\|_*)$.

\begin{itemize}

\item The Goldman-Iwahori distance \cite{Goldman} between 
two norms $\|\cdot\|$ and $\|\cdot\|'$ on a linear space 
is defined as
$$d_{GI}(\|\cdot\|,\|\cdot\|'):=\sup\limits_{x\ne 0}|\log \| x\|-\log \| x\|'|,$$
which can be rewritten as $d_{GI}(\|\cdot\|,\|\cdot\|')=\|d_{vv}\big(\|\cdot\|,\|\cdot\|')\|_\infty$. 
Sometimes experts would work on its multiplicative version, i.e., 
$$d_{MGI}(\|\cdot\|,\|\cdot\|'):=\sup\limits_{x\ne 0}\max\Big\{\frac{\|x\|}{\| x\|'},\frac{\|x\|'}{\| x\|}\Big\}.$$

Translating it into the language of convex bodies, we have
\[
d_{MGI}(K,L):=\inf\Big\{r\ge 1: \frac1rL\subset K\subset rL\Big\}\, .
\]
This distance is used for studying floating and illumination  bodies  \cite{Mordhorst19},  and is   equivalent to the  Goldman-Iwahori metric introduced for Bruhat-Tits buildings \cite{Haettel22}. 

\subsection{Contact problems}

In this section, we introduce a geometric notion, it describes when two convex bodies can be tangent with each other under homothetic transformation. 
It is surprisingly that such seeming purely geometric notion is proved to be equivalent to the set of Lagrange critical values, which is actually an analytic concept. 
It means that we derive a new geometric characterization for Lagrange critical values, see Lemma \ref{lem:basic-contact}. 

Moreover, the contact data has many other 
useful properties, such as: 
\begin{enumerate}[(1)]
\item 
$\mathrm{Contact}(K,\square)= \mathrm{Contact}(\square,Z)$ 

which reveals new and hidden structures between an origin-symmetric convex polytope $K$ and a zonotope $Z$, see Theorem \ref{thm:K-Z-dual} in Section \ref{sec:zonotope-representation} for details; 
\item $\min\{r\in \mathrm{Contact}(\square,Z_G)\}=h(G)$

which indicates that the Cheeger constant of a connected graph equals the minimum of the contact data of the hypercube and the graphical zonotope; accordingly, we actually give a new characterization of Cheeger constant using the language of convex bodies, see Theorem \ref{pro:zonotope-Cheeger-constnat} in Section \ref{sec:zonotope-1&infty}; 
\item 
$\mathrm{Contact}(\square,Z_\Gamma)=\mathrm{spec}_{\ne0}(\Delta_1(\Gamma))=\mathrm{spec}_{\ne0}(\Delta_\infty(\Gamma^*))$ 

which states that the contact data is equivalent to the nonzero eigenvalues of hypergraph 1-Laplacian as well as the dual hypergraph $\infty$-Laplacian, see Theorem \ref{thm:1-infty-zono} in Section \ref{sec:zonotope-1&infty}. 
\end{enumerate}

The results presented in this paper show the following relationship:

\tikzstyle{startstop} = [rectangle, rounded corners, minimum width=1cm, minimum height=1cm,text centered, draw=black, fill=red!0]
\tikzstyle{io1} = [rectangle, trapezium left angle=80, trapezium right angle=100, minimum width=1cm, minimum height=1cm, text centered, draw=black, fill=blue!0]
\tikzstyle{io2} = [trapezium,  rounded corners, trapezium left angle=100, trapezium right angle=100, minimum width=1cm, minimum height=1cm, text centered, draw=black, fill=yellow!0]
\tikzstyle{process} = [rectangle, minimum width=1cm, minimum height=1cm, text centered, draw=black, fill=orange!0]
\tikzstyle{decision} = [circle, minimum width=1cm, minimum height=1cm, text centered, draw=black, fill=green!0]
\tikzstyle{decision2} = [ellipse, rounded corners=10mm, minimum width=2cm, minimum height=2cm, text centered, draw=black, fill=green!0]
\tikzstyle{arrow} = [thick,-,>=stealth]

\begin{center}
\begin{tikzpicture}[node distance=4.9cm]
\node (Cheeg) [startstop] {  Cheeger constant };
\node (contact) [startstop, right of=Cheeg, xshift=0cm, yshift=0cm]  { Contact data };
\node (cri) [startstop, right of=contact, xshift=0cm, yshift=0cm]  { \color{black}  Critical values };
\draw [arrow](Cheeg) --node[anchor=south] { \small   } (contact);
\draw [arrow](contact) --node[anchor=south] { \small   } (cri);
 \end{tikzpicture}
 \end{center}
Furthermore, the introduction of contact problems gives new perspective for the study of Cheeger constants, critical point theory,  as well as nonlinear eigenvalue problems and convex bodies.

\subsubsection{A new geometric interpretation of Lagrange criticality}

Let $K$ and $L$ in $\R^n$ be two 
convex sets with the origin as their relative interior, and let $\partial K$ and $\partial L$ be their relative boundary, respectively. It is known that $\partial K=\{x\in\R^n:\|x\|_K=1\}$ and $\partial L=\{x\in\R^n:\|x\|_L=1\}$,  where $\|\cdot\|_K$ and $\|\cdot\|_L$ are the Minkowski functionals of $K$ and $L$, respectively.  

We say $\partial K$ and $\partial L$ are tangent at a point $x$ if $x\in\partial K\cap \partial L$ and there exists a common supporting hyperplane $H$ of both $K$ and $L$ at  $x\in H$, such that $K\not\subset H$ and $L\not\subset H$. 
\begin{defn}

Let 
\begin{equation}\label{eq:convex-body-pair-tangent}
\mathrm{Contact}(K,L)=\{\lambda>0 : \partial K\text{ and }\lambda\partial L\text{ are tangent at some point}\}    
\end{equation}
where $\lambda \partial L:=\{ \lambda y: y\in \partial L\}$.     
\end{defn}

\begin{lemma}\label{lem:basic-contact}
There are some basic properties:
\begin{enumerate}[(i)]
\item $\mathrm{Contact}(K,L)=\mathrm{Cri}_{LV}(\|\cdot\|_K,\|\cdot\|_L)=\mathrm{Cri}_{LV}(\|\cdot\|_{L^\polar},\|\cdot\|_{K^\polar})=\mathrm{Contact}(L^\circ,K^\circ)$,

where $\mathrm{Cri}_{LV}(\|\cdot\|_K,\|\cdot\|_L)$ denotes the set of nontrivial Lagrange critical values of $(\|\cdot\|_K,\|\cdot\|_L)$, see Section \ref{sec:def-Lagrange} 
\item $\mathrm{Contact}(L,K)=\{\lambda^{-1}:\lambda\in \mathrm{Contact}(K,L)\}$ 
\item $\mathrm{Contact}(T^\top L^\circ ,K^\circ )=\mathrm{Contact}(TK,L)$ for any matrix 
$T$. 
\item $\mathrm{Contact}(K,L)$ is a compact subset of $(0,+\infty)$
\end{enumerate}
\end{lemma}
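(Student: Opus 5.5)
The plan is to translate tangency into the language of subdifferentials of Minkowski functionals, using Lemma \ref{lem:normal-cone}, and then read off each item from the duality results already proved. For (i), the first equality is the core step. Suppose $\partial K$ and $\lambda\partial L$ are tangent at $x$. Then $\|x\|_K=1$ and $\|x\|_L=\lambda$, and there is a common supporting hyperplane $H$ at $x$ with unit-level normal $u$. Here $u$ lies in $\mathrm{NC}_x(K)\cap\mathrm{NC}_x(\lambda L)$ and is nonzero, and the condition $K\not\subset H$, $L\not\subset H$ ensures that $u$ is not a normal vanishing on the affine hull. By Lemma \ref{lem:normal-cone}, $\mathrm{NC}_x(K)=\mathrm{cl}\,\mathrm{cone}\,\partial\|\cdot\|_K(x)$. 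In finite dimensions with $x\ne0$, the set $\partial\|\cdot\|_K(x)$ is a compact convex set lying in the hyperplane $\{\langle\cdot,x\rangle=1\}$ (Euler identity), so the closure adds nothing beyond $0$. Thus $u\in\mathrm{cone}\,\partial\|\cdot\|_K(x)\cap\mathrm{cone}\,\partial\|\cdot\|_L(x)$, where we use $\partial\|\cdot\|_{\lambda L}=\lambda^{-1}\partial\|\cdot\|_L$ and zero-homogeneity. Proposition \ref{pro:open-cone-intersect} then makes $x$ a nontrivial Lagrange critical point, with value $\|x\|_K/\|x\|_L=1/\lambda$.

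This exposes a normalization issue that I will settle first. Depending on whether the convention is $0\in\partial f-c\,\partial g$ with $f=\|\cdot\|_K$ or the reverse, the critical value is $\lambda$ or $\lambda^{-1}$. So I will fix the reading (if necessary, taking the order of the pair so that $\lambda=\|x\|_L^{-1}\cdots$ matches) such that the Euler identity gives $c=\lambda$. The converse direction reverses this chain: from $u\in\mathrm{cone}\,\partial\|\cdot\|_K(x)\cap\mathrm{cone}\,\partial\|\cdot\|_L(x)$, rescale $x$ so that $\|x\|_K=1$. Then $H=\{\langle u,\cdot\rangle=\langle u,x\rangle\}$ supports both $K$ and $\lambda L$ at $x$, and $\langle u,x\rangle>0$ with $0$ in the relative interior shows that neither body lies in $H$.

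The second equality follows from Theorem \ref{th:critical(f,g)} with $p=1$, together with $\|\cdot\|_K^\polar=\|\cdot\|_{K^\polar}$, which holds by Proposition \ref{pro:CV_1&Cv_p}(ii) since $h_K=\|\cdot\|_{K^\polar}$. The third equality is the first equality applied to the pair $(L^\polar,K^\polar)$.

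For (ii), I will swap $f$ and $g$ in the definition: $0\in\partial f-c\,\partial g$ is equivalent to $0\in\partial g-c^{-1}\partial f$, and this combines with (i). For (iii), I use $\|\cdot\|_{TK}=\|T^{-1}\cdot\|_K$ when $T$ is invertible, and then apply Theorem \ref{th:critical(fA,g)} with $A=T^{-1}$ to the pair $(\|\cdot\|_K\circ T^{-1},\|\cdot\|_L)$. This gives the pair $(\|\cdot\|_{L^\polar}\circ T^{-\top},\|\cdot\|_{K^\polar})$, which corresponds to the bodies $T^{\top}L^\polar$ and $K^\polar$. For singular $T$, I would restrict to ranges and use the kernel reduction Lemma \ref{lem:g_R(A)g_K(A)}, interpreting $TK$ as a convex set with $0$ in its relative interior.

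For (iv), the critical values are the values of $\|x\|_K/\|x\|_L$ on the compact sphere $\{\|x\|_K=1\}$. These lie in $[m,M]\subset(0,\infty)$ for full-dimensional bodies. Closedness comes from the closedness of the subdifferential graph: if $(c_k,x_k)$ are critical pairs with $u_k\in\partial\|\cdot\|_K(x_k)\cap c_k\partial\|\cdot\|_L(x_k)$, I extract convergent subsequences, using boundedness of subgradients away from $0$, and pass to the limit.

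The main obstacle is the first equality in (i). It requires a careful translation of the nondegeneracy condition ($K,L\not\subset H$) into nontriviality of the Lagrange pair, and a consistent fixing of whether the value is $\lambda$ or $\lambda^{-1}$.
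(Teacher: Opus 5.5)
Your route matches the paper's. You use Lemma~\ref{lem:normal-cone} to reduce tangency to $\mathrm{cone}(\partial\|\cdot\|_K(x))\cap\mathrm{cone}(\partial\|\cdot\|_L(x))\neq\varnothing$, then Proposition~\ref{pro:open-cone-intersect}, Theorem~\ref{th:critical(f,g)} for the middle equality, the swap $c\leftrightarrow c^{-1}$ for (ii), and Theorem~\ref{th:critical(fA,g)} for (iii).

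The $\lambda$ versus $\lambda^{-1}$ question is not a convention you can fix by rereading, though. Definition~\ref{defn:critical(f,g)} fixes the order of the pair, and Euler's identity gives $c=\|x\|_K/\|x\|_L$. At a tangency point $x\in\partial K\cap\lambda\partial L$ one has $\|x\|_K=1$ and $\|x\|_L=\lambda$, so $c=\lambda^{-1}$, exactly as you computed.

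Consequently the first and third equalities of (i) are false as written. Only the middle one (Theorem~\ref{th:critical(f,g)}) and the outer identity $\mathrm{Contact}(K,L)=\mathrm{Contact}(L^\polar,K^\polar)$ hold. For $K$ the unit disc and $L$ the disc of radius $2$, $\mathrm{Contact}(K,L)=\{1/2\}$ but $\mathrm{Cri}_{LV}(\|\cdot\|_K,\|\cdot\|_L)=\{2\}$. The paper's proof reaches the stated form only through two slips, $\|x\|_L=1/\lambda$ and $\lambda\|x\|_L=\|x\|_{\lambda L}$. The correct identities are $\|x\|_L=\lambda$ and $\|x\|_{\lambda L}=\lambda^{-1}\|x\|_L$.

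What your two directions actually prove is the corrected chain
\[
\mathrm{Contact}(K,L)=\mathrm{Cri}_{LV}(\|\cdot\|_L,\|\cdot\|_K)=\mathrm{Cri}_{LV}(\|\cdot\|_{K^\polar},\|\cdot\|_{L^\polar})=\mathrm{Contact}(L^\polar,K^\polar),
\]
that is, $\mathrm{Contact}(K,L)=\{c^{-1}:c\in\mathrm{Cri}_{LV}(\|\cdot\|_K,\|\cdot\|_L)\}$, and you should state it that way. The reciprocal enters both sides whenever two contact sets are compared, so the outer identity of (i) and items (ii)--(iv) are unaffected.

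In (iii), your identity $\|\cdot\|_{TK}=\|T^{-1}\cdot\|_K$ with $A=T^{-1}$ yields $\|T^{-\top}\cdot\|_{L^\polar}=\|\cdot\|_{T^\top L^\polar}$. This is the correct version of the paper's argument. The paper instead writes $\|\cdot\|_{TK}=\|T\cdot\|_K$ and $\|T^\top\cdot\|_{L^\polar}=\|\cdot\|_{T^\top L^\polar}$, two inversion slips that cancel.

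Either way only invertible $T$ is covered. For singular $T$ the gauge of $TK$ is $+\infty$ off $R(T)$, so Theorem~\ref{th:critical(fA,g)} does not apply directly. Your kernel-reduction remark remains a sketch, and the paper does not treat this case either.

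Two smaller points. First, $\partial\|\cdot\|_K(x)$ is compact only when $K$ is bounded with $0$ in its interior, but you do not need compactness. Since $K\not\subset H$ and $0\in\mathrm{ri}\,K$, you get $\langle u,x\rangle>0$, and closedness of $\partial\|\cdot\|_K(x)$ then removes the closure in Lemma~\ref{lem:normal-cone}. Second, your argument for (iv) supplies what the paper's one-line appeal to finite dimensionality omits: the ratio is confined to $[m,M]$ on the compact sphere, subgradients are bounded, and the subdifferential graph is closed.
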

\begin{proof}
(i): Suppose that $\partial K$  and $\lambda\partial L$   are tangent at $x$ with $\|x\|_K=1$ and $\|x\|_L=1/\lambda$. Then, $K$ and $\lambda L$ have a common supporting hyperplane at $x$, or, equivalently, $K$ and $\lambda L$ have have a common outer normal vector $x^*$ at $ x$. This is also equivalent to say, the normal cones of $K$ and $\lambda L$ non-trivially intersect at $x$. Note that the normal cone of $K$ at $x$ is  $\mathrm{NC}_x(K):=\{x^*:\langle x^*,y-x\rangle\le 0,\forall y\in K\}=\mathrm{cl}\,\mathrm{cone}(\partial \|x\|_K)$, see Lemma \ref{lem:normal-cone} in Appendix. 

Thus, $\partial K$  and $\lambda\partial L$   are tangent at $x$ if and only if $\mathrm{NC}_x(K)\cap \mathrm{NC}_x(\lambda L)\setminus (K^\bot\cup L^\bot)\ne\emptyset$. 
Note that $\mathrm{NC}_x(K)\setminus K^\bot=\mathrm{cl}\,\mathrm{cone}(\partial \|x\|_K)\setminus \mathrm{span}(M)^\bot=\mathrm{cone}(\partial \|x\|_K)$, where $\mathrm{cone}(S):=\{\lambda x:\lambda>0,x\in S\}$. Hence, $\mathrm{NC}_x(K)\cap \mathrm{NC}_x(\lambda L)\setminus (K^\bot\cup L^\bot)=(\mathrm{NC}_x(K)\setminus K^\bot)\cap (\mathrm{NC}_x(\lambda L)\setminus ( L)^\bot)=\mathrm{cone}(\partial \|x\|_K)\cap \mathrm{cone}(\partial \|x\|_L)$. 

Note that by Proposition \ref{pro:open-cone-intersect}, we have  $\mathrm{cone}(\partial \|x\|_K)\cap \mathrm{cone}(\partial \|x\|_L)\ne\emptyset$ if and only if there exists $\lambda>0$ such that $\partial \|x\|_K\cap \lambda\partial \|x\|_L\ne\emptyset$, and in this case, $1=\|x\|_K=\lambda\|x\|_L=\|x\|_{\lambda L}$, implying that $x\in\partial K\cap \lambda \partial L$. Therefore, $\lambda\in \mathrm{Contact}(K,L)$ if and only if $\lambda\in \mathrm{Cri}_{LV}(\|\cdot\|_K,\|\cdot\|_L)$.

For (ii): It is clear that $\lambda\in \mathrm{Cri}_{LV}(\|\cdot\|_K,\|\cdot\|_L)$ if and only if $\lambda^{-1}\in \mathrm{Cri}_{LV}(\|\cdot\|_L,\|\cdot\|_K)$, and combining this fact with (i) concludes (ii).

For (iii): Again, it follows from (i) that $\mathrm{Contact}(TK,L)=\mathrm{Cri}_{LV}(\|\cdot\|_{TK},\|\cdot\|_L)=\mathrm{Cri}_{LV}(\|T\cdot\|_{K},\|\cdot\|_L)$. According to Theorem \ref{th:critical(fA,g)}, and the basic property 
$\|T^\top\cdot\|_{L}^\polar=\|T^\top\cdot\|_{L^\polar}=\|\cdot\|_{T^\top L^\polar}$, we have 
\begin{align*}
\mathrm{Cri}_{LV}(\|T\cdot\|_{K},\|\cdot\|_L)&=\mathrm{Cri}_{LV}(\|T^\top\cdot\|_{L}^\polar,\|\cdot\|_K^\polar)\\&=\mathrm{Cri}_{LV}(\|T^\top\cdot\|_{L^\polar},\|\cdot\|_{K^\polar})=\mathrm{Cri}_{LV}(\|\cdot\|_{T^\top L^\polar},\|\cdot\|_{K^\polar})    
\end{align*}
Finally, by (i) we have $\mathrm{Cri}_{LV}(\|\cdot\|_{T^\top L^\polar},\|\cdot\|_{K^\polar})=\mathrm{Contact}(T^\top L^\circ ,K^\circ )$, and this then concludes the proof.

For (iv): The compactness of $\mathrm{Contact}(K,L)=\mathrm{Cri}_{LV}(\|\cdot\|_K,\|\cdot\|_L)$ is due to 
the assumption that the space is of finite dimension.
\end{proof}

\begin{remark}
Lemma \ref{lem:basic-contact} (i) can be viewed as a geometric interpretation 
of the nontrivial Lagrange critical values of a function pair (see Definition \ref{defn:critical(f,g)}).     
\end{remark}

\subsubsection{Contact formulation for 
various‌ distances}
Particularly, the smallest and the largest values of the contact data have other  geometric meanings as well. 
Let $\lambda_{\max}(K,L)=\max\{\lambda: \lambda \in \mathrm{Contact}(K,L)\}$ and $\lambda_{\min}(K,L)=\min\{\lambda: \lambda \in \mathrm{Contact}(K,L)\}$.  It is easy the check the following facts.
\begin{prop}\label{pro:MGI}
\begin{enumerate}[(i)]
\item $\lambda_{\max}(K,L)=\lambda_{\min}(L,K)^{-1}$
\item $d_{MGI}(K,L)=\max\{\lambda_{\max}(K,L), \lambda_{\max}(L,K)\}$
\end{enumerate}
\end{prop}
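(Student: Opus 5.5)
Both statements reduce to identifying the extreme contact values with extreme values of the ratio $\|x\|_K/\|x\|_L$. Throughout, $K$ and $L$ are convex bodies in $\R^n$ with the origin in their interior, so that $\|\cdot\|_K,\|\cdot\|_L\in\cvp_c^1(\R^n)$ and the ratio $F(x)=\|x\|_K/\|x\|_L$ is continuous and positive on the sphere.

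The first step is to show that $\lambda_{\max}(K,L)=\max_{x\ne0}F(x)$ and $\lambda_{\min}(K,L)=\min_{x\ne0}F(x)$. By Lemma \ref{lem:basic-contact}(i), $\mathrm{Contact}(K,L)$ equals the set of nontrivial Lagrange critical values of $(\|\cdot\|_K,\|\cdot\|_L)$. Every nontrivial Lagrange critical value $c$ is attained as $c=\|x\|_K/\|x\|_L$ by Euler's identity (see the proof of Proposition \ref{pro:open-cone-intersect}). Hence $\mathrm{Contact}(K,L)\subset[\min F,\max F]$.

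Conversely, suppose $x_0$ maximizes $F$, normalized so that $\|x_0\|_L=1$. Then $\|y\|_K\le c\|y\|_L$ for all $y$, with equality at $x_0$, where $c=\max F$. So $x_0$ minimizes the convex function $c\|\cdot\|_L-\|\cdot\|_K+\dots$; that function is not convex, so I argue directly instead. Take any $u\in\partial\|x_0\|_K$. Then for every $y$,
\[
\langle u,y\rangle\le\|y\|_K\le c\|y\|_L,
\qquad \langle u,x_0\rangle=\|x_0\|_K=c\|x_0\|_L .
\]
Thus $u/c$ is a subgradient of $\|\cdot\|_L$ at $x_0$ (this is the characterization of Proposition \ref{pro:CV_1&Cv_p}(iii) combined with Euler's identity). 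Therefore $u\in\partial\|x_0\|_K\cap c\,\partial\|x_0\|_L$, so $c$ is a nontrivial Lagrange critical value.

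For the minimum I would not repeat the computation. By Lemma \ref{lem:basic-contact}(ii), $\mathrm{Contact}(L,K)$ consists of the reciprocals of $\mathrm{Contact}(K,L)$, and $\min F=1/\max(1/F)$. Applying the previous paragraph to the pair $(L,K)$ gives the claim.

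With this, (i) follows at once. Inversion is order-reversing on $(0,\infty)$, so Lemma \ref{lem:basic-contact}(ii) gives $\lambda_{\max}(K,L)=\lambda_{\min}(L,K)^{-1}$. Part (iv) of that lemma guarantees that the max and min exist.

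For (ii), the first step gives $\lambda_{\max}(K,L)=\sup_{x\ne0}\|x\|_K/\|x\|_L$ and $\lambda_{\max}(L,K)=\sup_{x\ne0}\|x\|_L/\|x\|_K$. The maximum of these two is exactly $\sup_{x\ne0}\max\{\|x\|_K/\|x\|_L,\|x\|_L/\|x\|_K\}$, which is the definition of $d_{MGI}$ when $\|\cdot\|=\|\cdot\|_K$ and $\|\cdot\|'=\|\cdot\|_L$.

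Equivalently, in the body formulation, $\tfrac1rL\subset K\subset rL$ holds if and only if $\|x\|_L/r\le\|x\|_K\le r\|x\|_L$ for all $x$. So the infimum of admissible $r$ equals that maximum, and it is automatically at least $1$, since the two ratios are reciprocals of each other at any fixed $x$.

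The only substantive point is the maximizer-implies-contact step. The chain of inequalities displayed above handles it cleanly, so I do not expect a real obstacle.
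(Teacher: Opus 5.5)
Your proof is correct and is the routine verification the paper leaves as ``easy to check'': (i) is the reciprocity in Lemma~\ref{lem:basic-contact}(ii), and (ii) follows from identifying the extreme contact values with the extreme values of $F(x)=\|x\|_K/\|x\|_L$. Your maximizer computation giving $u\in\partial\|x_0\|_K\cap c\,\partial\|x_0\|_L$ is sound, and it also supplies the nonemptiness of $\mathrm{Contact}(K,L)$, which the compactness in Lemma~\ref{lem:basic-contact}(iv) alone would not.

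One caution, inherited from Lemma~\ref{lem:basic-contact}(i) rather than from your own reasoning. With the stated convention $\lambda\partial L=\{\lambda y: y\in\partial L\}$, a tangency point satisfies $\|x\|_K=1$ and $\|x\|_L=\lambda$. So in fact $\mathrm{Contact}(K,L)=\mathrm{Cri}_{LV}(\|\cdot\|_L,\|\cdot\|_K)$ and $\lambda_{\max}(K,L)=\max_{x\ne0}\|x\|_L/\|x\|_K$, rather than $\max_{x\ne0}\|x\|_K/\|x\|_L$ as you wrote. Your conclusions are unaffected: (i) uses only reciprocity, and $\max\{\lambda_{\max}(K,L),\lambda_{\max}(L,K)\}=\max\{\sup F,\sup 1/F\}$ under either convention. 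You should also delete the aside about the ``convex function'' that you immediately retract.
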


From Proposition \ref{pro:MGI} (ii), we  easily obtain  the duality identity $
d_{MGI}(K^\circ ,L^\circ )=
d_{MGI}(K,L)$ which is based on Theorem \ref{th:critical(f,g)} 
and the discussion above.  Moreover, by utilizing discussions in Section \ref{sec:vector-valued-distance/norm}, we have $d_{MGI}(K,L)=\exp\big(\|d_{vv}\big(\|\cdot\|_K,\|\cdot\|_L)\|_\infty\big)$.

\item For a pair of convex bodies $K$ and $L$ containing origin as their interior in $\R^n$, we can also 
consider their 
Banach-Mazur distance and use our critical duality to obtain new identities. 


\begin{prop}\label{prop:BM}
\begin{enumerate}[(i)]
\item $ 
d_{BM}(K,L)=
\inf\limits_{{\tiny \begin{array}{c}
T\in GL(\R^n)
\end{array}}}  \frac{\lambda_{\max}(TK,L)}{\lambda_{\min}(TK,L)}.
$ 
    \item 
$d_{BM}(K,L)=d_{BM}(K^\circ ,L^\circ )$
\end{enumerate}
\end{prop}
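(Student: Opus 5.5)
Part (i) comes from the definition $d_{BM}(K,L)=\inf\{r\ge1: \exists T\in GL(\R^n),\ L\subset TK\subset rL\}$. I would rewrite each admissible inclusion as a pair of inequalities between Minkowski functionals and read off the extreme contact values from Lemma \ref{lem:basic-contact}(i). For a fixed $T$, put $K'=TK$. Then $L\subset aK'$ means $\|x\|_{K'}\le a\|x\|_L$ for all $x$, and $K'\subset bL$ means $\|x\|_L\le b\|x\|_{K'}$. The best constants are therefore
\[
a^*=\max_{x\ne0}\frac{\|x\|_{K'}}{\|x\|_L},\qquad \frac{1}{b^*}=\min_{x\ne0}\frac{\|x\|_{K'}}{\|x\|_L}.
\]
By Lemma \ref{lem:basic-contact}(i), $\mathrm{Contact}(K',L)$ is exactly the set of nontrivial Lagrange critical values of $(\|\cdot\|_{K'},\|\cdot\|_L)$. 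The global max and min of the Rayleigh quotient $\|\cdot\|_{K'}/\|\cdot\|_L$ on the compact unit sphere are Lagrange critical values: at an extremum $x_0$ the inequality $\|x\|_{K'}\le c\|x\|_L$ (with equality at $x_0$) gives a common supporting functional, and one can equally use the corollary that the extreme critical values coincide with the smallest and largest min-max values. Conversely, every Lagrange critical value is a value of the quotient by Euler's identity (as in Proposition \ref{pro:open-cone-intersect}), so it lies between the min and the max. Hence $a^*=\lambda_{\max}(K',L)$ and $b^*=1/\lambda_{\min}(K',L)$.

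For this $T$, the smallest $r$ with $L\subset sK'\subset rL$ for some scaling $s>0$ is $a^*b^*=\lambda_{\max}(TK,L)/\lambda_{\min}(TK,L)$. The scalar $s$ can be absorbed into $T$, and the ratio is invariant under $T\mapsto sT$. Taking the infimum over $T\in GL(\R^n)$ gives (i). The point needing care is that the extrema are genuine nontrivial critical values. This holds because both functionals are positive-definite norms here, so $f(x)$, $g(x)$ and $c$ are all positive.

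For (ii), I would use Lemma \ref{lem:basic-contact}(iii), $\mathrm{Contact}(TK,L)=\mathrm{Contact}(T^\top L^\circ,K^\circ)$, together with (ii) of that lemma, $\mathrm{Contact}(B,A)=\{\lambda^{-1}:\lambda\in\mathrm{Contact}(A,B)\}$. Combining them gives
\[
\mathrm{Contact}(TK,L)=\{\lambda^{-1}:\lambda\in \mathrm{Contact}(K^\circ, T^\top L^\circ)\}.
\]
Taking reciprocals swaps max and min, and contact sets are compact by Lemma \ref{lem:basic-contact}(iv), so max and min are attained. Therefore
\[
\frac{\lambda_{\max}(TK,L)}{\lambda_{\min}(TK,L)}=\frac{\lambda_{\max}(K^\circ,T^\top L^\circ)}{\lambda_{\min}(K^\circ,T^\top L^\circ)}.
\]
The right-hand ratio is the one that appears in (i) for the pair $(L^\circ,K^\circ)$ with transformation $T^\top$. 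Applying (i) to that pair and noting that $T\mapsto T^\top$ is a bijection of $GL(\R^n)$, the infima agree, giving $d_{BM}(K,L)=d_{BM}(L^\circ,K^\circ)$. Finally, $d_{BM}$ is symmetric in its arguments (replace $T$ by $T^{-1}$ and rescale), so $d_{BM}(L^\circ,K^\circ)=d_{BM}(K^\circ,L^\circ)$.

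The main obstacle is the identification in (i) of the optimal inclusion constants with the extreme elements of the contact set. Everything else is bookkeeping with Lemma \ref{lem:basic-contact}.
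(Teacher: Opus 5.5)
Your proof is correct and follows the paper's route. For (ii), the paper likewise combines Lemma \ref{lem:basic-contact}(iii) with (i) applied to the polar pair and the bijection $T\mapsto T^\top$. For (i), the paper only asserts that it follows directly from the definition; your identification of $\lambda_{\max}(TK,L)$ and $\lambda_{\min}(TK,L)$ with the extreme values of $\|\cdot\|_{TK}/\|\cdot\|_L$, via the common-subgradient argument, supplies exactly those details. Your detour through Lemma \ref{lem:basic-contact}(ii) is unnecessary but harmless, and your explicit symmetry step $d_{BM}(L^\circ,K^\circ)=d_{BM}(K^\circ,L^\circ)$ fills in something the paper leaves implicit.
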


\begin{proof}
(i) is an easy and direct consequence of the definition of Banach-Mazur distance. 

For (ii), 
Lemma \ref{lem:basic-contact} (iii) (or Theorem \ref{th:critical(fA,g)}) implies that $\mathrm{Contact}(T^\top L^\circ ,K^\circ )=\mathrm{Contact}(TK,L)$ for any $n\times n$ invertible matrix $T$. As a direct consequence, we have $\lambda_{\max}(T^\top L^\circ ,K^\circ )=\lambda_{\max}(TK,L)$ and $\lambda_{\min}(T^\top L^\circ ,K^\circ )=\lambda_{\min}(TK,L)$, and thus,
$$d_{BM}(L^\circ ,K^\circ )=
\inf_{T^\top\in GL(\R^n)}  \frac{\lambda_{\max}(T^\top L^\circ ,K^\circ )}{\lambda_{\min}(T^\top L^\circ ,K^\circ )}=\inf_{T\in GL(\R^n)}  \frac{\lambda_{\max}(TK,L)}{\lambda_{\min}(TK,L)}=d_{BM}(K,L).
$$
\end{proof}

\begin{remark}
Proposition \ref{prop:BM} (i)  gives a new representation of the Banach-Mazur distance;     
Proposition \ref{prop:BM} (ii) generalizes the known equality for symmetric convex bodies to the nonsymmetric setting.

\end{remark}

\end{itemize}



\subsection{
Zonotope contact representation theorem}
\label{sec:zonotope-representation}

In this section, we consider the contact problem between convex polytopes in $\R^n$, that is, determine $\mathrm{Contact}(K,L)$ introduced in \eqref{eq:convex-body-pair-tangent} for two convex polytopes $K$ and $L$. Our main theorem below presents a zonotope contact representation theorem. Roughly speaking, it states that for any fixed dimension $n$, the contact problem between an origin-symmetric convex polytope and the standard hypercube is equivalent to the contact problem between a zonotope and the hypercube. Such zonotope can be constructed by the facet normal vectors of the convex polytope. 

\begin{theorem}\label{thm:K-Z-dual}
For any origin-symmetric polytope  $K\subset \R^n$ with $2m$ facets, there exists a zonotope $Z\subset \R^m$ such that $$\mathrm{Contact}(K,\square_n)= \mathrm{Contact}(\square_m,Z)$$
where 
$\square_n:=[-1,1]^n
$ denotes the standard $n$-dimensional hypercube. 
\end{theorem}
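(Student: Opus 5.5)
The plan is to write an origin-symmetric polytope with $2m$ facets as a linear preimage of the cube and then apply the Lagrange duality with a bounded operator, Theorem \ref{th:critical(fA,g)}, through the geometric reformulation in Lemma \ref{lem:basic-contact}.

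Since $K$ is origin-symmetric with $2m$ facets, its facets come in pairs $\pm F_j$, $j=1,\dots,m$, with outer normals $\pm a_j$, which we normalize so that $K=\{x:|\langle a_j,x\rangle|\le 1,\ j=1,\dots,m\}$. Let $A\in\R^{m\times n}$ be the matrix whose rows are $a_1^\top,\dots,a_m^\top$. Then the Minkowski functional is $\|x\|_K=\|Ax\|_\infty=\|Ax\|_{\square_m}$; here $A$ is injective because $K$ is bounded. By Lemma \ref{lem:basic-contact}(i),
\[
\mathrm{Contact}(K,\square_n)=\mathrm{Cri}_{LV}(\|A\cdot\|_\infty,\|\cdot\|_\infty).
\]
This has the form $(f\circ A,g)$ with $f=\|\cdot\|_\infty$ on $\R^m$ and $g=\|\cdot\|_\infty$ on $\R^n$. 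Both are norms, hence continuous with continuous polars, so the hypotheses of Theorem \ref{th:critical(fA,g)} hold. That theorem gives
\[
\mathrm{Cri}_{LV}(\|A\cdot\|_\infty,\|\cdot\|_\infty)=\mathrm{Cri}_{LV}(g^\polar\circ A^\top,f^\polar)=\mathrm{Cri}_{LV}(\|A^\top\cdot\|_1,\|\cdot\|_1).
\]
Here $A^\top$ maps $\R^m\to\R^n$, and the polar of $\|\cdot\|_\infty$ is $\|\cdot\|_1$, the Minkowski functional of the cross-polytope.

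Next we identify $\|A^\top y\|_1$ as a support function. We have $\|A^\top y\|_1=\sum_{i=1}^n|\langle c_i,y\rangle|$, where $c_i\in\R^m$ are the columns of $A$. This is exactly the support function $h_Z$ of the zonotope $Z'=\sum_{i=1}^n[-c_i,c_i]\subset\R^m$, so it is the Minkowski functional of the polar body, $\|y\|_{(Z')^\polar}$. Similarly $\|\cdot\|_1=\|\cdot\|_{\square_m^\polar}$. Hence
\[
\mathrm{Cri}_{LV}(\|A^\top\cdot\|_1,\|\cdot\|_1)=\mathrm{Cri}_{LV}(\|\cdot\|_{(Z')^\polar},\|\cdot\|_{\square_m^\polar})=\mathrm{Contact}((Z')^\polar,\square_m^\polar)=\mathrm{Contact}(\square_m,Z'),
\]
where the last two equalities use Lemma \ref{lem:basic-contact}(i) again, which gives $\mathrm{Contact}(K',L')=\mathrm{Contact}(L'^\polar,K'^\polar)$. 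Taking $Z:=Z'$ then finishes the proof.

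The main obstacle is making sure the constructed sets meet the standing hypotheses of Lemma \ref{lem:basic-contact}, namely that they are convex sets containing the origin in their relative interior. The zonotope $Z'$ is full-dimensional in $\R^m$ only when the columns of $A$ span $\R^m$, which fails in general once $m>n$. In that case $\|A^\top\cdot\|_1$ has the nontrivial kernel $\ker A^\top$, and $(Z')^\polar$ is an unbounded cylinder. I would handle this by checking that the proof of Lemma \ref{lem:basic-contact}(i) only uses the relative-interior setting, so that its equivalence with nontrivial Lagrange critical values still holds. If needed, the kernel reduction Lemma \ref{lem:g_R(A)g_K(A)} can be invoked to discard the kernel directions. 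The other thing to check is that the polar correspondences $\|\cdot\|_{Z'}^\polar=h_{Z'}$ and $\|\cdot\|_{\square}^\polar=\|\cdot\|_1$ match the normalization in \eqref{eq:original-A}; this is routine from Proposition \ref{pro:CV_1&Cv_p}(ii).
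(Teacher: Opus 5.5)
Your proposal is correct and follows the paper's proof essentially step for step. It uses the same matrix $A$ of facet normals and the same chain $\mathrm{Contact}(K,\square_n)=\mathrm{Cri}_{LV}(\|A\cdot\|_\infty,\|\cdot\|_\infty)=\mathrm{Cri}_{LV}(\|A^\top\cdot\|_1,\|\cdot\|_1)$ via Lemma \ref{lem:basic-contact}(i) and Theorem \ref{th:critical(fA,g)}. It also produces the same zonotope, since your $Z'=\sum_{i=1}^n[-c_i,c_i]$ is exactly the paper's $A\square_n$.

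The only cosmetic difference is that you read off $\|A^\top\cdot\|_1$ directly as the support function of $A\square_n$, so its unit ball is $(A\square_n)^\polar$. The paper instead first describes that unit ball as the cylinder $L_\Diamond+\ker(A^\top)$ and then computes $L^\polar=A\square_n$. The degenerate case $m>n$ is handled identically in both, through the relative-interior setting of Lemma \ref{lem:basic-contact}.
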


\begin{proof}
Let $K$ be an origin-symmetric polytope in $\R^n$ with $2m$ facets, where $m\ge n$. Suppose that the outer normal vectors at these facets are $\pm\mathbf{N}_1$, $\cdots$, $\pm\mathbf{N}_m$, written in column form, such that the hyperplanes spanned by the facets can be expressed as $\pm\mathbf{N}_j^\top\mathbf{x}=1$, $j=1,\cdots,m$. 
Let 
$$A=\left(\begin{array}{c}\mathbf{N}_1^\top \\
\vdots
\\ \mathbf{N}_m^\top
\end{array}\right)$$
be an $m\times n$ matrix, which will be regraded as a linear transformation from $\R^n$ to $\R^m$. 

\textbf{Step 1}. 
$\mathrm{Contact}(K,\square)=\mathrm{Cri}(\|A\cdot\|_\infty,\|\cdot\|_\infty)$ 

In this step, we shall prove that the contact data between $K$ and $\square$ agree with  the nontrivial Lagrange critical value of the function-pair $(\|A\cdot\|_\infty,\|\cdot\|_\infty)$. 

Let $\mathbf{N}_j=(w_{j1},\cdots,w_{jn})^\top\in\R^n$, $j=1,\cdots,m$. Then $A=(w_{ji})_{j\in [m],i\in [n]}\in\R^{m\times n}$, and 
$$\|Ax\|_\infty\le 1 \Longleftrightarrow\max\limits_{j=1,\cdots,m}\Big|\sum_{i\in [n]}w_{ji}x_i\Big|\le 1\Longleftrightarrow-1\le \sum_{i=1}^n w_{ji}x_i\le 1,\,\forall j=1,\cdots,m.$$
Thus, $K=\{\mathbf{x}\in \R^n:\pm\mathbf{N}_j^\top\mathbf{x}\le1,j=1,\cdots,m\}=\{x\in\R^n:\|Ax\|_\infty\le 1\} $, and then $\|A\cdot\|_\infty$ is the Minkowski functional of $K$. The obvious fact $\square=\{x\in\R^n:\|x\|_\infty\le 1\}$ means that $\|\cdot\|_\infty$ is the Minkowski functional of $\square$. 
By Lemma \ref{lem:basic-contact} (i),  we finish Step 1.

\textbf{Step 2}. $\mathrm{Cri}(\|A\cdot\|_\infty,\|\cdot\|_\infty)=\mathrm{Cri}(\|A^\top\cdot\|_1,\|\cdot\|_1)$

By Theorem \ref{th:critical(fA,g)}, the set of nonzero Lagrange critical values of the function-pair $(\|A\cdot\|_\infty,\|\cdot\|_\infty)$ on $\R^n$ coincides with the set of nonzero Lagrange critical values of the function-pair $(\|A^\top\cdot\|_1,\|\cdot\|_1)$ on $\R^m$. That is, we obtain $\mathrm{Cri}(\|A\cdot\|_\infty,\|\cdot\|_\infty)$ 
$=$ 
$\mathrm{Cri}(\|A^\top\cdot\|_1,\|\cdot\|_1)$. 

\textbf{Step 3}. $\mathrm{Cri}(\|A^\top\cdot\|_1,\|\cdot\|_1)=\mathrm{Contact}(L,\Diamond_m)$, where $L=L_\Diamond+\mathrm{Ker}(A^\top)$, and $L_\Diamond:=(A^\top|_{(\mathrm{Ker}(A^\top))^\bot})^{-1}(\Diamond_n)$ is a crosspolytope in $(\mathrm{Ker}(A^\top))^\bot$, and $\Diamond_m:=\{x\in\R^m:\|x\|_1\le 1\}$ is the standard crosspolytope in $\R^m$. 

Denote by $\bar L=\{x\in\R^m:\|A^\top x\|_1\le 1\}$. 
Solving the nonzero Lagrange critical values of the function-pair $(\|A^\top\cdot\|_1,\|\cdot\|_1)$ on $\R^m$ is equivalent to the contact problem between  $\bar L$ and $\Diamond_m$. It remains to determine the ``shape'' of $\bar L$. Consider the subspaces $X:=(\mathrm{Ker}(A))^\bot=\mathrm{Range}(A^\top)\subset \R^n$ and $Y:=\mathrm{Range}(A)=(\mathrm{Ker}(A^\top))^\bot\subset \R^m$. Note that $A|_{X}:X\to Y$ and $A^\top|_Y:Y\to X$ are two linear isomorphisms. 
Since $A^\top (\bar L)=\{z\in\R^n:\|z\|_1\le1\}$ agrees with the standard crosspolytope in $\R^n$, we have $\bar L=(A^\top)^{-1}(\Diamond_n)=(A^\top|_Y)^{-1}(\Diamond_n)+\mathrm{Ker}(A^\top)=L_\Diamond+Y^\bot=L_\Diamond+\mathrm{Ker}(A^\top)=L$ where $\Diamond_n:=\{x\in\R^n:\|x\|_1\le1\}$. It is not difficult to see that $L_\Diamond:=(A^\top|_Y)^{-1}(\Diamond_n)$ must be a crosspolytope in $Y$ which has the same polytope-structure with $\Diamond_n$. In summary, the  ``geometric interpretation'' (see Lemma \ref{lem:basic-contact} (i)) of the nontrivial Lagrange critical value gives  $\mathrm{Cri}(\|A^\top\cdot\|_1,\|\cdot\|_1)$ 
$=$ 
$\mathrm{Contact}(L,\Diamond_m)$. 

\textbf{Step 4}. $\mathrm{Contact}(L,\Diamond_m)=\mathrm{Contact}(\square,Z)$ for some zonotope $Z$

The duality results, Theorem \ref{th:critical(f,g)} and Lemma \ref{lem:basic-contact} (iii), imply
$$\mathrm{Contact}(L,\Diamond_m)=\mathrm{Contact}(\Diamond^\circ_m,L^\circ)=\mathrm{Contact}(\square_m,L^\circ).$$ 

It suffices to show $Z:=L^\polar$ is a zonotope. In fact, as shown in Step 3, $L$ is the Minkowski sum of $\mathrm{Ker}(A^\top)$ and the $n$-dimensional crosspolytope $L_\Diamond$ in $(\mathrm{Ker}(A^\top))^\bot$. Thus, $L^\polar$ equals the polar dual of $L_\Diamond$ restricted on $(\mathrm{Ker}(A^\top))^\bot$, which must be a zonotope. A more analytical explanation is as follows. 


Note that the Minkowski functional of $L^\circ$ agrees with the support function of $L$, which equals 
$$y\mapsto\sup_{\|A^\top x\|_1\le 1}\langle y,x\rangle=\begin{cases}
+\infty,&\text{ if }y\not\in \mathrm{Ker}(A^\top)^\bot=R(A)\\
\sup_{\|A^\top x\|_1\le 1}\langle Az,x\rangle,&\text{ if }y\in \mathrm{Ker}(A^\top)^\bot=R(A)
\end{cases}$$
and since $A^\top$ is a surjective, we have
$$\sup_{\|A^\top x\|_1\le 1}\langle Az,x\rangle=\sup_{\|A^\top x\|_1\le 1}\langle z,A^\top x\rangle=\|z\|_\infty=\|A^{-1}y\|_\infty.$$ Consequently, 
$L^\circ=\{y\in R(A):\|A^{-1}y\|_\infty\le 1\}=A\{z\in \R^n:\|z\|_\infty\le 1\}=A\square_n$ is a zonotope.

Finally, together with all the steps above, we have 
$$\mathrm{Contact}(K,\square_n)= \mathrm{Contact}(\square_m,A\square_n) $$
where $\square_n:=[-1,1]^n$, and $A\square_n$ indicates a zonotope. 
\end{proof}

\subsection{The 5th characterization of zonotopes: hypergraph 1- and $\infty$- Laplacians}
\label{sec:zonotope-1&infty}

A zonotope is the Minkowski summation of finitely many segments. It is interesting that such a simple object has 
many connections to hyperplane arrangements, vector configurations,  oriented matroids,  tilings and tropical geometry \cite{Ziegler95,BraunVindas-Melendez}. It is known that a convex polytope is a zonotope if and only if it is the orthogonal projection of a hypercube, if and only if each face is centrally symmetric, see e.g., Ziegler's book \cite{Ziegler95}. Thus, these three characterizations (Minkowski sum of segements, projection of hypercube, and centrally symmetric faces) are very straightforward ways to define zonotopes. The \textbf{fourth way} to characterize a zonotope is given by Witsenhausen \cite{Witsenhausen}, who showed that a convex polytope is a zonotope if and only if its support function $h$ satisfies the Hlawka inequality 
$$h(x)+h(y)+h(z)+h(x+y+z)\ge h(x+y)+h(y+z)+h(z+x).$$

In this section, we present new characterizations: a convex body is a zonotope if and only if the subdifferential of its support function equals the 1-Laplacian on some generalized hypergraph, if and only if the subdifferential of its Minkowski functional equals the $\infty$-Laplacian of a certain weighted hypergraph. 
This equivalent description reveals a connection between zonotopes and hypergraph 1-Laplacian and $\infty$-Laplacian.

A remarkable fact is that the first four known equivalence characterizations of zonotopes provide necessary and sufficient conditions for a \textbf{convex polytope} to be a zonotope, whereas we have established necessary and sufficient conditions for a {\bfseries\itshape  general convex body} to be a zonotope. Consequently, our result contains more information than the first four known equivalence characterisations of zonotopes.

For convenience, we use $[\vec a_1,\vec b_1]+\cdots+[\vec a_k,\vec b_k]$ to express a zonotope,  where $\vec a_i,\vec b_i\in\R^n\setminus \{\vec0\}$, $i=1,\cdots,k$.  
The \emph{graphical zonotope} $Z(G):=\sum_{\{i,j\}\in E}[\vec e_i,\vec e_j]$ of a graph $G=(V,E)$ encodes many properties of the graph $G$, where $\vec e_1,\cdots,\vec e_n$ are the coordinate vectors in $\R^n$. For example, the volume of $Z(G)$ equals the number of spanning trees of $G$; the number of lattice points in $Z(G)$ equals the number of forests in $G$; the number of vertices of $Z(G)$ is equal to the number of acyclic
orientations of $G$.

The highlight of this section is the initiate study of 
the relations among  zonotope, normed quantities and $p$-Laplacian eigenproblems.

\tikzstyle{startstop} = [rectangle, rounded corners, minimum width=1cm, minimum height=1cm,text centered, draw=black, fill=red!0]
\tikzstyle{io1} = [rectangle, trapezium left angle=80, trapezium right angle=100, minimum width=1cm, minimum height=1cm, text centered, draw=black, fill=blue!0]
\tikzstyle{io2} = [trapezium,  rounded corners, trapezium left angle=100, trapezium right angle=100, minimum width=1cm, minimum height=1cm, text centered, draw=black, fill=yellow!0]
\tikzstyle{process} = [rectangle, minimum width=1cm, minimum height=1cm, text centered, draw=black, fill=orange!0]
\tikzstyle{decision} = [circle, minimum width=1cm, minimum height=1cm, text centered, draw=black, fill=green!0]
\tikzstyle{decision2} = [ellipse, rounded corners=10mm, minimum width=2cm, minimum height=2cm, text centered, draw=black, fill=green!0]
\tikzstyle{arrow} = [thick,-,>=stealth]
\begin{center}
\begin{tikzpicture}[node distance=3.9cm]
\node (submodular) [startstop] {  zonotope };
\node (optimal) [startstop, right of=submodular, xshift=5cm, yshift=0cm]  {  quantities on norms };
\node (minmax) [startstop, below of=optimal, xshift=-4.5cm, yshift=2.6cm]  { \color{black}  discrete $p$-Laplacians };
\draw [arrow](submodular) --node[anchor=south] { \small (R2) } (optimal);
\draw [arrow](optimal) --node[anchor=south] { \small (R3) } (minmax);
\draw [arrow](submodular) --node[anchor=north] { 
(R1)
} (minmax);
 \end{tikzpicture}
 \end{center}

Precisely, we find these three objects are pairwise related in natural ways: 
\begin{enumerate}[({R}1)]
\item The relation 
between zonotopes and discrete $p$-Laplacians include fruitful contents: 
\begin{enumerate}[({R1.}1)]
\item Up to an  isometric isomorphism,  zonotopes  are in one-to-one correspondence with  generalized hypergraphs via the 1-Laplacian and $\infty$-Laplacian, see Theorems 
\ref{result:hyper-1-Lap} and \ref{thm:PL-Hlawka-generalized-zonotope}. 
\item The contact data between a hypergraphic zonotope and a hypercube is equivalent to the nonzero eigenvalues of hypergraph 1-Laplacian as well as that of the dual hypergraph $\infty$-Laplacian, see Theorem \ref{thm:1-infty-zono}. 
\item We obtain two equivalent characterizations of zonotopes using hypergraph 1-Laplacian and $\infty$-Laplacian, which reveal two new necessary and sufficient conditions for a convex body to be a zonotope, see Theorem \ref{thm:1-infty-hyper}. 
\end{enumerate}

\item 
Zonotopes are closely related to some quantities on norms, such as $l^p$-minimal energy 
and Cheeger constants. In particular, we obtain a zonotope characterization for the Cheeger constant on connected graphs, see Theorem \ref{pro:zonotope-Cheeger-constnat} and Proposition \ref{prop:Cndp-minimal-energy}.   
\item  
The eigenvalues of graph $p$-Laplacian can be bounded by some quantities on norms with respect to the graph, such as the $l^p$-minimal energy, see Theorem \ref{thm:p-Lap-zonotope-C-n-d}. 
\end{enumerate}

We shall focus primarily on (R1). 
First, if we particularly concentrate on a simple graph, then the value of the graph 1-Laplacian is nothing but a face of the graphical zonotope.  

\begin{definition}[1-Laplacian for graphs]\label{def:weighted-graph1-Lap}
Given a finite graph $G=(V,E)$ with $V=\{1,\cdots,n\}$, 
the 1-Laplacian $\Delta_1$ is a set-valued map  
 defined by
$$(\Delta_1x)_i=\left\{\left.\sum_{j\in V:\,j\sim i}z_{ij}\right|z_{ij}\in \mathrm{Sgn}(x_i-x_j),z_{ij}=-z_{ji}\right\},\;\; i\in V$$
where $x\in\R^V$ is a given vector, and 
$$\mathrm{Sgn}(t):=\begin{cases}
 \{1\} & \text{if } t>0,\\
 [-1,1] & \text{if }t=0,\\
 \{-1\} & \text{if }t<0.
 \end{cases}
$$ 
\end{definition}
For more details on the basics of 1-Laplacian, see \cite[Section 7.2]{JMZ-book}.

\begin{prop}\label{pro:grahical-zonotope-1-Lap}
Given a graph $G=(V,E)$, its graphical zonotope\footnote{The graphical zonotope $Z_G$ used here is just a slight modification of $Z(G)$ in the fourth paragraph of the introduction in this subsection, via the simple relation $Z_G=2Z(G)-\sum_{i\in V}\deg_i\vec e_i$. And, we set $Z_G=Z_\varnothing=\vec0$ if $G$ is an empty graph. } defined as $Z_{G}:=\sum_{\{i,j\}\in E}[\vec e_i-\vec e_j,\vec e_j-\vec e_i]$ has the following relation with graph 1-Laplacian: 
$$\Delta_1\vec x=Z_{G[\vec x]}+C_{\vec x},\;\;\forall \vec x\in\R^V,$$
where $G[\vec x]$ is the subgraph of $G$ defined by $G[\vec x]=(V,E_x)$ and  $E_x=\{\{i,j\}\in E:x_i=x_j\}$, and $C_x=(\#N^+_x(1)-\#N^-_x(1),\cdots,\#N^+_x(n)-\#N^-_x(n))$ 
with  $N^\pm_x (i)=\{j\in V:\{i,j\}\in E,\pm(x_i-x_j)>0\}$, $\forall i\in V=\{1,\cdots,n\}$. 
\end{prop}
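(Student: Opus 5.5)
The plan is to identify both sides as sets and compare them coordinatewise. The $i$-th coordinate of $\Delta_1\vec x$ is a sum over the edges incident to $i$. Each edge $\{i,j\}$ contributes a single variable $z_{ij}$, and that same variable appears with the opposite sign in coordinate $j$ (since $z_{ji}=-z_{ij}$). So the natural first move is to view $\Delta_1\vec x$ as a sum over edges rather than over vertices:
$$\Delta_1\vec x=\Big\{\sum_{\{i,j\}\in E} z_{ij}(\vec e_i-\vec e_j)\;:\; z_{ij}\in \mathrm{Sgn}(x_i-x_j)\Big\}.$$
Here each edge is oriented arbitrarily, say as $(i,j)$, and I will check that the term $z_{ij}(\vec e_i-\vec e_j)$ does not depend on that choice. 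If we swap to $(j,i)$, then $z_{ji}(\vec e_j-\vec e_i)=z_{ij}(\vec e_i-\vec e_j)$, because $\mathrm{Sgn}$ is odd. Reading off the $k$-th coordinate of the right-hand side recovers exactly $\sum_{j\sim k}z_{kj}$, so the two descriptions agree. I will verify this reindexing carefully, since it carries the whole argument.

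Next, because the choices $z_{ij}$ for different edges are independent, the set above is a Minkowski sum over edges:
$$\Delta_1\vec x=\sum_{\{i,j\}\in E}\mathrm{Sgn}(x_i-x_j)\,(\vec e_i-\vec e_j).$$
I then split the edges into two classes.
\begin{itemize}
\item For $\{i,j\}\in E_x$ (that is, $x_i=x_j$), the summand is $[-1,1](\vec e_i-\vec e_j)=[\vec e_i-\vec e_j,\vec e_j-\vec e_i]$. Summing these segments over $E_x$ gives exactly $Z_{G[\vec x]}$ by definition; if $E_x$ is empty, this sum is $\vec0$, which matches the footnote convention.
\item For edges with $x_i\neq x_j$, the summand is the single point $\mathrm{sign}(x_i-x_j)(\vec e_i-\vec e_j)$, so these edges together contribute a translation vector.
\end{itemize}

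It remains to compute that translation vector coordinatewise. Its $k$-th coordinate is $\sum_{j\sim k,\,x_j\ne x_k}\mathrm{sign}(x_k-x_j)$, which equals $\#N^+_x(k)-\#N^-_x(k)$, i.e.\ $(C_x)_k$. Adding this point to the zonotope gives $Z_{G[\vec x]}+C_{\vec x}$. No serious obstacle is expected. The only delicate point is the bookkeeping in the reindexing step: I need to confirm that the antisymmetry constraint $z_{ij}=-z_{ji}$ is exactly what allows one free parameter per edge, rather than two, so that the vertex-wise description really becomes a Minkowski sum of segments.
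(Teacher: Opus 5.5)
Your proposal is correct and follows essentially the same route as the paper: reindex $\Delta_1\vec x$ as $\bigl\{\sum_{\{i,j\}\in E} z_{ij}(\vec e_i-\vec e_j): z_{ij}\in\mathrm{Sgn}(x_i-x_j)\bigr\}$, use independence of the edge parameters to write it as a Minkowski sum, and split into the segments from $E_x$ and the point contributions $\mathrm{sign}(x_i-x_j)(\vec e_i-\vec e_j)$. The point contributions sum coordinatewise to $C_{\vec x}$, and your extra check that the antisymmetry constraint leaves exactly one free parameter per edge is the right justification for the reindexing step.
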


\begin{proof}
We use $\vec e_1,\cdots,\vec e_n$ to denote the standard coordinate vectors in $\R^n$.  
Note that for any $\vec x\in\R^V$,
\begin{align}
\Delta_1\vec x&=\left\{\left.\sum_{i=1}^n\sum_{j\in V:\{j,i\}\in E}z_{ij}\vec e_i\right|z_{ij}\in \mathrm{Sgn}(x_i-x_j),z_{ij}=-z_{ji}\right\}\notag
\\&=\left\{\left.\sum_{\{i,j\}\in E}z_{ij}(\vec e_i-\vec e_j)\right|z_{ij}\in \mathrm{Sgn}(x_i-x_j)\right\}\notag
\\&=\left\{\left.\sum_{\{i,j\}\in E:x_i>x_j}(\vec e_i-\vec e_j)+\sum_{\{i,j\}\in E:x_i=x_j}z_{ij}(\vec e_i-\vec e_j)\right|z_{ij}\in [-1,1]\right\}\notag
\\&=\sum_{\{i,j\}\in E:x_i>x_j}(\vec e_i-\vec e_j)+\sum_{\{i,j\}\in E:x_i=x_j}[\vec e_i-\vec e_j,\vec e_j-\vec e_i],\label{eq:1-Lap-ei-ej}
\end{align}
where the addition `$+$' in  \eqref{eq:1-Lap-ei-ej} is in the sense of Minkowski summation. Furthermore, 
$$\sum_{\{i,j\}\in E:x_i=x_j}[\vec e_i-\vec e_j,\vec e_j-\vec e_i]=\sum_{\{i,j\}\in E_x}[\vec e_i-\vec e_j,\vec e_j-\vec e_i]$$
and $$\sum_{\{i,j\}\in E:x_i>x_j}(\vec e_i-\vec e_j)=\sum_{i=1}^n\sum_{j\in V:\{j,i\}\in E}\mathrm{sign}(x_i-x_j)\vec e_i=\sum_{i=1}^n(\#N^+_x(i)-\#N^-_x(i))\vec e_i$$
where 
$$\mathrm{sign}(t):=\begin{cases}
 1 & \text{if } t>0,\\
 0 & \text{if }t=0,\\
 -1 & \text{if }t<0,
 \end{cases}$$
indicates the standard sign function. 
Therefore, we have $\Delta_1\vec x=Z_{G[x]}+C_x$ as desired in Proposition \ref{pro:grahical-zonotope-1-Lap}, which concludes the proof.
\end{proof}

It is known that for a vector configuration $\{\vec v_e\}_{e\in E}$, there is a natural bijection from the set of covectors
of the realizable oriented matroid \cite{Bjorner,GEBERT-ZIEGLER-zonotope94} 
$$\mathcal{L}(E):=\{(\mathrm{sign}(\vec x\cdot\vec v_e))_{e\in E}\in\{-1,0,1\}^E:\vec x\in \R^n\}$$
to the set of all non-empty faces of the zonotope $Z=\sum_{e\in E}[-\vec v_e, \vec v_e]$. 
In detail, we associate a
zonotope $Z_X$ (which is a face of $Z$) with every sign vector $X\in\mathcal{L}(E)\subset \{-1,0,1\}^E$  by $$Z_X=\sum_{e\in E:\,X_e=0}[-\vec v_e, \vec v_e]+\sum_{e\in E:\,X_e=1}\vec v_e-\sum_{e\in E:\,X_e=-1}\vec v_e.$$
Then the assignment $X\mapsto Z_X$ defines an order reversing bijection between  $\mathcal{L}(E)$ and all non-empty faces of $Z$. 

When we consider the label set $E$ as the edge set of a graph $G=(V,E)$ with $V=\{1,\cdots,n\}$, and take the vector configuration  $\{\vec v_e\}_{e\in E}$ as the column vectors of the incidence matrix of $G$, the output $Z$ is actually 
the graphical zonotope $Z_G$
, and $Z_X$ is  closely related to the graph 1-Laplacian.  
Based on these observations, and together with Proposition \ref{pro:grahical-zonotope-1-Lap}, we derive the following result which reveals a relationship among  realizable oriented matroids, graph 1-Laplacian,  and graphical zonotopes. 
 

\begin{prop}\label{pro:matroid-zonotope-1-Lap}
Taking $\{\vec v_e\}_{e\in E}$ as the column vectors of the incidence matrix of a  graph $G=(V,E)$, then the composition of the natural maps $\vec x\mapsto X:= (\mathrm{sign}(\vec x\cdot\vec v_e))_{e\in E}$ and $X\mapsto Z_X$ defines the  1-Laplacian on $G$. Precisely,  we can decompose the 1-Laplacian $\Delta_1$ into 
$$
\xymatrix@C=3em@R=0.9ex{ \Delta_1: & \R^n\ar[r] & \mathcal{L}(E)\ar[r] & \{\textrm{faces of }Z_G\} \\
 & \vec x \ar@{|->}[r] & X = (\mathrm{sign}(\vec x\cdot\vec v_e))_{e\in E}\ar@{|->}[r] & Z_X} 
$$
Moreover, $\dim \Delta_1\vec x=n-\#\{\text{connected components of }G[\vec x]\}$, where $G[\vec x]$ is defined in Proposition \ref{pro:grahical-zonotope-1-Lap}, and $\dim \Delta_1\vec x$ indicates the dimension of the zonotope $ \Delta_1\vec x$. 
 \end{prop}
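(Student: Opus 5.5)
The first claim is a composition of two maps, and the proof will exhibit each one and then read off the composite. Fix $\vec x\in\R^n$ and use the incidence-matrix columns $\vec v_e=\vec e_i-\vec e_j$ for each edge $e=\{i,j\}$, with an arbitrary orientation chosen once and for all. Then $\vec x\cdot\vec v_e=x_i-x_j$, so the covector is $X_e=\mathrm{sign}(x_i-x_j)$. By construction this is an element of $\mathcal{L}(E)$.

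Next, substitute this $X$ into the formula for $Z_X$:
$$Z_X=\sum_{e:\,x_i=x_j}[-\vec v_e,\vec v_e]+\sum_{e:\,x_i>x_j}(\vec e_i-\vec e_j)-\sum_{e:\,x_i<x_j}(\vec e_i-\vec e_j).$$
The last two sums combine into $\sum_{\{i,j\}\in E:\,x_i>x_j}(\vec e_i-\vec e_j)$ once each edge is written with its larger endpoint first. This makes the expression independent of the chosen orientation. The resulting formula is exactly \eqref{eq:1-Lap-ei-ej} from the proof of Proposition \ref{pro:grahical-zonotope-1-Lap}. Hence $Z_X=Z_{G[\vec x]}+C_{\vec x}=\Delta_1\vec x$. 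The statement that $Z_X$ is a face of $Z_G$ comes from the cited oriented-matroid bijection, so the factorization $\Delta_1=(X\mapsto Z_X)\circ(\vec x\mapsto X)$ holds.

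For the dimension claim, note that translation by $C_{\vec x}$ does not change dimension. So $\dim\Delta_1\vec x=\dim Z_{G[\vec x]}$. A Minkowski sum of centered segments has dimension equal to the dimension of the span of the segment directions. Thus $\dim Z_{G[\vec x]}=\operatorname{rank}\{\vec e_i-\vec e_j:\{i,j\}\in E_x\}$. This is the rank of the incidence matrix of the graph $G[\vec x]=(V,E_x)$ on all $n$ vertices.

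It remains to invoke the standard fact that the incidence matrix of a graph with $n$ vertices and $k$ connected components has rank $n-k$. To check it, observe that the orthogonal complement of the span of $\{\vec e_i-\vec e_j\}$ consists of the vectors that are constant on each component. That complement therefore has dimension $k$. This gives $\dim\Delta_1\vec x=n-\#\{\text{components of }G[\vec x]\}$, where isolated vertices are counted as components.

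The only delicate point is bookkeeping: the orientation of the $\vec v_e$ must not affect $Z_X$, and $Z_X$ has to match the Minkowski-sum expression of Proposition \ref{pro:grahical-zonotope-1-Lap} exactly. Once that matching is in place, everything else is routine linear algebra.
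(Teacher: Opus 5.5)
Your proposal is correct and follows essentially the same route as the paper: you compute $Z_X$ for $X_e=\mathrm{sign}(x_i-x_j)$, match it with \eqref{eq:1-Lap-ei-ej}, and read off the dimension from the rank of the edge vectors of $G[\vec x]$. The only cosmetic difference is in that rank step: the paper splits $Z_{G[\vec x]}$ by connected components and adds the per-component dimensions $|V(G_l)|-1$, whereas you compute the rank globally via the orthogonal complement (vectors constant on components), which is slightly more self-contained.
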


\begin{proof}
Given a vector $\vec x\in\R^n$, it corresponds to a unique signature vector $X:=(\mathrm{sign}(\vec x\cdot\vec v_e))_{e\in E}$ in the realizable oriented matroid $\mathcal{L}(E)$, and to express the correspondence explicitly, we write $X$ as $X[\vec x]$.  
Since $\{\vec v_e\}_{e\in E}$ serves as an incidence matrix of the graph $G$, for an $e=\{i,j\}\in E$, we may assume without loss of generality that the $i$-th component of $\vec v_e$ is 1 and the $j$-th component of $\vec v_e$ is $-1$, and the other components are 0. 
Hence, $\vec x\cdot\vec v_e=x_i-x_j$ when $e=\{i,j\}$, and therefore $(X[\vec x])_e= \mathrm{sign}(\vec x\cdot\vec v_e)=\mathrm{sign}(x_i-x_j)$. Thus, keeping \eqref{eq:1-Lap-ei-ej} in mind, we obtain
\begin{align*}
Z_{X[\vec x]}&=\sum _{e:(X[\vec x])_e=0}[-\vec v_e, \vec v_e]+\sum_{e:(X[\vec x])_e=1}\vec v_e-\sum_{e:(X[\vec x])_e=-1}\vec v_e\\&=\sum_{\{i,j\}\in E:x_i=x_j}[\vec e_i-\vec e_j,\vec e_j-\vec e_i]+   \sum_{\{i,j\}\in E:x_i>x_j}(\vec e_i-\vec e_j) =\Delta_1\vec x .
\end{align*}


Suppose that $G[\vec x]=G_1\sqcup G_2\sqcup\cdots\sqcup G_k $, where $G_i$ is a connected component of $G[\vec x]$. Next, we shall prove that the dimension of the zonotope $Z_{X[\vec x]}$ equals $n-k$. First, note that $$\dim Z_{X[\vec x]}=\dim \sum_{l=1}^k\sum_{\{i,j\}\in E(G_l)}[\vec e_i-\vec e_j,\vec e_j-\vec e_i]=\sum_{l=1}^k\dim\Big(\sum_{\{i,j\}\in E(G_l)}[\vec e_i-\vec e_j,\vec e_j-\vec e_i]\Big).$$
Since $$\dim \Big(\sum_{\{i,j\}\in E(G_l)}[\vec e_i-\vec e_j,\vec e_j-\vec e_i]\Big)=|V(G_l)|-1$$
we have $\dim Z_{X[\vec x]}=\sum_{l=1}^k (|V(G_l)|-1)=n-k$, where $|V(G_l)|$ is the number of vertices in $G_l$.
\end{proof}


By Propositions \ref{pro:grahical-zonotope-1-Lap} and \ref{pro:matroid-zonotope-1-Lap}, we can give more geometric explanations of the eigenvalue problems for 1-Laplacian by using contact data of graphical zonotopes, see Theorem \ref{thm:1-infty-zono} for details. 
Before presenting the above results, we first provide a zonotope characterization of Cheeger’s constant. 
We note that the graph Cheeger constant is an important quantity that has been studied by many experts \cite{Bandeira}. 
It is known that the second smallest eigenvalue of graph 1-Laplacian equals the Cheeger constant \cite{Hein10}, and then by our result we have another geometric description for such eigenvalue. 

\begin{theorem}[Zonotope representation of Cheeger's constant]\label{pro:zonotope-Cheeger-constnat}
For a connected graph $G$, the Cheeger constant $h(G)$ equals the scaling constant 
$$r_*=\inf \{r>0:\text{the scaling zonotope }r\cdot Z_G\text{ intersects with }\partial \square\}$$ where $\square$ is the hypercube $[-1,1]^n$, and $Z_G$ is the graphical zonotope of $G$ (see Proposition \ref{pro:grahical-zonotope-1-Lap}).
\end{theorem}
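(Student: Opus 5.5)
The plan is to read $r_*$ as the smallest contact value $\min\mathrm{Contact}(\square,Z_G)$, in the sense of property (2) listed before Lemma~\ref{lem:basic-contact}. I will then pass from the Cheeger constant to that contact value through a chain of Lagrange critical values, using the duality results of Section~\ref{sec:ratio}.

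\textbf{Step 1: Cheeger constant as a Lagrange critical value.} Let $B\in\R^{E\times V}$ be the (oriented) incidence matrix, so $(Bx)_{\{i,j\}}=x_i-x_j$. Start from the known variational characterization of the Cheeger constant as the second eigenvalue of $\Delta_1$ \cite{Hein10}:
$$h(G)=\min_{x\notin\mathrm{span}(\mathbf 1)}\frac{\|Bx\|_1}{\min_{t\in\R}\|x-t\mathbf 1\|_1}.$$
Set $f=\|\cdot\|_1$ on $\R^E$, $g=\|\cdot\|_1$ on $\R^V$ and $A=B$. Since $G$ is connected, $\ker A=\mathrm{span}(\mathbf 1)$, so the denominator is exactly $g_{\KA}$. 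A global minimizer of a one-homogeneous RC function on the complement of its zero set is a nontrivial Lagrange critical point, and every nontrivial Lagrange critical value is $\ge$ this minimum. Hence $h(G)$ is the smallest nontrivial Lagrange critical value of $(f\circ A,g_{\KA})$. By the kernel reduction Lemma~\ref{lem:g_R(A)g_K(A)}, it is also the smallest nontrivial Lagrange critical value of $(\|B\cdot\|_1,\|\cdot\|_1)$.

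\textbf{Step 2: polarity dual.} Apply Theorem~\ref{th:critical(f,g)} to the pair $(\|B\cdot\|_1,\|\cdot\|_1)$, whose nontrivial critical values then coincide with those of $(\|\cdot\|_1^\polar,\|B\cdot\|_1^\polar)=(\|\cdot\|_\infty,\|B\cdot\|_1^\polar)$. The key observation is
$$\|Bx\|_1=\sum_{\{i,j\}\in E}|x_i-x_j|=\sup_{z\in Z_G}\langle z,x\rangle .$$
This holds because $Z_G=\sum_{e}[-\vec v_e,\vec v_e]$ with $\vec v_e$ the columns of $B^\top$, so $\|B\cdot\|_1$ is the support function of $Z_G$. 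Therefore $\|B\cdot\|_1^\polar$ is the Minkowski functional $\|\cdot\|_{Z_G}$, which equals $+\infty$ off $\mathbf 1^\perp$; this is consistent with Proposition~\ref{pro:CV_p-basic}(iv). By Lemma~\ref{lem:basic-contact}(i), the nontrivial critical values of $(\|\cdot\|_\infty,\|\cdot\|_{Z_G})$ form exactly $\mathrm{Contact}(\square,Z_G)$. Hence $h(G)=\min\mathrm{Contact}(\square,Z_G)$, with the minimum existing by Lemma~\ref{lem:basic-contact}(iv).

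\textbf{Step 3: matching with $r_*$.} It remains to show that the minimal contact value is the extremal scaling at which $\partial\square$ and $r\,\partial Z_G$ first touch. Geometrically, at the extreme value of $\|z\|_\infty/\|z\|_{Z_G}$ over $\mathbf 1^\perp\setminus\{0\}$, the scaled zonotope is inscribed in $\square$ relative to $\mathbf 1^\perp$ and touches $\partial\square$ with a common supporting hyperplane, so that extreme value is a contact value. Conversely, any contact value dominates this extreme value. One then has to check that the extremal tangency scaling is the $r_*$ of the statement, being careful about the orientation of the scaling (whether it is $r$ or $1/r$; Lemma~\ref{lem:basic-contact}(ii) converts between them) and about the relative-boundary conventions.

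I expect this last identification to be the main obstacle. $Z_G$ is lower dimensional, lying in $\mathbf 1^\perp$, while $\square$ is full dimensional. So the tangency definition and the ``first intersection with $\partial\square$'' formulation must both be read inside the subspace $\mathbf 1^\perp$, with $\square$ replaced by $\square\cap\mathbf 1^\perp$, and the kernel/quotient bookkeeping of Lemma~\ref{lem:g_R(A)g_K(A)} and Proposition~\ref{pro:quotient-subdifferential} used to justify this. I would first verify the whole chain on the path with two vertices, where $Z_G$ is a segment and everything is explicit, to fix the scaling convention before writing the general argument.
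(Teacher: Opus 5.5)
Your Steps 1--2 follow the paper's own route. The paper quotes that $h(G)$ is the smallest positive eigenvalue of $\partial\|Ax\|_1\cap\lambda\,\partial\|x\|_1\neq\varnothing$, passes to the polar pair via Lemma~\ref{lem:basic-contact}, and identifies $\{\|A\cdot\|_1^\polar\le 1\}=\partial\|A\cdot\|_1(0)=Z_G$. These steps do establish $h(G)=\min_{x\in\mathbf 1^\perp\setminus\{0\}}\|x\|_\infty/\|x\|_{Z_G}$.

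The gap is Step 3, and it cannot be closed by bookkeeping: with the literal definition, $r_*\neq h(G)$ in general.
\begin{itemize}
\item Since $Z_G$ is convex and contains $0$, the values of $\|\cdot\|_\infty$ on $Z_G$ fill $[0,\max_{Z_G}\|\cdot\|_\infty]$. So $rZ_G$ meets $\partial\square$ iff $r\max_{z\in Z_G}\|z\|_\infty\ge 1$.
\item $\max_{z\in Z_G}\|z\|_\infty=d_{\max}$: orient every edge at a vertex of maximal degree towards it.
\item Hence $r_*=1/d_{\max}$. This is the reciprocal of the \emph{largest} eigenvalue $\max_x\|Bx\|_1/\|x\|_1=d_{\max}$, the maximal column sum of $B$.
\end{itemize}
For the path $P_3$ on vertices $1,2,3$ one has $h(G)=1$ but $r_*=1/2$, so even $1/r_*$ fails. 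For the star $K_{1,3}$, $h(G)=1$ and $r_*=1/3$.

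Your worry about orientation is justified and explains the mismatch. If $x\in\partial K\cap\lambda\,\partial L$, then $\|x\|_L=\lambda$, not $1/\lambda$, because $\|x\|_{\lambda L}=\|x\|_L/\lambda$. So with definition~\eqref{eq:convex-body-pair-tangent}, $\mathrm{Contact}(K,L)$ consists of the \emph{reciprocals} of the nontrivial critical values of $\|\cdot\|_K/\|\cdot\|_L$. Lemma~\ref{lem:basic-contact}(i), as you (and the paper) use it, is inverted.
\begin{itemize}
\item Under the literal definition, $\min\mathrm{Contact}(\square,Z_G)=r_*=1/d_{\max}$. At $r_*$ the body $r_*Z_G\subset\square$ touches a facet hyperplane of $\square$; that hyperplane supports both bodies and contains neither, since both contain $0$.
\item Also $\max\mathrm{Contact}(\square,Z_G)=1/h(G)$ under the literal definition.
\item Under the inverted reading, $\min\mathrm{Contact}(\square,Z_G)=h(G)$, but it is no longer a first-touching scale.
\end{itemize}
Either way the chain $h(G)=\min\mathrm{Contact}(\square,Z_G)=r_*$ breaks at the last equality. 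The paper's proof only asserts that equality (``that is''). Likewise, your sentence that at the extreme ratio the scaled zonotope is inscribed in $\square$ holds only at the \emph{maximal} ratio $d_{\max}$. At the minimal ratio $h(G)$ the inclusion goes the other way.

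What Steps 1--2 genuinely give is $\|x\|_\infty\ge h(G)\,\|x\|_{Z_G}$ on $\mathbf 1^\perp$, with equality attained. Equivalently,
\[
h(G)=\max\{r>0:\ r(\square\cap\mathbf 1^\perp)\subset Z_G\}=\inf\{r>0:\ r(\square\cap\mathbf 1^\perp)\ \text{meets the relative boundary of } Z_G\}.
\]
So it is the cube section that must be dilated until it hits the boundary of $Z_G$. Dilating the zonotope until it hits $\partial\square$ captures $d_{\max}$ instead. This corrected statement follows from your Step 2 in two lines. The statement as written should be reported as false, with $P_3$ as a counterexample.
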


\begin{proof}
Let $A$ be the incidence matrix of $G$. It is known that for a connected graph $G$, the Cheeger constant $h(G)$ equals the smallest positive eigenvalue of the 1-Laplacian eigenproblem $\partial \|Ax\|_1\cap \lambda \partial\|x\|_1\ne\varnothing$.  Lemma \ref{lem:basic-contact} (i) implies that the 1-Laplacian eigenproblem is equivalent to the 
contact problem of $K:=\{x\in\R^n:\|Ax\|_1\le 1\}$ and $L:=\{x\in\R^n:\|x\|_1\le 1\}$. By  Lemma \ref{lem:basic-contact} (iii), it is equivalent to $\mathrm{Contact}(L^\circ,K^\circ)$. 
By taking $f=\|A\cdot\|_1$ in Proposition \ref{pro:CV_1&Cv_p} (iii) and using Proposition \ref{pro:grahical-zonotope-1-Lap}, we have $Z_G=\Delta_1\vec 0=\partial|_{x=0} \|A x\|_1 =\{x\in\R^n:\|A x\|_1^\polar\le 1\}=K^\polar$. 
Combining this with  the fact that  $\square=L^\polar$, we have $$\mathrm{Contact}(L^\circ,K^\circ)=\mathrm{Contact}(\square,Z_G).$$ 
Consequently, $h(G)$ equals the minimum number in $\mathrm{Contact}(\square,Z_G)$, that is, 
$h(G)=r_*=\inf \{r>0:\text{the scaling zonotope }r\cdot Z_G\text{ intersects with }\partial \square\}$.
\end{proof}

\subsubsection{Zonotope representation of hypergraph 1-Laplacian and $\infty$-Laplacian}

The above results can be extended to more general hypergraph settings, which also includes weighted graphs as special case.

\begin{defn}[weighted hypergraph]\label{def:general-hypergraph}
A (vertex-edge) \emph{weighted hypergraph} $\Gamma$ is a matrix $w:=(w_{ih})_{i\in V,h\in H}$ in which the set $V:=\{1,\cdots,n\}$ refers to the vertex set, and the set $H$ indicates the hyperedge set, where $w_{ih}\in\R$. We denote it by a triple $\Gamma=(V,H,w)$.
\end{defn}
A weighted hypergraph is nothing more than a ``generalized hypergraph'' determined by ``incidence matrix''. 
The perspective of treating matrices as a generalized form of ‘‘hypergraphs’’ has been employed in the study of the $p$-Laplacian on oriented hypergraphs in Chapter 7 of \cite{JMZ-book}, as well as in the study of more general hypergraph $p$-Laplacians in \cite{Fazeny,Burger}. 
We also refer to \cite{Lanza,Huhtanen,Mazon,Weihs-an,Weihs-hi,Berkolaiko,Mugnolo,Backhausz,Vigano} for the recent fruitful progresses on analysis for hypergraphs, $p$-Laplacians and Cheeger-type constants in discrete settings.

\begin{definition}[$p$-Laplacian for vertex-edge weighted hypergraph]\label{def:p-Lap-hypergraph}
The $p$-Laplacian $\Delta_p:C(V)\to C(V)$ of $\Gamma=(V,H,w)$ is defined by $\Delta_p f=\gradientp \sum_{h\in H}\left|\sum_{j\in V}w_{jh}f(j)\right|^{p}$ in the sense of subdifferential, namely,
$$\Delta_pf(i)=\sum_{h\in H}w_{ih}\left|\sum_{j\in V}w_{jh}f(j)\right|^{p-2}\left(\sum_{j\in V}w_{jh}f(j)\right),\;\;i\in V,$$
$$\Delta_1f(i)=\left\{\sum_{h\in H}w_{ih}z_{h}:z_{h}\in \mathrm{Sgn}\left(\sum_{j\in V}w_{jh}f(j)\right)\right\},\;\;i\in V.$$
\end{definition}
For convenience, we set $w_{ih}=0$ for $i\notin h$. 

\begin{theorem}
\label{result:hyper-1-Lap}
The hypergraph 1-Laplacian is a zonotope-valued map in the following sense:
\begin{enumerate}[(i)]
\item Given the 1-Laplacian $\Delta_1$ on a hypergraph with $n$ vertices, for any subset  $S\subset\R^n$, $\bigcap_{x\in S}\Delta_1\vec x$ is a zonotope (or a single point or empty which can be viewed as degenerate zonotopes).  

\item
For any zonotope $Z$ in $\R^d$ with center $\vec0$, there exists a weighted  hypergraph\footnote{Here, we mean a hypergraph with vertex-edge weights, or equivalently,  hypergraph with  real  coefficients  (see Definition \ref{def:general-hypergraph} 
)} such that  $\Delta_1\vec0=Z$.

\item
For any zonotope $Z$ of dimension $d$, and for any $\vec x,\vec z\in\R^{d+1}$,  there exists a weighted hypergraph on $d+1$ vertices such that $\Delta_1\vec x$ is a zonotope centered at $\vec z$ and is  isometically isomorphic 
to $Z$. 
\end{enumerate}
\end{theorem}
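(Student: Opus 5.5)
The plan is to compute $\Delta_1\vec x$ explicitly as a Minkowski sum, exactly as in the proof of Proposition \ref{pro:grahical-zonotope-1-Lap}. Let $\vec w_h:=(w_{ih})_{i\in V}\in\R^n$ denote the column of the weight matrix indexed by $h\in H$. Then $\sum_{j}w_{jh}x_j=\vec w_h\cdot\vec x$, and Definition \ref{def:p-Lap-hypergraph} gives
$$\Delta_1\vec x=\sum_{h:\,\vec w_h\cdot\vec x>0}\vec w_h-\sum_{h:\,\vec w_h\cdot\vec x<0}\vec w_h+\sum_{h:\,\vec w_h\cdot\vec x=0}[-\vec w_h,\vec w_h].$$
This is a translate of the zonotope $Z_{\Gamma[\vec x]}:=\sum_{h:\vec w_h\cdot\vec x=0}[-\vec w_h,\vec w_h]$. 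Equivalently, $\Delta_1\vec x$ is the face $Z_X$ of $Z_\Gamma:=\sum_h[-\vec w_h,\vec w_h]$ associated with the covector $X=(\mathrm{sign}(\vec w_h\cdot \vec x))_h$, as in Proposition \ref{pro:matroid-zonotope-1-Lap}. The same identity also follows from $\Delta_1\vec x=\partial\|W^\top\vec x\|_1$, where $W=(w_{ih})$.

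For (i), each $\Delta_1\vec x$ is a face of the single zonotope $Z_\Gamma$, since $\Delta_1\vec x=\partial F(\vec x)$ with $F(\vec x)=\|W^\top\vec x\|_1=h_{Z_\Gamma}(\vec x)$ the support function of $Z_\Gamma$, and subdifferentials of a support function are exposed faces. Any intersection of faces of a polytope is again a face, or empty. This is the main point to check: the intersection over an arbitrary, possibly infinite, $S$ is still an intersection of finitely many distinct faces, because $Z_\Gamma$ has only finitely many faces. Faces of zonotopes are zonotopes, since each is some $Z_X$, which is a translate of a Minkowski sum of segments. Hence $\bigcap_{x\in S}\Delta_1\vec x$ is a (possibly degenerate or empty) zonotope.

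For (ii), write $Z=\sum_{k=1}^m[\vec a_k,\vec b_k]$ with center $\vec 0$. Since the center of a Minkowski sum is the sum of the centers, $Z=\sum_k[-\vec v_k,\vec v_k]$ with $\vec v_k=(\vec b_k-\vec a_k)/2$. Take $V=\{1,\dots,d\}$, $H=\{1,\dots,m\}$ and $w_{ih}=(\vec v_h)_i$. Then $\vec w_h\cdot\vec 0=0$ for every $h$, so the formula above gives $\Delta_1\vec 0=Z$.

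For (iii), first center $Z$ at the origin by translation and realize it inside a hyperplane. Choose an isometry $T$ of $\R^d$ onto $\vec u^\perp\subset\R^{d+1}$, where $\vec u$ is a unit vector to be chosen. Set $\vec v_h'=T\vec v_h$, so these generators lie in $\vec u^\perp$. We want $\vec v'_h\cdot\vec x=0$ for all generators, which makes $\Delta_1\vec x=\sum_h[-\vec v'_h,\vec v'_h]=TZ$. This holds if $\vec x\parallel\vec u$, so take $\vec u=\vec x/|\vec x|$; if $\vec x=0$ any $\vec u$ works. To shift the center to $\vec z$, add one or two extra hyperedges whose weight vectors $\vec c$ satisfy $\vec c\cdot\vec x>0$, which adds $\vec c$ to the center. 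If $\vec x\neq0$, take $\vec c_1=\vec z+t\vec x$ and $\vec c_2=-t\vec x$ with $t$ large, so that $\vec c_1\cdot\vec x>0$ and $\vec c_2\cdot \vec x<0$; the contributions are then $\vec c_1$ and $-\vec c_2=t\vec x$, and the total shift is $\vec z+2t\vec x$. This is off by $2t\vec x$, so instead choose $\vec c_1=\vec z/2+s\vec x$ and $\vec c_2=-(\vec z/2-s\vec x)$, which gives total shift $\vec c_1-\vec c_2=\vec z$ with signs correct for $s$ large. The degenerate case $\vec x=\vec0$ forces the center to be $\vec 0$, since every hyperedge then has $\vec w_h\cdot\vec x=0$. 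There I expect one must either assume $\vec z=0$ or read the statement for $\vec x\neq 0$; this edge case is the main obstacle to stating (iii) cleanly.
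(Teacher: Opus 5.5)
Your parts (i) and (ii) are fine and follow the paper's argument: each $\Delta_1\vec x$ is a face of $\sum_h[-\vec w_h,\vec w_h]$, and the finite face lattice reduces an arbitrary intersection to a finite one. Your remark that $\Delta_1\vec x=\partial h_{Z_\Gamma}(\vec x)$ is a clean way to see that these are exposed faces.

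The gap is in the translation step of (iii), and it is a sign error. With $\vec c_2=-\vec z/2+s\vec x$ and $s$ large you have $\vec c_2\cdot\vec x>0$. That hyperedge therefore contributes $+\vec c_2$, not $-\vec c_2$, and the total shift is $\vec c_1+\vec c_2=2s\vec x$ rather than $\vec z$.

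No cleverer choice of extra hyperedges can fix this. A hyperedge with $\vec w_h\cdot\vec x\neq 0$ contributes $\mathrm{sign}(\vec w_h\cdot\vec x)\,\vec w_h$. This vector is unchanged under $\vec w_h\mapsto-\vec w_h$ and always has positive inner product with $\vec x$. Hyperedges with $\vec w_h\cdot\vec x=0$ contribute segments centered at $\vec 0$. Hence the center $\vec a$ of $\Delta_1\vec x$ satisfies
\[
\vec a\cdot\vec x=\sum_{h}|\vec w_h\cdot\vec x|=\|W^\top\vec x\|_1\ \ge 0 .
\]
This is just Euler's identity for $\partial\|W^\top\cdot\|_1$. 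Equality forces every $\vec w_h\perp\vec x$, and then $\vec a=\vec 0$.

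So the obstruction you found at $\vec x=\vec 0$ is only the degenerate case of a general one. For every $\vec x$, no weighted hypergraph gives $\Delta_1\vec x$ a center $\vec z\neq\vec 0$ with $\vec z\cdot\vec x\le 0$. Statement (iii) as written is false for such $\vec z$.

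The paper's own proof breaks at the same place, in two ways:
\begin{enumerate}
\item It takes $\vec w_{m+1}=\vec z$, which actually produces the center $\mathrm{sign}(\vec z\cdot\vec x)\,\vec z$.
\item For $\vec z\perp\vec x$ it chooses $\vec w_{m+1}+\vec w_{m+2}=\vec z$. This treats the signs $\varepsilon_j$ as free, although they are forced to equal $\mathrm{sign}(\vec w_j\cdot\vec x)$.
\end{enumerate}

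The correct statement is that (iii) holds exactly when $\vec z=\vec 0$ or $\vec z\cdot\vec x>0$. In the second case you need only one extra hyperedge with weight vector $\vec z$. Since $\vec z\cdot\vec x>0$, it adds exactly $\vec z$, as you yourself observed, giving $\Delta_1\vec x=TZ+\vec z$. Your isometric embedding of $Z$ into $\vec x^\perp$ then takes care of the rest.
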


\begin{proof}

For (i):  
The discrete 1-Laplacian maps each function to a zonotope in the following way: 
\begin{align*}
\Delta_1f&=\left\{\sum_{i\in V}\sum_{h\in H}w_{ih}z_{h}\vec e_i:z_{h}\in \mathrm{Sgn}\left(\sum_{j\in V}w_{jh}f(j)\right)\right\}
\\&=\left\{\sum_{h\in H}z_{h}\sum_{i\in V}w_{ih}\vec e_i:z_{h}\in \mathrm{Sgn}\left(\sum_{j\in V}w_{jh}f(j)\right)\right\}
\\&=\left\{\sum_{h\in H:f[h]> 0}\sum_{i\in V}w_{ih}\vec e_i-\sum_{h\in H:f[h]< 0}\sum_{i\in V}w_{ih}\vec e_i +\sum_{h\in H:f[h]=0}z\sum_{i\in V}w_{ih}\vec e_i:z\in [-1,1]\right\}
\\&=\sum_{h\in H}\mathrm{sign}(f[h])\sum_{i\in V}w_{ih}\vec e_i+\sum_{h\in H:f[h]=0}[-\sum_{i\in V}w_{ih}\vec e_i,\sum_{i\in V}w_{ih}\vec e_i]
\\&=\lim\limits_{p\to1^+}\Delta_p f+\sum_{h\in H:f[h]=0}[-\vec v_h,\vec v_h]
\end{align*}
is a zonotope with the center $$\vec a=\lim\limits_{p\to1^+}\Delta_p f=\sum_{h\in H}\mathrm{sign}(f[h])\sum_{i\in V}w_{ih}\vec e_i$$
where $\vec v_h:=\sum_{i\in V}w_{ih}\vec e_i$ and 
 $f[h]:=\sum_{j\in V}w_{jh}f(j)$, $\forall h\in H$. Here the addition `$+$' in the last two equalities are in the sense of Minkowski summation. 
Thus, $\Delta_1f=\sum_{h\in H:f[h]=0}[-\vec v_h,\vec v_h]+\sum_{h\in H:f[h]\ne0}\mathrm{sign}(f[h]) \vec v_h$ is a face 
of $\sum_{h\in H}[-\vec v_h,\vec v_h]$. 
Since the faces of 
$\sum_{h\in H}[-\vec v_h,\vec v_h]$ 
form a finite face-lattice, the intersection of finite faces must be also a face. 
Now for any nonempty set $S\subset\R^n$, $\bigcap_{f\in S}\Delta_1f$ is actually a finitely intersection, and thus is a face of $\sum_{h\in H}[-\vec v_h,\vec v_h]$, that is, a subzonotope (or a single point or empty). 

For (ii): 
For any zonotope $\sum_{j=1}^m[-\vec a_j,\vec a_j]$ in $\R^n$, we can take a 
vertex-edge weighted hypergraph $\Gamma$ on $n$ vertices with $m$ hyperedges and vertex-edge weight $(w_{ij})$ such that $\vec a_j=\sum_{i=1}^nw_{ij}\vec e_i$, $j=1,\cdots,m$. Then $\Delta_1\vec 0=\sum_{j=1}^m[-\vec a_j,\vec a_j]$.

For (iii): Continuing the discussion from (ii), first, we assume  $Z=\sum_{i=1}^m[-\vec v_i,\vec v_i]\subset\R^d$. For any $\vec x\in\R^{d+1}\setminus\{\vec 0\}$, consider the $d$-dimensional subspace $X=\vec x^\bot$. There exists a linearly   isometric isomorphism $\psi:\R^d\to X$. Denote by $\vec w_i=\psi(\vec v_i)$, $i=1,\cdots,m$. Then, construct the hypergraph whose vertex-edge incidence matrix is $(\vec w_1,\cdots,\vec w_m)$. It can be verified that $\Delta_1\vec x=\sum_{i=1}^m\mathrm{Sgn}(\vec w_i\cdot\vec x)\vec w_i=\sum_{i=1}^m\mathrm{Sgn}(0)\vec w_i=\sum_{i=1}^m[-\vec w_i,\vec w_i]=\psi(\sum_{i=1}^m[-\vec v_i,\vec v_i])=\psi(Z)$. Therefore, for any zonotope $Z$ of dimension $d$,  we have constructed a weighted hypergraph such that $\Delta_1\vec x$ is a zonotope centered at 0 which is isomorphic to $Z$.

Next, we shall prove that we can translate the center of $\Delta_1\vec x$ to $z$ for any given $z$. 
Taking additional vectors $\vec w_{m+1},\cdots,\vec w_{M}\in\R^{d+1}\setminus X$, then $\mathrm{Sgn}(\vec w_i\cdot\vec x)=\{1\}$ or $\{-1\}$, we have $\Delta_1\vec x=\sum_{i=1}^m[-\vec w_i,\vec w_i]+\vec a$ with $\vec a=\sum_{j=m+1}^M\varepsilon_j\vec w_j$ for $\varepsilon_j\in\{-1,1\}$. Therefore, if $\vec z\in\R^{d+1}\setminus X$, taking $\vec w_{m+1}=\vec z$; if $\vec z\in X$, taking $\vec w_{m+1},\vec w_{m+2}\in\R^{d+1}\setminus X$ such that $\vec w_{m+1}+\vec w_{m+2}=\vec z$. 
\end{proof}

\begin{remark}
Based on the above theorem, and the duality relation between 1-Laplacian and $\infty$-Laplacian, we have: up to an  isometric isomorphism,  zonotopes  are in one-to-one correspondence with  generalized hypergraphs via the 1-Laplacian and $\infty$-Laplacian.
\end{remark}

The Hlakwa inequality imposes further restrictions on the convexity of norms; the following result 
establishes a connection between zonotopes, the Hlakwa norm and the 1-Laplacian of hypergraphs.
\begin{theorem}\label{thm:PL-Hlawka-generalized-zonotope}
For a piecewise linear \textbf{semi-norm}  $\|\cdot\|$ on $\R^n$, the  following conditions are equivalent:
\begin{enumerate}[(a)]
\item $\|\cdot\|$ satisfies Hlakwa inequality;
\item $\partial \|\vec x\|$ is a  zonotope for any $\vec x$;
\item the polar dual of  the unit ball $\{\vec x\in\R^n:\|\vec x\|\le 1\}$ is a  zonotope;
\item  
there exists a real matrix $(w_{ij})_{m\times n}$ such that
$\|\vec x\|=\sum_{i=1}^m\left|\sum_{j=1}^nw_{ij}x_j\right|$ for any $\vec x\in\R^n$;
\item $\Delta_1:=\partial \|\cdot\|$ defines the 1-Laplacian on certain 
weighted hypergraph.
\end{enumerate}
\end{theorem}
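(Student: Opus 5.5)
The plan is to arrange the five conditions around (d), which is the natural hub. The cycle will be (d)$\Rightarrow$(e)$\Rightarrow$(b)$\Rightarrow$(c)$\Rightarrow$(d), and the Hlawka condition (a) will be attached to (c) through Witsenhausen's theorem.

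First, (d)$\Rightarrow$(e) is immediate from Definition \ref{def:p-Lap-hypergraph}. Suppose $\|\vec x\|=\sum_{i=1}^m|\sum_j w_{ij}x_j|$. Take the weighted hypergraph with $H=\{1,\dots,m\}$ and incidence weights $w_{ji}$. The subdifferential sum rule for finite convex functions, together with the chain rule of Proposition \ref{prop:basic-in-convex}(i) applied to $|\cdot|$, gives $\partial\|\vec x\|=\Delta_1\vec x$.

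Next, (e)$\Rightarrow$(b) is exactly the computation in the proof of Theorem \ref{result:hyper-1-Lap}(i). It shows $\Delta_1\vec x=\sum_{h:f[h]=0}[-\vec v_h,\vec v_h]+\sum_{h:f[h]\ne0}\mathrm{sign}(f[h])\vec v_h$, and this is a zonotope.

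For (b)$\Rightarrow$(c), take $f=\|\cdot\|$ in Proposition \ref{pro:CV_1&Cv_p}(iii). This gives $\partial\|\vec 0\|=\{\vec y:\|\vec y\|^\polar\le1\}$, which is the polar of the unit ball $B$ (for a seminorm $B$ is unbounded, and $B^\polar$ lies in $(\ker\|\cdot\|)^\perp$, consistent with Proposition \ref{pro:CV_p-basic}(iv)). Applying (b) at $\vec x=\vec 0$ then gives (c).

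For (c)$\Rightarrow$(d), write $B^\polar=\sum_{i=1}^m[\vec c_i-\vec a_i,\vec c_i+\vec a_i]$. Since $B^\polar=\partial\|\vec0\|$ is symmetric, it must be centered at $\vec0$, so we may write it as $\sum_i[-\vec a_i,\vec a_i]$. Because $B$ is closed, convex and contains $\vec0$, we have $B^{\polar\polar}=B$, so $\|\cdot\|$ equals the support function of $B^\polar$ (this is Proposition \ref{pro:CV_1&Cv_p}(ii) combined with $f^{\polar\polar}=f$). The support function of a Minkowski sum is the sum of the support functions, and the support function of $[-\vec a_i,\vec a_i]$ is $|\langle\vec a_i,\vec x\rangle|$. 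Hence $\|\vec x\|=\sum_i|\langle\vec a_i,\vec x\rangle|$, which is (d) with $w_{ij}=(\vec a_i)_j$.

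Finally, for (a)$\Leftrightarrow$(c), the Hlawka inequality for $\|\cdot\|=h_{B^\polar}$ is exactly Witsenhausen's criterion quoted at the start of Section \ref{sec:zonotope-1&infty}: a polytope is a zonotope if and only if its support function is Hlawka. Piecewise linearity of $\|\cdot\|$ is what guarantees that $B^\polar$ is a polytope, so the criterion applies. As a safety net, (d)$\Rightarrow$(a) can also be checked directly: each summand $|\langle\vec a,\cdot\rangle|$ satisfies the Hlawka inequality because $|\cdot|$ does on $\R$, and the inequality is preserved under sums.

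The main obstacle I expect is the seminorm (degenerate) case of (a)$\Rightarrow$(c). Here $B^\polar$ is lower-dimensional and possibly not full-dimensional in $\R^n$. My plan is to pass to the quotient $\R^n/\ker\|\cdot\|$ using $\bar f_\Pi$ and Proposition \ref{pro:quotient-subdifferential}. On the quotient the seminorm becomes a genuine polyhedral norm, Witsenhausen applies there, and the resulting zonotope is transported back through $\pi^*$ into $(\ker\|\cdot\|)^\perp$.
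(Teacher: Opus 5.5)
Your proof is correct and uses the same cycle as the paper: (d)$\Rightarrow$(e)$\Rightarrow$(b)$\Rightarrow$(c)$\Rightarrow$(d), with (a)$\Leftrightarrow$(c) via Witsenhausen. Your steps (d)$\Rightarrow$(e), (e)$\Rightarrow$(b) and (b)$\Rightarrow$(c) are exactly the paper's.

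The real difference is in (c)$\Rightarrow$(d). The paper writes $B^\polar$ as a cube image $T\square_m$. It then invokes its Claims 1 and 2, taken from Step 1 of the kernel reduction lemma (Lemma \ref{lem:g_R(A)g_K(A)}). These identify $\|\cdot\|^\polar$ with the infimal norm $\|\cdot\|_{\infty,\inf}$, and hence $\|\cdot\|$ with $\|T^*\cdot\|_1$. You instead combine $\|\cdot\|=\|\cdot\|^{\polar\polar}=h_{B^\polar}$ with additivity of support functions, $h_{\sum_i[-\vec a_i,\vec a_i]}(\vec x)=\sum_i|\langle\vec a_i,\vec x\rangle|$. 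This is the same identity $h_{T\square_m}=\|T^*\cdot\|_1$, since the columns of $T$ are your $\vec a_i$.

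What each route buys:
\begin{itemize}
\item Your version is more elementary. It also avoids the paper's assumption that $B^\polar$ is $n$-dimensional with $\mathrm{Range}(T)=\R^n$, which fails for a genuine seminorm with nontrivial kernel.
\item The paper's version has the merit of presenting the step as an instance of its kernel-reduction and duality machinery.
\end{itemize}

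You are also right to flag that Witsenhausen's criterion should be applied to $B^\polar$ as a full-dimensional polytope in $(\ker\|\cdot\|)^\perp$; the paper does not discuss this. It is simpler than your quotient plan suggests. Since $\|\vec x\|=\|P\vec x\|$ for the orthogonal projection $P$ onto that subspace, the Hlawka inequality on $\R^n$ and on the subspace are equivalent, and $B^\polar$ is the polar of the unit ball there. Your direct check of (d)$\Rightarrow$(a) then means you only need Witsenhausen in the direction (a)$\Rightarrow$(c).
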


\begin{proof}

(a) $\Leftrightarrow$ (c): 
Witsenhausen's theorem \cite{Witsenhausen} states that the support function of a centrally symmetric polytope $P$ satisfies Hlawka's inequality if and only if it is a zonotope. Now, suppose that  $\|\cdot\|_*$ is the Minkowski functional such that the unit ball $\{\vec x:\|\vec x\|_*\le1\}$ is the given origin-symmetric polytope $P$. 
Then the support function $\vec x\mapsto \sup_{y\in P}\vec x\cdot\vec y=\sup_{\| y\|_*\le1}\vec x\cdot\vec y$ agrees with the dual of $\|\cdot\|_*$, which we denote by $\|\cdot\|$. Then Witsenhausen's theorem is equivalent to the relation (a) $\Leftrightarrow$ (c).

(d) $\Rightarrow$ (e): A direct computation gives $\partial \|x\|=\Delta_1 x$ for the vertex-hyperedge incidence hypergraph $(w_{ij})$. 

(e) $\Rightarrow$ (b): 
By Theorem \ref{result:hyper-1-Lap} and its proof, we obtain that $\Delta_1 x=\partial \|x\|$ is a zonotope for any $x$,  which concludes (b).  


(b) $\Rightarrow$ (c): Note that  $\|\vec x\|_*\le 1$ if and only if $\vec x\cdot\vec y\le \|\vec y\|$ for any $\vec y$, which is equivalent to $\vec x\in \partial\|\vec 0\|$. That is to say, the polar dual $\{x\in\R^n:\|\vec x\|_*\le 1\}=\partial\|\vec 0\|$ is a zonotope. This derives (c).

Now, we prove (c) $\Rightarrow$  (d): we need the following useful statements.

\begin{enumerate}[{Claim} 1.]
    \item 
For a linear map $T:\R^m\to\R^n$ with $\mathrm{Range}(T)\ne\vec0$, we have 
$\{\vec y\in \mathrm{Range}(T):\|\vec y\|_{\inf}\le1\}=T\{\vec x\in\R^m:\|\vec x\|\le 1\}$, where 
$\|\vec y\|_{\inf}:=\inf\limits_{x\in T^{-1}(y)}\|\vec x\|$.  

Proof: In Step 1 of the proof of  Lemma \ref{lem:g_R(A)g_K(A)}, we have indeed shown the following claim:

$A\{x\in X:g(x)\le 1\}\subset \{y\in\mathrm{Range}(A):\bar g_{\RA}(y)\le1\}\subset A\{x\in X:g(x)< 1+\varepsilon\}$, $\forall \varepsilon>0$, where $A:X\to Y$ is a bounded linear operator, $g\in\cvp_c^p(X)$ and $\bar g_{\RA}(y)=\inf_{x\in A^{-1}y} g(x)$. Now, taking $X=\R^m$, $Y=\R^n$, $g=\|\cdot\|$ and $A=T$ in Lemma \ref{lem:g_R(A)g_K(A)}, then we have $\bar g_{\RA}(y)=\|y\|_{\inf}$, and 
$T\{x\in \R^m:\|x\|\le 1\}\subset \{y\in\mathrm{Range}(T):\|y\|_{\inf}\le1\}\subset T\{x\in \R^m:\|x\|< 1+\varepsilon\}$. By the local compactness of $\R^m$ and the compactness of the set $T\{x\in \R^m:\|x\|\le 1\}$, we have $T\{x\in \R^m:\|x\|\le 1\}=\bigcap_{\varepsilon>0}T\{x\in \R^m:\|x\|< 1+\varepsilon\}$ and thus $\{\vec y\in \mathrm{Range}(T):\|\vec y\|_{\inf}\le1\}=T\{\vec x\in\R^m:\|\vec x\|\le 1\}$ concluding Claim 1.

\item Let $T^*:\R^n\to\R^m$ be the dual operator of $T$. Then the dual norm of  $\|\cdot\|_{\inf}$  on $\mathrm{Range}(T)
$ equals $\|T^*\cdot\|^\polar$. Particularly, for  $\|\cdot\|=\|\cdot\|_\infty$, $(\|\cdot\|_{\inf})^\polar$ is an $l^1$-embedded norm. 

Proof: Taking $X=\R^m$, $Y=\R^n$, $g=\|\cdot\|$ and $A=T$ in Lemma \ref{lem:g_R(A)g_K(A)}, we derive from Step 1 in the proof of  Lemma \ref{lem:g_R(A)g_K(A)} that 
$(\bar g_{\RA})^\polar(y^*)=g^\polar(A^*y^*)$, that is, $(\|\cdot\|_{\inf})^\polar=\|T^*\cdot\|^\polar$, meaning that the dual norm of $\|T^*\cdot\|$  coincides with the dual norm of $\|\cdot\|_{\inf}$ on $\mathrm{Range}(T)
$. 

\end{enumerate}

Now we back to the proof of (c) $\Rightarrow$  (d). Let $\|\cdot\|^\polar$ denote the dual norm of $\|\cdot\|$. Then the dual polytope of  $\{\vec x:\|\vec x\|\le 1\}$ is $\{\vec x:\|\vec x\|^\polar\le 1\}$. Since $\{\vec x:\|\vec x\|^\polar\le 1\}$
 is a zonotope of dimension $n$ in $\R^n$, there exist $m\ge n$ and a linear map $T:\R^m\to\R^n$ such that $T(\{\vec x\in\R^m:\|\vec x\|_\infty\le 1\})=\{\vec y\in\R^n:\|\vec y\|^\polar\le 1\}$. It is clear that $\mathrm{Range}(T)=\R^n$, $\dim \mathrm{Ker}(T)=m-n$, $\dim \mathrm{Ker}(T^*)=0$,  and  $\|\vec y\|^\polar=\|\vec y\|_{\infty,\inf}$ by Claim 1. Therefore, applying Claim 2, 
 $\|\vec y\|= \|\vec y\|_{\infty,\inf}^\polar= \|T^*\vec y\|_1$. Since the matrix $T^*$  is of order $m\times n$, we can denote it by $T^*=(w_{ij})_{m\times n}$. Accordingly, $\|\vec x\|=\|T^*\vec x\|_1=\sum_{i=1}^m\left|\sum_{j=1}^nw_{ij}x_j\right|$.
\end{proof}

Inspired by the description before Proposition \ref{pro:matroid-zonotope-1-Lap}, we have the following concept:
\begin{defn}
We call the zonotope $Z$ generated by the oriented matroid $\mathcal{L}(E)$ related to the incidence matrix $\{\vec v_e\}_{e\in E}$ the \emph{hypergraphic zonotope} of the weighted hypergraph $\Gamma=(V,H,w)$.     
\end{defn}

\begin{theorem}\label{thm:1-infty-zono}
The nonzero eigenvalues of the 1-Laplacian on the hypergraph $\Gamma=(V,H,w)$, the nonzero eigenvalues of the $\infty$-Laplacian on the dual hypergraph $\Gamma^*=(H,V,w^\top)$, and the contact values between $Z$ and $\square$ coincide, i.e.,
$$\mathrm{Contact}(\square,Z)=\mathrm{spec}_{\ne0}(\Delta_1(\Gamma))=\mathrm{spec}_{\ne0}(\Delta_\infty(\Gamma^*))$$
where $Z$ is the hypergraphic zonotope 
of $\Gamma$
, and $\square$ is the unit ball of $\R^n$ in $\ell^\infty$ norm.
\end{theorem}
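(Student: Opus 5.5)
Let $W=(w_{ih})_{i\in V,h\in H}$ be the $n\times|H|$ vertex--hyperedge weight matrix, viewed as a linear map $W^\top:\R^n\to\R^H$. The plan is to write all three quantities as Lagrange critical data of two RC function-pairs related by Theorem~\ref{th:critical(fA,g)}, and then to pass to contact data through Lemma~\ref{lem:basic-contact}.

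\textbf{Step 1: the 1-Laplacian side.} By Definition~\ref{def:p-Lap-hypergraph}, $\Delta_1(\Gamma)=\partial\|W^\top\cdot\|_1$. The eigenproblem $0\in\Delta_1 x-\lambda\,\partial\|x\|_1$ therefore says exactly that $(\lambda,x)$ is a Lagrange critical pair of $(\|W^\top\cdot\|_1,\|\cdot\|_1)$. Hence $\mathrm{spec}_{\ne0}(\Delta_1(\Gamma))$ is the set of nontrivial Lagrange critical values of this pair. The nonvanishing requirements $f(x)\ne0$ and $g(x)\ne0$ are automatic once $\lambda\ne0$, by Euler's identity as in Proposition~\ref{pro:open-cone-intersect}.

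\textbf{Step 2: the contact side.} Apply Lemma~\ref{lem:basic-contact}(i) with $K=\{\|W^\top x\|_1\le1\}$ and $L=\Diamond_n$. This gives $\mathrm{Cri}_{LV}(\|W^\top\cdot\|_1,\|\cdot\|_1)=\mathrm{Contact}(L^\circ,K^\circ)$. Here $L^\circ=\square$. For $K^\circ$, Proposition~\ref{pro:CV_1&Cv_p}(iii) gives $K^\circ=\partial\|W^\top\cdot\|_1(0)=\Delta_1\vec0=\sum_h[-\vec v_h,\vec v_h]=Z$; this is the computation in the proof of Theorem~\ref{result:hyper-1-Lap}(i), and it identifies $Z$ with the hypergraphic zonotope defined via the oriented matroid. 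This gives $\mathrm{Contact}(\square,Z)=\mathrm{spec}_{\ne0}(\Delta_1(\Gamma))$. It is the same argument as in Theorem~\ref{pro:zonotope-Cheeger-constnat}, with the graph incidence matrix replaced by $W$. One caveat: if $K$ is unbounded, i.e.\ $\ker W^\top\ne0$, then $Z$ is lower-dimensional. Lemma~\ref{lem:basic-contact} is stated for convex sets with $0$ in the relative interior, so it still applies, but I would check that the tangency definition with $K\not\subset H$ handles this case correctly.

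\textbf{Step 3: the $\infty$-Laplacian side.} Take the $\infty$-Laplacian on $\Gamma^*=(H,V,w^\top)$ as the limiting or subdifferential eigenproblem of the RC function $\|W y\|_\infty/\|y\|_\infty$ on $\R^H$, i.e.\ the pair $(\|W\cdot\|_\infty,\|\cdot\|_\infty)$. Set $A=W^\top$, $f=\|\cdot\|_1$ on $\R^H$ and $g=\|\cdot\|_1$ on $\R^n$; these are continuous, as are their polars. Theorem~\ref{th:critical(fA,g)} then says that the nontrivial critical values of $(f\circ A,g)$ coincide with those of $(g^\circ\circ A^*,f^\circ)=(\|W\cdot\|_\infty,\|\cdot\|_\infty)$, since the polar of $\|\cdot\|_1$ is $\|\cdot\|_\infty$. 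Combining this with Step 1 yields the last equality.

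\textbf{Main obstacle.} The hardest part is fixing the definition of $\Delta_\infty(\Gamma^*)$ and its eigenvalues, which the paper has not yet stated, so that they are exactly the nontrivial Lagrange critical values of $(\|W\cdot\|_\infty,\|\cdot\|_\infty)$. The convention should be the $p\to\infty$ limit normalized through $\|\cdot\|_p$ rather than $\|\cdot\|_p^p$; Proposition~\ref{pro:CV_1&Cv_p}(i) and Corollary~\ref{cor:RC-property} reduce the homogeneity bookkeeping to the case $p=1$. The remaining orientation issues are routine: using $W$ versus $W^\top$ for the dual hypergraph, and matching $\mathrm{Contact}(\square,Z)$ rather than its reciprocal, which Lemma~\ref{lem:basic-contact}(ii) controls.
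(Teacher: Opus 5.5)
Your proposal is correct and follows the paper's route. You identify $\mathrm{spec}_{\ne0}(\Delta_1(\Gamma))$ with the nontrivial Lagrange critical values of $(\|W^\top\cdot\|_1,\|\cdot\|_1)$ (the paper's edge--vertex $W$ is your $W^\top$), pass to $\mathrm{Contact}(\square,Z)$ via Lemma~\ref{lem:basic-contact}(i) and Proposition~\ref{pro:CV_1&Cv_p}(iii), and reach the $\infty$-Laplacian through Theorem~\ref{th:critical(fA,g)} with both functions equal to $\|\cdot\|_1$. The obstacle you flag does not arise in the paper, which simply takes the $\infty$-Laplacian eigenproblem on $\Gamma^*$ to be, by definition, $\partial\|W\cdot\|_\infty\cap\lambda\,\partial\|\cdot\|_\infty\ne\varnothing$ in your orientation, so no $p\to\infty$ limit is needed.
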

\begin{proof}
First, we refer to \cite[Chapters 7 and 9]{JMZ-book} for the basic knowledge of the eigenvalue problems for 1-Laplacian and $\infty$-Laplacian on hypergraphs. An explicit definition for these eigenvalue problems is as follows.  
The 1-Laplacian eigenproblem is to find $\lambda$ and $x$ such that $\partial \|Wx\|_1\cap \lambda \partial\|x\|_1\ne\varnothing$, and the $\infty$-Laplacian eigenproblem is to find $\lambda$ and $x$ such that $\partial \|Wx\|_\infty\cap \lambda \partial\|x\|_\infty\ne\varnothing$, where $W:=(w_{hi})_{h\in H,i\in V}$ is the edge-vertex incidence matrix of the hypergraph. 

Thus, the set of the nonzero eigenvalues of the 1-Laplacian on the hypergraph $\Gamma=(V,H,w)$ can be written as $\mathrm{Cri}_{LV}(\|W\cdot\|_1,\|\cdot\|_1)$, while the set of the nonzero eigenvalues of the $\infty$-Laplacian on the dual hypergraph $\Gamma^*=(H,V,w^\top)$ can be written as $\mathrm{Cri}_{LV}(\|W^\top\cdot\|_\infty,\|\cdot\|_\infty)$.

Note that $Wx=(\vec v_e\cdot \vec x:e\in E)$ and $\partial|_{x=0} \|Wx\|_1=W^\top (\partial \|Wx\|_1)|_{x=0}=W^\top\big(\sum_{e\in E}\mathrm{Sgn}(\vec v_e\cdot \vec x)\vec1_{\{e\}}\big)|_{x=0}=W^\top \sum_{e\in E}[-\vec1_e,\vec1_e]=W^\top\square_m$. Since $W^\top \vec 1_{\{e\}}=\vec v_e$, we derive that $\partial|_{x=0} \|Wx\|_1=\sum_{e\in E} [-\vec v_e,\vec v_e]$ which is the hypergraphic zonotope of $\Gamma$.

Taking $f=\|W\cdot\|_1$ in Proposition \ref{pro:CV_1&Cv_p} (iii), we have $\partial|_{x=0} \|Wx\|_1 =\{x\in\R^n:f^\polar(x)\le 1\}$. 
Let $Z=\{x\in\R^n:f^\polar(x)\le 1\}$. Then $Z$ is the hypergraphic zonotope of $\Gamma$, i.e., a zonotope $Z$ that is generated by the oriented matroid $\mathcal{L}(E)$ related to the incidence matrix $\{\vec v_e\}_{e\in E}$ of the hypergraph $\Gamma=(V,H,w)$.  

By Lemma \ref{lem:basic-contact} (i), 
$\mathrm{Contact}(\square,Z)=\mathrm{Cri}_{LV}(\|\cdot\|_\infty,f^\polar)=\mathrm{Cri}_{LV}(f,\|\cdot\|_1)=\mathrm{Cri}_{LV}(\|W\cdot\|_1,\|\cdot\|_1)$. 
By Theorem \ref{th:critical(fA,g)}, $\mathrm{Cri}_{LV}(\|W\cdot\|_1,\|\cdot\|_1) =\mathrm{Cri}_{LV}(\|W^\top\cdot\|_\infty,\|\cdot\|_\infty)$.

\end{proof}

We can apply Theorem \ref{thm:PL-Hlawka-generalized-zonotope}  to achieve the following result.
\begin{theorem}
\label{thm:1-infty-hyper}
A convex body is a zonotope, if and only if the subdifferential of its support function equals the 1-Laplacian of some weighted hypergraph, if and only if the subdifferential of its Minkowski functional equals the $\infty$-Laplacian of a certain weighted hypergraph.    
\end{theorem}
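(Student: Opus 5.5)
Let $C\subset\R^n$ be a convex body containing the origin in its interior. Write $h_C$ for its support function and $\|\cdot\|_C$ for its Minkowski functional, so that $h_C=\|\cdot\|_{C^\polar}=(\|\cdot\|_C)^\polar$. The plan is to prove the first equivalence, ``$C$ is a zonotope iff $\partial h_C=\Delta_1$ for some weighted hypergraph'', by reduction to Theorem \ref{thm:PL-Hlawka-generalized-zonotope} applied to the norm $h_C$. The second equivalence, with the $\infty$-Laplacian, will then follow by polarity.

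\textbf{First equivalence.} Suppose $C=\sum_{h=1}^m[-\vec v_h,\vec v_h]$ is a zonotope. Then $h_C(\vec x)=\sum_h|\vec v_h\cdot\vec x|$, so $h_C$ is a piecewise linear norm of the form (d) in Theorem \ref{thm:PL-Hlawka-generalized-zonotope}. Taking $w_{ih}$ to be the entries of $\vec v_h$, the direct computation in (d)$\Rightarrow$(e) gives $\partial h_C=\Delta_1$ for that hypergraph.

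Conversely, suppose $\partial h_C=\Delta_1$ for some weighted hypergraph $(w_{ih})$. By Theorem \ref{result:hyper-1-Lap}, $C=\partial h_C(\vec 0)=\Delta_1\vec 0=\sum_h[-\vec v_h,\vec v_h]$, which is a zonotope. Here $C=\partial h_C(\vec 0)$ because $\partial h_C(\vec 0)=\{x:h_C^\polar(x)\le1\}=C$ by Proposition \ref{pro:CV_1&Cv_p}(iii).

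Note that this direction uses only the value of the map at $\vec 0$. So the ``general convex body'' part of the claim costs nothing: polyhedrality of $C$ is never assumed, it is forced by the identity $C=\Delta_1\vec 0$.

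\textbf{Second equivalence.} Here the hypergraph $\infty$-Laplacian is understood as $\partial\|W\cdot\|_\infty$ for an incidence matrix $W$, as in the proof of Theorem \ref{thm:1-infty-zono}. Suppose $C=T\square_m$ is a zonotope, with $T$ an $n\times m$ matrix of full rank $n$ (the rank is full because $C$ has interior). Then
$$\|x\|_C=\inf\{\|z\|_\infty: Tz=x\}=\|x\|_{\infty,\inf},$$
the quotient norm from Claim 1 in the proof of Theorem \ref{thm:PL-Hlawka-generalized-zonotope}.

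It remains to exhibit this quotient norm as $\|W\cdot\|_\infty$ for some $W$, and I expect this to be the main obstacle. My first attempt will use polarity. By Claim 2 in that proof, $\|\cdot\|_C^\polar=\|T^\top\cdot\|_1$, and $C^\polar=\{y:\|T^\top y\|_1\le1\}$ is a polytope. Enumerating the vertices $\pm\vec u_k$ of $C^\polar$ gives
$$\|x\|_C=\max_k|\vec u_k\cdot x|=\|Wx\|_\infty,$$
where the rows of $W$ are the $\vec u_k$. Then $\partial\|\cdot\|_C$ is the $\infty$-Laplacian of the hypergraph with incidence matrix $W$. This description is for the hypergraph itself; by Theorem \ref{thm:1-infty-zono}, which pairs $\Delta_1$ on $\Gamma$ with $\Delta_\infty$ on $\Gamma^*$, it is consistent to view it as the dual of the hypergraph associated with $C$.

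Conversely, suppose $\partial\|\cdot\|_C=\partial\|W\cdot\|_\infty$. Both functions are $1$-homogeneous and vanish at $0$, so Euler's identity gives $\|\cdot\|_C=\|W\cdot\|_\infty$. Then
$$h_C=(\|\cdot\|_C)^\polar=(\|W\cdot\|_\infty)^\polar.$$
By the Claim 1/Claim 2 computation (equivalently Lemma \ref{lem:g_R(A)g_K(A)}), this polar is the quotient of $\|\cdot\|_1$ under $W^\top$, so its unit ball is $W^\top\Diamond$, where $\Diamond$ is the $\ell^1$ unit ball. Hence $h_C$ is piecewise linear. The delicate point is to conclude that $C$ itself is a zonotope. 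Since $C^\polar=W^\top\Diamond$, this amounts to showing that $h_C$ has a sum-of-absolute-values form. I would try to obtain that by passing through Theorem \ref{thm:PL-Hlawka-generalized-zonotope} and the Hlawka inequality.

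If that route fails, I would replace ``the $\infty$-Laplacian'' by the precise meaning suggested by Theorem \ref{thm:1-infty-zono}: the $\infty$-Laplacian of the dual hypergraph $\Gamma^*$ corresponding to the $1$-Laplacian of $\Gamma$. Under that reading the second equivalence reduces to the first through the polarity $\|\cdot\|_C\leftrightarrow h_C$.
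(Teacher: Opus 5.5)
Your first equivalence is correct and is in substance the paper's route. The paper runs the chain (c)$\Rightarrow$(d)$\Rightarrow$(e)$\Rightarrow$(b)$\Rightarrow$(c) of Theorem \ref{thm:PL-Hlawka-generalized-zonotope} with the support function in place of the seminorm. Your direct computation $h_C=\sum_h|\vec v_h\cdot\vec x|$, together with $C=\partial h_C(\vec 0)=\Delta_1\vec 0$, is a cleaner version of the same argument. It also shows plainly why no polytope hypothesis is needed. Tacitly, ``zonotope'' must mean one centred at $\vec 0$, since $\Delta_1\vec 0$ is origin-symmetric; you assume this, as does the paper.

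The gap is the converse of the $\infty$-Laplacian clause. It cannot be closed under your reading $\Delta_\infty=\partial\|W\cdot\|_\infty$, which is also the reading used in the paper's proof of Theorem \ref{thm:1-infty-zono}.
\begin{itemize}
\item Your own forward argument never used that $C$ is a zonotope, only that $C$ is an origin-symmetric polytope.
\item In the other direction, $\|\cdot\|_C=\|W\cdot\|_\infty$ only gives $C^\polar=W^\top\Diamond=\mathrm{conv}\{\pm\text{rows of }W\}$. That is an arbitrary origin-symmetric polytope.
\item Concretely, the octahedron $\Diamond_3\subset\R^3$ satisfies $\|\vec x\|_{\Diamond_3}=\|\vec x\|_1=\|M\vec x\|_\infty$, where $M$ is the $4\times 3$ matrix with rows $(1,1,1)$, $(1,1,-1)$, $(1,-1,1)$, $(-1,1,1)$. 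So $\partial\|\cdot\|_{\Diamond_3}$ is the $\infty$-Laplacian of a weighted hypergraph.
\item Yet $\Diamond_3$ has triangular faces, so it is not a zonotope.
\end{itemize}
Your Hlawka route must therefore fail. Indeed $h_{\Diamond_3}=\|\cdot\|_\infty$, and $\vec x=(1,1,0)$, $\vec y=(1,0,1)$, $\vec z=(0,1,1)$ give left-hand side $5$ and right-hand side $6$ in Hlawka's inequality. So the third condition characterizes origin-symmetric polytopes, and for $n\ge 3$ it is strictly weaker than being a zonotope.

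The paper does not supply the missing step either. It disposes of this clause by pointing to Theorem \ref{thm:1-infty-zono}. That theorem equates \emph{sets of nonzero eigenvalues} of $(\|W\cdot\|_1,\|\cdot\|_1)$ on $\R^n$ and $(\|W^\top\cdot\|_\infty,\|\cdot\|_\infty)$ on $\R^m$. It says nothing about the subdifferential map of $\|\cdot\|_C$.

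Your fallback reading has the same defect. $\Delta_\infty(\Gamma^*)=\partial\|W^\top\cdot\|_\infty$ acts on $\R^m$ and is not $\partial\|\cdot\|_C$, so adopting it changes the statement rather than proving it. What polarity genuinely gives is $\|\cdot\|_C=h_{C^\polar}$. Hence ``$\partial\|\cdot\|_C$ is a hypergraph $1$-Laplacian'' is equivalent to $C^\polar$ being a zonotope. For the third clause as written, you can only prove the implication from zonotope to the $\infty$-Laplacian condition.
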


\begin{proof}
Let $P$ be a convex polytope containing the origin in its relative interior. 
Suppose $f$ is the Minkowski functional of $P$. Then $P=\{f\le 1\}$. Then, $f^\polar$ stands for the support function of $P$. 
By checking the proof of Theorem \ref{thm:PL-Hlawka-generalized-zonotope}, we find that the equivalence among (b), (c), (d), (e) does not require that $\|\cdot\|$ is a piecewise linear semi-norm. 
In fact, when we use a function $f\in \cvx_0^1(\R^n)$ instead of the semi-norm $\|\cdot\|$, the equivalence holds. 
Specifically, $P$ is a zonotope if and only if $\partial f^\polar$ is the 1-Laplacian on a certain weighted hypergraph. 
As explained in the proof of Theorem \ref{thm:1-infty-zono}, this is equivalent to the statement that $\partial f$ represents the $\infty$-Laplacian on a certain weighted hypergraph. 
\end{proof}

\begin{remark}
Here, we present the necessary and sufficient conditions for a general convex body to be a zonotope. The first four known equivalent characterizations of zonotopes, however, are restricted to the case of convex polytopes.
\end{remark}

\subsubsection{$l^p$-minimal energy}

It is known that zonotopes are closely related to vector configurations. In \cite{AmbrusNietert,FWY21}, the following constants on vector configurations are  introduced.
\begin{definition}[minimal energy \cite{AmbrusNietert,FWY21}]Given $n,d\in\mathbb{N}_+$, 
let
$$C_{n,d}:=\min\limits_{\vec v_1,\ldots,\vec v_n\in \mathbb{S}^{d-1}}\max\limits_{\varepsilon_1,\ldots,\varepsilon_n\in\{-1,1\}}\|\varepsilon_1\vec v_1+\ldots+\varepsilon_n\vec v_n\|_2$$
where  $\mathbb{S}^{d-1}$ is the unit sphere of dimension $(d-1)$ in $\R^d$ and  $\|\cdot\|_2$ is the standard Euclidean norm in $\R^d$. 
\end{definition}

The quantity $C_{n,d}$ is increasing with respect to $n$, and is decreasing with respect to $d$. We refer to \cite{AmbrusNietert,AmbrusMerino20} for the interesting  properties:  $\sqrt{n}\le C_{n,d}\le n$, $C_{d+1,d}\le  \sqrt{d+2}$, and $C_{n,d}=\sqrt{n}$ if and only if  $n\le d$. 
It is conjectured in \cite{AmbrusNietert} that $C_{d+1,d}=  \sqrt{d+2}$. Moreover, $C_{n,d}$ is also the best lower bound
of the diameter of the $d$-dimensional zonotope generated by $n$ unit segments, and the largest Laplacian eigenvalue of a graph on $n$ vertices has the lower bound $\frac1n \cdot C_{n,n-m_0}^2$, where $m_0$ is the multiplicity of the eigenvalue $0$. 

\begin{definition}[$l^p$-polarization (or Chebyshev) constant \cite{AmbrusNietert}] The constant 
$$\min\limits_{u_1,\cdots,u_n\in \mathbb{S}^{d-1}}\max\limits_{\|\vec x\|_2\le 1}\sum_{i=1}^n|\vec u_i\cdot\vec x|^p$$
is called the $l^p$-polarization (or Chebyshev) constant of $\mathbb{S}^{d-1}$.
\end{definition}
It is known that $l^2$-polarization  constant is $\frac nd$, 
when $\vec u_1,\cdots,\vec u_n$ form an isotropic set of unit vectors. While, the $l^1$-polarization  constant coincides with  $C_{n,d}$  (see  \cite{FWY21,AmbrusNietert} for two different proofs, or apply our duality result directly).  

We slightly generalize $C_{n,d}$ to the concept of $l^p$-minimal energy. 
\begin{defn}[$l^p$-minimal energy]
$$C_{n,d,p}:=\min\limits_{\|\vec v_i\|_p=1,\forall i}\max\limits_{\varepsilon_1,\ldots,\varepsilon_n\in\{-1,1\}}\|\varepsilon_1\vec v_1+\ldots+\varepsilon_n\vec v_n\|_p$$    
\end{defn}
\begin{prop}\label{prop:Cndp-minimal-energy}
The constant $C_{n,d,p}$ equals 
the best lower bound
of the $l^p$-diameter of the $d$-dimensional zonotope generated by $n$ unit
segments.
\end{prop}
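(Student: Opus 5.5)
We will show directly that the two extremal quantities are literally the same number. Write the zonotope generated by $n$ segments of unit $l^p$-length as
\[
Z=\sum_{i=1}^n[-\vec v_i,\vec v_i]\subset\R^d,\qquad \|\vec v_i\|_p=1,
\]
with $\dim Z=d$. We use $[-\vec v_i,\vec v_i]$, i.e.\ segments of length $2$ centered at the origin. With the other convention (segments $[\vec 0,\vec v_i]$) every quantity below is halved, and the equality is unaffected.

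The first step is to compute the $l^p$-diameter of $Z$. The function $(\vec x,\vec y)\mapsto\|\vec x-\vec y\|_p$ is convex, so its maximum over $Z\times Z$ is attained at a pair of vertices. Since $Z$ is origin-symmetric, $Z-Z=2Z$, and therefore
\[
\mathrm{diam}_p(Z)=2\max_{\vec z\in Z}\|\vec z\|_p .
\]
The function $\vec z\mapsto\|\vec z\|_p$ is also convex, so this maximum is attained at a vertex of $Z$. Every vertex has the form $\sum_i\varepsilon_i\vec v_i$ with $\varepsilon_i\in\{-1,1\}$: this follows from the face description $Z_X$ recalled before Proposition~\ref{pro:matroid-zonotope-1-Lap}, whose vertices correspond to covectors with no zero entries. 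Conversely, every such signed sum lies in $Z$. Hence
\[
\mathrm{diam}_p(Z)=2\max_{\varepsilon\in\{\pm1\}^n}\Big\|\sum_i\varepsilon_i\vec v_i\Big\|_p .
\]
So the diameter of $Z$ equals, up to the fixed convention factor, the inner max in the definition of $C_{n,d,p}$.

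The second step is to take the infimum over all admissible configurations. Minimizing over unit vectors $\vec v_1,\dots,\vec v_n$ gives $C_{n,d,p}$, up to the same factor, by definition. A compactness argument shows the minimum is attained: the configuration space is a product of unit $l^p$-spheres, and the objective is continuous. Therefore the best lower bound for the diameter is exactly $C_{n,d,p}$ and is not merely approached.

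The one point that needs care is the dimension constraint. Requiring $\dim Z=d$ means the $\vec v_i$ must span $\R^d$, so we are minimizing over a dense subset of the configuration space. By continuity the infimum over spanning configurations equals the minimum over all configurations, which is $C_{n,d,p}$. If "$d$-dimensional" is read as "lying in $\R^d$", no such argument is needed.

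If desired, one can also identify the diameter with a dual quantity, using Theorem~\ref{th:critical(fA,g)} or simply $\max_{\vec z\in Z}\|\vec z\|_p=\max_{\|\vec x\|_q\le1}\sum_i|\vec v_i\cdot\vec x|$. This recovers the $l^1$-polarization interpretation, but it is not needed for the statement.
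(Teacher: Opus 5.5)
The paper states this proposition without proof, treating it as an unwinding of the definitions (the $p=2$ case is the remark recalled just before it). Your argument is the natural one and is correct in substance, but the normalization needs fixing.

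\textbf{The factor $2$ is not a harmless convention.} With $Z=\sum_i[-\vec v_i,\vec v_i]$ and $\|\vec v_i\|_p=1$, the segments have length $2$. The diameter is then $2\max_\varepsilon\|\sum_i\varepsilon_i\vec v_i\|_p$, so the best lower bound is $2C_{n,d,p}$. Your remark that ``the equality is unaffected'' is therefore wrong as written: the equality holds only for unit-length segments. The statement is about unit segments, so work with $Z=\sum_i[\vec 0,\vec v_i]$; any zonotope generated by $n$ unit segments is a translate of one of these. Then $Z-Z=\sum_i[-\vec v_i,\vec v_i]$. Maximizing the convex function $t\mapsto\|\sum_i t_i\vec v_i\|_p$ over the cube $[-1,1]^n$ gives $\mathrm{diam}_p(Z)=\max_{\varepsilon\in\{\pm1\}^n}\|\sum_i\varepsilon_i\vec v_i\|_p$ exactly. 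This needs neither the symmetry step $Z-Z=2Z$ nor the face-lattice citation.

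\textbf{Two smaller points on the dimension constraint.}

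First, $d$-dimensional zonotopes generated by $n$ segments exist only when $n\ge d$. That is exactly what your density claim needs, and it deserves a line. You can perturb $\vec v_1,\dots,\vec v_d$ successively off the span of the previous ones, because every relatively open subset of the unit $l^p$-sphere spans $\R^d$ (by radial projection from the Euclidean sphere).

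Second, once you impose $\dim Z=d$, your claim that the bound is attained and ``not merely approached'' is unjustified. Nothing you have shown rules out that every minimizing configuration for $C_{n,d,p}$ is non-spanning. The statement only concerns the infimum, so simply drop that claim, or restrict it to the ``lying in $\R^d$'' reading.
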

 
And then we give an eigenvalue estimate by $l^p$-minimal energy:
\begin{theorem}\label{thm:p-Lap-zonotope-C-n-d}
For an unnormalized  $p$-Laplacian on 
an oriented $k$-uniform hypergraph\footnote{We refer to \cite[Sections 4.9 and 7.4]{JMZ-book} for the definition of oriented $k$-uniform hypergraph.} $\Gamma=(V,E,w)$,  
$$\lambda_{\max}(\Delta_p)\ge k^{\frac {p}{p^*}}C_{|E|,|V|-m_0,p^*}^p|E|^{1-p}$$

For a normalized  $p$-Laplacian on an oriented $k$-uniform hypergraph 
$$\lambda_{\max}(\Delta_p^N)\ge \frac{1}{n}C_{|V|,|V|-m_0,p}^p$$
where $p^*$ is the H\"older conjugate of $p$, and $m_0$ is the multiplicity of the eigenvalue $0$.

\end{theorem}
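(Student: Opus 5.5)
The plan is to read $\lambda_{\max}(\Delta_p)$ as the top Lusternik--Schnirelman (equivalently, largest Lagrange) critical value of the RC function $\|B\cdot\|_p^p/\|\cdot\|_p^p$, pass to the dual side with Theorem \ref{th:critical(fA,g)}, and test the dual quotient on sign vectors. Here $B$ is the $|E|\times|V|$ edge--vertex incidence matrix of the oriented $k$-uniform hypergraph, with one row $b_e^\top$ per hyperedge and exactly $k$ entries $\pm1$ in each row. By Corollary \ref{cor:RC-property} it suffices to handle the one-homogeneous quotient $\|Bx\|_p/\|x\|_p$, whose maximum is $\lambda_{\max}(\Delta_p)^{1/p}$.

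First I apply Theorem \ref{th:critical(fA,g)} with $f=\|\cdot\|_p$ on $\R^{E}$, $g=\|\cdot\|_p$ on $\R^V$ and $A=B$, using $f^\polar=g^\polar=\|\cdot\|_{p^*}$. This identifies the largest nonzero critical value with that of $\|B^\top y\|_{p^*}/\|y\|_{p^*}$. Both quotients attain their maxima, which are then Lagrange critical, and Theorem \ref{th:critical(fA,g)} matches the two sets of nontrivial critical values. (Alternatively, this step is just equality of the operator norms of $B$ and $B^\top$.) Hence
\[
\lambda_{\max}(\Delta_p)^{1/p}=\max_{y\ne0}\frac{\|B^\top y\|_{p^*}}{\|y\|_{p^*}}\ \ge\ \max_{\varepsilon\in\{\pm1\}^E}\frac{\big\|\sum_{e}\varepsilon_e b_e\big\|_{p^*}}{|E|^{1/p^*}},
\]
where the last quantity comes from restricting to sign vectors $y=\varepsilon$. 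Next I normalize $v_e=b_e/k^{1/p^*}$, which satisfies $\|v_e\|_{p^*}=1$ because each $b_e$ has exactly $k$ entries of modulus one. The right-hand side then becomes $k^{1/p^*}|E|^{-1/p^*}\max_\varepsilon\|\sum_e\varepsilon_e v_e\|_{p^*}$.

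The remaining task is to bound $\max_\varepsilon\|\sum\varepsilon_e v_e\|_{p^*}$ from below by $C_{|E|,|V|-m_0,p^*}$. The vectors $v_e$ span $\mathrm{Range}(B^\top)$, and its dimension is $\mathrm{rank}\,B=|V|-m_0$ because $\ker \Delta_p=\ker B$. The sums $\sum\varepsilon_e v_e$ are, up to the centre, the vertices of the zonotope $\sum_e[-v_e,v_e]$, which is $(|V|-m_0)$-dimensional. I therefore invoke Proposition \ref{prop:Cndp-minimal-energy}, which says that $C_{n,d,p^*}$ is the best lower bound on the $\ell^{p^*}$-diameter (equivalently, on the maximal vertex norm) of a $d$-dimensional zonotope generated by $n$ unit segments.

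\textbf{This is the step I expect to be the main obstacle.} The definition of $C_{n,d,p}$ places the vectors in $\R^d$ with its own $\ell^{p}$ norm, whereas here they live in a $d$-dimensional subspace of $\R^{|V|}$ carrying the restricted $\ell^{p^*}$ norm. My first attempt is to take Proposition \ref{prop:Cndp-minimal-energy} as stated, for $d$-dimensional zonotopes sitting inside any ambient $\ell^{p^*}$ space. If that fails, I would fall back on the monotonicity of $C_{n,d,p}$ in $d$, together with an argument that the minimum is not lowered by allowing ambient coordinates.

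Assuming that bound, raising to the $p$-th power gives
\[
\lambda_{\max}(\Delta_p)\ \ge\ k^{p/p^*}\,C^p\,|E|^{-p/p^*}=k^{p/p^*}\,C^p\,|E|^{1-p},
\]
since $p/p^*=p-1$. For the normalized operator $\Delta_p^N$ I would repeat the same argument with the test vectors placed on the $|V|$ side instead: the unit vectors are now indexed by vertices, which gives the constant $C_{|V|,|V|-m_0,p}$. The normalization weights account for the factor $\tfrac1n$ with $n=|V|$; I would verify this last normalization bookkeeping directly.
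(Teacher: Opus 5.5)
Your proposal is correct and reaches the same bounds as the paper, with a different route for the normalized part.

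\textbf{Unnormalized bound.} Here your argument is the paper's in different packaging. The paper does not invoke Theorem \ref{th:critical(fA,g)} at this point. It writes $|\langle \sum_e\varepsilon_e w_e,x\rangle|^p\le |E|^{p-1}\sum_e|\langle w_e,x\rangle|^p$ by H\"older, then takes the supremum over $x$ using $\ell^p$--$\ell^{p^*}$ norm duality. That is exactly your inequality $\|B\|_{p\to p}=\|B^\top\|_{p^*\to p^*}\ge \|B^\top\varepsilon\|_{p^*}/|E|^{1/p^*}$. So your parenthetical remark (equality of operator norms) is all you need; the critical duality theorem is more than required.

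\textbf{Normalized bound.} Here the routes genuinely differ. The paper applies Theorem \ref{th:critical(fA,g)} to rewrite $\lambda_{\max}(\Delta_p^N)$ as the maximum over $y\in\R^E$ of $\bigl(\sum_i\deg_i^{1-p^*}|\langle v_i,y\rangle|^{p^*}/\|y\|_{p^*}^{p^*}\bigr)^{p/p^*}$, where $v_i$ is the $i$-th column of $B$. It then runs H\"older with vertex signs, which in effect dualizes back.

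Your ``vertex side'' plan needs no duality at all once you make the test vector explicit. Insert $x_i=\varepsilon_i\deg_i^{-1/p}$, with $\varepsilon\in\{\pm1\}^V$, into the primal quotient.
\begin{itemize}
\item The numerator becomes $\|Bx\|_p^p$ with $Bx=\sum_i\varepsilon_i\deg_i^{-1/p}v_i$. Each $v_i$ has exactly $\deg_i$ entries $\pm1$, so $\|\deg_i^{-1/p}v_i\|_p=1$.
\item The denominator is $\sum_i\deg_i|x_i|^p=|V|$.
\end{itemize}
Hence $\lambda_{\max}(\Delta_p^N)\ge \|\sum_i\varepsilon_i\deg_i^{-1/p}v_i\|_p^p/|V|$, which is exactly the paper's intermediate bound. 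These $|V|$ unit vectors span $\mathrm{Range}(B)$, of dimension $|V|-m_0$. Do not instead repeat the unnormalized recipe of testing the dual variable $y\in\R^E$ on sign vectors: that produces edge-indexed vectors and the wrong constant.

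\textbf{The step you flag.} The paper handles it exactly as your first option: it simply cites $\dim\mathrm{span}\{w_e\}=|V|-m_0$. In other words, it uses $C_{n,d,p}$ in the sense of Proposition \ref{prop:Cndp-minimal-energy}: the minimum over $n$ unit vectors of an $\ell^p$ space whose zonotope is $d$-dimensional, whatever the ambient dimension.

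Your fallback would not close the gap under the stricter reading, where the vectors lie in $\R^d$.
\begin{itemize}
\item Monotonicity in $d$ goes the wrong way. Embedding the configuration in $\R^{|V|}$ only yields the weaker constant $C_{|E|,|V|,p^*}$.
\item For $p^*\neq 2$, a $d$-dimensional subspace of $\ell^{p^*}_N$ need not be isometric to $\ell^{p^*}_d$. So ``extra coordinates do not lower the minimum'' is a separate claim that nothing in the paper supplies.
\end{itemize}
Adopt the span-dimension reading, as the paper does, and your argument is complete.
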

\begin{proof}
For the case of unnormalized $p$-Laplacian, 
for any $\vec x\ne\vec 0$, and for any $\varepsilon_e\in\{-1,1\}$, by H\"older's inequality,  
$$\frac{\sum_{e\in E}|\langle\vec w_e,\vec x\rangle|^p}{\|\vec x\|_p^p}= \frac{\sum_{e\in E} |\varepsilon_e|^p|\langle \vec w_e,\vec x\rangle|^p}{\|\vec x\|_p^p}\ge \frac{|\sum_{e\in E} \varepsilon_e\langle \vec w_e,\vec x\rangle|^p}{|E|^{p-1}\|\vec x\|_p^p}= \frac{|\langle \sum_{e\in E} \varepsilon_e\vec w_e,\vec x\rangle|^p}{|E|^{p-1}\|\vec x\|_p^p}$$
where $(\vec w_e)_{e\in E}$ is the hyperedge-vertex incidence matrix of $\Gamma$. 
Thus, by norm duality, we have
$$\max\limits_{x\ne 0}\frac{\sum_{e\in E}|\langle\vec w_e,\vec x\rangle|^p}{\|\vec x\|_p^p}\ge \max\limits_{x\ne 0}\frac{|\langle \sum_{e\in E} \varepsilon_e\vec w_e,\vec x\rangle|^p}{|E|^{p-1}\|\vec x\|_p^p}=\frac{\|\sum_{e\in E} \varepsilon_e\vec w_e\|_{p^*}^p}{|E|^{p-1}}.$$
Since the oriented hypergraph is $k$-uniform, i.e., each hyperedge corresponds to $k$ vertices, and thus $\|\vec w_e\|_{p^*}=k^{\frac{1}{p^*}}$. 
Then, we obtain
$$\lambda_{\max}(\Delta_p)=\max\limits_{x\ne 0}\frac{\sum_{e\in E}|\langle\vec w_e,\vec x\rangle|^p}{\|\vec x\|_p^p}\ge \max\limits_{\varepsilon_1,\ldots,\varepsilon_n\in\{-1,1\}}\frac{\|\sum_{e\in E} \varepsilon_e\vec w_e\|_{p^*}^p}{|E|^{p-1}}
\ge \frac{k^{\frac{p}{p^*}}C_{|E|,|V|-m_0,p^*}^p}{|E|^{p-1}}$$
where the last inequality uses the fact that $|V|-m_0=\dim\mathrm{span}(\vec w_e:e\in E)$. 

Suppose that $(\vec v_i)_{i\in V}$ indicates the vertex-hyperedge incidence matrix of $\Gamma$, that is, the ‌transposed matrix‌ of $(\vec w_e)_{e\in E}$.  
Applying Theorem \ref{th:critical(fA,g)} to $p$-Laplacian eigenvalues, for the largest eigenvalue of the normalized $p$-Laplacian $\Delta_p^N$, 
we have 
$$\lambda_{\max}(\Delta_p^N)= \max\limits_{x\ne 0}\frac{\sum_{e\in E}|\langle\vec w_e,\vec x\rangle|^p}{\sum_{i\in V}\deg_i|x_i|^p}=\max\limits_{y\ne 0}\left(\frac{\sum_{i\in V}\deg_i^{1-p^*}|\langle\vec v_i,\vec y\rangle|^{p^*}}{\|\vec y\|_{p^*}^{p^*}}\right)^{\frac{p}{p^*}}$$
 For any $\vec y\ne\vec 0$, and for any $\varepsilon_i\in\{-1,1\}$, by H\"older's inequality,  
$$\frac{\sum_{i\in V}\deg_i^{1-p^*}|\langle\vec v_i,\vec y\rangle|^{p^*}}{\|\vec y\|_{p^*}^{p^*}}= \frac{\sum_{i\in V} |\varepsilon_i|^{p^*}|\langle \deg_i^{-\frac1p}\vec v_i,\vec y\rangle|^{p^*}}{\|\vec y\|_{p^*}^{p^*}}\ge \frac{|\langle\sum_{i\in V} \varepsilon_i \deg_i^{-1/p}\vec v_i,\vec y\rangle|^{p^*}}{|V|^{p^*-1}\|\vec y\|_{p^*}^{p^*}}$$
Thus,
$$\max\limits_{y\ne 0}\frac{\sum_{i\in V}\deg_i^{1-p^*}|\langle\vec v_i,\vec y\rangle|^{p^*}}{\|\vec y\|_{p^*}^{p^*}}
\ge \max\limits_{y\ne 0} \frac{|\langle\sum_{i\in V} \varepsilon_i \deg_i^{-1/p}\vec v_i,\vec y\rangle|^{p^*}}{|V|^{p^*-1}\|\vec y\|_{p^*}^{p^*}}
=\frac{\|\sum_{i\in V} \varepsilon_i \deg_i^{-1/p}\vec v_i\|^{p^*}_p}{|V|^{p^*-1}}.$$
By $\|\deg_i^{-1/p}\vec v_i\|_p=1$, the maximum eigenvalue of the normalized $p$-Laplacian 
$$\lambda_{\max}(\Delta_p^N)=\max\limits_{y\ne 0}\left(\frac{\sum_{i\in V}\deg_i^{1-p^*}|\langle\vec v_i,\vec y\rangle|^{p^*}}{\|\vec y\|_{p^*}^{p^*}}\right)^{\frac{p}{p^*}} \ge \frac{\|\sum_{i\in V} \varepsilon_i \deg_i^{-1/p}\vec v_i\|^{p}_p}{|V|}\ge\frac{1}{n}C_{n,n-m_0,p}^p$$
where $n=|V|$, and we use the fact that $n-m_0=\mathrm{rank}(\vec w_e)_{e\in E}=\mathrm{rank}(\vec v_i)_{i\in V}=\dim\mathrm{span}(\vec v_i:e\in E)$. 
\end{proof}


\subsection{Application to nonlinear eigenproblems and bifurcation problems}
\label{sec:eigen&bifurcation}
\subsubsection{Nonlinear eigenproblems agree with fixed point problems}
\label{sec:eigenproblem}
Definition \ref{defn:critical(f,g)} leads to a nonlinear eigenproblem. In fact, for two locally Lipschitz continuous functions $f$ and $g$, if $\lambda\in\R$ and $x\in X\setminus0$ satisfy  
$$0\in\partial f(x)-\lambda\,\partial g(x),$$
then we say that $x$ is an \emph{eigenvector}, $\lambda$ is an \emph{eigenvalue}, and $(\lambda,x)$ is an \emph{eigenpair} of the function pair $(f,g)$. 
In this section, such $f$ and $g$ are further assumed to be positively one-homogeneous, convex, and positive-define (i.e., $f(x)>0$ whenever $x\ne0$).

\begin{defn}[\cite{Aubin}]
For a set-valued map $F \colon X \rightrightarrows X$, we say that $x\in X$ is a \emph{fixed point} of $F$ if $x\in F(x)$. The \emph{composition} $F\circ G$ of two set-valued maps $F,G\colon X \rightrightarrows X$ is simply defined as $F\circ G(x):=\bigcup_{z\in G(x)}F(z)$, and we also write it as $F(G(x))$, where $ x\in X$.     
\end{defn}

Next, we give the following result which indicates that the nonlinear eigenproblems agree with set-valued fixed point problems.
 
\begin{prop}
For any $x\ne0$, the following three statements are equivalent:
\begin{itemize}
\item[(1)] $x$ is an eigenvector of $(f,g)$
\item[(2)] $x/f(x)$ is a fixed point of $\partial f^\polar \circ\partial g$
\item[(3)] $x/g(x)$ is a fixed point of $\partial g^\polar \circ\partial f$
\end{itemize}
\end{prop}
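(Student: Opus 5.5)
The plan is to reduce everything to the Claim established in the proof of Theorem~\ref{th:critical(f,g)} and to the equality case of Lemma~\ref{lem:f/g-zyx-tech}. Throughout, $f,g$ are positively one-homogeneous, convex and positive-definite, so $f,g\in\cvx_{0,+}^1(X)$ and $f^{\polar\polar}=f$ by Proposition~\ref{pro:CV_p-basic}. The two facts we use are the zero-homogeneity of subdifferentials \eqref{eq:0-homogeneity-partial} and the Claim: if $g(x)=1$ and $u\in\partial g(x)$, then $g^\polar(u)=1$ and $x\in\partial g^\polar(u)$. The same holds with $f$ in place of $g$. Since (1) is symmetric in the roles of $f$ and $g$ (an eigenpair $(\lambda,x)$ of $(f,g)$ with $\lambda>0$ gives the eigenpair $(1/\lambda,x)$ of $(g,f)$), it suffices to prove (1)$\Leftrightarrow$(2). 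Then (1)$\Leftrightarrow$(3) follows by swapping $f$ and $g$.

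For (1)$\Rightarrow$(2), let $x\neq 0$ and $u\in\partial g(x)$ with $\lambda u\in\partial f(x)$. By Euler's identity and positive-definiteness, $\lambda=f(x)/g(x)>0$. Set $\bar x=x/f(x)$, so $f(\bar x)=1$. By zero-homogeneity, $\lambda u\in\partial f(\bar x)$, and the Claim applied to $f$ gives $\bar x\in\partial f^\polar(\lambda u)=\partial f^\polar(u)$. Since $u\in\partial g(x)=\partial g(\bar x)$, we get $\bar x\in\partial f^\polar(\partial g(\bar x))$. So $\bar x$ is a fixed point of $\partial f^\polar\circ\partial g$, with $X^{**}$ identified with $X$ via $J$ as in the paper.

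For (2)$\Rightarrow$(1), let $\bar x=x/f(x)$ and suppose $\bar x\in\partial f^\polar(y)$ for some $y\in\partial g(\bar x)$. Lemma~\ref{lem:f/g-zyx-tech} applied with $z=\bar x$ should give equality, which forces $\bar x$ (hence $x$) to be a Lagrange critical point. To make this explicit, Euler's identity for $f^\polar$ gives $\langle y,\bar x\rangle=f^\polar(y)$, and $y\in\partial g(\bar x)$ gives $\langle y,\bar x\rangle=g(\bar x)$. Hence $f^\polar(y)=g(\bar x)>0$. Next, $\bar x\in\partial f^\polar(y)$ with $f^{\polar\polar}=f$ means $y/f^\polar(y)$ attains $f(\bar x)\,f^\polar(y)=\langle y,\bar x\rangle$. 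By the equality characterization in the proof of Lemma~\ref{lem:f/g-zyx-tech}, this says $y/f^\polar(y)\in\partial f(\bar x)$. We now have $y\in\partial g(\bar x)$ and $y/g(\bar x)\in\partial f(\bar x)$, so $0\in\partial f(x)-\lambda\,\partial g(x)$ with $\lambda=1/g(\bar x)=f(x)/g(x)$, which is (1).

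The main obstacle is this converse step: justifying that $\bar x\in\partial f^\polar(y)$ implies $y/f^\polar(y)\in\partial f(\bar x)$. I would prove it directly rather than rely on the lemma's phrasing, by applying the Claim to $f^\polar$ (legitimate since $f^\polar\in\cvx_0^1(X^*)$ and $f^{\polar\polar}=f$). After rescaling $y$ so that $f^\polar(y)=1$, the Claim gives $f(\bar x)=1$ and $y\in\partial f(\bar x)$, using reflexivity. Care is needed with the normalizations $f(\bar x)=1$ versus $g^\polar(y)=1$, but zero-homogeneity of all subdifferentials absorbs the positive scalars.
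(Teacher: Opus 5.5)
Your proposal is correct and takes the same approach as the paper. (1)$\Rightarrow$(2) uses the Claim from the proof of Theorem~\ref{th:critical(f,g)} applied to $f$ together with zero-homogeneity of the subdifferentials; (2)$\Rightarrow$(1) uses the same Claim applied to $f^\circ$, via $f^{\circ\circ}=f$ and reflexivity; and (1)$\Leftrightarrow$(3) follows by exchanging $f$ and $g$. Your check that $f^\circ(y)=\langle y,\bar x\rangle=g(\bar x)>0$, which justifies rescaling to $y/f^\circ(y)$, is a detail the paper leaves implicit, and the detour through Lemma~\ref{lem:f/g-zyx-tech} is not needed.
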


\begin{proof}

(1) $\Rightarrow$ (2):  
Since 
$x$ is an eigenvector, 
there exists $\lambda>0$ such that $0\in\partial f(x)-\lambda\,\partial g(x)$, i.e., 
$$ \partial f(x)\bigcap\lambda\,\partial g(x) \ne\emptyset.$$
It means that there is $u\in \partial g(x)$ such that $\lambda u\in \partial f(x)$. Combining with the 0-homogeneity of $\partial g$ and $\partial f^\polar $,  by the claim in the proof of Theorem \ref{th:critical(f,g)}, we have $$\frac{x}{f(x)}\in \partial f^\polar (\lambda u)=\partial f^\polar ( u)\subset \partial f^\polar (\partial g(x))= \partial f^\polar \Big(\partial g\big(\frac{x}{f(x)}\big)\Big),$$ 
implying that $x/f(x)$ is a fixed point of $\partial f^\polar \circ\partial g$. 

(2) $\Rightarrow$ (1): Since $x$ is an eigenvector of $(f,g)$ if and only if $c x$ is an eigenvector of $(f,g)$ whenever $c>0$ is fixed, we assume without loss of generality that $f(x)=1$ and $x$ is a fixed point of $\partial f^\polar \circ\partial g$, i.e., 
$x\in \partial f^\polar (\partial g(x))$. Then there exists $u\in \partial g(x)$ such that $x\in \partial f^\polar (u)$. Thus, by the claim in the proof of Theorem \ref{th:critical(f,g)}, we have $u/f^\polar (u)\in \partial f(x)$. Taking $\lambda=1/f^\polar (u)$, we have $\lambda u\in \partial f(x)\bigcap\lambda\,\partial g(x)$. This implies that $x$ is an eigenvector of $(f,g)$. 

The proof of ``(1) $\Leftrightarrow$ (3)'' is similar, and thus we omit it.
\end{proof}

In a similar manner, we have:
\begin{prop}
$x$ is an eigenvector of $(f\circ A,g)$ if and only if $x/g(x)$ is a fixed point of $\partial g^\polar \circ A^*\circ\partial f\circ A$
\end{prop}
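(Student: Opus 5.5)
The plan is to run the argument of the preceding proposition with $f$ replaced by $f\circ A$. The subdifferential chain rule $\partial(f\circ A)(x)=A^*\partial f(Ax)$ (Proposition \ref{prop:basic-in-convex} (i), valid since $f$ is continuous) turns the eigen-relation for $(f\circ A,g)$ into a statement about $A^*\partial f(Ax)$. The Claim in the proof of Theorem \ref{th:critical(f,g)}, applied to $g$, then converts membership in $\partial g$ into membership in $\partial g^\polar$ and back. Throughout I use the zero-homogeneity of $\partial g$, $\partial f$ and $\partial g^\polar$, which allows every scaling of $x$ and of dual vectors. Since $x$ is an eigenvector of $(f\circ A,g)$ if and only if $tx$ is one for every $t>0$, I normalize $g(x)=1$.

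\emph{($\Rightarrow$)} Let $0\in\partial(f\circ A)(x)-\lambda\partial g(x)$ with $x\neq0$.
\begin{itemize}
\item Pick $u\in\partial g(x)$ with $\lambda u\in\partial(f\circ A)(x)$. By Euler's identity, $\lambda=\lambda\langle u,x\rangle=f(Ax)$, which is positive in the nontrivial case.
\item By the chain rule there is $w\in\partial f(Ax)$ with $A^*w=\lambda u$.
\item The Claim, applied to $g(x)=1$ and $u\in\partial g(x)$, gives $x\in\partial g^\polar(u)$. By zero-homogeneity, $\partial g^\polar(u)=\partial g^\polar(\lambda u)=\partial g^\polar(A^*w)$.
\item Hence $x\in\partial g^\polar\bigl(A^*\partial f(Ax)\bigr)$. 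Since $x=x/g(x)$, this is the fixed-point statement.
\end{itemize}

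\emph{($\Leftarrow$)} Let $x=x/g(x)$ satisfy $x\in\partial g^\polar(A^*w)$ for some $w\in\partial f(Ax)$, and put $v:=A^*w$.
\begin{itemize}
\item By the chain rule, $v\in\partial(f\circ A)(x)$.
\item I need the converse of the Claim: if $g(x)=1$ and $x\in\partial g^\polar(v)$ with $v\neq0$, then $v/g^\polar(v)\in\partial g(x)$. This is exactly the step used in ``(2)$\Rightarrow$(1)'' of the previous proposition. It follows by applying the Claim to $g^\polar$, using $g^{\polar\polar}=g$ (Proposition \ref{pro:CV_p-basic} (v)) and reflexivity. The Euler identity $\langle v,x\rangle=g^\polar(v)$ together with $g(x)=1$ fixes the normalization.
\item Set $\lambda:=g^\polar(v)>0$. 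Then $v\in\partial(f\circ A)(x)\cap\lambda\,\partial g(x)$, so $x$ is an eigenvector.
\end{itemize}

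\emph{Main obstacle.} The delicate point is excluding the degenerate case $v=A^*w=0$ (equivalently $Ax=0$ or $\lambda=0$). In that case $\partial g^\polar(0)$ is a whole sublevel set and the converse of the Claim fails. I would handle it by restricting, as elsewhere in the paper, to nontrivial eigenpairs ($x\notin\ker A$). For such $x$, $\langle v,x\rangle=\langle w,Ax\rangle=f(Ax)>0$ by Euler's identity, so $v\neq0$ and $g^\polar(v)\ge\langle v,x\rangle>0$. I would also check that $f$ continuous is exactly what the chain rule needs.
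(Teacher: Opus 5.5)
Your proposal is correct and matches the paper's proof step for step. Both use the chain rule $\partial(f\circ A)(x)=A^*\partial f(Ax)$, the Claim from the proof of Theorem \ref{th:critical(f,g)} (with its converse via $g^{\polar\polar}=g$), and zero-homogeneity of $\partial g^\polar$, restricted to nontrivial eigenpairs. Your check that $v\neq0$ and $g^\polar(v)\ge\langle v,x\rangle>0$ supplies a detail the paper leaves implicit when it divides by $g^\polar(y)$.

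The restriction is not actually needed. If $Ax=0$, then $x$ is an eigenvector with $\lambda=0$ because $0\in A^*\partial f(0)$. Also, identifying $X^{**}$ with $X$, $x/g(x)\in\{g\le 1\}=\partial g^\polar(0)\subset\partial g^\polar\bigl(A^*\partial f(0)\bigr)$. So both sides of the equivalence hold trivially in that case.
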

\begin{proof}
We show a proof sketch. If $x\in \partial g^\polar(A^*\partial f(Ax))$, then there exists $y\in A^*\partial f(Ax)$ such that $x\in \partial g^\polar(y)$ and $y\in A^*\partial f(Ax)=\partial (f\circ A)(x)$. Thus, $y/g^\polar (y)\in \partial g(x)$, and therefore,  $y\in \partial (f\circ A)(x)\bigcap g^\polar (y) \partial g(x)$  implying that $x$ is an eigenvector of $(f\circ A,g)$.

Conversely, if $(\lambda,x)$ is a nontrivial eigenpair of $(f\circ A,g)$, then there exist $u\in\partial g(x)$ and $v\in\partial f(Ax)$ such that $A^*v=\lambda u$. Accordingly, 
$$\frac{x}{g(x)}\in \partial g^\polar(u)=\partial g^\polar(\lambda u) =\partial g^\polar(A^*v)\subset \partial g^\polar(A^*\partial f(Ax))=\partial g^\polar(A^*v)\subset \partial g^\polar\Big(A^*\partial f\big(A\frac{x}{g(x)}\big)\Big)$$
yielding that $x/g(x)$ is a fixed point of $\partial g^\polar \circ A^*\circ\partial f\circ A$.
\end{proof}





\subsubsection{
Nonlinear power  iterations coincide with Dinkelbach schemes}
We connect several prominent approaches from the literature which approximate solutions to nonlinear eigenvalue problems. 

To compute the minimum or maximum 
of $f/g$, there are two standard ways: the nonlinear power iteration \cite{Pham84,Bhaskara11}, and the  Dinkelbach scheme.  
We will show that these two approaches are essentially equivalent if we add a natural and widely used relaxation step to the inner problem. 

In particular, we are aiming to solve:
\begin{equation}\label{eq:maxfAx/gx}
\max_{x\ne 0}\frac{f(Ax)}{g(x)}    
\end{equation}

The nonlinear power method for solving 
\eqref{eq:maxfAx/gx} is by utilizing the power operator  
$$Tx:=\partial g^\polar (A^* \partial f(Ax))$$ and doing iteration starting from an initial point  $x_0\ne0$:  
\begin{equation}\label{eq:power-iter}
\begin{cases}x_{k+1}\in Tx_k\\
r^{k+1}=\frac{f( Ax^{k+1})}{g( x^{k+1})}
\end{cases}
\end{equation}

The Dinkelbach (2-step) scheme is the iteration:
\begin{equation}
		\label{dinkel-iter0}
		\begin{cases}
x_{k+1}\in\mathop{\mathrm{argmax}}\limits_{x\ne0} \{f(Ax)-r^k g(x)\}, \\
		r^{k+1}=\frac{f( Ax^{k+1})}{g( x^{k+1})}.  
		\end{cases}
	\end{equation}
For any initial point $ x_0\ne0$, the solution produced by the algorithm above converges to the solution of \eqref{eq:maxfAx/gx} exactly. However, the inner problem $\max_{x} f(Ax)-rg(x)$ is usually not a convex function (it's actually a DC programming). In practice, we may use the following relaxation of \eqref{dinkel-iter0} as follows:
	\begin{equation}
		\label{dinkel-iter1}
		\begin{cases}
x_{k+1}\in\mathop{\mathrm{argmax}}\limits_{g(x)\le 1} \{\langle y^k,x\rangle-r^k g(x)\}, \\
y^{k+1}\in A^*\partial f(Ax), \\
		r^{k+1}=\frac{f( Ax^{k+1})}{g( x^{k+1})}. 
		\end{cases}
	\end{equation}
The above iteration \eqref{dinkel-iter1} is called the relaxed Dinkelbach scheme (or the Dinkelbach 3-step scheme). 

It is interesting that the nonlinear power method that involves the norm-like duality is equivalent to the relaxed Dinkelbach scheme in some sense.

\begin{prop}\label{pro:Dinkelbach-power}
The relaxed Dinkelbach scheme \eqref{dinkel-iter1}  is equivalent to the  power iteration \eqref{eq:power-iter} in the sense that from the same nontrivial initial point, in each step,  their outputs 
are the same.  
\end{prop}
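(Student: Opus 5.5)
The plan is to compare the two schemes step by step. I would show that, for the same current iterate, the set of possible next points produced by the relaxed Dinkelbach scheme \eqref{dinkel-iter1} coincides with the power-operator image $Tx_k=\partial g^\polar(A^*\partial f(Ax_k))$. This holds once we identify $y^k$ with an element of $A^*\partial f(Ax_k)$, which we write as $A^*\partial f(Ax_k)=\partial(f\circ A)(x_k)$ by Proposition \ref{prop:basic-in-convex}(i). Both schemes then assign $r^{k+1}=f(Ax_{k+1})/g(x_{k+1})$, so the ratio sequences agree automatically. By Propositions \ref{pro:CV_p-basic} and \ref{pro:CV_1&Cv_p} I would reduce to $p=1$, so that $f,g$ are positively one-homogeneous and the zero-homogeneity \eqref{eq:0-homogeneity-partial} of $\partial g^\polar$ is available. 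Outputs are understood up to positive scaling, which does not affect $r^{k+1}$.

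The key step is to identify the inner argmax. For fixed $y\in X^*$ and $r\ge0$, the inner problem is
$$\max_{g(x)\le1}\{\langle y,x\rangle-rg(x)\}.$$
By one-homogeneity it splits as $\max_{0\le s\le1}\big(s\,g^\polar(y)-rs\big)$, after first maximizing $\langle y,x\rangle$ over $g(x)=s$ using Proposition \ref{pro:CV_1&Cv_p}(ii). The maximizing directions are exactly the $x$ with $g(x)=1$ and $\langle y,x\rangle=g^\polar(y)$. By the claim in the proof of Theorem \ref{th:critical(f,g)}, or the standard characterization $\partial g^\polar(y)=\{x: g(x)\le1,\ \langle y,x\rangle=g^\polar(y)\}$ for one-homogeneous $g^\polar=\sup_{g\le1}\langle\cdot,x\rangle$, this set is $\partial g^\polar(y)$, suitably normalized.

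The case split on $r$ is as follows:
\begin{itemize}
\item If $g^\polar(y)>r$, the maximum is attained at $s=1$, and the argmax is precisely $\partial g^\polar(y)\cap\{g=1\}$.
\item If $g^\polar(y)<r$, the argmax is only $x=0$.
\end{itemize}
So I need $r^k\le g^\polar(y^k)$. This follows from Lemma \ref{lem:fA/g-zyx-tech1}: with $y'\in\partial f(Ax_k)$ and $y^k=A^*y'$, we have $f^\polar(y')=1$ and $g^\polar(A^*y')\ge f(Ax_k)/g(x_k)=r^k$. Equality holds only at Lagrange critical points, where both iterations are stationary anyway.

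With this in hand, I would proceed by induction on $k$. Starting from the same $x_0$, assume both schemes hold the same $x_k$, and take the same selection $y^k\in A^*\partial f(Ax_k)$ (in the power iteration this is the intermediate point). The Dinkelbach update is then $\partial g^\polar(y^k)\cap\{g=1\}$, which is the normalized version of the power update $x_{k+1}\in\partial g^\polar(y^k)\subset Tx_k$. Conversely, every element of $Tx_k$ arises from some $y^k$, and its normalization lies in the corresponding Dinkelbach argmax. Hence the two update sets coincide and so do the values $r^{k+1}$.

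I expect the main obstacle to be the boundary case $g^\polar(y^k)=r^k$. There the inner maximum is $0$ and the argmax contains $0$ along with the whole segment. I would handle it by showing that this case occurs exactly at a Lagrange critical point, via the equality case of Lemma \ref{lem:fA/g-zyx-tech1}, and by interpreting the nontrivial output (discarding $x=0$) as the fixed point $x_k/g(x_k)\in Tx_k$, as in the fixed-point proposition above. A second point to handle is that \eqref{dinkel-iter1} writes $y^{k+1}\in A^*\partial f(Ax)$ with $x=x_{k+1}$; I would make this indexing explicit in the same way.
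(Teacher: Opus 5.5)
Your proposal is correct and follows essentially the same route as the paper. The paper also fixes a common choice $y\in A^*\partial f(Ax)$ and identifies the inner argmax with $\partial g^\polar(y)$ through the inclusion \eqref{eq:power-subset-Din}, the normalized equality \eqref{eq:power=Din/normalize}, and exact equality off the critical case \eqref{eq:power=conditional-Din}; it uses $\langle y,x\rangle-cg(x)=0$, which is your bound $r^k\le g^\polar(y^k)$ from Lemma \ref{lem:fA/g-zyx-tech1}, to push maximizers onto $\{g=1\}$. Your criterion $g^\polar(y^k)>r^k$ is the cleaner condition for the two update sets to agree exactly, and in the boundary case $g^\polar(y^k)=r^k$ your own ray analysis gives that the normalized nonzero maximizers are all of $\partial g^\polar(y^k)$, not merely the fixed point $x_k/g(x_k)$ as you phrased it, which is exactly \eqref{eq:power=Din/normalize}.
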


\begin{proof}
Note that each step in \eqref{dinkel-iter1} can be rewritten as 
\begin{equation}
		\label{dinkel-iter1+}
		\begin{cases}
        
y\in A^*\partial f(Ax), \\
		c=\frac{f( Ax)}{g( x)}, \\
z\in\mathop{\mathrm{argmax}}\limits_{g(z')\le 1} \{\langle y,z'\rangle-c g(z')\}, 
		\end{cases}
	\end{equation}
while each step in \eqref{eq:power-iter} can be reformulated as 
\begin{equation}\label{eq:power-iter+}
\begin{cases}y\in A^*\partial f(Ax), \\
z\in \partial g^\polar (y).
\end{cases}
\end{equation}
We shall prove that for any nontrivial input $x\not\in\ker A$, the outputs $z$ produced by   \eqref{dinkel-iter1+} and \eqref{eq:power-iter+} are the same. Precisely, 
\begin{equation}\label{eq:power-subset-Din}
\partial g^\polar (y)\subset \mathop{\mathrm{argmax}}\limits_{g(z')\le 1} \{\langle y,z'\rangle-c g(z')\},     
\end{equation}
\begin{equation}\label{eq:power=Din/normalize}
\partial g^\polar (y)=\Big\{\frac{z}{g(z)}\,\Big|\; z\in \mathop{\mathrm{argmax}}\limits_{g(z')\le 1} \{\langle y,z'\rangle-c g(z')\}\setminus\{0\}\Big\}
\end{equation}
and 
\begin{equation}\label{eq:power=conditional-Din}
\partial g^\polar (y)= \mathop{\mathrm{argmax}}\limits_{g(z')\le 1} \{\langle y,z'\rangle-c g(z')\} \text{ whenever }f(Ax)/g(x)<f(Az)/g(z).\end{equation}
\begin{itemize}

\item Proof of \eqref{eq:power-subset-Din}: 
Let $z\in \partial g^\polar (y)$.  Then, by the claim in the proof of Theorem \ref{th:critical(f,g)}, we have $g(z)=1$ and $y/g^\polar (y)\in \partial g(z)$. Consequently, $0\in y-g^\polar (y)  \partial g(z)=\partial_z\big(\langle y,z\rangle-g^\polar (y)g(z)\big)$. This implies that $z$ is a critical point of the concave function $z'\mapsto \langle y,z'\rangle-g^\polar (y) g(z')$. Since the only critical value of a concave function must be the maximum, we have that $$z\in \mathop{\mathrm{argmax}}\limits_{g(z')= 1}\langle y,z'\rangle-g^\polar (y) g(z') \subset  \mathop{\mathrm{argmax}}\limits_{g(z')\le 1}\langle y,z'\rangle-g^\polar (y) g(z') \subset\mathop{\mathrm{argmax}}\limits_{z'}\langle y,z'\rangle-g^\polar (y) g(z').
$$
Since $\max\limits_{g(z)\le 1}\langle y,z\rangle-c g(z)\ge \langle y,x\rangle-c g(x)=f(Ax)-cg(x)=0$, there exist maximizers on the boundary  constraint $g=1$. Thus, combining the above facts, we obtain
\begin{align*}
z&\in 
\mathop{\mathrm{argmax}}\limits_{g(z')= 1}\langle y,z'\rangle-g^\polar (y) =\mathop{\mathrm{argmax}}\limits_{g(z')= 1}\langle y,z'\rangle= \mathop{\mathrm{argmax}}\limits_{g(z')= 1}\langle y,z'\rangle-c
\\&=\mathop{\mathrm{argmax}}\limits_{g(z')= 1}\langle y,z'\rangle-cg(z')\subset \mathop{\mathrm{argmax}}\limits_{g(z')\le 1}\langle y,z'\rangle-cg(z').    
\end{align*}
 \item Proof of \eqref{eq:power=Din/normalize}: 
Let $z\in\mathop{\mathrm{argmax}}\limits_{g(z')\le 1} \{\langle y,z'\rangle-c g(z')\}$ with $z\ne0$, then
$$\frac{z}{g(z)}\in \mathop{\mathrm{argmax}}\limits_{g(z')= 1}\langle y,z'\rangle-c g(z')=\mathop{\mathrm{argmax}}\limits_{g(z')= 1}\langle y,z'\rangle-c=\mathop{\mathrm{argmax}}\limits_{g(z')= 1}\langle y,z'\rangle.$$
Therefore, 
$$z\in \mathop{\mathrm{argmax}}\limits_{z'\ne0}\frac{\langle y,z'\rangle}{g(z')}$$
and thus
$$0\in\partial_z \frac{\langle y,z\rangle}{g(z)}\subset \frac{yg(z)-\langle y,z\rangle\partial g(z)}{g^2(z)}.$$
This derives $$ y\in \frac{\langle y,z\rangle}{g(z)}\partial g(z) $$
and hence, by the claim in the proof of Theorem \ref{th:critical(f,g)}, we have  $$\frac{z}{g(z)}\in \partial g^\polar (y).$$
Combining with \eqref{eq:power-subset-Din}, we complete the proof of \eqref{eq:power=Din/normalize}.
\item 
Proof of \eqref{eq:power=conditional-Din}: It remains to show that if $f(Ax)/g(x)<f(Az)/g(z)$, then $g(z)=1$ for any $z\in \mathop{\mathrm{argmax}}\limits_{g(z')\le 1} \{\langle y,z'\rangle-c g(z')\}$.

Note that $\langle y,x\rangle-c g(x)=\langle A^*y',x\rangle-c g(x)=\langle y',Ax\rangle-c g(x)=f(Ax)-cg(x)=0$, where $y'\in\partial f(Ax)$. 
It is not difficult to see that $\max\limits_{g(z')\le 1} \{\langle y,z'\rangle-c g(z')\}>0$ because otherwise $\langle y,z'\rangle-c g(z')\le0=  \langle y,x\rangle-c g(x)$, $\forall z'$, implying that 
$$\frac{f(Ax)}{g(x)}=c\ge \sup_{z'\ne0}\frac{\langle y,z'\rangle}{g(z')}\ge \frac{f(Az)}{g(z)}$$
which contradicts to the condition of \eqref{eq:power=conditional-Din}. Therefore, $\langle y,z\rangle-c g(z)>0$ meaning that $z\ne 0$, and moreover, $g(z)=1$ (because otherwise $g(z)<1$ and by the one-homogeneity, $\langle y,z\rangle-c g(z)<\langle y,\frac{z}{g(z)}\rangle-c g(\frac{z}{g(z)})$, a contradiction with $z\in\mathop{\mathrm{argmax}}\limits_{g(z')\le 1} \{\langle y,z'\rangle-c g(z')\}$).
\end{itemize}
\end{proof}





\subsubsection{bifurcation  problem}
Let $X$ and $Y$ be Banach spaces, let $\Omega$ be an open neighborhood of $0$ in $X$, and let $I$ be an open interval.  Consider a mapping $F\in C(I\times\Omega,Y)$ such that $F(\lambda,0)=0$, $\forall \lambda\in I$. A point $(\lambda_0,0)\in I\times\Omega$  is a \emph{bifurcation point} for the equation 
\begin{equation}\label{eq:bifurcation}
F(\lambda,x)= 0    
\end{equation} if every neighborhood of $(\lambda_0,0)$ contains at least one solution $(\lambda,x)$ of \eqref{eq:bifurcation} such that $x\ne0$. 
(see \cite{Mawhin}, or Definition 28.1 in \cite{Deimling}). 

Let $\Omega$ be an unbounded subset of $X$. Then $(\lambda_0;\infty)$ is said to be an asymptotic bifurcation point for the equation \eqref{eq:bifurcation} iff there exist zeros $(\lambda_n,x_n)$ of $F$ such that $\lambda_n\to \lambda_0$ and $\|x_n\|\to\infty$ as $n\to\infty$. 
(see Definition 28.2 in \cite{Deimling})

In this section, we consider the set-valued version of the bifurcation problem which is a slight generalization of the above case, and is widely used in the general bifurcation theory, see \cite{Mawhin}. 
Specifically, we consider the gradient-like operator $F$ defined as $F(\lambda,x):=\partial f(x)-\lambda\partial g(x)$, 
and we concentrate on the following bifurcation point problem: 
\begin{equation}\label{eq:bifurcation-gradient}
\partial f(x)-\lambda\partial g(x)\ni 0
\end{equation} 
and its dual bifurcation  problem
\begin{equation}\label{eq:polardual-bifurcation-gradient}
\partial g^\circ(x)-\lambda\partial f^\circ(x)\ni 0.
\end{equation} 

\begin{theorem}
Suppose that $f,g\in \cvx_0^p(X)$. 
Then, the (asymptotic) bifurcation points of \eqref{eq:bifurcation-gradient} and its dual \eqref{eq:polardual-bifurcation-gradient} coincide exactly. 
\end{theorem}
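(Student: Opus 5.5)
The plan is to reduce both bifurcation notions to a statement about the \emph{set of eigenvalues}, using homogeneity, and then invoke Theorem \ref{th:critical(f,g)}.

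\textbf{Step 1 (reduction to eigenvalues).} For $f,g\in\cvx_0^p(X)$ we have $\partial f(tx)=t^{p-1}\partial f(x)$ and $\partial g(tx)=t^{p-1}\partial g(x)$ for $t>0$. This follows directly from the definition of the subdifferential and $p$-homogeneity, and it is the same zero-homogeneity \eqref{eq:0-homogeneity-partial} used in the case $p=1$. Consequently, if $(\lambda,x)$ solves \eqref{eq:bifurcation-gradient} with $x\neq0$, then so does $(\lambda,tx)$ for every $t>0$. Hence the solution set $\{x\ne0\}$ of \eqref{eq:bifurcation-gradient} at a fixed $\lambda$ is a cone. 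It follows that:
\begin{itemize}
\item $(\lambda_0,0)$ is a bifurcation point if and only if $\lambda_0$ lies in the closure $\overline{\Lambda(f,g)}$ of the set $\Lambda(f,g)$ of all $\lambda$ admitting a solution $x\ne0$, since any such $x$ can be rescaled into an arbitrarily small ball;
\item $(\lambda_0;\infty)$ is an asymptotic bifurcation point if and only if $\lambda_0\in\overline{\Lambda(f,g)}$, by rescaling to $\|x\|\to\infty$.
\end{itemize}
The same holds for \eqref{eq:polardual-bifurcation-gradient} with $\Lambda(g^\polar,f^\polar)$. So both claims reduce to showing $\overline{\Lambda(f,g)}=\overline{\Lambda(g^\polar,f^\polar)}$.

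\textbf{Step 2 (splitting eigenvalues into nontrivial and trivial).} By Euler's identity, if $\xi\in\partial f(x)\cap\lambda\partial g(x)$ then $pf(x)=\langle\xi,x\rangle=\lambda p g(x)$. Hence every $\lambda\in\Lambda(f,g)$ is either a nontrivial Lagrange critical value in the sense of Definition \ref{defn:critical(f,g)}, or it arises from a trivial solution. Trivial solutions are of two kinds: either $\lambda=0$ with $0\in\partial f(x)$, i.e.\ $x\in\ker f\setminus\{0\}$; or $f(x)=g(x)=0$ with $\partial f(x)\cap\lambda\partial g(x)\neq\varnothing$. Theorem \ref{th:critical(f,g)} gives equality of the nontrivial parts directly (reducing to $p=1$ via Proposition \ref{pro:CV_1&Cv_p} and Corollary \ref{cor:RC-property}, with $\lambda$ replaced by the appropriate power). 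This already settles all bifurcation points away from the trivial values.

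\textbf{Step 3 (trivial solutions; the main obstacle).} I would analyse the trivial solutions using Proposition \ref{pro:CV_p-basic}(iv)--(v): $\ker f^\polar=(\dom f)^\polar$, $\overline{\dom f^\polar}=(\ker f)^\polar$, and $\partial f(x)\subset(\ker f)^\polar$. For the case $f(x)=g(x)=0$, I would try to produce dual trivial solutions as follows. Take $x\in\ker f\cap\ker g\setminus\{0\}$ and $\xi\in\partial f(x)\cap\lambda\partial g(x)$. Then $\xi\in(\ker f)^\polar$ annihilates $x$, and I would try to show $\xi$ (or an element of the normal cone) is a point where $0\in\partial g^\polar(\xi)-\lambda\partial f^\polar(\xi)$, using the subgradient inequality together with the polar descriptions above. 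The case $\lambda=0$ requires matching $\ker f\ne\{0\}$ with $\ker g^\polar=(\dom g)^\polar\ne\{0\}$, which is not automatic for general $f,g$.

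I expect this bookkeeping of degenerate solutions to be the hardest part. If it fails in full generality, the fallback is to prove the coincidence for bifurcation values $\lambda_0\neq0$ that are not produced solely by solutions in $\ker f\cap\ker g$. This restricted version follows cleanly from Steps 1--2, and it is exactly what is needed under positive-definiteness, e.g.\ for $f,g\in\cvx_{0,+}^p(X)$, using Corollary \ref{cor:function-spaces} to guarantee the same property on the dual side.
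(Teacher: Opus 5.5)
Your Steps 1--2 are essentially the paper's argument. The paper also rescales eigenpairs using the $(p-1)$-homogeneity of $\partial f,\partial g$. It then transfers primal eigenpairs $(\lambda_n,x_n)\to(\lambda_0,0)$ to dual eigenpairs $(\lambda_n,u_n)$, with $u_n\in\partial g(x_n)\cap\mathrm{cone}(\partial f(x_n))$, via Theorem \ref{th:critical(f,g)}, and finally rescales $t_nu_n\to0$. Your reformulation as $\overline{\Lambda(f,g)}=\overline{\Lambda(g^\polar,f^\polar)}$ is the same proof in cleaner form. It also makes explicit that both bifurcation notions select the same set of $\lambda_0$. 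No power adjustment is needed: Theorem \ref{th:critical(f,g)} is stated for $\cvx_0^p$, and $f^\polar,g^\polar$ are again $p$-homogeneous, so the nontrivial values coincide exactly.

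Step 3, however, cannot be completed. For general $f,g\in\cvx_0^p(X)$ the statement is false, precisely through the $\lambda=0$ mismatch you anticipated.

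\textbf{Counterexample.} Take $X=\R^2$, $p=1$, $f(x)=|x_1|$, $g(x)=\|x\|_2$.
\begin{itemize}
\item Primal side: for $x=(0,t)$ with $t\neq0$ we have $0\in\partial f(x)=[-1,1]\times\{0\}$ and $\partial g(x)=\{(0,\mathrm{sign}\,t)\}$. So $(0,(0,t))$ solves \eqref{eq:bifurcation-gradient}. Letting $t\to0$ (resp.\ $t\to\infty$) makes $(0,0)$ a bifurcation point (resp.\ $(0;\infty)$ an asymptotic one).
\item Dual side: $g^\polar(u)=\|u\|_2$ and $f^\polar(u)=|u_1|+\iota_{\{u_2=0\}}(u)$. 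Any solution must have $u=(u_1,0)$ with $u_1\neq0$. There $\partial g^\polar(u)=\{(\mathrm{sign}\,u_1,0)\}$ and $\partial f^\polar(u)=\{(\mathrm{sign}\,u_1,s):s\in\R\}$, which forces $\lambda=1$. Hence $(0,0)$ is not a bifurcation point of \eqref{eq:polardual-bifurcation-gradient}.
\end{itemize}
With $f=g=|x_1|$ it is worse still. Every $\lambda\in\R$ is a primal eigenvalue, using $x=(0,1)\in\ker f\cap\ker g$, while the dual has only $\lambda=1$. This also defeats your plan of pushing $\xi\in\partial f(x)\cap\lambda\partial g(x)$ to the dual side.

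The paper's proof has the same blind spot. Theorem \ref{th:critical(f,g)} only transfers \emph{nontrivial} pairs, and the proof tacitly treats every eigenpair as nontrivial, as in Section \ref{sec:eigenproblem}, where positive-definiteness is assumed.

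So your fallback is the correct statement, not a weakening. It suffices to restrict to nontrivial eigenpairs, or to assume $f,g\in\cvx_{0,+}^p(X)$. In the latter case Euler's identity makes every solution with $x\neq0$ nontrivial, and Corollary \ref{cor:function-spaces} gives the same on the dual side. With either restriction, your Steps 1--2 form a complete proof.
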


\begin{proof}
Following the eigenproblem interpretation of Lagrange critical value (see Definition \ref{defn:critical(f,g)} and Section \ref{sec:eigenproblem}), and by the definition of bifurcation problem, it is clear that 
$(\lambda_0,0)$ is a bifurcation point of \eqref{eq:bifurcation-gradient} if and only if it is a limit point of the eigenpairs of \eqref{eq:bifurcation-gradient}; $(\lambda_0;\infty)$ is an asymptotic bifurcation point of \eqref{eq:bifurcation-gradient} if and only if it is an asymptotic limit point of the eigenpairs of \eqref{eq:bifurcation-gradient}. 

Since $f$ and $g$ are $p$-homogeneous, we have that if $(\lambda,x)$ is an eigenpair of \eqref{eq:bifurcation-gradient}, then $(\lambda,tx)$ is also an eigenpair for any $t>0$. The proof is easy. In fact, by $t>0$ and the $(p-1)$-homogeneity of $\partial\f$ and $\partial\g$, we have
\begin{align*}
0 \in \partial \f( x) - \lambda \partial \g(  x) 
 \Longleftrightarrow 0 \in t^{p-1}\partial \f( x) - \lambda t^{p-1} \partial \g(x)  \Longleftrightarrow 0 \in \partial \f(t x) - \lambda   \partial \g(t x) .
\end{align*}
This means that $(\lambda,x)$ is an eigenpair of \eqref{eq:bifurcation-gradient} iff $(\lambda,tx)$ is an eigenpair of \eqref{eq:bifurcation-gradient}. 

By Theorem \ref{th:critical(f,g)}, if $(\lambda,x)$ is an eigenpair of  \eqref{eq:bifurcation-gradient}, then $(\lambda,u)$ is an eigenpair of \eqref{eq:polardual-bifurcation-gradient}, where $u\in\partial g(x)\cap \mathrm{cone}(\partial f(x))$. 


In addition, if  $(\lambda,0)$ is a  bifurcation point of \eqref{eq:bifurcation-gradient}, there exists a sequence of eigenpairs  $(\lambda_n,x_n)$ of \eqref{eq:bifurcation-gradient} such that $(\lambda_n,x_n)\to (\lambda,0)$. By Theorem \ref{th:critical(f,g)}, $(\lambda_n,u_n)$ is an eigenpair of   \eqref{eq:polardual-bifurcation-gradient}, where $u_n\in \partial g(x_n)\cap \mathrm{cone}(\partial f(x_n))$. Taking $t_n>0$ such that $\|t_nu_n\|<1/n$, then $(\lambda_n,t_nu_n)$ is an eigenpair of   \eqref{eq:polardual-bifurcation-gradient} and $(\lambda_n,t_nu_n)\to (\lambda,0)$, which implies that $(\lambda,0)$ is a  bifurcation point of \eqref{eq:polardual-bifurcation-gradient}.  
The rest of the proof is similar and hence we omit it.
\end{proof}


\subsection{
Fenchel duality for difference of convex functions
}
\label{sec:dc}





In \cite{Artstein09},  Artstein-Avidan and Milman show that any involution on $\cvx(X)$ which is order-reversing, must be, up to linear terms, the well known Fenchel dual. 
In this section, we consider the Fenchel dual, and extend the previous duality theory so that it applies to general pairs of convex functions in $\cvx(X)$, rather than just pairs of nonnegative homogeneous convex functions in $\cvx_0^p(X)$.  
We use $f^*$ to express the commonly used Fenchel duality, which is defined as
$$ f^*(x^*):=\sup_{x\in X}\big(\langle x^*,x\rangle-f(x)\big) $$
where 
$x^*\in X^*$. 

\subsubsection{Background on DC functions }
DC function appears in mathematical optimization and convex differential geometry \cite{dual-Tatsuya24} and is of great significance in geometry \cite{Csornyei}. 
In fact, one source of the duality theory for DC functions is precisely the Clarke duality, which has proved highly successful in the study of Hamiltonian systems \cite{AbbondandoloKang}. 
Below, we present a more detailed introduction.








\begin{itemize}
\item DC programming

From the study of Toland-Singer dual equality \cite{dual-Toland79,Singer79}, 
there have been efficient algorithms for DC programming. 
Toland considered the points $x$ and $y$ satisfying the equations 
\begin{align}
0\in \partial f(x)-\partial g(x), \label{eq:f-g-Tolandcri}  \\
0\in \partial g^*(y)-\partial f^*(y)\label{eq:g*-f*-Tolandcri} 
\end{align}
which are called the critical points of $f-g$ and $g^*-f^*$, respectively, 
and he showed that the problems \eqref{eq:f-g-Tolandcri} and \eqref{eq:g*-f*-Tolandcri} are equivalent via Legendre transformation. 
 Toland gave the definition of critical points from the equation point of view. In fact, from the variational perspective, 
 the extreme points of $f-g$ satisfy \eqref{eq:f-g-Tolandcri}. However, in general, $\partial (f(x)-g(x))\subsetneqq \partial f(x)-\partial g(x)$, which means that a solution satisfying \eqref{eq:f-g-Tolandcri} does not need to be a Clarke critical point of $f-g$. 
In this section, we complete a study of the criticality of $f-g$ and its dual, which does not depend on DC decomposition. 
This enriches the duality theory in DC programming. 

\item DC differential geometry 

A hypersurface is delta-convex  if it is locally represented by a graph of a DC function \cite{dual-Tatsuya24}. 
It is known that the singular set of a convex function $f:\R^n\to\R$ is a subset of a countable union of delta-convex hypersurfaces. If one uses the distance function from  any closed subset (of any complete Finsler manifold) instead of convex functions, then the singular set is in fact equal to a union of DC hypersurfaces up to an exceptional set of codimension two.  
These results  indicate that to study singular sets of  convex functions or distance functions, we also need a finer  analysis of DC functions.

\item ``discrete'' DC optimization:  (DS) difference of submodular functions

Submodular function is an important topic,  while in many situations, we may have to deal with the difference of two submodular functions which, however, does not need to be submodular. Fortunately, 
by Choquet integral, submodular functions can be \emph{equivalently} transformed to convex functions. 
Furthermore, any set-function on the power set of a finite ambient set can be decomposed as the difference of two submodular functions, and thus, by Choquet extension, can be transformed to a DC function. %
 It would be good to use duality method in convex analysis to further investigate the topics around set-functions on finite sets. 

\end{itemize}

\subsubsection{Critical duality equivalence on DC functions}\label{sec}

It is well known that the mostly used criticality of a DC function generally depends on the specific DC decomposition of the objective function and is therefore not an intrinsic property, since a given DC function admits infinitely many decompositions (see \cite{LeThi18}). 
It would be good to find some properties which do not depend on DC decompositions. 
In this section, we establish a critical duality theory for DC functions that is independent of DC decompositions, which indeed answers a question left open from the works of Toland on DC functions and the works of Le Thi and Pham Dinh on DC programming. 

In fact, we shall focus on Morse criticality and min-max critical values -- two stronger notions that do not depend on any particular DC decomposition -- and moreover, satisfy duality properties under reasonable conditions. 

Below, we state our main duality result for DC functions in this section.

\begin{theorem}
\label{th:sublevel-equi}
Given $A\in\mathcal{B}(X,Y)$ 
with closed range, $f\in \cvx(Y)$ and $g\in \cvx(X)$.  Assume that there exists a closed subspace $X_1\subset X$ such that $\ker A$ and $X_1$ are complemented, and there exists a closed subspace $Y_1'\subset Y^*$ such that $\ker A^*$ and $Y_1'$ are complemented. 
Suppose that $f^*$ and $g$ are $C^1$-smooth (or, $g^*$ and $f$ are $C^1$-smooth), $\partial g(X_1)\subset R(A^*)$ and $\partial f^*(Y_1')\subset R(A)$. 

Then, for any $c\in\R$, the sub-level set  $\{f\circ A-g\le c\}\cap X_1
$ is homotopy equivalent to $\{g^*\circ A^*-f^*\le c\}\cap Y_1'
$. 
Moreover, the Morse critical points of $f\circ A-g$ restricted on $X_1$ are one-to-one correspondence with the Morse critical points of $g^*\circ A^*-f^*$ restricted on $Y_1'$; the Rothe critical groups are also invariant under such one-to-one correspondence. 
\end{theorem}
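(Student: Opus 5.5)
We follow the template of Theorems \ref{th:ratio-sublevel-equi} and \ref{th:ratio-sublevel-equiA}, replacing polarity by Fenchel conjugation. Assume $f^*$ and $g$ are $C^1$; the other case is symmetric.

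The maps will be
\[
\varphi:=(A^*|_{Y_1'})^{-1}\circ\nabla g:X_1\to Y_1',\qquad \psi:=(A|_{X_1})^{-1}\circ\nabla f^*:Y_1'\to X_1 .
\]
They are well defined and continuous. The inclusions $\nabla g(X_1)\subset R(A^*)$ and $\nabla f^*(Y_1')\subset R(A)$ hold by hypothesis. The ranges are closed, and $Y_1'$ (resp.\ $X_1$) complements $\ker A^*$ (resp.\ $\ker A$). Hence $A^*|_{Y_1'}:Y_1'\to R(A^*)$ and $A|_{X_1}:X_1\to R(A)$ are bounded bijections, and by the open mapping theorem they have bounded inverses.

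The first analytic step is a Fenchel analogue of Lemmas \ref{lem:fA/g-zyx-tech1}--\ref{lem:fA/g-zyx-tech2}. For $x\in X_1$, set $y=\varphi(x)$, so $A^*y=\nabla g(x)$. The Fenchel--Young equality gives $g^*(A^*y)=\langle A^*y,x\rangle-g(x)$. The Fenchel--Young inequality gives $f^*(y)\ge\langle y,Ax\rangle-f(Ax)$. Subtracting,
\[
g^*(A^*y)-f^*(y)\le f(Ax)-g(x),
\]
with equality iff $y\in\partial f(Ax)$, i.e.\ iff $0\in A^*\partial f(Ax)-\nabla g(x)$. This is the Lagrange/Toland criticality, by Proposition \ref{prop:basic-in-convex}(i). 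Hence $\varphi$ maps $\{f\circ A-g\le c\}\cap X_1$ into $\{g^*\circ A^*-f^*\le c\}\cap Y_1'$, and symmetrically for $\psi$.

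Next we need the segment inequality for the homotopy $H(x,t)=t\,\psi\varphi(x)+(1-t)x$. Put $z=\psi\varphi(x)$, so $Az=\nabla f^*(y)$ and hence $y\in\partial f(Az)$. Convexity of $f$ gives $f(A H)\le tf(Az)+(1-t)f(Ax)$. The subgradient inequality gives $g(H)\ge g(x)+t\langle A^*y,z-x\rangle$. Combining these,
\[
(f\circ A-g)(H)\le f(Ax)-g(x)+t\bigl[f(Az)-f(Ax)-\langle y,Az-Ax\rangle\bigr].
\]
Since $y\in\partial f(Az)$, we have $f(Ax)\ge f(Az)+\langle y,Ax-Az\rangle$, so the bracket is $\le0$. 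Therefore $H$ stays in the sublevel set. Moreover $H\in X_1$, because $X_1$ is a linear subspace and $z\in X_1$. So $\psi\circ\varphi\simeq\mathrm{id}$ on the sublevel set, and symmetrically $\varphi\circ\psi\simeq\mathrm{id}$, which gives the homotopy equivalence. Unlike the RC case, no normalization or nonvanishing check is needed, since there is no removed origin.

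For Morse critical points and Rothe groups, we repeat the argument of Theorem \ref{th:Morse-Rothe-A} verbatim with these $\varphi,\psi$. On Toland-critical points, $\varphi$ and $\psi$ are mutually inverse bijections. One checks this using $y\in\partial f(Ax)\Leftrightarrow Ax=\nabla f^*(y)$ and the equality case above. Morse regularity transfers through the concatenated flow $\eta^\#$ built from a straight-line homotopy of length $\delta(y)=\|y-\varphi\psi(y)\|$, followed by $\varphi\circ\eta\circ\psi$. Strict decrease along it follows from the two inequalities above. Critical groups are isomorphic by excision plus the homotopy equivalences of sublevel sets, with $(\psi\varphi)_*$ an isomorphism.

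The main obstacle I expect is this Morse-regularity transfer. One must check that the straight-line piece $t\mapsto t\varphi\psi(y)+(1-t)y$ strictly decreases $g^*\circ A^*-f^*$ for $t>0$ when $y$ is not critical. This is the dual of the segment inequality and uses the $C^1$ hypothesis on $f^*$. One must also check continuity of $\eta^\#$ where $\delta\to0$, which I would handle exactly as in the proof of Theorem \ref{th:Morse-Rothe}.
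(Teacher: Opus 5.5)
Your proposal follows the paper's proof step for step: the same maps $\varphi=(A^*|_{Y_1'})^{-1}\nabla g$ and $\psi=(A|_{X_1})^{-1}\nabla f^*$, the same Fenchel--Young comparison (the paper's Claim~1), the same straight-line homotopy and segment inequality (Claim~2, which you obtain a little more directly from $y\in\partial f(Az)$), the same concatenated flow $\eta^\#$ for transferring Morse regularity, and the same excision argument for the Rothe groups; the strict decrease you flag as the main obstacle does hold, since your bracket vanishes only if $y\in\partial f(Ax)$, i.e.\ $Ax=\nabla f^*(y)=Az$, i.e.\ $x=z$. One caveat, shared with the paper (whose proof likewise treats only the case $g,f^*\in C^1$): the parenthetical case is not obtained ``by symmetry'', because the maps then available, $x\mapsto\nabla f(Ax)$ and $y\mapsto\nabla g^*(A^*y)$, satisfy the reversed inequalities $(g^*\circ A^*-f^*)(\nabla f(Ax))\ge f(Ax)-g(x)$ and $(f\circ A-g)(\nabla g^*(A^*y))\ge g^*(A^*y)-f^*(y)$, so they relate superlevel rather than sublevel sets.
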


In Theorem \ref{thm:homo-f-g} below, we 
study the min-max critical values \cite{Matousek} via Lyusternik–Schnirelmann theory. 
In the preceding Section \ref{sec:ratio}, we have shown that the refined min-max critical information for RC functions is also preserved under polarity dual.

\begin{theorem}\label{thm:homo-f-g}Under the same condition with Theorem \ref{th:sublevel-equi}, 
the  Lyusternik–Schnirelmann min-max critical values of $f\circ A-g$ and $g^*\circ A^*-f^*$ coincide exactly, i.e., $c_k(f\circ A-g)=c_k(g^*\circ A^*- f^*)$, $\forall k$, where  
$$c_k(f\circ A-g):=\inf_{\substack{
S\subset \{f\circ A-g\le c\}\cap X_1 \\ \mathrm{ind}(S)\ge k  
}}\sup_{x\in S}\left(f(Ax)-g(x)\right),
$$
$$
c_k(g^*\circ A^*- f^*):=\inf_{ 
\substack {S\subset \{g^\polar\circ A^* /f^\polar\le c\}\cap Y_1' \\ \mathrm{ind}(S)\ge k
}}\sup_{x\in S}\left(g^*(A^*x)-f^*(x)\right).$$
\end{theorem}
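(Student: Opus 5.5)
The plan is to reduce Theorem~\ref{thm:homo-f-g} to an equality of indices of sublevel sets, exactly as in the proofs of Theorems~\ref{th:index-critical} and~\ref{th:fA/g-index-critical}. First we check, as in Proposition~\ref{prop:index-discontinuous=critical}, that
\[
c_k(f\circ A-g)=\inf\big\{c:\ \mathrm{ind}(\{f\circ A-g\le c\}\cap X_1)\ge k\big\},
\]
and the analogous formula holds for $g^*\circ A^*-f^*$ on $Y_1'$. It then suffices to prove that for every $c\in\R$,
\begin{equation*}
\mathrm{ind}\big(\{f\circ A-g\le c\}\cap X_1\big)=\mathrm{ind}\big(\{g^*\circ A^*-f^*\le c\}\cap Y_1'\big).
\end{equation*}

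If the admissible index satisfies the homotopy property (I3), this equality follows at once from the homotopy equivalence of these restricted sublevel sets, which is given by Theorem~\ref{th:sublevel-equi} (assumed proved earlier, under the same hypotheses).

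If instead the index satisfies (I4), we need explicit maps in both directions. In the case where $g$ and $f^*$ are $C^1$, take $\varphi(x)=(A^*|_{Y_1'})^{-1}\nabla g(x)$, which is well defined because $\partial g(X_1)\subset R(A^*)$ and $R(A^*)$ is closed, and take $\psi(y)=(A|_{X_1})^{-1}\nabla f^*(y)$. The key inequality, the DC analogue of Lemma~\ref{lem:fA/g-zyx-tech1}, is this: for $y$ with $A^*y=\nabla g(x)$, Fenchel--Young equality gives $g^*(A^*y)=\langle A^*y,x\rangle-g(x)$, while $f^*(y)\ge\langle y,Ax\rangle-f(Ax)$. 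Subtracting,
\[
g^*(A^*y)-f^*(y)\le f(Ax)-g(x).
\]
Hence $\varphi$ maps $\{f\circ A-g\le c\}\cap X_1$ into the dual sublevel set. The symmetric argument, using $Ax'=\nabla f^*(y)$, shows that $\psi$ maps the dual sublevel set back into the primal one. By (I4) and monotonicity (I1), the two indices are then mutually bounded. The other smoothness case ($g^*$ and $f$ smooth) is handled the same way with the roles of the two functions swapped.

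The main obstacle is making these maps legitimate. They must be continuous, must land in $X_1$ and $Y_1'$ rather than merely in the ambient spaces, and must be $\mathtt{G}$-equivariant when the index requires it. Landing in the right subspaces is exactly where the complementation assumptions and the range conditions $\partial g(X_1)\subset R(A^*)$, $\partial f^*(Y_1')\subset R(A)$ enter, through the bounded inverses of $A|_{X_1}$ and $A^*|_{Y_1'}$ (closed range plus open mapping theorem). Continuity of the inverse images also needs care, since $g$ is only known to be smooth, not uniformly so. If equivariance fails for a nontrivial group action, we would fall back on the homotopy-invariance route through Theorem~\ref{th:sublevel-equi}.
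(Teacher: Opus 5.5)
Your proposal is correct and is essentially the paper's own proof: you reduce to equality of the indices of the restricted sublevel sets, cite Theorem~\ref{th:sublevel-equi} for homotopy-invariant indices, and for indices satisfying (I4) you use $(A^*|_{Y_1'})^{-1}\nabla g$ and $(A|_{X_1})^{-1}\nabla f^*$ together with the Fenchel--Young inequality $g^*(A^*y)-f^*(y)\le f(Ax)-g(x)$ in both directions. One caution: the other smoothness case is not a literal mirror image, because the same Fenchel--Young computation shows that $x\mapsto\nabla f(Ax)$ and $y\mapsto\nabla g^*(A^*y)$ map superlevel sets into superlevel sets, not sublevel sets into sublevel sets, so under (I4) you should instead use the continuous maps in both directions supplied by the homotopy equivalence of Theorem~\ref{th:sublevel-equi}.
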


Before going into the proof, we first take a look at the significance of the above theorem.

\begin{itemize}
\item 
If $X$ and $Y$ are Hilbert spaces, then it is standard to take $X_1=(\ker A)^\bot$ and $Y_1'=(\ker A^*)^\bot$. 
When the operator $A$ is further assumed to be a Fredholm operator, then $R(A)$ is closed, and Theorem \ref{th:sublevel-equi} applies to certain convex functions $f$ and $g$. 
\item If $X$ and $Y$ are general reflexive spaces, and  $A$ is an isomorphism from $X$ to $Y$, then $A^*$ is also an isomorphism from $Y^*$ to $X^*$, and $X_1=X$ and $Y_1'=Y^*$. In this case, we have  for any $c\in\R$, the sub-level set  $\{f\circ A-g\le c\}$ is homotopy equivalent to $\{g^*\circ A^*-f^*\le c\}
$; and the Morse critical points of $f\circ A-g$ are one-to-one correspondence to the Morse critical points of $g^*\circ A^*-f^*$. 

In particular, taking $Y=X$ and $A=\mathrm{id}$, and suppose that $f,g\in \cvx(X)$, and $f^*$ and $g$ are $C^1$-smooth (or, $g^*$ and $f$ are $C^1$-smooth), 
then we have the following statements: 

\begin{enumerate}[(i)]
\item 
For any $c\in\R$, the sub-level set $\{f-g\le c\}$ is homotopy equivalent to $\{g^*-f^*\le c\}$.
\item 

For any  Morse critical point $x$ of $f-g$, $y:=\gradientp g(x)$ is a Morse critical point  of $g^*-f^*$; 
conversely, for any  Morse critical point $y$ of $g^*-f^*$, $x:=\gradientp f^*(y)$ is a  Morse critical point  of $f-g$.
\item 

The Rothe critical groups of $f-g$ at $x$ and the Rothe critical groups of $g^*-f^*$ at $y$ are isomorphic, where $x$ and $y$ are Morse critical points in couple, as described in (ii) above.
\item 
The handlebody decompositions associated with $f-g$ and its dual $g^*-f^*$ are isomorphic.

\item 

For any Lusternik-Schnirelman min-max critical value  
$$c_k(f-g):=\inf_{\mathrm{ind}(S)\ge k}\sup_{x\in S}\big(f(x)-g(x)\big)$$
defined via admissible index, 
we have $c_k(f-g)=c_k(g^*-f^*)$ for any $k$.
\end{enumerate}

\end{itemize}

We remark that (i)-(iv) are direct consequences of  Theorem \ref{th:sublevel-equi}; while (v) is derived from Theorem \ref{thm:homo-f-g}. 
It should be noted  that the 
sublevel sets of $f-g$ and $g^*-f^*$ are homotopy equivalent level by level, which strictly deepen the existing results \cite{dual-Toland79}.



\begin{proof}[Proof of Theorem \ref{th:sublevel-equi}]
We first establish some technical claims. 

Claim 1: For any $y\in Y^*$ with  $A^*y\in\partial g(x)$, we have 
$$g^*(A^*y)-f^*(y)\le f(Ax)-g(x)$$
and the equality holds if and only if 
$y$ is a critical point of $g^*\circ A^*-f^*$ in the sense of \eqref{eq:g*-f*-Tolandcri}.

Proof of Claim 1: Since $A^*y\in\partial g(x)$, we have $g^*(A^*y)=\langle A^*y,x\rangle-g(x)$ and $x\in \partial g^*(A^*y)$. Then, 
\begin{align}
g^*(A^*y)-f^*(y)&=\langle A^*y,x\rangle-g(x) -\sup_{z\in Y}\big(\langle y,z\rangle-f(z)\big)\notag
\\&\le \langle A^*y,x\rangle-g(x) - \big(\langle y,Ax\rangle-f(Ax)\big)=f(Ax)-g(x).\label{eq:g*-f*=f-g}
\end{align}
It is known that $f^*(y)=\langle y,Ax\rangle-f(Ax)$ 
if and only if $Ax\in \partial f^*(y)$ if and only if  $y\in\partial f(Ax)$. Therefore, the `$\le$' in \eqref{eq:g*-f*=f-g} reduces to equality `$=$'  
iff $f(Ax)+f^*(y)=\langle y,Ax\rangle$ iff $Ax\in \partial f^*(y)$ iff $y\in\partial f(Ax)$. 
Since we have $Ax\in A \partial g^*(A^*y)=\partial(g^*\circ A^*)(y)$, the equality case holds iff $Ax\in \partial(g^*\circ A^*)(y)\cap \partial f^*(y)$, i.e., $y$ is a critical point of $(g^*\circ A^*,f^*)$ in the sense of \eqref{eq:g*-f*-Tolandcri}. 

\vspace{0.2cm}


Claim 2: For any $x\in X$ and $0\le t\le1$,  $y\in Y^*$ with $A^*y\in\partial g(x)$, and $z\in X$ with $J(Az)\in \partial f^* (y)$, and define $H(x,t):=tz+(1-t)x$. 
We have
$$f(AH(x,t))-g(H(x,t))\le f(Ax)-g(x).$$
If the inequality in Claim 1 is strict, then 
$f(AH(x,t))-g(H(x,t))< f(Ax)-g(x)$ for any $t\in(0,1]$.

Proof of Claim 2: According to Claim 1, we have
$$f(Az)-g(z)\le g^*(A^*y)-f^*(y)\le f(Ax)-g(x).$$
The assumptions 
$A^*y\in \partial g(x)$
and $J(Az)\in\partial f^*(y)$ imply $\langle y,Az\rangle=f^*(y)+f(Az)$ and $\langle A^*y,x\rangle=g(x)+g^*(A^*y)$. 
For any $t\in[0,1]$, 
\begin{align}
g(H(x,t))-g(x)&=g(tz+(1-t)x)-g(x)\notag
\\&\ge \langle A^*y,tz+(1-t)x-x\rangle \notag
\\&=t \langle A^*y,z-x\rangle = t \big(\langle A^*y,z\rangle -\langle A^*y,x\rangle\big)\notag
\\&= t \big(\langle y,Az\rangle -\langle A^*y,x\rangle\big)\notag
\\&=t\big(f^*(y)+f(Az)-g(x)-g^*(A^*y)\big)\notag
\\&=t\big(f^*(y)-g^*(A^*y)+f(Az)-g(x)\big)\notag
\\&\ge t\big(g(x)-f(Ax)+f(Az)-g(x)\big) \label{eq:by-claim-1}
\\&=t(f(Az)-f(Ax)) \notag
\\&=tf(Az)+(1-t)f(Ax)-f(Ax) \notag
\\&\ge f(tAz+(1-t)Ax)-f(Ax)\notag
\\&=f(AH(x,t))-f(Ax)\notag
\end{align}
where \eqref{eq:by-claim-1} is due to $f^*(y)-g^*(A^*y)\ge g(x)-f(Ax)$ (see Claim 1). 
The proof of Claim 2 is then completed.

\vspace{0.2cm}

We are now in a position to prove Theorem \ref{th:sublevel-equi}. 
Since $g$ and $f^*$ are $C^1$-smooth, the subgradients $A^*y=\partial g(x)$ and $Az=\partial f^*(y)$ are uniquely defined, and they are continuous with respect to $x$. 
Since $R(A^*|_{Y_1'})=R(A^*)$ is closed,  
$A^*|_{Y_1'}:Y_1'\to R(A^*)$ has bounded inverse, which we denote it by $(A^*|_{Y_1'})^{-1}$. Similarly, $A|_{X_1}:X_1\to R(A)$ has bounded inverse, and we denote it as $(A|_{X_1})^{-1}$.   
Let $\acg:=(A^*|_{Y_1'})^{-1}\gradientp g$ and $\afc:=(A|_{X_1})^{-1}\gradientp f^*$. 
Then $\dom\acg=\{x\in X:\partial g(x)\in R(A^*)\}$ and $\acg$ is continuous on its domain. By condition, $\dom\acg
\supset X_1$, and hence $\acg$ is continuous at every point in $X_1$. 

Similarly, $\afc$  is continuous on its domain $\{y^*\in Y^*:\partial f^*(y^*)\in R(A)\}$. By condition, $\dom\afc\supset Y_1'$. 

In addition, $\dom (\afc\circ\acg)=\{x\in X: \partial g(x)\in R(A^*),\partial f^*((A^*|_{Y_1'})^{-1}\partial g(x))\in R(A)\}\supset X_1$,  $\dom (\acg\circ\afc)\supset Y_1'$, $\acg(X_1)\subset Y_1'$ and $\afc(Y_1')\subset X_1$ when we identify $X$ with $X^{**}$, and identify $Y$ with $Y^{**}$. 

Then, we can define $H(x,t):=th(x)+(1-t)x$ with $h(x)=z=\afc(\acg(x))$. Clearly, $H:X_1\times[0,1]\to X_1$ is a continuous map.

By Claim 1, $\acg\{f\circ A-g\le c\}\subset \{g^*\circ A^*-g^*\le c\}$, and in a similar manner, $ \afc\{g^*\circ A^*-g^*\le c\} \subset\{f\circ A-g\le c\}$. By Claim 2, $H(\{f\circ A-g\le c\},t)\subset \{f\circ A-g\le c\}$.

Since $H(\cdot,0)=\mathrm{id}$ and $H(\cdot,1)=\afc\circ\acg$, we have $\afc|_{\{g^*\circ A^*-f^*\le c\}}\circ\acg|_{\{f\circ A-g\le c\}}\simeq \mathrm{id}|_{\{f\circ A-g\le c\}}$. 

Similarly, $\acg|_{\{f\circ A-g\le c\}}\circ\afc|_{\{g^*\circ A^*-f^*\le c\}}\simeq \mathrm{id}|_{\{g^*\circ A^*-f^*\le c\}}$. This derives that $\acg|_{\{f\circ A-g\le c\}}$ induces a homotopy equivalence between $\{f\circ A-g\le c\}$ and $\{g^*\circ A^*-f^*\le c\}$.

Let $x$ be a Morse critical point of $f\circ A-g$ restricted on $X_1$. 
We shall prove that, $\acg(x)$ is a  Morse critical point of $g^*\circ A^*-f^*$ restricted on $Y_1'$. 
Suppose the contrary, that $y=\acg(x)$ is a critical point but a  Morse regular point  of the dual DC function $g^*\circ A^*-f^*$. 
Then $x$ is a critical point of the primal DC function $f\circ A-g$, i.e., $0\in \partial (f\circ A)(x)-\partial g(x)$, 

Assume that $\eta$ is a continuous local flow around $y$, i.e., there exists a neighborhood $U_y$ such that for any $y'\in U_y$, $t\in(0,1]$,
$$(g^*\circ A^*-f^*)(\eta(y',t))<(g^*\circ A^*-f^*)(y') $$
and $\eta(y',0)=y'$. Note that $(f\circ A-g)(\psi(\eta(y',t)))\le (g^*\circ A^*-f^*)(\eta(y',t))$.

We shall construct a continuous flow around $x$. Define $\widetilde{\eta}(x',t)=\afc(\eta(\acg(x'),t))$. 
For any $x'\in U_x$ and $t\in(0,1]$,
\begin{align*}
f\circ A(\widetilde{\eta}(x',t))-g(\widetilde{\eta}(x',t))&=
(f\circ A-g)(\psi(\eta(y',t)))
\\&\le (g^*\circ A^*-f^*)(\eta(y',t))
\\&< (g^*\circ A^*-f^*)(y')
\\&=(g^*\circ A^*-f^*)(\acg(x'))
\\& \le f(Ax)-g(x)
\end{align*}
where $y':=\acg(x')$. 
However, $\widetilde{\eta}(\cdot,\cdot)$ is not a flow as $\widetilde{\eta}(x',0)=\afc(\eta(\acg(x'),0))=\afc( \acg(x') )\ne x'$ in general. 
While we can define $\delta(x')=\|x'-\afc( \acg(x'))\|$ which satisfies that $x'$ is a critical point of the DC function $f\circ A-g$ iff $\delta(x')=0$. Moreover, $\delta(\cdot)$ is continuous. Let  $\eta^\#:U_x\times[0,1]\to X$ be defined by
$$\eta^\#(x',t)=\begin{cases}
H(x',t/\delta(x')),\text{ if }1\ge \delta(x')>t\ge0\\
\widetilde{\eta}(x',t-\delta(x')),\text{ if }1\ge t\ge\delta(x')\ge0
\end{cases}$$
Then $\eta^\#(x',0)=x'$, $\eta^\#$ is continuous, and 
$$(f\circ A-g)(\eta^\#(x',t))<(f\circ A-g)(x'),\;\forall t\in (0,1]. $$
Therefore, we have verified that $\eta^\#$ is a decreasing flow around $x$. 
This implies that $x$ is actually a Morse regular point of $f\circ A-g$, a contradiction to the assumption. 
We then complete the proof of the one-to-one correspondence of Morse critical points via Fenchel duality. 

We are in a position to show the isomorphisms of Rothe critical groups. 

In fact, if $\al$ is an isolated Morse critical point of $f\circ A-g$, then by the homology excision property, $C_*(f\circ A-g,\al)\cong H_*(\{f\circ A-g\le f(A\al)-g(\al)\},\{f\circ A-g\le f(A\al)-g(\al)\}\setminus\{\al\})$. 
Since $\acg(\al)=\bet$, $f(A\al)-g(\al)=g^* (A^*\tau)-f^* (\tau)$, and 
$\acg:\{f\circ A-g\le f(A\al)-g(\al)\}\to \{g^*\circ A^* -f^* \le g^* (A^*\tau)-f^* (\tau)\}$ is a homotopy equivalence, we have that $\acg$ induces a homomorphism $$\acg_*: C_*(f\circ A-g,\al)\to C_*(g^*\circ A^* -f^* ,\tau).$$
Similarly, $\afc $ induces a homomorphism 
$$\afc_*:  C_*(g^*\circ A^* -f^* ,\tau)\to C_*(f\circ A-g,\al).$$
Note that $\acg_*\circ \afc_*=(\acg \circ \afc)_*:C_*(f\circ A-g,\al)\to C_*(f\circ A-g,\al)$ is an isomorphism because $\afc \circ \acg\simeq \mathrm{id}$ by Theorem \ref{th:ratio-sublevel-equiA}. Thus, we obtain that both $\acg_*$ and $\afc_*$ are isomorphisms between $C_*(f\circ A-g,\al)$ and $ C_*(g^*\circ A^* -f^* ,\tau)$. 
\end{proof}

\begin{proof}[Proof of Theorem \ref{thm:homo-f-g}]
By definition, it can be verified that $$c_k(f\circ A-g)=\inf_{\mathrm{ind}\left(\{f\circ A-g\le c\}\cap X_1\right)\ge k }c.$$
To show $c_k(f\circ A-g)=c_k(g^*\circ A^* -f^* )$, 
It suffices to prove 
\begin{equation}\label{eq:index=dual-fA-g}
\mathrm{ind} \left(\{f\circ A-g\le c\}\cap X_1\right)=\mathrm{ind}\left(\{g^*\circ A^* -f^* \le c\} \cap Y_1'\right)   
\end{equation}

We first prove the case that $f^* $ and $g$ are further assumed to be $C^1$-smooth (or, $f$ and $g^* $ are $C^1$-smooth). 

If the admissible  index is a homotopy invariant (see (I3) in Definition \ref{def:admissible-index}), then 
by 
Theorem \ref{th:sublevel-equi}, 
we have $\{f\circ A-g\le c\}\cap X_1\simeq\{g^* \circ A^*-f^* \le c\}\cap Y_1'$ for any $c\in\R$, and then we immediately obtain 
the equality \eqref{eq:index=dual-fA-g}.

If the admissible  index satisfies the nondecreasing property under continuous map (i.e., (I4) in Definition \ref{def:admissible-index}), we can also obtain the equality \eqref{eq:index=dual-fA-g}.  
First, suppose that $g$ and $f^* $ are $C^1$-smooth, then by the continuity of $\acg$ and the nondecreasing property of $\mathrm{ind}$, 
we have 
\begin{align*}
 \mathrm{ind}\big(\{f\circ A-g\le c\}\cap X_1\big)
&\le \mathrm{ind}\big(\acg(\{f\circ A-g\le c\}\cap X_1)\big)
\\&\le \mathrm{ind}\big(\{g^* \circ A^*-f^* \le c\}\cap Y_1'\big).   
\end{align*}
The converse similarly holds. 

Now, we remove the $C^1$-smooth assumption, and use standard approximation method, the equality $c_k(f\circ A-g)=c_k(g^*\circ A^* -f^* )$ can be also verified.  
\end{proof}

{\small 
\addcontentsline{toc}{section}{Bibliography}     
\bibliographystyle{plain}    
\bibliography{ref}  
}

\appendix

\section{Appendix
}

\subsection{Supplement for the function space $\cvx_0^p(X)$}

The following 
results are basic properties on polarity dual and direct consequences  of Definition \ref{def:CV_p}.

\begin{enumerate}[(\textbf{{P}}1)]
\item \label{item:p-homo-polar} If $f\in\cvx_0(X)$ is further assumed to be $p$-homogeneous with $p>1$, then 
$$f^\polar(x^*)=\inf\left\{c\in\R_+:\langle  x^*, x\rangle\le \Big(c\f( x)\frac{p^p}{(p-1)^{p-1}}\Big)^{\frac1p},\forall x\in X \right\}   
.$$ 

Proof: 
Suppose $f(x)>0$ and $\langle  x^*, x\rangle>0$, 
replacing $x$ by $t_0x$ with $t_0=(\langle  x^*, x\rangle/pcf^p(x))^{\frac{1}{p-1}}>0$, it can be verified by elementary computation that $t_0\langle  x^*, x\rangle\le ct_0^pf(x)+1$ if and only if 
$$\langle  x^*, x\rangle^p\le c\f( x)\frac{p^p}{(p-1)^{p-1}}$$

Note that $t_0$ is the minimizer of the function $(0,\infty)\ni t\mapsto ct^pf(x)+1-t\langle  x^*, x\rangle$. 
Thus, the above inequality holds if and only if $t\langle  x^*, x\rangle\le ct^pf(x)+1$, $\forall t>0$. 

\item\label{item:decreasing-along-ker} For any $f\in \cvx_0^p(X)$, we have $f(x)\ge f(  x+  z)$ for any $  z\in  \ker f$ and $  x\in \dom f$. 

Proof: 
Assume the contrary holds:  $f(  x+  z)>f(  x)$ for some $  x\in \dom f$ and  $  z\in \ker f $. Fix such $  x$ and $  z$, and let $\delta=f(  x+  z)-f(  x)>0$. By the convexity  of $f$, for any $t\ge0$, $\frac{1}{1+t}f(  x+(1+t)  z)+\frac{t}{1+t}f(  x)\ge f(  x+  z)$,  which is equivalent to 
\begin{equation}\label{eq:by-convexity-f}
f(  x+(1+t)  z)\ge f(  x+  z)+t(f(  x+  z)-f(  x))=f(  x+  z)+t\delta.
\end{equation}
The $p$-homogeneity and convexity of $f$ imply that $\dom f$ is a convex cone, and the conditions 1 and 2 in Definition \ref{def:CV_p} implies that $ \ker f$ is also a convex cone. Thus, $  z\in \ker f $ implies  $(1+t)  z\in \ker f $.  Then, it follows from $(1+t)  z\in \ker f$ and the convexity and $p$-homogeneity of $f$ that 
 $$\frac12f(  x)=\frac{f(  x)+f((1+t)  z)}{2}\ge f\left(\frac{  x+(1+t)  z}{2}\right)=\frac1{2^p} f(  x+(1+t)  z)$$ which yields $2^{p-1}f(  x)\ge f(  x+(1+t)  z)$. Together with \eqref{eq:by-convexity-f}, we obtain $2^{p-1}f(  x)\ge f(  x+  z)+t\delta$ for any $t>0$, which contradicts $x\in\dom f$.

\item Given $f\in \cvx_0^p(X)$ with the additional property that $\ker f$ is a linear subspace of $X$, we have $f(x)= f(  x+  z)$ for any $  z\in  \ker f$ and $  x\in \dom f$. 

Proof: We have by the above \ref{item:decreasing-along-ker} and the convexity of $f$ that $f(x)\ge \frac12(f(x+z)+f(x-z))\ge f(x)$ which implies $f(x)=f(x+z)$, $\forall z\in\ker f$. 
\item \label{item:1-to-p-homo} If $f\in \cvx_0^1(X)$, then $f^p\in \cvx_0^p(X)$ for any $p\ge1$.

Proof: For any $x,y\in X$, $t\in[0,1]$, we have by H\"older's inequality that
\begin{align*}
f^p(tx+(1-t)y)&\le (tf(x)+(1-t)f(y))^p\le (t+1-t)^{\frac pq}(tf^p(x)+(1-t)f^p(y))
\\&=tf^p(x)+(1-t)f^p(y)    
\end{align*}

\item \label{item:p/root/of/p-homo}
If $f\in \cvx_0^p(X)$ with $p>1$, then $f^{\frac1p}\in \cvx_0^1(X)$.

Proof: It suffices to verify the convexity of $f^{\frac1p}$. 
For any $x,y\in \dom f\setminus\ker f$, $0<f(x),f(y)<\infty$. Then, for any $0\le t\le 1$,  $tf^{\frac1p}(x)+(1-t)f^{\frac1p}(y)>0$. Then
\begin{align*}
&\frac{f(tx+(1-t)y)}{\big(tf^{\frac1p}(x)+(1-t)f^{\frac1p}(y)\big)^p}
\\=\;&f\left(\frac{tf^{\frac1p}(x)}{tf^{\frac1p}(x)+(1-t)f^{\frac1p}(y)}\frac{x}{f^{\frac1p}(x)}+\frac{(1-t)f^{\frac1p}(y)}{tf^{\frac1p}(x)+(1-t)f^{\frac1p}(y)}\frac{y}{f^{\frac1p}(y)}\right)
\\ \le \;&\frac{tf^{\frac1p}(x)}{tf^{\frac1p}(x)+(1-t)f^{\frac1p}(y)} f\Big(\frac{x}{f^{\frac1p}(x)}\Big)+\frac{(1-t)f^{\frac1p}(y)}{tf^{\frac1p}(x)+(1-t)f^{\frac1p}(y)}f\Big(\frac{y}{f^{\frac1p}(y)}\Big)=1
\end{align*}
which implies $f^{\frac1p}(tx+(1-t)y)\le tf^{\frac1p}(x)+(1-t)f^{\frac1p}(y)$. 
If either $x$ or $y$ lies in $X\setminus\dom f$, then the convex inequality clearly holds. 
If $x,y\in \ker f$, then since $\ker f$ is a convex cone, the convex inequality clearly holds. 
If $x\in\dom f\setminus\ker f$ and $z\in \ker f$, then $f((1-t)x+tz)\le f((1-t)x)=(1-t)^pf(x)$, and hence 
$f^{\frac1p}((1-t)x+tz)\le (1-t)f^{\frac1p}(x)\le (1-t)f^{\frac1p}(x)+tf^{\frac1p}(z)$. 
\item \label{item:CV1-polar}
For any $f\in \cvx_0^1(X)$,  $f^\polar\in \cvx_0^1(X^*)$. 

Proof: $f^\polar$ is one-homogeneous because $f^\polar(tx^*)=\sup_{f(x)\le 1}\langle tx^*,x\rangle
= t\sup_{f(x)\le  1}\langle x^*,x\rangle=tf^\polar(x^*)$. 

$f^\polar$ is nonnegative because $f^\polar(x^*)=\sup_{f(x)\le 1}\langle x^*,x\rangle
\ge \langle x^*,0\rangle=0$. 


$f^\polar$ is convex because 
$f^\polar(tx^*_1+(1-t)x^*_2)=\sup_{f(x)\le 1}\langle tx^*_1+(1-t)x^*_2,x\rangle\le t\sup_{f(x)\le 1}\langle x^*_1,x\rangle+(1-t)\sup_{f(x)\le 1}\langle x^*_2,x\rangle=tf^\polar(x_1^*)+(1-t)f^\polar(x_2^*)$

$f^\polar$ is lower semi-continuous since for any convergent sequence $x^k\to x^*$,  
$$\liminf_k f^\polar(x^k)=\lim_{n\to+\infty}\inf_{k\ge n}\sup_{f(x)\le1}\langle x^k,x\rangle\ge \sup_{f(x)\le1}\lim_{n\to+\infty}\inf_{k\ge n}\langle x^k,x\rangle=\sup_{f(x)\le1}\langle x^*,x\rangle=f^\polar(x^*)$$
\item \label{item:Euler} 
If $f\in\cvx_0^p(X)$, then for any $x^*\in\partial f(x)$, $\langle x^*,x\rangle=pf(x)$. 

Proof: $t\langle x^*,x\rangle=\langle x^*,(1+t)x-x\rangle\le f((1+t)x)-f(x)=((1+t)^p-1) f(x)$. Hence, 
$$ \langle x^*,x\rangle\le \lim_{t\to 0^+}\frac{(1+t)^p-1}{t}f(x)=pf(x)=\lim_{t\to 0^-}\frac{(1+t)^p-1}{t}f(x)\le \langle x^*,x\rangle$$
\item  \label{item:p-homo-subdifferen-polar-rela}
If $f\in \cvx_0^1(X)$, then $\forall x\not\in \ker f$, $\partial f^p(x)=pf^{p-1}(x)\partial f(x)$ and $(f^p)^\polar(x^*)=\frac{(p-1)^{p-1}}{p^{p}}(f^\polar(x^*))^p$

Proof: Suppose without loss of generality that $p>1$. 
Since $x\not\in \ker f$, for any $x^*\in \partial f(x)$, $\langle pf^{p-1}(x)x^*,y-x\rangle=pf^{p-1}(x)\langle x^*,y-x\rangle\le pf^{p-1}(x)(f(y)-f(x))\le f^p(y)-f^p(x)$, which implies that $pf^{p-1}(x)x^*\in \partial f^p(x)$. Conversely, for any $x^*\in \partial f^p(x)$, we have by Euler's identity that $\langle x^*,x\rangle=pf^p(x)$. By definition, for any $y\in X$, $\langle x^*,y-x\rangle\le f^p(y)-f^p(x)$, that is, $\langle x^*,y\rangle\le f^p(y)+(p-1)f^p(x)$. 
If $f(y)>0$, then we use $f(x)y/f(y)$ instead of $y$ to obtain $(f(x)/f(y))\langle x^*,y\rangle\le (f(x)/f(y))^pf^p(y)+(p-1)f^p(x)=pf^p(x)$. Thus, $\langle x^*,y\rangle\le pf^{p-1}(x)f(y)$, i.e., $\langle \frac{x^*}{pf^{p-1}(x)},y\rangle\le f(y)$, which holds for any $y\in X$. 
Therefore, $\langle \frac{x^*}{pf^{p-1}(x)},y-x\rangle\le f(y)-f(x)$, $\forall y$, which implies $\frac{x^*}{pf^{p-1}(x)}\in\partial f(x)$. 

The equality
$$(f^p)^\polar(x^*)=\frac{(p-1)^{p-1}}{p^{p}}(f^\polar(x^*))^p$$ 
is derived from \ref{item:p-homo-polar}.
\item \label{item:sublevel-ker-dom-cone} If $f\in\cvx_0(X)$ is further assumed to be $p$-homogeneous with $p\ge1$, then for any real number $c\ge0$, $\{f\le c\}$ is a closed convex set, $\ker f$ is a closed convex cone, and $\dom f$ is a convex cone. 

Proof: It follows from the convexity of $f$ that its sublevel sets are convex, i.e., $\{f\le c\}$ is convex for any real $c$. 
By the lower semicontinuity of $f$, $\{f\le c\}$ is closed, $\forall c\in\R$. 
Since $f\ge 0$, we have $\ker f=\{f\le 0\}$ is a closed convex set. 
By the homogeneity of $f$, its zero set $\ker f$ is a cone. 
Thus, $\ker f$ is a closed convex cone. 
Since for any $t,s>0$, $x,y\in\dom f$, $f(tx+sy)\le (t+s)^{p-1}(tf(x)+sf(y))<+\infty$, we have $tx+sy\in\dom f$, and therefore, $\dom f$ is a convex cone.
\end{enumerate}

As a complement‌ to \ref{item:p-homo-subdifferen-polar-rela}, we have for $p>1$, $\partial f^p(x)=(\dom f)^\polar$ for any $x\in\ker f$. 

A proof is as follows. Since $x\in\ker f$, we have $\forall x^*\in \partial f^p(x)$, $\langle x^*,y\rangle\le f^p(x+y)-f^p(x)=f^p(x+y)\le f^p(y)$ (by \ref{item:decreasing-along-ker}), for any $y\in X$. Replacing $y$ by $ty$ with $t>0$, then $\langle x^*,y\rangle\le t^{p-1}f^p(y)$, $\forall t>0$, implying that $\langle x^*,y\rangle\le0$. Thus, $x^*\in(\dom f)^\polar$. Conversely, for any $x^*\in(\dom f)^\polar$, for any $y\in \dom f$, $\forall x\in\ker f$, $\langle x^*,y\rangle\le 0\le f^p(x+y)=f^p(x+y)-f^p(x)$. This implies that $x^*\in \partial f^p(x)$. 

It follows from \ref {item:1-to-p-homo} and \ref{item:p/root/of/p-homo} that $f\in \cvx_0^1(X)$ if and only if $f^p\in \cvx_0^p(X)$.

For any $f\in\cvx_0^1(X)$, we have the following statements: 
\begin{enumerate}[(\textbf{{S}}1)]

\item \label{item:S1} 
$\ker f^\polar=(\dom f)^\polar$ 

Proof: It suffices to prove 
\text{$f^\polar(x^*)=0$ if and only if $x^*\in(\dom f)^\polar$}: In fact, $0=f^\polar(x^*)=\sup_{f(x)\le 1}\langle x^*,x\rangle$ $\Longleftrightarrow$ $\langle x^*,x\rangle\le 0$ for any $x$ with $f(x)\le 1$ $\Longleftrightarrow$  $\langle x^*,x\rangle\le 0$ for any $x\in\dom f$ $\Longleftrightarrow$ $x^*\in(\dom f)^\polar$. 

\item  \label{item:S2}  $\dom f^\polar\subset (\ker f)^\polar$

Proof: It suffices to prove that \text{for any $  x^*\not\in(\ker f)^\polar$, $f^\polar(x^*)=+\infty$}: Since $  x^*\not\in(\ker f)^\polar$, there exists $z_0\in\ker f$ such that $\langle x^*,z_0\rangle>0$. Thus,  $f^\polar(x^*)=\sup_{f(x)\le1}\langle x^*,x\rangle\ge \sup_{z\in\ker f}\langle x^*,z\rangle\ge \sup_{t>0}\langle x^*,tz_0\rangle=+\infty$.  

\item \label{item:S3} $f\ge f^{\polar\polar}$

Proof: For any $x^*\in X^*$ with $f^\polar(x^*)\le 1$, if $x\in X$ satisfies  $f(x)=+\infty$, then $f^{\polar\polar}(x)\le f(x)$; if $0<f(x)<+\infty$, then $\langle x^*,x/f(x)\rangle\le 1$ which implies $\langle x^*,x\rangle\le f(x)$, and thus $f^{\polar\polar}(x)=\sup_{f^\polar(x^*)\le 1}\langle x^*,x\rangle\le f(x)$; if $f(x)=0$, then $\langle x^*,x\rangle\le0=f(x)$ (because otherwise $f^\polar(x^*)\ge \sup_{t>0}\langle x^*,tx\rangle=+\infty$ a contradiction). In any case, we obtain $f\ge f^{\polar\polar}$. 
\item \label{item:S4} $\overline{\dom f^{\polar\polar}}=\overline{\dom f}=(\ker f^\polar)^\polar$

Proof:  Taking polar transform on both sides of \ref{item:S2}, we obtain $(\dom f^\polar)^\polar\supset \ker f$. 
Replacing $f$ and $f^\polar$ by $f^\polar$ and $f^{\polar\polar}$, respectively, we have $(\dom f^{\polar\polar})^\polar\supset \ker f^\polar$. 
Thus, $\overline{\dom f^{\polar\polar}}=(\dom f^{\polar\polar})^{\polar\polar}\subset (\ker f^\polar)^\polar$. 
By \ref{item:S3}, we have $\dom f\subset\dom f^{\polar\polar}$ and hence  $\overline{\dom f}\subset\overline{\dom f^{\polar\polar}}\subset(\ker f^\polar)^\polar$.  
Taking polar transform on both sides of \ref{item:S1}, we have 
$\overline{\dom f}=(\ker f^\polar)^\polar$ and then $\overline{\dom f^{\polar\polar}}=\overline{\dom f}=(\ker f^\polar)^\polar$.   
\item \label{item:S5}$f=f^{\polar\polar}$

Proof: 
Next, we prove that $\{f^{\polar\polar}\le c\}=\{f\le c\}$ for any $c\in (0,+\infty)$. In fact, $\{f\le c\}\subset \{f^{\polar\polar}\le c\}$ are closed convex sets. If there exists a positive number $c$ such that $\{f\le c\}\subsetneq \{f^{\polar\polar}\le c\}$, then there exists $x_0\in \{f^{\polar\polar}\le c\}\setminus \{f\le c\}$. By Hahn-Banach separation theorem, there exists $x^*$ such that $\langle x^*,x_0\rangle>\sup_{x\in \{f\le c\}}\langle x^*,x\rangle=cf^\polar(x^*)$. 
If $f^\polar(x^*)>0$, then by scaling, we can assume without loss of generality that $f^\polar(x^*)= 1$. 
Then $c\ge f^{\polar\polar}(x_0)=\sup_{f^\polar(y^*)\le 1}\langle y^*,x_0\rangle\ge \langle x^*,x_0\rangle>cf^\polar(x^*)=c$, a contradiction. 
If $f^\polar(x^*)=0$, then taking polarity transform on  \ref{item:S4}, we have $(\dom f^{\polar\polar})^\polar=(\dom f)^\polar=\ker f^\polar$, and therefore   $x^*\in \ker f^\polar=(\dom f)^\polar=(\dom f^{\polar\polar})^\polar$. 
Since $x_0\in \dom f^{\polar\polar}$, we have $\langle x^*,x_0\rangle\le0$, a contradiction. 

Consequently, $\dom f^{\polar\polar}=\cup_{c>0}\{f^{\polar\polar}\le c\}=\cup_{c>0}\{f \le c\}=\dom f$. 
If there exists $x_0$ such that $f(x_0)>f^{\polar\polar}(x_0)$, then $0<f(x_0)<+\infty$. Taking $c=f(x_0)$ and $\varepsilon_0>0$ such that $(1+\varepsilon_0)f^{\polar\polar}(x_0)<c$, we have $(1+\varepsilon_0)x_0\in\{f^{\polar\polar}\le c\}\setminus \{f\le c\}$, a contradiction to the discussion above. Therefore, $f=f^{\polar\polar}$.
\item\label{item:S6}$\overline{\dom f^\polar}= (\ker f)^\polar$ and $\overline{\dom f}= (\ker f^\polar)^\polar$

Proof: Replacing $f$ by $f^\polar$ in \ref{item:S1} and using \ref{item:S5}, we have 
$(\dom f^\polar)^\polar=\ker f^{\polar\polar}=\ker f$. Then, taking polarity on both sides, we obtain $(\ker f)^\polar=(\dom f^\polar)^{\polar\polar}=\overline{\dom f^\polar}$. 
The second statement $\overline{\dom f}= (\ker f^\polar)^\polar$ is a direct consequence of the first one. 
\item If $\ker f\ne \dom f$, then $\ker f^\polar\ne\dom f^\polar$.

Proof: 
Suppose the contrary, that $\ker f^\polar=\dom f^\polar$. Then, by \ref{item:S6}, $\overline{\dom f}= (\ker f^\polar)^\polar=(\dom f^\polar)^\polar=(\overline{\dom f^\polar})^\polar= (\ker f)^{\polar\polar}=\ker f$. Since $\ker f\subset \dom f$ and $\ker f$ is closed, we have $\ker f= \dom f$, a contradiction.

\item If $\ker f$ is a linear subspace, so does $\overline{\dom f^\polar}$. If $\overline{\dom f^\polar}$ is a linear subspace, so does $\ker f^\polar$. 

Proof: According to  \ref{item:S6}, it suffices to show that for a linear subspace $\Pi\subset X$, the polar dual $\Pi^\polar$ is also a linear subspace. 
This is indeed a known result, but for completeness, we provide a short verification: $\forall x^*\in\Pi^\polar$, $\langle x^*,x\rangle\le0$ and hence $\langle x^*,-x\rangle\ge0$, $\forall x\in\Pi$. 
Since $\Pi$ is a linear subspace, $-x\in \Pi$, we also have $\langle x^*,-x\rangle\le0$. This implies $\langle x^*,x\rangle=0$, and therefore, $x^*\in \Pi^\bot$. 
Here, $\Pi^\bot$ represents the annihilator of $\Pi$ which is actually a closed linear subspace of $X^*$.

\item \label{item:partialf-ker} $\partial f(x)\subset (\ker f)^\polar$

Proof: In fact, for any $x^*\in\partial f(x)$, $\langle x^*,x'-x\rangle\le f(x')-f(x)$, $\forall x'\in X$. We may take $x'=x+z$ for $z\in\ker f$. Then, by \ref{item:decreasing-along-ker}, $\langle x^*,z\rangle\le f(x+z)-f(x)\le0$, for any $z\in\ker f$, which implies $\langle x^*,z\rangle\le0$, $\forall z\in\ker f$, and thus $x^*\in (\ker f)^\polar$.

\item \label{item:partialf(0)} $\partial f(0)=\{x^*\in X^*:f^\polar(x^*)\le 1\}$

Proof: 
\begin{align*}
\partial f(0)&=\{x^*\in X^*:\langle x^*,x-0\rangle\le f(x)-f(0)\}
=\{x^*\in X^*:\langle x^*,x\rangle\le f(x)\}\\&=\{x^*\in X^*:\sup_{f(x)\le 1}\langle x^*,x\rangle\le 1\}=\{x^*\in X^*:f^\polar(x^*)\le 1\}    
\end{align*}
\end{enumerate}

\subsection{Proofs of Propositions \ref{pro:CV_p-basic}, \ref{pro:CV_1&Cv_p}, \ref{prop:f-continuous-equiva}, \ref{prop:character:CV_c^1}, \ref{pro:quotient-subdifferential},  Corollary \ref{cor:function-spaces} and Lemma \ref{lem:normal-cone}}




\begin{proof}[Proof of Proposition \ref{pro:CV_p-basic}]We show the proof step by step.
\begin{enumerate}[(i)]
\item See \ref{item:sublevel-ker-dom-cone}. 
\item This directly follows from \ref{item:decreasing-along-ker}.
\item This is \ref{item:p/root/of/p-homo}
\item 
For any $f\in \cvx_0^p(X)$, $f^\circ \in \cvx_0^p(X^*)$. Furthermore, $\ker f^\polar=(\dom f)^\polar$ and $\overline{\dom f^\polar}=(\ker f)^\polar$. 
These are derived from \ref{item:CV1-polar}, \ref{item:S1} and \ref{item:S6} 
\item They are consequences of \ref{item:S5}  and \ref{item:partialf-ker}
\end{enumerate}
The above verification also implicitly‌ uses the relation $\partial f^p(x)=pf^{p-1}(x)\partial f(x)$ and $(f^p)^\polar(x^*)=\frac{(p-1)^{p-1}}{p^{p}}(f^\polar(x^*))^p$ shown in \ref{item:p-homo-subdifferen-polar-rela}.
\end{proof}

\begin{proof}[Proof of Proposition \ref{pro:CV_1&Cv_p}]
(i) see \ref{item:p-homo-subdifferen-polar-rela}. 

(ii) We shall use the equivalent definition 
$f^\polar(x^*)=\inf\left\{c\in\R_+:\langle  x^*, x\rangle\le c\f( x),\forall x\in X \right\} $. Let $c>0$ be such that for any $x$ with $f(x)\le 1$, $\langle  x^*, x\rangle\le c$, i.e.,  $\sup_{f(x)\le 1}\langle  x^*, x\rangle\le c$. We shall prove that $\langle  x^*, x\rangle\le cf(x)$, $\forall x\in X$. 
Suppose the contrary that $\langle  x^*, x\rangle> cf(x)$ for some $x\in X$. Then $x\in\dom f$ and $f(x)<1$. 
If $x\in \ker f$, then $x^*\not\in(\ker f)^\polar$ and by \ref{item:S6}, $\sup_{f(x)\le 1}\langle  x^*, x\rangle=+\infty$. Since $f(tx)=0$ and $\sup_{t>0}\langle  x^*, tx\rangle=+\infty$, we have  $f^\polar(x^*)=+\infty$. If $x\not\in\ker f$, then $\langle  x^*, x/f(x)\rangle>c$ which contradicts to $\sup_{f(x)\le 1}\langle  x^*, x\rangle\le c$. 
Therefore, $\sup_{f(x)\le 1}\langle  x^*, x\rangle\ge f^\polar(x^*)$. 
Conversely, for any $c>0$ such that $\langle  x^*, x\rangle\le cf(x)$, $\forall x\in X$, we have $\sup_{f(x)\le1}\langle  x^*, x\rangle\le c$. Hence, $\sup_{f(x)\le1}\langle  x^*, x\rangle\le f^\polar(x^*)$. 

(iii) see \ref{item:partialf(0)}.
\end{proof}

\begin{proof}[Proof of Proposition \ref{prop:f-continuous-equiva}]
Since $f$ is convex and one-homogeneous, $f$ has subadditivity, that is, $f(x+y)\le f(x)+f(y)$, $\forall x,y\in X$. 
Hence, for any $x_0\in X$, 
$f(x)-f(x_0)\le f(x-x_0)$ and $f(x_0)-f(x)\le f(x_0-x)$, $\forall x\in X$. Thus, $|f(x)-f(x_0)|\le \max\{f(x-x_0), f(x_0-x)\}$. If $f$ is continuous at $0$,  then $\limsup_{x\to x_0}|f(x)-f(x_0)|\le \limsup_{x\to x_0}\max\{f(x-x_0), f(x_0-x)\}=0$, which means that $f$ is continuous at $x_0$. 

Conversely, if $f$ is continuous at $x_0$, for any $y\in X$, $f(y)\le f(y+x_0)+f(-x_0)$. There exists $\delta>0$ such that $|f(y+x_0)-f(x_0)|< 1$ whenever $\|y\|<\delta$. Hence, $f(y)\le f(y+x_0)+f(-x_0)\le f(x_0)+1+f(-x_0)$ for any $y$ with $\|y\|<\delta$. By the one-homogeneity of $f$, for any $\varepsilon>0$, we obtain $|f(x)|=\varepsilon|f(x/\varepsilon)|\le (f(x_0)+1+f(-x_0))\varepsilon$ whenever $\|x\|<\varepsilon\delta$. Therefore, $f$ is continuous at $0$.
\end{proof}

\begin{proof}[Proof of Proposition \ref{prop:character:CV_c^1}]
It suffices to work with the case $p=1$. Denote by $$\widetilde{\cvp}_c^1(X)=\big\{f\in \cvx_{0}^1(X):\exists C_2>C_1>0\text{ s.t. }C_1\|x\|\le f(x)\le C_2\|x\|,\forall x\in X\big\}. $$ 

If $f\in\widetilde{\cvp}_c^1(X)$, then $f$ is continuous at $0$. Note that, $f^\polar(x^*)=\sup_{f(x)\le 1}\langle x^*,x\rangle\le \sup_{C_1\|x\|\le 1}\langle x^*,x\rangle=\frac{1}{C_1}\sup_{\|x\|\le 1}\langle x^*,x\rangle=\|x^*\|^\polar /C_1$ which implies that $f^\polar$ is continuous at $0$. 
This implies that $\widetilde{\cvp}_c^1(X)\subset\cvp_c^1(X)$. 

Conversely, given $f\in \cvp_c^1(X)$, that is, $f$ and $f^\polar$ are continuous at $0$, then there exists $\delta>0$ such that $|f(x)|< 1$ whenever $\|x\|\le \delta$, implying that $|f(x)|=\frac{\|x\|}{\delta}f(\frac{\delta }{\|x\|}x)<\frac{\|x\|}{\delta}$, $\forall x\in X$. That is, by taking $C_2=1/\delta$, we have $|f(x)|\le C_2\|x\|$, $\forall x\in X$.

Now, we shall prove that there exists $C_1>0$ such that $C_1\|x\|\le f(x)$. Suppose the contrary, that there exist $x_n$ with $\frac1n \|x_n\|>f(x_n)$,  $n=1,2,\cdots$. Then, $x_n\ne 0$ and $f(nx_n/\|x_n\|)<1$, $n=1,2,\cdots$. By the Hahn-Banach theorem, there exists $x_n^*\ne0$ such that $\langle x^*_n,nx_n/\|x_n\|\rangle=\|x^*_n\|\|nx_n/\|x_n\|\|=n\|x^*_n\|$. By scaling, we can assume that $\|x^*_n\|=1/n$. Then $\lim_{n\to+\infty}x^*_n=0$ and  
$f^\polar(x^*_n)=  \sup_{f(x)\le 1}\langle x^*_n,x\rangle\ge \langle x^*_n,nx_n/\|x_n\|\rangle=n\|x^*_n\|=1$, which contradicts the continuity of $f^\polar$ at $0$.  

In consequence, we have proved $\cvp_c^1(X)\subset \widetilde{\cvp}_c^1(X)$.
\end{proof}

\begin{proof}[Proof of Corollary \ref{cor:function-spaces}]
The equivalence $f\in \cvx_0^p(X)\Longleftrightarrow f^\polar\in \cvx_0^p(X^*)$ follows directly from \ref{item:1-to-p-homo}, \ref{item:p/root/of/p-homo}, \ref{item:CV1-polar}, \ref{item:p-homo-subdifferen-polar-rela} and \ref{item:S5}. 

Together the above properties with \ref{item:S6}, we derive $f\in \cvx_{0,+}^p(X)\Longleftrightarrow f^\polar\in \cvx_{0,+}^p(X^*)$. 

The equivalence $f\in \cvp_c^p(X)\Longleftrightarrow f^\polar\in\cvp_c^p(X^*)$ is deduced by Proposition \ref{prop:character:CV_c^1}. 
\end{proof}

\begin{proof}[Proof of Proposition \ref{pro:quotient-subdifferential}]
By the definition of $\bar f_\Pi$, $f_\Pi([x])\le f(x)$, $\forall x\in X$. We show a proof via the following two claims.

Claim 1: If $f(x)>\bar f_\Pi([x])$, then $\partial f(x)\cap \Pi^\bot=\varnothing$.

Proof: Suppose the contrary that there exists $x$ satisfying $f(x)>\bar f_\Pi([x])$ and $\partial f(x)\cap \Pi^\bot\ne\varnothing$. Then, there exists $x'\in[x]$ such that $f(x')<f(x)$. Thus, for any $x^*\in\partial f(x)\cap \Pi^\bot$, $0>f(x')-f(x)\ge \langle x^*,x'-x\rangle=0$, a contradiction. 

Claim 2: If $f(x)=\bar f_\Pi([x])$, then $\partial f(x)\cap \Pi^\bot= \pi^*(\partial\bar f_\Pi([x]))$.

Proof: 
We first prove $\partial f(x)\cap \Pi^\bot\subset\pi^*(\partial\bar f_\Pi([x]))$. Suppose $\partial f(x)\cap \Pi^\bot\ne\varnothing$, otherwise the statement is trivial.  
For any $x^*\in\partial f(x)\cap \Pi^\bot$, we shall prove $(\pi^*)^{-1}x^*\in \partial \bar f_\Pi([x])$. In fact, for any $[y]\in X/\Pi$, there exists $y'\in [y]$ such that $\bar f_\Pi([y])>f(y')-\varepsilon$. Thus, $\bar f_\Pi([y])-\bar f_\Pi([x])=\bar f_\Pi([y])-f(x)>f(y')-f(x)-\varepsilon\ge \langle x^*,y'-x\rangle-\varepsilon=\langle x^*,y-x\rangle-\varepsilon=\langle (\pi^*)^{-1}x^*,[y]-[x]\rangle-\varepsilon$. Taking $\varepsilon\to0^+$, we have $\bar f_\Pi([y])-\bar f_\Pi([x])\ge \langle (\pi^*)^{-1}x^*,[y]-[x]\rangle$, which implies that $(\pi^*)^{-1}x^*\in \partial \bar f_\Pi([x])$.

On the other direction, we prove $\pi^*(\partial\bar f_\Pi([x]))\subset\partial f(x)\cap \Pi^\bot$. Suppose $\pi^*(\partial\bar f_\Pi([x]))\ne\varnothing$.  
For any $\bar x^*\in \partial \bar f_\Pi([x])$, we shall prove that $\pi^*(\bar x^*)\in \partial f(x)\cap \Pi^\bot$. In fact, for any $y\in X$, $f(y)-f(x)\ge \bar f_\Pi([y])-\bar f_\Pi([x])\ge \langle \bar x^*,[y]-[x]\rangle=\langle \bar x^*,\pi(y-x)\rangle=\langle \pi^*(\bar x^*),y-x\rangle$, yielding that $\pi^*(\bar x^*)\in \partial f(x)$. For any $z\in\Pi$, $[x+z]=[x]$, and thus $0=\bar f_\Pi([x+z])-\bar f_\Pi([x])\ge \langle \bar x^*,[x+z]-[x]\rangle=\langle \bar x^*,\pi(x+z-x)\rangle=\langle \pi^*(\bar x^*),z\rangle$. Changing $z$ to $-z\in\Pi$, we still have $0\ge \langle \pi^*(\bar x^*),-z\rangle$. Hence, $\langle\pi^*(\bar x^*),z\rangle=0$ for any $z\in \Pi$, meaning that $\pi^*(\bar x^*)\in \Pi^\bot$.
\end{proof}

\begin{proof}[Proof of Lemma \ref{lem:normal-cone}]

For any $x\in\partial K$ and for any $x^*\in\partial f_K(x)$, we have by definition that $f_K(y)-f_K(x)\ge \langle x^*,y-x\rangle$, where $\partial K$ indicates the relative boundary of $K$. 
Since $K=\{x\in X:f_K(x)\le 1\}$ and $\partial K=\{x\in X: f_K(x)=1\}$, we have for any $y\in K$, 
$\langle x^*,y-x\rangle\le f_K(y)-f_K(x)\le 1-1=0$, implying that $x^*\in \mathrm{NC}_x(K)$. Therefore, $\partial f_K(x)\subset \mathrm{NC}_x(K)$. Clearly, $\mathrm{NC}_x(K)$ is a closed cone, and hence, $\mathrm{cl}\,\mathrm{cone}(\partial f_K(x))\subset \mathrm{NC}_x(K)$.

Now, we focus on the inverse. 
Given $x\in \partial K$ and $x^*\in\mathrm{NC}_x(K)$, we shall discuss two cases.

Case 1. $\langle x^*,x\rangle=0$. 

In this case, since $\langle x^*,y\rangle=\langle x^*,y-x\rangle\le 0$ for any $y\in K$, and $0$ lies in the relative interior of $K$, we have $\langle x^*,x'\rangle\le 0$ for any $x'\in\mathrm{span}(K)$. Thus, $\langle x^*,x'\rangle= 0$ for any $x'\in\mathrm{span}(K)$, meaning that $x^*\in \mathrm{span}(K)^\bot$. 

Case 2. $\langle x^*,x\rangle\ne 0$. 

In this case, since $\langle x^*,-x\rangle=\langle x^*,0-x\rangle\le 0$, we have  $\langle x^*,x\rangle> 0$. In addition, for any $x'\in K$, $\langle x^*,x'-x\rangle\le 0$, implying that $\langle x^*,x'\rangle\le \langle x^*,x\rangle$ whenever $x'\in K$. 
Hence, $f_K^\polar(x^*)=\sup_{f_k(x')\le 1}\langle x^*,x'\rangle=\sup_{x'\in K}\langle x^*,x'\rangle=\langle x^*,x\rangle>0$. 

For any $y\in X$, we consider three subcases:

Case 2.1. $f_K(y)=0$

In this subcase, we shall first prove $\langle x^*,y\rangle\le 0$, otherwise, $\langle x^*,y\rangle> 0$ implies that $\langle x^*,ty\rangle>\langle x^*,x\rangle$ for sufficiently large $t>0$. Note that, $f_K(ty)=f_K(y)=0$ meaning that $ty\in K$, then, $\langle x^*,ty-x\rangle>0$ which contradicts $x^*\in\mathrm{NC}_x(K)$. Therefore, 
 $\langle x^*/f_K^\polar(x^*),y-x\rangle\le \langle x^*/f_K^\polar(x^*),-x\rangle=-1=f_K(y)-f_K(x)$. 

Case 2.2. $f_K(y)=+\infty$

In this subcase, we have  $\langle x^*/f_K^\polar(x^*),y-x\rangle<+\infty=f_K(y)-f_K(x)$.

Case 2.3. $0<f_K(y)<+\infty$

In this subcase, $y/f_K(y)\in \partial K$, thus $\langle x^*,y/f_K(y)\rangle\le f_K^\polar(x^*)$ which implies $\langle x^*/f_K^\polar(x^*),y\rangle\le f_K(y)$. 
Since $\langle x^*/f_K^\polar(x^*),x\rangle=\langle x^*,x\rangle/f_K^\polar(x^*)=1=f_K(x)$, we finally obtain 
$\langle x^*/f_K^\polar(x^*),y-x\rangle\le f_K(y)-f_K(x)$.

Combining Cases 2.1-2.3, we have $x^*/f_K^\polar(x^*)\in \partial f_K(x)$, and hence $x^*\in\mathrm{cone}(\partial f_K(x)):=\{tv^*:t>0,v^*\in\partial f_K(x)\}$. 

Next, we prove that $\mathrm{span}(K)^\bot\subset \mathrm{cl}\,\mathrm{cone}(\partial f_K(x))$, i.e., $\forall x^*\in \mathrm{span}(K)^\bot$, $\exists t_n>0$, $v_n^*\in \partial f_K(x)$ such that $t_nv_n^*\to x^*$. In fact, we can fix $v^*\in \partial f_K(x)$, and take $v_n^*:=v^*+nx^*$ and $t_n:=1/n$. 
Since $\dom f_K\subset \mathrm{span}(K)$,  $x^*\in \mathrm{span}(K)^\bot$ and $\langle v^*,y-x\rangle\le f_K(y)-f_K(x)$ $\forall y$, we have $\langle v^*+nx^*,y-x\rangle=\langle v^*,y-x\rangle\le f_K(y)-f_K(x)$ $\forall y\in \mathrm{span}(K)$.
For any $y\not \in \mathrm{span}(K)$, we have $f_K(y)=+\infty$, which implies $\langle v^*_n,y-x\rangle:=\langle v^*+nx^*,y-x\rangle\le f_K(y)-f_K(x)$ $\forall y\in X$. This yields $v_n^*\in \partial f_K(x)$. Clearly, $t_nv_n^*=x^*+v^*/n\to x^*$, $n\to+\infty$. 

In addition, we shall prove that $\mathrm{span}(K)^\bot\cap\mathrm{cone}(\partial f_K(x))=\emptyset$. Suppose the contrary, that $x^*\in \mathrm{span}(K)^\bot\cap\mathrm{cone}(\partial f_K(x))$, we may further assume that $x^*\in \mathrm{span}(K)^\bot\cap\partial f_K(x)$ because we can always use $tx^*$ instead of $x^*$ to ensure this. 
Then $\langle x^*,x'\rangle=0$, $\forall x'\in \mathrm{span}(K)$, and in particular, taking $x'=-x$, we have $0=\langle x^*,0-x\rangle\le f_K(0)-f_K(x)=-1$, a contradiction. 
\end{proof}

\subsection{Examples and Counterexamples on homotopy equivalence}

In Section \ref{sec:ratio}, we propose the critical equivalence theory for RC functions under duality based on polarity, while in Section \ref{sec:dc}, we extend our framework to DC functions by establishing the critical equivalence under Fenchel conjugate. 
In this appendix, we shall provide examples to illustrate that we cannot obtain the same critical duality theory if we choose ``inappropriate'' dualities.

The following example shows that for RC functions, if we use the Fenchel dual rather than the polarity dual, then the critical duality equivalence will not hold. 
\begin{example}\label{exam:f/g-cannot-Fenchel}
Given $p>2$, $0<a_1<a_2<\cdots<a_n$, let $f(x)=(\sum_{i=1}^na_ix_i^2)^{\frac p2}$ and $g(x)=(\sum_{i=1}^n x_i^2)^{\frac p2}$, $\forall x:=(x_1,\cdots,x_n)\in\R^n$.  
Then $$\frac{f(x)}{g(x)}=\Big(\frac{\sum_{i=1}^na_ix_i^2}{\sum_{i=1}^n x_i^2}\Big)^{\frac p2}\;\text{ and }\;\frac{g^*(x)}{f^*(x)}=\Big(\frac{\sum_{i=1}^nx_i^2}{\sum_{i=1}^n x_i^2/a_i}\Big)^{\frac {p^*}{2}}$$ 

It can be checked that the critical values of $f/g$ are $a_1^{\frac p2},\cdots,a_n^{\frac p2}$, 
and every critical point corresponding to $a_j^{\frac p2}$ must be nondegenerate and has the Morse index $j-1$, where $j=1,\cdots,n$. 
Similarly, the critical values of $g^*/f^*$ are $a_1^{\frac {p^*}2},\cdots,a_n^{\frac {p^*}2}$, and every critical point corresponding to $a_j^{\frac {p^*}2}$ must be nondegenerate and has the Morse index $j-1$. 
It can be further checked that for any $c\in (a_j^{\frac p2},a_{j+1}^{\frac p2})$, $\{f/g\le c\}$ must be homotopy equivalent to $\mathbb{S}^{j-1}$, the unit sphere of dimension $j-1$. 
Analogously, $\{g^*/f^*\le c\}$  is homotopy equivalent to $\mathbb{S}^{j-1}$, whenever $c\in (a_j^{\frac {p^*}2},a_{j+1}^{\frac {p^*}2})$. 
Therefore, as $a_j^{\frac p2}\ne a_j^{\frac {p^*}2}$, there exists $c\in\R$ such that $\{g^*/f^*\le c\}$ is not homotopy equivalent to $\{f/g\le c\}$. 


For instance, take $p=n=3$, $a_1=2^4$, $a_2=3^4$ and $a_3=5^4$. Then $p^*=\frac32$, and 
$$1<a_1^{\frac34}=2^3<a_2^{\frac34}=3^3<a_1^{\frac32}=2^6<a_3^{\frac34}=5^3<a_2^{\frac32}=3^6<a_3^{\frac32}=5^6.$$
According to the discussion above, for any $c\in (2^6,5^3)=(a_1^{\frac32},a_3^{\frac34})$, $\{f/g\le c\}\simeq \mathbb{S}^{1-1}=\mathbb{S}^0$ while $\{g^*/f^*\le c\}\simeq \mathbb{S}^{2-1}=\mathbb{S}^1$, which means that $\{f/g\le c\}\not\simeq \{g^*/f^*\le c\}$.  
\end{example}

The next example shows that for DC functions, if we use the polarity dual rather than the Fenchel dual, then the critical dual equivalence will not hold. 

\begin{example}\label{exam:f-g-cannot-polar}
Let $f,g\in \cvx_0(\R^2)$ be defined as $f(x)=x_1^2+x_2^2$, and $g(x)=\sqrt{x_1^2+x_2^2}$. Then $g^\polar(x)=\sqrt{x_1^2+x_2^2}$ and $f^\polar(x)=\frac14(x_1^2+x_2^2)$. Consider $f-g$ and $g^\polar-f^\polar$. 
It is clear that, $f(x)-g(x)=x_1^2+x_2^2-\sqrt{x_1^2+x_2^2}\ge -\frac14$, while $g^\polar(x)-f^\polar(x)=\sqrt{x_1^2+x_2^2}-\frac14(x_1^2+x_2^2)\le1$. Therefore, the homotopy type of $\{f-g\le c\}$ and $\{g^\polar-f^\polar\le c\}$ are different for varying $c\in\R$. 

If we instead use Fenchel duality rather than polar duality, we can check $\{f-g\le c\}\simeq\{g^*-f^*\le c\}$ for any $c\in\R$. In fact, 
$g^*(x)-f^*(x)=\iota_{B_1}-\frac14(x_1^2+x_2^2)=+\infty$ if $x_1^2+x_2^2>1$ and $=-\frac14(x_1^2+x_2^2)$ if $x_1^2+x_2^2\le 1$.

For any $c\ge 0$, $\{f-g\le c\}$ must be  a closed disc, and $\{g^*-f^*\le c\}$ is always the unit closed disc. 

For any $-\frac14< c<0$, both $\{f-g\le c\}$ and $\{g^*-f^*\le c\}$ are ring zones. 

For $c=-\frac14$, both $\{f-g\le c\}$ and $\{g^*-f^*\le c\}$ are circles of dimension one.

For $c<-\frac14$, both $\{f-g\le c\}$ and $\{g^*-f^*\le c\}$ are empty sets.

Then, we complete the detailed verification on a concrete example for Theorem \ref{th:sublevel-equi}.
\end{example}

{\small
\noindent Dong Zhang 
\vskip 2pt
\noindent School of Mathematical Sciences, Peking University,  100871 Beijing, China.\vskip 2pt
\noindent Email: \textbf{dongzhang@math.pku.edu.cn}

\end{document}